\newcommand{\arXiv}[1]{\href{http://arxiv.org/abs/#1}{\tt arXiv:\nolinkurl{#1}}}
\newcommand{\googlebooks}[1]{(preview at \href{http://books.google.com/books?id=#1}{google books})}
\definecolor{dark-red}{rgb}{0.7,0.25,0.25}
\definecolor{dark-blue}{rgb}{0.15,0.15,0.55}
\definecolor{medium-blue}{rgb}{0,0,.8}
\definecolor{DarkGreen}{RGB}{0,150,0}
\theoremstyle{plain}
\newtheorem{thm}{Theorem}[section]
\newtheorem*{thm*}{Theorem}
\newtheorem{cor}[thm]{Corollary}
\newtheorem{lem}[thm]{Lemma}
\newtheorem{quest}[thm]{Question}
\newtheorem*{quest*}{Question}
\theoremstyle{definition}
\newtheorem{defn}[thm]{Definition}
\newtheorem{assumption}[thm]{Assumption}
\newtheorem{nota}[thm]{Notation}
\newtheorem{note}[thm]{Note}
\newtheorem{ex}[thm]{Example}
\newtheorem{rem}[thm]{Remark}
\DeclareMathOperator{\coev}{coev}
\DeclareMathOperator{\End}{End}
\DeclareMathOperator{\Epi}{Epi}
\DeclareMathOperator{\Hom}{Hom}
\DeclareMathOperator{\Gr}{Gr}
\DeclareMathOperator{\ev}{ev}
\DeclareMathOperator{\id}{id}
\DeclareMathOperator{\OP}{op}
\DeclareMathOperator{\Tr}{Tr}
\DeclareMathOperator{\tr}{tr}
\newcommand{\comment}[1]{}
\newcommand{\be}{\begin{enumerate}}
\newcommand{\ee}{\end{enumerate}}
\newcommand{\N}{\mathbb{N}}
\newcommand{\Z}{\mathbb{Z}}
\newcommand{\F}{\mathbb{F}}
\newcommand{\R}{\mathbb{R}}
\newcommand{\C}{\mathbb{C}}
\renewcommand{\P}{\mathbb{P}}
\newcommand{\I}{\infty}
\newcommand{\set}[2]{\left\{#1 \middle| #2\right\}}
\newcommand{\cA}{\mathcal{A}}
\newcommand{\cB}{\mathcal{B}}
\newcommand{\cC}{\mathcal{C}}
\newcommand{\cF}{\mathcal{F}}
\newcommand{\cG}{\mathcal{G}}
\newcommand{\cH}{\mathcal{H}}
\newcommand{\cL}{\mathcal{L}}
\newcommand{\cM}{\mathcal{M}}
\newcommand{\cN}{\mathcal{N}}
\newcommand{\cS}{\mathcal{S}}
\newcommand{\cV}{\mathcal{V}}
\newcommand{\sC}{{\sf C}}
\newcommand{\PA}{{\mathcal{P}\hspace{-.1cm}\mathcal{A}}}
\newcommand{\Pro}{{\sf Pro}}
\newcommand{\CF}{{\sf CF}}
\newcommand{\Bim}{{\sf Bim}}
\newcommand{\Rep}{{\sf Rep}}
\newcommand{\op}{^{\OP}}
\newcommand{\noshow}[1]{}
\newcommand{\MR}[1]{}
\newcommand{\bluecup}{\textcolor{\cupcolor}{\cup}}
\tikzstyle{shaded}=[fill=red!10!blue!20!gray!30!white]
\tikzstyle{unshaded}=[fill=white]
\tikzstyle{empty box}=[circle, draw, thick, fill=white, opaque, inner sep=2mm]
\tikzstyle{annular}=[scale=.7, inner sep=1mm, baseline]
\tikzstyle{rectangular}=[scale=.75, inner sep=1mm, baseline=-.1cm]
\newcommand{\cupcolor}{DarkGreen}
\newcommand{\alphacolor}{blue}
\newcommand{\betacolor}{red}
\newcommand{\Mbox}[2]{
\begin{tikzpicture}[baseline=-.1cm]
	\draw (0,0)--(0,.8);
	\draw[thick, unshaded] (-.4, -.4) -- (-.4, .4) -- (.4, .4) -- (.4,-.4) -- (-.4, -.4);
	\node at (.2,.6) {{\scriptsize{$#1$}}};
	\node at (0,0) {$#2$};
\end{tikzpicture}
}
\newcommand{\nbox}[3]{
\begin{tikzpicture}[baseline=-.1cm]
	\draw (0,-.8)--(0,.8);
	\draw[thick, unshaded] (-.4, -.4) -- (-.4, .4) -- (.4, .4) -- (.4,-.4) -- (-.4, -.4);
	\node at (.2,.6) {{\scriptsize{$#1$}}};
	\node at (0,0) {$#2$};
	\node at (.2,-.6) {{\scriptsize{$#3$}}};
\end{tikzpicture}
}
\newcommand{\ColorNBox}[3]{
\begin{tikzpicture}[baseline=-.1cm]
	\draw[thick, #3] (0,-.8)--(0,0);
	\draw (0,0)--(0,.8);
	\draw[thick, unshaded] (-.4, -.4) -- (-.4, .4) -- (.4, .4) -- (.4,-.4) -- (-.4, -.4);
	\node at (.2,.6) {{\scriptsize{$#1$}}};
	\node at (0,0) {$#2$};
\end{tikzpicture}
}
\newcommand{\widenbox}[3]{
\begin{tikzpicture}[baseline=-.1cm]
	\draw (0,-.8)--(0,.8);
	\draw[thick, unshaded] (-.65, -.4) -- (-.65, .4) -- (.65, .4) -- (.65,-.4) -- (-.65, -.4);
	\node at (.25,.6) {{\scriptsize{$#1$}}};
	\node at (0,0) {$#2$};
	\node at (.25,-.6) {{\scriptsize{$#3$}}};
\end{tikzpicture}
}
\newcommand{\rotateccw}[5]{
\begin{tikzpicture}[baseline=-.1cm]
	\clip (-1.1,-.9)--(-1.1,.9)--(1.1,.9)--(1.1,-.9);	
	\draw (0,.4) arc (180:0:.4cm)--(.8,-.8);
	\draw (0,-.4) arc (0:-180:.4cm)--(-.8,.8);
	\draw[thick, unshaded] (-.4, -.4) -- (-.4, .4) -- (.4, .4) -- (.4,-.4) -- (-.4, -.4);
	\node at (0,0) {$#5$};
	\node at (-1,.6) {{\scriptsize{$#1$}}};
	\node at (-.2,.6) {{\scriptsize{$#2$}}};
	\node at (.2,-.6) {{\scriptsize{$#3$}}};
	\node at (1,-.6) {{\scriptsize{$#4$}}};
\end{tikzpicture}
}
\newcommand{\rotatecw}[5]{
\begin{tikzpicture}[baseline=-.1cm]
	\clip (-1.1,-.9)--(-1.1,.9)--(1.1,.9)--(1.1,-.9);	
	\draw (0,.4) arc (0:180:.4cm)--(-.8,-.8);
	\draw (0,-.4) arc (-180:0:.4cm)--(.8,.8);
	\draw[thick, unshaded] (-.4, -.4) -- (-.4, .4) -- (.4, .4) -- (.4,-.4) -- (-.4, -.4);
	\node at (0,0) {$#5$};
	\node at (-1,-.6) {{\scriptsize{$#1$}}};
	\node at (-.2,-.6) {{\scriptsize{$#2$}}};
	\node at (.2,.6) {{\scriptsize{$#3$}}};
	\node at (1,.6) {{\scriptsize{$#4$}}};
\end{tikzpicture}
}
\newcommand{\trace}[4]{
\begin{tikzpicture}[baseline = -.1cm]
	\draw (0,.4) arc (180:0:.4cm)--(.8,-.4) arc (0:-180:.4cm);
	\filldraw[unshaded,thick] (-.4,.4)--(.4,.4)--(.4,-.4)--(-.4,-.4)--(-.4,.4);
	\node at (1,0) {{\scriptsize{$#1$}}};
	\node at (-.2,.6) {{\scriptsize{$#2$}}};
	\node at (0,0) {$#3$};
	\node at (-.2,-.6) {{\scriptsize{$#4$}}};
\end{tikzpicture}
}
\newcommand{\traceop}[4]{
\begin{tikzpicture}[baseline = -.1cm]
	\draw (0,.4) arc (0:180:.4cm)--(-.8,-.4) arc (-180:0:.4cm);
	\filldraw[unshaded,thick] (-.4,.4)--(.4,.4)--(.4,-.4)--(-.4,-.4)--(-.4,.4);
	\node at (-1,0) {{\scriptsize{$#1$}}};
	\node at (.2,.6) {{\scriptsize{$#2$}}};
	\node at (0,0) {$#3$};
	\node at (.2,-.6) {{\scriptsize{$#4$}}};
\end{tikzpicture}
}
\newcommand{\PAMultiply}[5]{
\begin{tikzpicture}[baseline=.5cm]
	\draw (0,2)--(0,-.8);
	\filldraw[unshaded,thick] (-.4,.4)--(.4,.4)--(.4,-.4)--(-.4,-.4)--(-.4,.4);
	\draw[thick, unshaded] (-.4, .8) -- (-.4, 1.6) -- (.4, 1.6) -- (.4,.8) -- (-.4, .8);
	\node at (.2,1.8) {{\scriptsize{$#1$}}};
	\node at (0,1.2) {$#2$};
	\node at (.2,.6) {{\scriptsize{$#3$}}};
	\node at (0,0) {$#4$};
	\node at (.2,-.6) {{\scriptsize{$#5$}}};
\end{tikzpicture}
}
\newcommand{\ColorMultiply}[3]{
\begin{tikzpicture}[baseline=.5cm]
	\draw (0,2)--(0,1);
	\draw [thick, #3] (0,1)--(0,-.8);
	\filldraw[unshaded,thick] (-.4,.4)--(.4,.4)--(.4,-.4)--(-.4,-.4)--(-.4,.4);
	\draw[thick, unshaded] (-.4, .8) -- (-.4, 1.6) -- (.4, 1.6) -- (.4,.8) -- (-.4, .8);
	\node at (0,1.2) {$#1$};
	\node at (0,0) {$#2$};
\end{tikzpicture}
}
\newcommand{\PAMultiplyThree}[7]{
\begin{tikzpicture}[baseline=-.1cm]
	\draw (0,1.8)--(0,-2);
	\filldraw[unshaded,thick] (-.4,.4)--(.4,.4)--(.4,-.4)--(-.4,-.4)--(-.4,.4);
	\draw[thick, unshaded] (-.4, .8) -- (-.4, 1.6) -- (.4, 1.6) -- (.4,.8) -- (-.4, .8);
	\draw[thick, unshaded] (-.4, -.8) -- (-.4, -1.6) -- (.4, -1.6) -- (.4,-.8) -- (-.4, -.8);
	\node at (.2,1.8) {{\scriptsize{$#1$}}};
	\node at (0,1.2) {$#2$};
	\node at (.2,.6) {{\scriptsize{$#3$}}};
	\node at (0,0) {$#4$};	
	\node at (.2,-.6) {{\scriptsize{$#5$}}};
	\node at (0,-1.2) {$#6$};
	\node at (.2,-1.8) {{\scriptsize{$#7$}}};
\end{tikzpicture}
}
\newcommand{\ColorMultiplyThree}[5]{
\begin{tikzpicture}[baseline=-.1cm]
	\draw (0,1.8)--(0,1);
	\draw[thick, #4] (0,1)--(0,-1);
	\draw[thick, #5] (0,-1)--(0,-2);
	\filldraw[unshaded,thick] (-.4,.4)--(.4,.4)--(.4,-.4)--(-.4,-.4)--(-.4,.4);
	\draw[thick, unshaded] (-.4, .8) -- (-.4, 1.6) -- (.4, 1.6) -- (.4,.8) -- (-.4, .8);
	\draw[thick, unshaded] (-.4, -.8) -- (-.4, -1.6) -- (.4, -1.6) -- (.4,-.8) -- (-.4, -.8);
	\node at (0,1.2) {$#1$};
	\node at (0,0) {$#2$};	
	\node at (0,-1.2) {$#3$};
\end{tikzpicture}
}
\newcommand{\MInnerProduct}[3]{
\begin{tikzpicture}[baseline=-.1cm]
	\draw [thick, #2] (0,.6)--(0, -.4);
	\draw [thick, #2] (0, -.4) .. controls ++(270:.4cm) and ++(270:.4cm) .. (1.2,-.4);
	\draw[thick, #2] (1.2,-.4) -- (1.2,.6);
	\draw[thick, unshaded] (1.6, -.4) -- (1.6, .4) -- (.8, .4) -- (.8, -.4) -- (1.6, -.4);
	\draw[thick, unshaded] (-.4, -.4) -- (-.4, .4) -- (.4, .4) -- (.4, -.4) -- (-.4, -.4);
	\node at (1.2, 0) {$#3$};
	\node at (0, 0) {$#1$};
\end{tikzpicture}
}
\newcommand{\MInnerProductOp}[3]{
\begin{tikzpicture}[baseline=-.1cm]
	\draw[thick, #2] (0,-.6)--(0, .4);
	\draw[thick, #2] (1.2,.4) -- (1.2,-.6);
	\draw[thick, #2] (0, .4) .. controls ++(90:.4cm) and ++(90:.4cm) .. (1.2,.4);
	\draw[thick, unshaded] (1.6, -.4) -- (1.6, .4) -- (.8, .4) -- (.8, -.4) -- (1.6, -.4);
	\draw[thick, unshaded] (-.4, -.4) -- (-.4, .4) -- (.4, .4) -- (.4, -.4) -- (-.4, -.4);
	\node at (1.2, 0) {$#3$};
	\node at (0, 0) {$#1$};
\end{tikzpicture}
}
\begin{document}
\title{Rigid $C^*$-tensor categories of bimodules over interpolated free group factors}
\author{Arnaud Brothier\footnote{KU Leuven, Department of Mathematics, Celestijnenlaan 200B - box 2400, 3001 Heverlee, Belgium}, Michael Hartglass\footnote{UC Berkeley, Department of Mathematics, 970 Evans Hall box 3840, Berkeley, CA 94720-3840 USA}, and David Penneys\footnote{Department of Mathematics, University of Toronto, Bahen Centre, 40 St. George St., Room 6290, Toronto, Ontario M5S 2E4, Canada} }
\date{\today}
\maketitle
\begin{abstract}
Given a countably generated rigid $C^*$-tensor category $\sC$, we construct a planar algebra $P_\bullet$ whose category of projections $\Pro$ is equivalent to $\sC$. From $P_\bullet$, we use methods of Guionnet-Jones-Shlyakhtenko-Walker to construct a rigid $C^*$-tensor category $\Bim$ whose objects are bifinite bimodules over an interpolated free group factor, and we show $\Bim$ is equivalent to $\Pro$. We use these constructions to show $\sC$ is equivalent to a category of bifinite bimodules over $L(\F_\infty)$.
\end{abstract}
\section{Introduction}

Jones initiated the modern theory of subfactors in his breakthrough paper \cite{MR696688} in which he classified the possible values for the index of a $II_1$ subfactor to the range $\set{4\cos^2(\pi/n)}{n\geq 3}\cup [4,\infty]$, and he found a subfactor of the hyperfinite $II_1$-factor $R$ for each allowed index.

A finite index subfactor $N\subset M$ is studied by analyzing its standard invariant, i.e., two rigid $C^*$-tensor categories of $N-N$ and $M-M$ bimodules and the module categories of $N-M$ and $M-N$ bimodules which arise from the Jones tower. The standard invariant has been axiomatized in three similar ways, each emphasizing slightly different structure: Ocneanu's paragroups \cite{MR996454,MR1642584}, Popa's $\lambda$-lattices \cite{MR1334479}, and Jones' planar algebras \cite{math/9909027}.

In \cite{MR1198815,MR1334479,MR1887878}, Popa starts with a $\lambda$-lattice $A_{\bullet,\bullet}=(A_{i,j})$ and constructs a $II_1$-subfactor whose standard invariant is $A_{\bullet,\bullet}$. Hence for each subfactor planar algebra $P_\bullet$, there is some subfactor whose planar algebra is $P_\bullet$. However, the following question remains unanswered:

\begin{quest}\label{quest:WhichP}
For which subfactor planar algebras $P_\bullet$ is there a subfactor of $R$ whose planar algebra is $P_\bullet$?
\end{quest}

Using his reconstruction theorems, Popa gave a positive answer to Question \ref{quest:WhichP} for (strongly) amenable subfactor planar algebras \cite{MR1278111}.

In \cite{MR2051399}, Popa and Shlyakhtenko were able to identify the factors in certain cases of Popa's reconstruction theorems. Using this, they gave a positive answer to Question \ref{quest:WhichP} for $L(\F_\infty)$, i.e., every subfactor planar algebra arises as the standard invariant of some subfactor $N\subset M$ such that $N,M$ are both isomorphic to $L(\F_\infty)$. This theorem was reproduced by Hartglass \cite{1208.2933} using the reconstruction results of Guionnet-Jones-Shlyakhtenko-Walker (GJSW) \cite{MR2732052,MR2645882,MR2807103} which produce subfactors of interpolated free group factors.

It is natural to extend these questions to rigid $C^*$-tensor categories, i.e.,
\begin{quest}\label{quest:WhichC}
For which rigid $C^*$-tensor categories $\sC$ is there a category $\sC_{bim}$ of bifnite bimodules over $R$ such that $\sC_{bim}$ is equivalent to $\sC$?
\end{quest}

As in the subfactor case, Hayashi and Yamagami gave a positive result for amenable rigid $C^*$-tensor categories \cite{MR1749868} (amenability for $C^*$-tensor categories was first studied by Hiai and Izumi \cite{MR1644299}). Moreover, given a rigid $C^*$-tensor category $\sC$, Yamagami constructed a category of bifite bimodules $\sC_{bim}$ over an amalgamated free product $II_1$-factor such that $\sC_{bim}$ is equivalent to $\sC$ \cite{MR1960417}. However, one can show these factors have property $\Gamma$, so they are not interpolated free group factors (we briefly sketch this in Appendix \ref{sec:Appendix}).

In this paper, we give a result analogous to Popa and Shlyakhtenko's results for $L(\F_\infty)$ for countably generated rigid $C^*$-tensor categories, which answers part of Question 9 in \cite[Section 6]{MR2681261}. Recall that a rigid $C^*$-tensor category $\sC$ is generated by a set of objects $\cS$ if for every $Y\in\sC$, there are $X_1,\dots, X_n\in \cS$ such that
$$
\sC(X_1\otimes\cdots \otimes X_n,Y)\neq (0),
$$
i.e., $Y$ is (isomorphic to) a sub-object of $X_1\otimes\cdots \otimes X_n$.

\begin{thm}\label{thm:Main}
Every countably generated rigid $C^*$-tensor category can be realized as a category of bifinite bimodules over $L(\F_\infty)$.
\end{thm}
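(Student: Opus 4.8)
The plan is to proceed in three stages, matching the three constructions promised in the abstract. First, given a countably generated rigid $C^*$-tensor category $\sC$, I would build a planar algebra $P_\bullet$ whose category of projections $\Pro$ is equivalent to $\sC$. The idea is to pick a countable generating set $\cS = \{X_1, X_2, \dots\}$ of objects (closed under duals and containing the tensor unit), and encode $\sC$ into a (non-irreducible, or ``coloured''-style) planar algebra: the box spaces $P_{n}$ should be direct sums of hom-spaces $\sC(X_{i_1}\otimes\cdots, X_{j_1}\otimes\cdots)$ over all length-$n$ words in the generators, with the planar operad action given by the composition, tensor product, and the duality (conjugate) structure of $\sC$ — i.e., the standard ``graph planar algebra''/``diagrammatic calculus'' translation of a tensor category into a planar algebra, together with the evaluation and coevaluation morphisms supplying the cups and caps. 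Positivity of $\sC$ (it is a $C^*$-category) gives the needed positivity of the planar algebra trace. One must check that every object of $\sC$ appears as a sub-object of a word in $\cS$, hence as a projection in some $P_n$, so that $\Pro \simeq \sC$ as $C^*$-tensor categories.

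Second, I would feed $P_\bullet$ into the Guionnet–Jones–Shlyakhtenko–Walker machine (as in \cite{MR2732052,MR2645882,MR2807103} and Hartglass \cite{1208.2933}): form the graded algebra $\Gr(P_\bullet)$ with the GJS/Voiculescu-type trace built from sums over planar tangles, complete to get a tower of $II_1$-factors, and let $\Bim$ be the resulting rigid $C^*$-tensor category of bifinite bimodules over the distinguished factor $M$. The GJSW free-probability computation identifies $M$: for a planar algebra with infinitely many generating strands / non-trivial box spaces the relevant von Neumann algebra is an interpolated free group factor, and in the countably-generated case it should come out to be $L(\F_\infty)$ (or one absorbs a free group factor to arrange this). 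The bimodule calculus of GJSW then yields a fully faithful tensor functor $\Pro \to \Bim$, and one shows it is essentially surjective onto its (semisimple, $C^*$) image, so $\Bim \simeq \Pro$.

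Third, composing the two equivalences gives $\sC \simeq \Pro \simeq \Bim$, with $\Bim$ a category of bifinite bimodules over $L(\F_\infty)$, which is exactly the statement.

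The main obstacle I expect is the second stage: controlling the isomorphism class of the von Neumann algebra produced by GJSW when $P_\bullet$ has \emph{infinitely many} colours/generators and is not a subfactor planar algebra in the classical (finite-depth-independent, single-strand) sense. One must adapt the GJS free-probabilistic moment computations — which were written for subfactor planar algebras — to this more general ``coloured'' planar algebra, verify that the trace is positive and the completion is a factor (not merely a von Neumann algebra with center), and pin down that the free dimension works out to give $L(\F_\infty)$ rather than some other interpolated free group factor. A secondary technical point is checking that $\Bim$ is genuinely \emph{rigid} (every bimodule in it has a dual with the expected dimension) and that the functor $\Pro\to\Bim$ is monoidal on the nose, including compatibility with the duality morphisms; the rigidity of $\sC$ should propagate through, but the bookkeeping with the GJSW bimodule constructions is where the real work lies.
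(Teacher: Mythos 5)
Your proposal matches the paper's three-step strategy almost exactly: encode $\sC$ as a (multi-coloured, unshaded) factor planar algebra $P_\bullet$ via the graphical calculus, run the GJSW graded-algebra construction on $P_\bullet$ to produce a $II_1$ factor $M$ with a category of bifinite bimodules equivalent to $\Pro(P_\bullet)$, and then identify $M \cong L(\F_\infty)$ via free-probabilistic/amalgamated-free-product analysis of the fusion graph. The technical obstacles you flag — positivity and boundedness for the coloured/infinitely-generated planar algebra, factorality, the free-dimension computation, and matching the planar-algebra tensor product to honest Connes' fusion (which the paper handles by introducing the auxiliary category $\CF$ and proving $\Bim\simeq\CF$) — are precisely the points the paper works out in Sections 3 and 4.
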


\begin{rem}
Note that when $\sC$ is finitely generated, we can prove Theorem \ref{thm:Main} using \cite{MR2051399} by adapting the technique in \cite[Theorem 4.1]{1112.4088}. We provide a sketch of the proof in Appendix \ref{sec:Appendix}, where we also point out some difficulties of using the results of \cite{MR2051399} when $\sC$ is not finitely generated  (see also \cite[Section 4]{MR1960417}).

Hence we choose to use planar algebra technology to prove Theorem \ref{thm:Main} since it offers the following advantages. First, the same construction works for both the finitely and infinitely generated cases. Second, planar diagrams arise naturally in the study of tensor categories, and a reader familiar with the diagrams may benefit from a planar algebraic approach.
Third, we get an elegant description of the bimodules over $L(\F_\I)$ directly from the planar algebra (see Sections \ref{sec:MoreBimodules} and \ref{sec:CategoriesOfBimodules}).
\end{rem}

There are three steps to the proof of Theorem \ref{thm:Main}.
\begin{enumerate}[(1)]
\item
Given a countably generated $C^*$-tensor category $\sC$, we get a factor planar algebra $P_\bullet$ such that the $C^*$-tensor category $\Pro$ of projections of $P_\bullet$ is equivalent to $\sC$.

A \underline{factor planar algebra} (called a fantastic planar algebra in \cite{1208.3637}) is an unshaded, spherical, evaluable $C^*$-planar algebra.

This step is well known to experts; we give most of the details in Section \ref{sec:TCandPA}.

\item
Given a factor planar algebra $P_\bullet$, we construct a $II_1$-factor $M$ and two rigid $C^*$-tensor categories of bifinite bimodules over $M$:
\begin{itemize}
\item
$\Bim$, built entirely from $P_\bullet$ and obviously equivalent to $\Pro$, and
\item
$\CF$, formed using Connes' fusion and linear operators.
\end{itemize}
These categories are defined in Definitions \ref{defn:Bim} and \ref{defn:CF}. We then show $\Bim\simeq \CF$ in Theorem \ref{thm:BimCF}.

We use results of GJSW to accomplish this step in Section \ref{sec:GJS}. Along the way, we adapt Brothier's treatment \cite{1202.1298} of GJSW results \cite{MR2732052,MR2645882} for unshaded planar algebras.

\item
We show $M\cong L(\F_\I)$.

This last step is similar to results of GJS \cite{MR2807103} and Hartglass \cite{1208.2933}.
\end{enumerate}

One can use similar analysis as in the proof of Theorem \ref{thm:Main} to prove the following theorem:

\begin{thm}\label{thm:Finite}
Suppose that in addition, $\sC$ has finitely many isomorphism classes of simple objects, i.e., $\sC$ is a unitary fusion category. Picking an object $X\in\sC$ which generates $\sC$, then $\sC$ can be realized as a category of bifinite bimodules over $L(\F_t)$ with
$$
t=1+\dim(\sC)(\dim(X\oplus\overline{X})-1)=1+\dim(\sC)(2\dim(X)-1),
$$
where $\dim(\sC)$ is the Frobenius-Perron dimension of $\sC$.
\end{thm}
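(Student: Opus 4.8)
The proof of Theorem~\ref{thm:Finite} is a bookkeeping refinement of the proof of Theorem~\ref{thm:Main}: one reruns the three-step construction, but instead of a free group factor on infinitely many generators, one keeps careful track of the parameters in the GJSW/GJS free-probability computation of Step~(3) and identifies the precise interpolated free group factor that appears. So the plan is first to redo Step~(1) in the fusion setting with the \emph{single} generating object $X$, then to invoke Steps~(2) and~(3) verbatim, and finally to compute the free-group parameter $t$ from the combinatorial data of $\sC$.

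First I would set up the planar algebra. Since $\sC$ is a unitary fusion category generated by $X$, applying the Step~(1) construction to the single object $X\oplus\overline X$ (one needs a symmetrically self-dual generator to build an unshaded factor planar algebra) produces a factor planar algebra $P_\bullet$ with $\Pro\simeq\sC$, where the distinguished generating object corresponds to $X\oplus\overline X$, of dimension $2\dim(X)$. The key finite-dimensional input is that $P_\bullet$ is evaluable with finitely many simple projections, indexed by the (finitely many) isomorphism classes of simples of $\sC$; the relevant global size is the total dimension $\dim(\sC)=\sum_i \dim(X_i)^2$, the Frobenius--Perron dimension.

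Next I would feed $P_\bullet$ into the GJSW machine of Step~(2) to get the $II_1$-factor $M$ with $\Bim\simeq\CF\simeq\Pro\simeq\sC$, and then run the free-probability identification of Step~(3). This is where the work is: in the GJS-type computation, $M$ is realized as (a corner of, or a compression within) a free-product-type von Neumann algebra built from the graph/tower associated to $P_\bullet$, and its isomorphism class as an interpolated free group factor is governed by a free-dimension / Euler-characteristic count. The formula $t = 1 + \dim(\sC)\bigl(\dim(X\oplus\overline X)-1\bigr) = 1+\dim(\sC)\bigl(2\dim(X)-1\bigr)$ should fall out by specializing Hartglass's formula \cite{1208.2933} (itself modeled on GJS \cite{MR2807103}): the ``$1$'' is the ambient factor, and each simple object $X_i$ contributes a loop parameter weighted by $\dim(X_i)^2/\dim(\sC)$ times $(\dim(X\oplus\overline X)-1)$, and these sum to $\dim(\sC)(2\dim(X)-1)$ since $\sum_i\dim(X_i)^2=\dim(\sC)$. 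Concretely I would: (a) identify the principal graph data of $P_\bullet$ relative to the generator, (b) apply the free-dimension formula of GJS/Hartglass for the compressed algebra $p M p$ where $p$ is a minimal projection supported at a chosen vertex, (c) use the rescaling formula $L(\F_s)_\lambda \cong L(\F_{1+(s-1)/\lambda^2})$ for interpolated free group factors to pass to $M$ itself.

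The main obstacle I expect is bookkeeping the free-dimension computation so that the weights combine into exactly $\dim(\sC)$; in particular, making sure the normalization of the trace on $P_\bullet$ (sphericity, the choice of $\delta$'s at each simple) matches the normalization under which the GJS formula is stated, and handling the passage between the (possibly reducible) generator $X\oplus\overline X$ and the individual simples. This is genuinely a finite, concrete calculation rather than a new idea, which is why the statement can be asserted as ``similar analysis'' to Theorem~\ref{thm:Main}; the novelty is entirely in pinning down the constant $t$.
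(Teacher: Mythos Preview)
Your proposal is correct and follows essentially the same approach as the paper: choose the single label $\cL=\{X\oplus\overline X\}$, rerun Steps~(1)--(3), and then in Step~(3) realize $M$ as the cutdown $p_\emptyset\cN(\cF_\sC(X\oplus\overline X))p_\emptyset$ of the graph von Neumann algebra, identifying the interpolated free group parameter by a free-dimension count (exactly as in Remark~\ref{rem:Finite}). The paper does not spell out the arithmetic either, so your caveat that the content is ``a finite, concrete calculation rather than a new idea'' matches precisely how the paper treats it.
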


\begin{rem}

Note that a version of Theorem \ref{thm:Finite} can be obtained by using \cite[Theorem 4.1]{1112.4088} together with \cite{MR2807103}.  If $Z_{1},\dots, Z_{n}$ are representatives for the simple objects in $\sC$ and $Y = \bigoplus_{k=1}^{n} Z_{k}$, then one obtains the factor $L(\F_s)$ for
$$
s = 1+\dim(\sC)(\dim(Y)-1),
$$
which will be a different parameter than what we obtained in Theorem \ref{thm:Finite}.
\end{rem}

On the other end of the spectrum, one should also note that there has been interesting work on rigid $C^*$-tensor categories of bimodules over $II_1$-factors by Vaes, Falgui\`{e}res, and Raum \cite{0811.1764,1112.4088}. Given a rigid $C^*$-tensor category $\sC$ which is either $\Rep(G)$ for $G$ a compact quantum group \cite{0811.1764} or a unitary fusion category \cite{1112.4088}, they construct a $II_1$-factor $M$ whose category $\Bim(M)$ of bifinite bimodules is \underline{exactly} $\sC$ (up to equivalence). Their results can be interpreted as rigidity results in contrast to the universality of $L(\F_\infty)$ to rigid $C^*$-tensor categories.

\paragraph{Acknowledgements.} We would like to thank Vaughan Jones, Scott Morrison, Noah Snyder, and Stefaan Vaes for many helpful conversations. The majority of this work was completed at the 2012 NCGOA on Conformal field theory and von Neumann algebras at Vanderbilt University and the 2012 Subfactors in Maui conference. The authors would like to thank Dietmar Bisch, Vaughan Jones, James Tener, and the other organizers for those opportunities. The authors were supported by DOD-DARPA grants HR0011-11-1-0001 and HR0011-12-1-0009. Michael Hartglass and David Penneys were also supported by NSF Grant DMS-0856316.  Arnaud Brothier was also supported by ERC Starting Grant VNALG-200749.

\section{Tensor categories and planar algebras}\label{sec:TCandPA}

We briefly recall how to go back and forth between rigid $C^*$-tensor categories and factor planar algebras.
The contents of this subsection are well known to experts.
Our treatment follows \cite{MR2559686,JonesPAnotes,1207.1923,MR2811311,MR1960417, MR2681261}.

\begin{nota}
Categories will be denoted with the sans-serif font ${\sf ABC}\dots$. We write $X\in\sC$ to mean $X$ is an object in $\sC$, and we write $\sC(X\to Y)$ or $\sC(X,Y)$ for the set of morphisms from $X$ to $Y$ in $\sC$.
\end{nota}

\subsection{Factor planar algebras and principal graphs}

We briefly recall the definition of an unshaded factor planar algebra where the strings are labelled.
For more details, see \cite{JonesPAnotes}.

\begin{defn}
Given a set $\cL$, the \underline{planar operad with string labels $\cL$}, denoted $\P_{\cL}$, is the set of all planar tangles whose strings are labelled by elements of $\cL$, e.g.,
$$
\begin{tikzpicture}[baseline = .5cm]
	\draw[thick] (-2,-.8)--(-2,2)--(.8,2)--(.8,-.8)--(-2,-.8);
	\draw (.8,1.4) arc (-90:-180:.6cm);
	\draw (0,.4) arc (0:90:.8cm);
	\draw (-.4,-.2) arc (270:90:.2cm);
	\draw (-1,2)--(-1,1)--(-2,1);
	\draw (-1.4,2)--(-1.4,1.4)--(-2,1.4);
	\draw (0,-.8)--(0,0)--(.8,0);
	\draw (-1.4,0) circle (.3cm);
	\filldraw[unshaded,thick] (-1.6,.8)--(-1.6,1.6)--(-.8,1.6)--(-.8,.8)--(-1.6,.8);
	\filldraw[unshaded,thick] (-.4,.4)--(.4,.4)--(.4,-.4)--(-.4,-.4)--(-.4,.4);
	\node at (-1.6,-.4) {{\scriptsize{$a$}}};
	\node at (-1.6,1.8) {{\scriptsize{$b$}}};
	\node at (-1.8,1.6) {{\scriptsize{$b$}}};
	\node at (-.8,1.8) {{\scriptsize{$c$}}};
	\node at (-1.8,.8) {{\scriptsize{$c$}}};
	\node at (.4,1.4) {{\scriptsize{$a$}}};
	\node at (0,1) {{\scriptsize{$d$}}};
	\node at (-.6,.4) {{\scriptsize{$d$}}};
	\node at (.6,.2) {{\scriptsize{$b$}}};
	\node at (.2,-.6) {{\scriptsize{$c$}}};
	\node at (-.5,-.5) {$\star$};
	\node at (-1.6,.6) {$\star$};
	\node at (-2.2,-.6) {$\star$};
\end{tikzpicture}
\text{ where }a,b,c,d\in\cL.
$$
A \underline{planar tangle} $T$ consists of the following data:
\begin{enumerate}[(1)]
\item
a rectangle $D_0(T)\subset \R^2$,
\item
Finitely many disjoint rectangles $D_1(T),\dots ,D_s(T)$ in the interior of $D_0(T)$ ($s$ may be zero),
\item
Finitely many disjoint smooth arcs in $D_0(T)$ called the strings of $T$ which do not meet the interior of any $D_1(T),\dots, D_s(T)$.
The boundary points of a string of $T$ (if it has any) lie in the boundaries of the $D_i(T)$, and they meet these boundaries transversally (if they meet the boundaries at all).
\end{enumerate}
The boundaries of the strings divide the boundaries of the rectangles into \underline{intervals}. For each rectangle, there is a distinguished interval denoted $\star$. The intervals of $D_i(T)$ are divided by the \underline{marked points} of $D_i(T)$, i.e., the points at which the strings meet the boundary of $D_i(T)$. Starting at $\star$, the marked points on each rectangle are number clockwise.

Each string is labelled by an element from $\cL$, which induces a labeling on the marked points of the $D_i(T)$. Reading clockwise around the boundary of $D_i(T)$ starting at $\star$, we get a word $\alpha_i\in \Lambda$, the set of all finite words on $\cL$. We call $D_i(T)$ an $\alpha_i$-rectangle, and we call such a planar tangle $T$ a \underline{planar $\alpha_0$-tangle}.

If we have two planar tangles $S,T$ satisfying the following \underline{boundary conditions}:
\begin{itemize}
\item
some internal rectangle $D_i(S)$ agrees with $D_0(T)$,
\item
the marked points of $D_i(S)$ agree with the marked points of $D_0(T)$,
\item
the distinguished interval of $D_i(S)$ agrees with the distinguished interval of $T$, and
\item
the label from $\cL$ of each marked point of $D_i(S)$ agrees with the label of each marked point from $D_0(T)$,
\end{itemize}
then we may compose $S$ and $T$ to get the planar tangle $S\circ_i T$ by taking $S$ union the interior of $D_0(T)$, removing the boundary of $D_i(S)$, and smoothing the strings.
\end{defn}

\begin{rem}
When we draw a planar tangle, we will often suppress the external rectangle, which is assumed to be large. If we omit the $\star$, it is always assumed the $\star$ is in the \underline{lower left corner}. Finally, if $a_1,\dots,a_k\in\cL$ and  $\alpha=a_1\dots a_k\in\Lambda$, we draw one string labelled $\alpha$ rather than $k$ parallel strings labelled $a_1,\dots ,a_k$ where we always read the strands from \underline{left to right} and \underline{top to bottom}.

For each word $\alpha=a_1\cdots,a_n\in\Lambda$, we write $\overline{\alpha}=a_n\cdots a_1$ for the word in the reverse order.
\end{rem}

\begin{defn}
A \underline{planar algebra $P_\bullet$ with string labels $\cL$} is
\begin{itemize}
\item
a collection of vector spaces $(P_\alpha)_{\alpha\in\Lambda}$ (recall $\Lambda$ is the set of finite words on $\cL$)
\item
an action of planar tangles by multilinear maps, i.e., for each planar $\alpha_0$-tangle $T$, whose rectangles $D_i(T)$ are $\alpha_i$-rectangles, there is a multilinear map
$$
Z_T\colon \prod_{i=1}^s P_{\alpha_i}\to P_{\alpha_0}
$$
satisfying the following axioms:
\begin{enumerate}
\item[\underline{\text{Isotopy}:}]
If $\theta$ is an orientation preserving diffeomorphism of $\R^2$, then $Z_{\theta(T)}=Z_T$.
\item[\underline{\text{Naturality}:}]
For $S,T$ composable tangles, $Z(S\circ_i T) = Z(S)\circ_i Z(T)$, where the composition on the right hand side is the composition of multilinear maps.
\end{enumerate}
\end{itemize}
Moreover, $P_\bullet$ is called a \underline{factor planar algebra} if $P_\bullet$ is
\begin{itemize}
\item
\underline{evaluable}, i.e., $\dim(P_\alpha)<\infty$ for all $\alpha\in\Lambda$ and $P_\emptyset\cong \C$ via the map that sends the empty diagram to $1\in\C$. Hence, by naturally, to each $a\in\cL$, there is a scalar $\delta_a\in\C$ such that any labelled diagram containing a closed loop labelled $a$ is equal to the same diagram without the closed loop multiplied by $\delta_a$. We use the notation $\delta_{\alpha}=\delta_{a_1}\cdots \delta_{a_n}$ if $\alpha = a_1\cdots a_n$.
\item
\underline{involutive}, i.e., for each $\alpha\in\Lambda$, there is a map $*\colon P_\alpha\to P_{\overline{\alpha}}$ with $*\circ *=\id$ which is compatible with the reflection of tangles, i.e., if $T$ is a planar tangle labelled by $\alpha_1,\dots,\alpha_s$, then
$$
T(\alpha_1,\dots, \alpha_n)^*=T^*(\alpha_1^*,\dots,\alpha_n^*)
$$
where $T^*$ is the reflection of $T$.
\item
\underline{spherical}, i.e., for all $\alpha\in\Lambda$ and all $x\in P_{\alpha\overline{\alpha}}$, we have
$$
\tr(x)
=
\trace{\overline{\alpha}}{\alpha}{x}{\alpha}
=
\traceop{\overline{\alpha}}{\alpha}{x}{\alpha}\,.
$$
\item
\underline{positive}, i.e., for every $\alpha\in\Lambda$, the map $\langle\cdot,\cdot\rangle \colon P_\alpha\times P_\alpha\to P_\emptyset\cong\C$ given by
$$
\langle x,y\rangle =
\begin{tikzpicture}[baseline = -.1cm]
	\draw (0,0)--(1.2,0);	
	\filldraw[unshaded,thick] (-.4,.4)--(.4,.4)--(.4,-.4)--(-.4,-.4)--(-.4,.4);
	\filldraw[unshaded,thick] (.8,.4)--(1.6,.4)--(1.6,-.4)--(.8,-.4)--(.8,.4);
	\node at (0,0) {$x$};
	\node at (1.2,0) {$y^*$};
	\node at (.6,.2) {{\scriptsize{$\alpha$}}};
\end{tikzpicture}
$$
is a positive definite inner product. Hence for all $a\in\cL$, $\delta_a>0$.
\end{itemize}
\end{defn}

\begin{nota}
For each $\alpha,\beta\in\Lambda$, we write $P_{\alpha\to\beta}$ to denote the box space $P_{\alpha\overline{\beta}}$ of elements of the form
$$
\nbox{\alpha}{x}{\beta}\,.
$$
\end{nota}

\begin{rem}\label{rem:multiply}
Note that for each $\alpha\in\Lambda$, the multiplication tangle
$$
\PAMultiply{\alpha}{}{\alpha}{}{\alpha}
$$
makes $P_{\alpha\to\alpha}$ into an associative algebra. If $P_\bullet$ is a factor planar algebra, then the multiplication tangle makes $P_{\alpha\to\alpha}$ a finite dimensional $C^*$-algebra.
\end{rem}

\begin{defn}
Suppose $P_\bullet$ is a factor planar algebra. A \underline{projection} in $P_\bullet$ is an element $p\in P_{\alpha\to\alpha}$ satisfying $p=p^2=p^*$ where the multiplication is as in Remark \ref{rem:multiply}. A projection $p\in P_{\alpha\to\alpha}$ is called \underline{simple} if it is a minimal projection in $P_{\alpha\to\alpha}$. Since $P_{\alpha\to \alpha}$ is a finite dimensional $C^*$-algebra, every projection is the sum of finitely many simple projections. This property is called \underline{semi-simpicity}.

Given a projection $p\in P_{\alpha\to\alpha}$, the dual projection $\overline{p}\in P_{\overline{\alpha}\to\overline{\alpha}}$ is obtained by
$$
\overline{p}
=
\rotateccw{\overline{\alpha}}{\alpha}{\alpha}{\overline{\alpha}}{p}
=
\rotatecw{\overline{\alpha}}{\alpha}{\alpha}{\overline{\alpha}}{p}
\,.
$$

Projections $p\in P_{\alpha\to\alpha}$ and $q\in P_{\beta\to\beta}$ are \underline{isomorphic} or \underline{equivalent}, denoted $p\simeq q$, if there is a $u\in P_{\alpha\to\beta}$ such that
$$
u^*u
=
\PAMultiply{\alpha}{u}{\beta}{u^*}{\alpha}
=
p
\text{ and }
uu^*
=
\PAMultiply{\beta}{u^*}{\alpha}{u}{\beta}
=
q.
$$

Given a projection $p\in P_{\alpha\to\alpha}$ and a $\beta\in \Lambda$, we can form the projection
$$
p\otimes \id_{\beta} =
\nbox{\alpha}{p}{\alpha}\,
\begin{tikzpicture}[baseline=-.1cm]
	\draw (0,-.8)--(0,.8);
	\node at (.2,0) {{\scriptsize{$\beta$}}};
\end{tikzpicture}
\in P_{\alpha\beta\to\alpha\beta}.
$$
The \underline{principal graph  of $P_\bullet$ with respect to $\beta\in\Lambda$}, denoted $\Gamma_\beta$, is the graph whose vertices are the isomorphism classes of simple projections in $P_\bullet$, and if $p\in P_{\alpha\to\alpha}$ and $q\in P_{\alpha\beta\to\alpha\beta}$ are simple projections, then the vertices $[p]$ and $[q]$ are connected by $\dim(qP_{\alpha\beta\to\alpha\beta}(p\otimes\id_{\beta}))$ edges.

The \underline{principal graph of $P_\bullet$}, denoted $\Gamma$, is the push out of the $\Gamma_b$ for $b\in\cL$ over the isomorphism classes of simple projections, i.e., the vertices are the same as before, and the edge set is the union of the edge sets of the $\Gamma_b$ for $b\in \cL$.

Since $P_\bullet$ is factor and $\cL$ is countable, $\Gamma$ has countably many vertices, although it may not be locally finite.
However, $\Gamma_b$ is always locally finite for $b\in \cL$.

Given a vertex $[p]$ of $\Gamma$, the number $\tr(p)$ is independent of the choice of representative of $[p]$. The vector $(\tr(p))_{[p]\in V(\Gamma)}$ defines a \underline{Frobenius-Perron weight vector} on the vertices of $\Gamma$ satisfying the following equation for each $b\in\cL$:
$$
\delta_b \tr(p) = \sum_{[q]\in V(\Gamma_b)} n_{[p],[q]}^b \tr(q)
$$
where $n_{[p],[q]}^b$ is the number of edges connecting $[p]$ and $[q]$ in $\Gamma_b$.
\end{defn}

\subsection{Rigid $C^*$-tensor categories and fusion graphs}
We briefly recall the definition of a rigid $C^*$-tensor category.

\begin{defn}
A \underline{rigid $C^*$-tensor category} is a pivotal, spherical, positive/unitary, rigid, semisimple, linear ({\sf Vect}-enriched) monoidal category such that $\End(1)\cong\C$.
\end{defn}

We now unravel this definition and state many properties that follow. The interested reader should see \cite{MR1960417, MR2681261} for more details. As we go through the properties, we will also go through the well-known graphical calculus used for strict tensor categories. We will immediately see that we get a factor planar algebra from a rigid $C^*$-tensor category.

We start with an abelian category $\sC$ together with
\begin{itemize}
\item
a bifunctor $\otimes\colon \sC\times \sC\to \sC$ which is associative up to a natural isomorphism (the pentagon axiom is satisfied), and
\item
a unit object $1\in\sC$ which is a left and right identity for $\otimes$ up to natural isomorphism (the triangle identity is satisfied).
\end{itemize}

\begin{rem}
Recall that a tensor category is called strict if the above natural isomorphisms are identities, i.e., for each $X,Y,Z\in\sC$, we have
\begin{align*}
(X\otimes Y)\otimes Y &= X(\otimes (Y\otimes Z)\text{ and}\\
1\otimes X &= X= X\otimes 1.
\end{align*}
Since each tensor category is equivalent to a strict tensor category by Theorem 7.2.1 in \cite{MR1712872}, our tensor categories will be assumed to be strict unless otherwise stated. Note that even with all the properties we want, we can still restrict our attention to strict categories.
\end{rem}

First, since $\sC$ is {\sf Vect}-enriched, for each $X,Y\in\sC$, $\sC(X\to Y)$ is a finite dimensional complex vector space. The morphisms in $\sC$ are drawn as boxes with strings emanating from the top and bottom. The strings are labelled by the objects, and the diagram is read from \underline{top to bottom}. For example,
$$
f
=
\nbox{X}{f}{Y}
\in\sC(X\to Y),
$$
and the identity morphism $\id_X$ is denoted by the horizontal strand labelled $X$. We compose morphisms by vertical concatenation
$$
\widenbox{X}{f\circ g}{Z}
=
\PAMultiply{X}{g}{Y}{f}{Z}
\in \sC(X\to Z),
$$
and we tensor morphisms by horizontal concatenation:
$$
\widenbox{\hspace{.8cm}X_1\otimes Y_1}{f_1\otimes f_2}{\hspace{.8cm}X_2\otimes Y_2}
=
\nbox{X_1}{f_1}{Y_1}
\,
\nbox{X_2}{f_2}{Y_2}
\in\sC(X_1\otimes X_2\to Y_1\otimes Y_2).
$$

Since $\sC$ is rigid, for each $X\in\sC$, there is a \emph{dual} or \emph{conjugate} $\overline{X}\in\sC$, and there is a natural isomorphism $\overline{\overline{X}}\cong X$. Along with the dual object, we have an \emph{evaluation map} $\ev_X\colon \overline{X}\otimes X\to 1$ and a \emph{coevaluation} map $\coev_X\colon 1\to X\otimes \overline{X}$ such that the diagram
$$
\xymatrix{
&X\otimes \overline{X}\otimes X \ar[dr]^{1\otimes \ev_X }\\
X\ar[ur]^{\coev_{X}\otimes 1}\ar[dr]_{1\otimes \coev_{\overline{X}}}\ar[rr]^{\id_X} && X\\
& X\otimes \overline{X}\otimes X \ar[ur]_{\ev_{\overline{X}}\otimes 1}
}
$$
commutes. The evaluation is denoted by a cap, and we a draw a cup for the coevaluation:
$$
\ev_X=
\begin{tikzpicture}[baseline=-.1cm]
	\draw[dotted] (0,-.4)--(0,0);
	\draw (-.4,.4) arc (-180:0:.4cm);
	\node at (.6,.4) {{\scriptsize{$X$}}};
	\node at (-.6,.4) {{\scriptsize{$\overline{X}$}}};
	\node at (0,-.6) {$1$};
\end{tikzpicture}
\text{ and }
\coev_X=
\begin{tikzpicture}[baseline=.1cm]
	\draw[dotted] (0,.4)--(0,0);
	\draw (-.4,-.4) arc (180:0:.4cm);
	\node at (-.6,-.4) {{\scriptsize{$X$}}};
	\node at (.6,-.4) {{\scriptsize{$\overline{X}$}}};
	\node at (0,.6) {$1$};
\end{tikzpicture}\,.
$$
The diagram above commuting is sometimes referred to as the \emph{zig-zag relation}, since it is the straightening of the kinked string:
$$
\begin{tikzpicture}[baseline=-.1cm]
	\draw (0,-.8)--(0,.8);
	\node at (.2,0) {{\scriptsize{$X$}}};
\end{tikzpicture}
=
\begin{tikzpicture}[baseline=-.1cm]
	\draw (0,.4) arc (180:0:.4cm)--(.8,-1.2);
	\draw (0,-.4)--(0,.4);
	\draw[dotted] (.4,.8)--(.4,1.2);
	\draw[dotted] (-.4,-.8)--(-.4,-1.2);
	\draw (0,-.4) arc (0:-180:.4cm)--(-.8,1.2);
	\node at (.2,0) {{\scriptsize{$\overline{X}$}}};
	\node at (1,-.6) {{\scriptsize{$X$}}};
	\node at (-.6,.6) {{\scriptsize{$X$}}};
\end{tikzpicture}
=
\begin{tikzpicture}[baseline=-.1cm]
	\draw (0,.4) arc (0:180:.4cm)--(-.8,-1.2);
	\draw (0,-.4)--(0,.4);
	\draw[dotted] (-.4,.8)--(-.4,1.2);
	\draw[dotted] (.4,-.8)--(.4,-1.2);
	\draw (0,-.4) arc (-180:0:.4cm)--(.8,1.2);
	\node at (-.2,0) {{\scriptsize{$\overline{X}$}}};
	\node at (-1,-.6) {{\scriptsize{$X$}}};
	\node at (.6,.6) {{\scriptsize{$X$}}};
\end{tikzpicture}\,.
$$
In general, we don't draw a string connected to the trivial object $1\in\sC$. For each $X,Y\in\sC$, $\overline{X\otimes Y}$ is naturally isomorphic to $\overline{Y}\otimes \overline{X}$, and the diagram
$$
\xymatrix{
\overline{X\otimes Y} \otimes (X\otimes Y)\ar[d]  \ar[rr]^(.6){\ev_{X\otimes Y}}&& 1\\
\overline{Y}\otimes \overline{X}\otimes X\otimes Y\ar[rr]^(.6){1\otimes \ev_X\otimes 1} && \overline{Y}\otimes Y\ar[u]^{\ev_Y}\\
}
$$
commutes, and similarly for the $\coev$'s. This diagram just means that we can write one cap labelled $X\otimes Y$ and its dual instead of two separate caps labelled $X$ and $Y$ and their duals:
$$
\begin{tikzpicture}[baseline=-.1cm]
	\draw (-.4,.4) arc (-180:0:.4cm);
	\draw (-.8,.4) arc (-180:0:.8cm);
	\node at (.6,.4) {{\scriptsize{$X$}}};
	\node at (-.6,.4) {{\scriptsize{$\overline{X}$}}};
	\node at (1,.4) {{\scriptsize{$Y$}}};
	\node at (-1,.4) {{\scriptsize{$\overline{Y}$}}};
\end{tikzpicture}
=
\begin{tikzpicture}[baseline=-.1cm]
	\draw (-.4,.4) arc (-180:0:.4cm);
	\node at (.9,.4) {{\scriptsize{$X\otimes Y$}}};
	\node at (-.9,.4) {{\scriptsize{$\overline{X\otimes Y}$}}};
\end{tikzpicture},
$$
and similarly for the cups.

The \emph{pivotality} axiom in $\sC$ requires that for all $f\in \sC(X\to Y)$,
$$
(\ev_{Y}\otimes \id_{\overline{X}})\circ(\id_{\overline{Y}}\otimes f\otimes \id_{\overline{X}})\circ(\id_{\overline{Y}} \otimes \coev_X)
=(\id_X\otimes \ev_{Y})\circ(\id_{\overline{Y}}\otimes f\otimes \id_{\overline{X}})\circ(\coev_{\overline{X}}\otimes\id_{\overline{Y}}).
$$
The equation above has an elegant representation in diagrams:
$$
\rotateccw{\overline{Y}}{X}{Y}{\overline{X}}{f}
=
\rotatecw{\overline{X}}{Y}{X}{\overline{Y}}{f}\,.
$$
For $f\in \sC(X\to Y)$, the above diagram defines a dual map $\overline{f}\in \sC(\overline{Y}\to \overline{X})$, and $\overline{\overline{f}}=f$.

The evaluations and coevaluations together with pivotality allow us to define a \emph{left} and \emph{right trace} on $\End_\sC(X)$:
\begin{align*}
\tr_L(f)&=\ev_{X}\circ (\id_{\overline{X}}\otimes f)\circ \coev_{\overline{X}}
=
\traceop{\overline{X}}{X}{f}{X}
\in \End(1)\cong \C\text{ and}\\
\tr_R(f)&=\ev_{\overline{X}}\circ (f\otimes \id_{\overline{X}})\circ \coev_{X}
=
\trace{\overline{X}}{X}{f}{X}
\in \End(1)\cong \C.
\end{align*}
Similarly, for each $X\in\sC$, there are numbers $d^L_{X}$ and $d^R_{X}$ which are the left and right traces of the identity morphism respectively, and $d_{X}^L=d_{\overline{X}}^R$ and $d_{X}^R=d_{\overline{X}}^L$

\emph{Sphericality} means that these two traces are equal, and we denote the common number by $\tr(f)$. The sphericality allows us to perform isotopy on closed diagrams as if they were drawn on a sphere. Hence $\dim(X):=d_X^L=d_X^R$ and $\dim(X)=\dim(\overline{X})$ for all $X\in \sC$.

The \emph{positivity} or \emph{unitarity} of $\sC$ means there is a contravariant functor $*\colon \sC\to \sC$ which is the identity on all objects, and on morphisms, it is anti-linear, involutive ($*\circ *=\id_{\sC}$), monoidal ($(f\circ g)^*=g^*\circ f^*$ for composable $f,g$), and positive ($f^*\circ f=0$ implies $f=0$). We require $*$ to be compatible with the duality ($\overline{f}^*=\overline{f^*}$) and with the evaluations and coevaluations (for all $X\in\sC$, $\coev_X=\ev_{\overline{X}}^*$). On diagrams, we perform $*$ by reflecting the diagram, keeping the labels on the strings, and placing a $*$ on all morphisms.

For all $X,Y\in\sC$, we now have that $\sC(X\to Y)$ is a Banach space with positive definite inner product
$$
\langle f,g\rangle = \tr(g^*f)=
\begin{tikzpicture}[baseline=.5cm]
	\draw (0,1.6) arc (180:0:.4cm) -- (.8,-.4) arc (0:-180:.4cm);
	\draw (0,.4)--(0,.8);
	\filldraw[unshaded,thick] (-.4,.4)--(.4,.4)--(.4,-.4)--(-.4,-.4)--(-.4,.4);
	\draw[thick, unshaded] (-.4, .8) -- (-.4, 1.6) -- (.4, 1.6) -- (.4,.8) -- (-.4, .8);
	\node at (-.2,1.8) {{\scriptsize{$X$}}};
	\node at (0,1.2) {$f$};
	\node at (-.2,.6) {{\scriptsize{$Y$}}};
	\node at (0,0) {$g^*$};
	\node at (-.2,-.6) {{\scriptsize{$X$}}};
	\node at (1,.6) {{\scriptsize{$\overline{X}$}}};
\end{tikzpicture}.
$$
The inner product makes $\End_{\sC}(X)$ a finite dimensional $C^*$-algebra, so in particular, all projections are sums of finitely many simple projections, and $\sC$ is \emph{semi-simple}, i.e., every exact sequence in $\sC$ splits. This also means that any object in $\sC$ can be written as a finite direct sum of simple objects. Recall that $X\in\sC$ is \emph{simple} if $\dim(\End_{\sC}(X))=1$. Thus if $X,Y$ are non-isomorphic simple objects, $\sC(X,Y)=(0)$. This means that for each simple $X,Y,Z\in \sC$, there are non-negative integers $N_{X,Y}^Z $ such that $X\otimes Y = \bigoplus_{Z\in\sC} N_{X,Y}^Z Z$, i.e.,
$$
N_{X,Y}^Z = \dim(\sC(X\otimes Y\to Z)).
$$
Moreover, we have \emph{Frobenius reciprocity}, i.e., for each $X,Y,Z\in\sC$, there are natural isomorphisms
$$
\sC(X\otimes Y\to Z)\cong \sC(X\to Z\otimes \overline{Y}) \cong \sC(Y\to \overline{X}\otimes Z)
$$
which are implemented by the evaluation and coevaluation maps:
$$
\begin{tikzpicture}[baseline=-.1cm]
	\draw (0,0)--(0,-.8);
	\draw (-.2,.8)--(-.2,0);
	\draw (.2,.8)--(.2,0);
	\draw[thick, unshaded] (-.4, -.4) -- (-.4, .4) -- (.4, .4) -- (.4, -.4) -- (-.4, -.4);
	\node at (0, 0) {$f$};
	\node at (-.4,.6) {\scriptsize{$X$}};
	\node at (.4,.6) {\scriptsize{$Y$}};
	\node at (.2,-.6) {\scriptsize{$Z$}};
\end{tikzpicture}
\leftrightarrow
\begin{tikzpicture}[baseline=-.1cm]
	\draw (0,0)--(0,-.8);
	\draw (-.2,.8)--(-.2,0);
	\draw (.2,.4) arc (180:0:.2cm) -- (.6,-.8);
	\draw[thick, unshaded] (-.4, -.4) -- (-.4, .4) -- (.4, .4) -- (.4, -.4) -- (-.4, -.4);
	\node at (0, 0) {$f$};
	\node at (-.4,.6) {\scriptsize{$X$}};
	\node at (.4,.8) {\scriptsize{$Y$}};
	\node at (.8,-.6) {\scriptsize{$\overline{Y}$}};
	\node at (.2,-.6) {\scriptsize{$Z$}};
\end{tikzpicture}
\leftrightarrow
\begin{tikzpicture}[baseline=-.1cm]
	\draw (0,0)--(0,-.8);
	\draw (.2,.8)--(.2,0);
	\draw (-.2,.4) arc (0:180:.2cm) -- (-.6,-.8);
	\draw[thick, unshaded] (-.4, -.4) -- (-.4, .4) -- (.4, .4) -- (.4, -.4) -- (-.4, -.4);
	\node at (0, 0) {$f$};
	\node at (-.4,.8) {\scriptsize{$X$}};
	\node at (-.8,-.6) {\scriptsize{$\overline{X}$}};
	\node at (.4,.8) {\scriptsize{$Y$}};
	\node at (.2,-.6) {\scriptsize{$Z$}};
\end{tikzpicture}.
$$
Hence, for all simple $X,Y,Z\in\sC$, we have $N_{X,Y}^Z=N_{Z,\overline{Y}}^X=N_{\overline{X},Z}^{Y}$.

\begin{defn}
An object $X\in \sC$ has a \underline{self-duality} if there is an invertible $\varphi\in \sC(X,\overline{X})$, which must satisfy certain compatibility axioms. We would like this $\varphi$ to allow us to define evaluation and coevaluation maps $X\otimes X\to 1$ and $1\to X\otimes X$, i.e, they are adjoint to each other, satisfy the zig-zag relation, and give a positive scalar for $\dim(X)$ when composed in the natural way. We define these maps by
$$
\begin{tikzpicture}[baseline=-.1cm]
	\draw (0,.6) -- (0,-.2) arc (-180:0:.4cm) -- (.8,.6);
	\filldraw[unshaded,thick] (-.25,.25)--(.25,.25)--(.25,-.25)--(-.25,-.25)--(-.25,.25);
	\node at (-.2,.5) {{\scriptsize{$X$}}};
	\node at (0,0) {$\varphi$};
	\node at (-.15,-.5) {{\scriptsize{$\overline{X}$}}};
	\node at (1,0) {{\scriptsize{$X$}}};
\end{tikzpicture}
\text{ and }
\begin{tikzpicture}[baseline=.1cm]
	\draw (0,-.6) -- (0,.2) arc (180:0:.4cm) -- (.8,-.6);
	\filldraw[unshaded,thick] (-.25,.25)--(.25,.25)--(.25,-.25)--(-.25,-.25)--(-.25,.25);
	\node at (-.2,-.5) {{\scriptsize{$X$}}};
	\node at (0,0) {$\varphi^*$};
	\node at (-.15,.5) {{\scriptsize{$\overline{X}$}}};
	\node at (1,0) {{\scriptsize{$X$}}};
\end{tikzpicture}
$$
respectively. Since $X$ is naturally isomorphic to $\overline{\overline{X}}$, $\overline{\varphi}$ is naturally in $\sC(X,\overline{X})$. Therefore, the compatibility requirements are that $\varphi$ must satisfy $\varphi\overline{\varphi^*}=\id_X$ and $\tr(\varphi\varphi^*)=\dim(X)$.
However, to be able to draw these diagrams naively by just a cup and a cap without the label $\varphi$, we must have that each of these maps is preserved by rotation:
$$
\begin{tikzpicture}[baseline=-.1cm]
	\draw (0,.6) -- (0,-.2) arc (-180:0:.4cm) -- (.8,.6) arc (180:0:.3cm) -- (1.4,-.4) .. controls ++(270:.6cm) and ++(270:.6cm) .. (-.6,-.4)--(-.6,.6);
	\filldraw[unshaded,thick] (-.25,.25)--(.25,.25)--(.25,-.25)--(-.25,-.25)--(-.25,.25);
	\node at (-.2,.5) {{\scriptsize{$X$}}};
	\node at (0,0) {$\varphi$};
	\node at (-.15,-.5) {{\scriptsize{$\overline{X}$}}};
	\node at (1,0) {{\scriptsize{$X$}}};
	\node at (1.6,0) {{\scriptsize{$\overline{X}$}}};
	\node at (-.8,0) {{\scriptsize{$X$}}};
\end{tikzpicture}
=
\begin{tikzpicture}[baseline=-.1cm]
	\draw (0,.6) -- (0,-.2) arc (0:-180:.4cm) -- (-.8,.6);
	\filldraw[unshaded,thick] (-.25,.25)--(.25,.25)--(.25,-.25)--(-.25,-.25)--(-.25,.25);
	\node at (.2,.5) {{\scriptsize{$X$}}};
	\node at (0,0) {$\varphi$};
	\node at (.15,-.5) {{\scriptsize{$\overline{X}$}}};
	\node at (-1,0) {{\scriptsize{$X$}}};
\end{tikzpicture}
=
\begin{tikzpicture}[baseline=-.1cm]
	\draw (0,.6) -- (0,-.2) arc (-180:0:.4cm) -- (.8,.6);
	\filldraw[unshaded,thick] (-.25,.25)--(.25,.25)--(.25,-.25)--(-.25,-.25)--(-.25,.25);
	\node at (-.2,.5) {{\scriptsize{$X$}}};
	\node at (0,0) {$\overline{\varphi}$};
	\node at (-.15,-.5) {{\scriptsize{$\overline{X}$}}};
	\node at (1,0) {{\scriptsize{$X$}}};
\end{tikzpicture}
=
\begin{tikzpicture}[baseline=-.1cm]
	\draw (0,.6) -- (0,-.2) arc (-180:0:.4cm) -- (.8,.6);
	\filldraw[unshaded,thick] (-.25,.25)--(.25,.25)--(.25,-.25)--(-.25,-.25)--(-.25,.25);
	\node at (-.2,.5) {{\scriptsize{$X$}}};
	\node at (0,0) {$\varphi$};
	\node at (-.15,-.5) {{\scriptsize{$\overline{X}$}}};
	\node at (1,0) {{\scriptsize{$X$}}};
\end{tikzpicture}
$$
i.e., the \underline{Frobenius-Schur indicator} \cite{MR2381536} of the evaluation must be equal to $+1$. This tells us that $\varphi=\overline{\varphi}$, and $\varphi$ is unitary ($\varphi^*\varphi=\id_X$ and $\varphi\varphi^*=\id_{\overline{X}}$). A self-duality satisfying this extra axiom is called a \underline{symmetric self-duality}.
\end{defn}

\begin{assumption}\label{assume:Countable}
We assume that our rigid $C^*$-tensor category $\sC$ is countably generated, i.e., there is a countable set $\cS$ of objects in $\sC$ such that for each $Y\in\sC$, there are $X_1,\dots, X_n\in \cS$ such that
$$
\sC(X_1\otimes\cdots \otimes X_n,Y)\neq (0),
$$
i.e., $Y$ is (isomorphic to) a sub-object of $X_1\otimes\cdots \otimes X_n$.

To perform the calculations needed to prove Theorem \ref{thm:Main}, we want the planar algebra $P_\bullet$ associated to $\sC$ in Definition \ref{defn:PAfromTC} to be non-oriented, have a non-oriented fusion graph, and have all loop parameters greater than 1.

Hence given a countable generating set $\cS$, we work with the generating set $\cL=\set{X\oplus \overline{X}}{X\in\cS}$. Note that the objects in $\cL$ are \underline{not} simple, but they are symmetrically self-dual and have dimension greater than 1.
\end{assumption}

\begin{defn}
The \underline{fusion graph of $\sC$ with respect to $Y\in\sC$}, denoted $\cF_\sC(Y)$, is the oriented graph whose vertices are the isomorphism classes of simple objects of $\sC$, and between simple objects $X,Z\in\sC$, there are $N_{X,Y}^Z=\dim(\sC(X\otimes Y\to Z))$ oriented edges pointing from $X$ to $Z$. Note that if $Y$ is self-dual, then by semi-simplicity, we have $N_{X,Y}^Z=N_{Z,Y}^X$, and we may ignore the orientation of the edges.

The \underline{fusion graph of $\sC$ with respect to $\cL$} (with $\cL$ as in Assumption \ref{assume:Countable}), denoted $\cF_\sC(\cL)$, is the push out of the $\cF_\sC(Y)$ over the isomorphism classes of simple objects $Y\in\cL$, i.e., the vertices are the same as before, and the edge set is the union of the edge sets of the $\cF_\sC(Y)$ for $Y\in\cL$. If $e$ is an edge in $\cF_\sC(\cL)$ which comes from an edge in $\cF_\sC(Y)$, then we color $e$ by $Y$.

Since $\cL$ is countable, $\cF_\sC(\cL)$ has countably many vertices, although it may not be locally finite.
However, $\cF_\sC(X)$ is always locally finite for $X\in \sC$.

Given a vertex $[X]$ of $\cF_\sC(\cL)$, the number $\dim(X)$ is independent of the choice of representative of $[X]$.
Again, we get a \underline{Frobenius-Perron weight vector} on the vertices of $\cF_\sC(\cL)$, given by $(\dim(X))_{[X]\in V(\cF_\sC(\cL))}$, which satisfies the following equation for each $Y\in\cL$:
$$
\dim(X) \dim(Y) = \sum_{[Z]\in V(\cF_\sC(Y))}N_{X,Y}^Z \dim(Z).
$$
\end{defn}

For convenience, we will identify words on $\cL$ with their products, i.e., the word $\alpha= X_1X_2\dots X_n$ is identified with $X_1\otimes X_2\otimes \cdots \otimes X_n$.

\begin{defn}\label{defn:PAfromTC}
To get a factor planar algebra $\PA(\sC)_\bullet$, for each word $\alpha$ on $\cL$, let $\PA(\sC)_{\alpha} = \sC(\alpha\to 1)$, whose elements are represented diagrammatically as
$$
\Mbox{\alpha}{f}
$$
Frobenius reciprocity allows us to identify $\PA(\sC)_\alpha$ with $\sC(\beta\to\gamma)$ where $\alpha=\beta\overline{\gamma}$:
$$
\Mbox{\alpha}{f}
=
\nbox{\beta}{f}{\gamma}\,.
$$

We may now interpret any planar tangle in $\mathbb{P}_\cL$ labelled by morphisms of $\sC$ as one morphism in $\sC$ in the usual way. First, isotope the tangle so that each string travels transversally to each horizontal line, except at finitely many critical points. Then isotope the tangle so that each labelled rectangle and each critical point occurs at a different vertical height, and read the diagram from bottom to top to see what the morphism is. The zig-zag relation, Frobenius-reciprocity, pivotality, and symmetric self-dualities of the objects in $\cL$ ensure that the answer is well-defined.
\end{defn}

\subsection{From planar algebras to tensor categories}

Given a factor planar algebra $P_\bullet$, we obtain its $C^*$-tensor category $\Pro({P_\bullet})$ of projections as described in \cite{MR2559686}. We briefly recall the construction here.

\begin{defn}
Let $\Pro(P_\bullet)$ (abbreviated $\Pro$ when $P_\bullet$ is understood) be the rigid $C^*$-tensor category given as follows.
\begin{enumerate}
\item[\text{\underline{Objects:}}]
The objects of $\Pro$ are formal finite direct sums of \underline{projections} in $P_\bullet$, i.e., all $p\in P_{\alpha\to\alpha}$ satisfying $p=p^2=p^*$ for all words $\alpha$ on $\cL$. The trivial object is the empty diagram.

\item[\text{\underline{Tensor:}}]
We tensor objects in $\Pro$ by horizontal concatenation; e.g., if $p\in P_{\alpha\to\alpha}$ and $q\in P_{\beta\to\beta}$, then $p\otimes q\in P_{\alpha\beta\to\alpha\beta}$ is given by
$$
\widenbox{\alpha\beta}{p\otimes q}{\alpha\beta}
=
\nbox{\alpha}{p}{\alpha}
\,
\nbox{\beta}{q}{\beta}
\in P_{\alpha\beta\to\alpha\beta}.
$$
Note that the simple objects in $\Pro$ are the simple projections in $P_\bullet$.

We extend the tensor product to direct sums of projections linearly.

\item[\text{\underline{Morphisms:}}]
The morphisms in $\Pro$ are matrices of intertwiners between the projections. If $p\in P_{\alpha\to\alpha}$ and $q\in P_{\beta\to\beta}$, then elements in $\Pro(p,q)=qP_{\alpha\to \beta}p$ are all $x\in P_{\alpha\to\beta}$ such that $x=qxp$, i.e.,
$$
\nbox{\alpha}{x}{\beta}
=
\PAMultiplyThree{\alpha}{p}{\alpha}{x}{\beta}{q}{\beta}\,.
$$
We compose morphisms by vertical concatenation of elements in the planar algebra. If we have $x\in\Pro(p\to q)$ and $y\in\Pro(q\to r)$, then the composite $xy$ is given by
$$
\nbox{}{xy}{}
=
\PAMultiply{}{x}{}{y}{}
\in\Pro(p\to r).
$$
Composition of matrices of morphisms occurs in the usual way.

\item[\text{\underline{Tensoring:}}]
We tensor morphisms by horizontal concatenation. If $x\in\Pro(p_1\to q_1)$ and $y\in\Pro(p_2\to q_2)$, then the tensor product $x\otimes y$ is given by
$$
\widenbox{}{x\otimes y}{}
=
\nbox{}{x}{}
\,
\nbox{}{y}{}\,.
$$
The tensor product of matrices of intertwiners is the tensor product of matrices followed by tensoring of morphisms.

\item[\text{\underline{Duality:}}]
The \emph{duality} operation on objects and morphisms is rotation by $\pi$
$$
\nbox{}{\overline{p}}{}
=
\rotateccw{}{}{}{}{p}
=
\rotatecw{}{}{}{}{p}.
$$
The evaluation and coevaluation maps are given by the caps and cups between the projections in the obvious way.

\item[\text{\underline{Adjoint:}}]
The \emph{adjoint} operation in $\Pro$ is
the identity on objects. The adjoint of a $1$-morphism
is the same as the adjoint operation in the planar algebra $P_\bullet$.
If $x\in\Pro(p\to q)$ where $p\in P_{\alpha\to \alpha}$ and $q\in P_{\beta\to \beta}$, then consider $x\in P_{\alpha\to \beta}$, take the adjoint, which is an element in $P_{\beta\to \alpha}$, and consider the result $x^*$ as an element in $\Pro(q\to p)$.

For matrices of intertwiners, the adjoint is the $*$-transpose.
\end{enumerate}
\end{defn}

\begin{ex}
We copy the example from \cite{MR2559686} as it is highly instructional. If $p,q\in P_{\alpha\to\alpha}$ are orthogonal, then if we define the matrix
$$
u=
\begin{pmatrix}
p & q
\end{pmatrix}\in \Pro(\id_\alpha\oplus\id_\alpha\to \id_\alpha),
$$
we get an isomorphism $p\oplus q = u^*u \simeq uu^* = p+q$.
\end{ex}

\begin{rem}
Note that $\Pro$ is strict. For any projection $p\in P_{\alpha\to\alpha}$, $p\otimes 1_{\Pro}=1_{\Pro}\otimes p = p$ since $1_{\Pro}$ is the empty diagram. For all projections $p,q,r\in P_\bullet$,
$$
(p\otimes q)\otimes r
=
\nbox{}{p}{}
\,
\nbox{}{q}{}
\,
\nbox{}{r}{}
=
p\otimes (q\otimes r).
$$
\end{rem}

The following theorem is well-known to experts, and one can easily work it out from the definitions. See part (ii) of the remark on page 10 of \cite{1207.1923} for more details.

\begin{thm}\label{thm:PAandTC}\mbox{}
\begin{enumerate}[(1)]
\item
Let $\sC$ be a strict rigid $C^*$-tensor category. Then $\Pro(\PA(\sC)_\bullet)$ is equivalent to $\sC$.
\item
Let $P_\bullet$ be a factor planar algebra. Then $\PA(\Pro(P_\bullet))_\bullet$=$P_\bullet$.
\end{enumerate}
\end{thm}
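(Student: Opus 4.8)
The plan is to prove the two parts by different routes: part (2) is essentially an unwinding of definitions, while part (1) requires building an explicit monoidal dagger equivalence.

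For part (2), I would take the string labels of $\Pro(P_\bullet)$ (as needed to form $\PA(-)_\bullet$) to be the single-strand identity projections $1_a\in P_{a\to a}$ for $a\in\cL$, each of which is symmetrically self-dual --- with the cup and cap serving as coevaluation and evaluation --- so that Definition \ref{defn:PAfromTC} applies, and identify a word $\alpha=a_1\cdots a_n$ on $\cL$ with the all-strands identity $1_\alpha=1_{a_1}\otimes\cdots\otimes 1_{a_n}\in P_{\alpha\to\alpha}$. Then, directly from the definition of the morphism spaces of $\Pro(P_\bullet)$,
$$
\PA(\Pro(P_\bullet))_\alpha \;=\; \Pro(P_\bullet)(1_\alpha\to 1_{\Pro}) \;=\; 1_{\Pro}\,P_{\alpha\to\emptyset}\,1_\alpha \;=\; P_\alpha ,
$$
so the two graded vector spaces agree on the nose. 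It then remains to check that every ingredient used to let a planar tangle act on $\PA(\sC)_\bullet$ in Definition \ref{defn:PAfromTC} --- vertical and horizontal concatenation of morphisms, the rotations defining duality, and the insertion of the cup and cap morphisms --- becomes, when $\sC=\Pro(P_\bullet)$, literally the corresponding planar-tangle operation inside $P_\bullet$. Hence the two actions of $\P_\cL$ coincide and $\PA(\Pro(P_\bullet))_\bullet=P_\bullet$. The only real care is bookkeeping of the distinguished intervals $\star$ and the clockwise labelling conventions, which match by construction.

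For part (1), I would build a functor $F\colon\Pro(\PA(\sC)_\bullet)\to\sC$ and show it is a fully faithful, essentially surjective, dagger monoidal pivotal functor, hence an equivalence. Since $\sC$ is a $C^*$-category it has isometric splittings of projections, so for each word $\alpha$ on $\cL$ and each projection $p\in\PA(\sC)_{\alpha\to\alpha}=\End_\sC(\alpha)$ fix an object $X_p\in\sC$ and an isometry $\iota_p\in\sC(X_p\to\alpha)$ with $\iota_p^*\iota_p=\id_{X_p}$ and $\iota_p\iota_p^*=p$; put $F(\bigoplus_i p_i)=\bigoplus_i X_{p_i}$ and $F(x)=\iota_q^*x\iota_p$ for $x\in\Pro(p\to q)=qP_{\alpha\to\beta}p\subseteq\sC(\alpha\to\beta)$, extended entrywise to matrices of intertwiners. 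The verification then runs through the following steps: (i) $F$ is a dagger functor, since $F(\id_p)=\id_{X_p}$, composition is respected because $\iota_q\iota_q^*=q$ is absorbed in the middle, and $F(x^*)=F(x)^*$ because $\iota_p^*x^*\iota_q=(\iota_q^*x\iota_p)^*$; (ii) $F$ is fully faithful, since $g\mapsto\iota_q g\iota_p^*$ lands in $qP_{\alpha\to\beta}p$ and is a two-sided inverse to $F$ on each hom-space (the composites recover $g$ and $qxp=x$), and this passes to matrices; (iii) $F$ is essentially surjective, since by Assumption \ref{assume:Countable} the set $\cS$ generates $\sC$ and each $X\in\cS$ embeds in $X\oplus\overline{X}\in\cL$, so $\cL$ also generates $\sC$ and every $Y\in\sC$ is a subobject of some word $\alpha$ on $\cL$, i.e.\ $Y\cong X_p$ for some projection $p$; (iv) $F$ is monoidal, since $\iota_p\otimes\iota_q$ is an isometric splitting of $p\otimes q\in\End_\sC(\alpha\beta)$, making $u_{p,q}:=\iota_{p\otimes q}^*(\iota_p\otimes\iota_q)\colon X_p\otimes X_q\to X_{p\otimes q}$ unitary and natural, with the associativity and unit coherence conditions for $(F,u)$ immediate because both categories are strict and $\otimes$ is a bifunctor.

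The step I expect to be the main obstacle is (v): showing $F$ is compatible with the duality and pivotal structures, i.e.\ that $F(\overline{p})\cong\overline{F(p)}$ in a way carrying the cups and caps of $\Pro(\PA(\sC)_\bullet)$ to $\ev$ and $\coev$ in $\sC$, and that the two spherical traces agree. This is precisely where the symmetric self-duality of the objects of $\cL$ --- already used for well-definedness of the tangle action at the end of Definition \ref{defn:PAfromTC} --- is needed, since it kills the Frobenius--Schur sign that would otherwise obstruct the identification. Once (v) is in place, $F$ is an equivalence of rigid $C^*$-tensor categories; everything outside (v) is formal bookkeeping about splitting idempotents in a $C^*$-category, which is consistent with the remark that the theorem is well known and ``one can easily work it out from the definitions.''
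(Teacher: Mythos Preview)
The paper does not actually give a proof of this theorem; it merely states that the result is well known to experts, can be worked out from the definitions, and points to \cite{1207.1923} for details. Your proposal is a correct and careful execution of precisely that exercise --- unwinding the definitions for part (2) and constructing the idempotent-splitting equivalence for part (1) --- so it is entirely consistent with what the paper intends, and your identification of step (v) as the one place requiring genuine care (via the symmetric self-duality of the labels in $\cL$) is accurate.
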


\begin{cor}
Suppose that
\begin{itemize}
\item
$\sC=\Pro(P_\bullet)$ and $P_\bullet$ has a countable set of string labels $\cL$, or
\item
$P_\bullet=\PA(\sC)_\bullet$ and $\sC$ has countable generating set $\cL$ of symmetrically self-dual objects.
\end{itemize}
Then we may identify the fusion graph $\Gamma$ of $P_\bullet$ with the fusion graph $\cF_\sC(\cL)$ of $\sC$.
\end{cor}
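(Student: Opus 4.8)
The plan is to reduce both hypotheses to a single statement about $\Pro(P_\bullet)$ and then match vertices and edges directly from the definitions. In the second case, Theorem \ref{thm:PAandTC}(1) supplies an equivalence $\sC\simeq\Pro(\PA(\sC)_\bullet)=\Pro(P_\bullet)$, and by Definition \ref{defn:PAfromTC} the string labels of $\PA(\sC)_\bullet$ are exactly the chosen generating set $\cL$; since a monoidal equivalence carries the generating objects $b\in\cL\subset\sC$ to the objects $\id_b\in P_{b\to b}$ of $\Pro(P_\bullet)$ and preserves all the multiplicities $\dim\sC(X\otimes Y\to Z)$ and all dimensions, it identifies $\cF_\sC(\cL)$ with $\cF_{\Pro(P_\bullet)}(\cL)$. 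In the first case $\sC=\Pro(P_\bullet)$ already. So I would reduce to proving: for a factor planar algebra $P_\bullet$ with countable string labels $\cL$, the principal graph $\Gamma$ of $P_\bullet$ equals $\cF_{\Pro(P_\bullet)}(\cL)$, where each $b\in\cL$ is viewed as the object $\id_b$ of $\Pro(P_\bullet)$.

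For the vertices I would simply observe that the vertices of $\Gamma$ are, by construction, the isomorphism classes of simple projections of $P_\bullet$, which are exactly the simple objects of $\Pro(P_\bullet)$ with the same notion of isomorphism; under this identification $\tr(p)$ corresponds to $\dim(p)$, so the two Frobenius--Perron weight vectors also match.

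For the edges I would work one string label at a time. Fixing $b\in\cL$ and simple projections $p\in P_{\alpha\to\alpha}$, $q\in P_{\alpha b\to\alpha b}$, the number of edges between $[p]$ and $[q]$ in $\Gamma_b$ is $\dim\bigl(q\,P_{\alpha b\to\alpha b}\,(p\otimes\id_b)\bigr)$, which is literally $\dim\Pro(p\otimes\id_b\to q)=N^{[q]}_{[p],[b]}$ by the definition of morphisms in $\Pro$, and hence the number of edges between $[p]$ and $[q]$ in $\cF_\sC(b)$. Phrasing the count inside $\Pro$ also makes its independence of the stabilizing word $\alpha$ and of the chosen representatives automatic, since it is just the multiplicity of the simple object $[q]$ in $p\otimes\id_b$. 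Thus $\Gamma_b=\cF_\sC(b)$, and since $\Gamma$ and $\cF_\sC(\cL)$ are by definition the push-outs of the $\Gamma_b$, respectively the $\cF_\sC(b)$, over $b\in\cL$ along the common vertex set, I would conclude $\Gamma=\cF_\sC(\cL)$.

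The only genuine point of care is the edge identification, and specifically reconciling the fact that $\Gamma_b$ is a priori unoriented while $\cF_\sC(Y)$ carries orientations for a general object $Y$. This is where Assumption \ref{assume:Countable} enters: the labels $b$ are symmetrically self-dual, so Frobenius reciprocity in $\Pro$ gives $\dim\Pro(p\otimes\id_b\to q)=\dim\Pro(q\otimes\id_b\to p)$ and the unoriented edge count is unambiguous. Everything else is a direct unwinding of the definitions of $\Pro$, $\PA(\sC)_\bullet$, $\Gamma$, and $\cF_\sC$, together with Theorem \ref{thm:PAandTC}.
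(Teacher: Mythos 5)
Your argument is correct, and it is precisely the natural unwinding of the definitions; the paper states this corollary without supplying a proof (it immediately follows Theorem \ref{thm:PAandTC}, for which the authors remark only that "one can easily work it out from the definitions"), so there is no separate argument in the paper to compare against. You correctly reduce the second bullet to the first via Theorem \ref{thm:PAandTC}(1), match vertices as isomorphism classes of simple projections with $\tr(p)\leftrightarrow\dim(p)$, and identify the edge count $\dim\bigl(q\,P_{\alpha b\to\alpha b}\,(p\otimes\id_b)\bigr)=\dim\Pro(p\otimes\id_b\to q)=N^{[q]}_{[p],[b]}$; the observation that orientations are immaterial because each $b$ is (symmetrically) self-dual — a string labeled $b$ in the unshaded planar algebra is unoriented, and in the tensor-category case Assumption \ref{assume:Countable} imposes it — is exactly the right point of care and is handled cleanly.
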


\section{GJS results for factor planar algebras}\label{sec:GJS}

Given a subfactor planar algebra $P_\bullet$, GJSW constructed a subfactor $N\subset M$ whose planar algebra is $P_\bullet$ \cite{MR2732052,MR2645882}. Moreover, they identified the factors as interpolated free group factors \cite{MR2807103}.

Suppose we have a factor planar algebra $P_\bullet$ with a countable set of string labels $\cL$ such that for each $c\in\cL$, $\delta_c> 1$. (One can assume $P_\bullet$ is the factor planar algebra associated to a rigid $C^*$-tensor category $\sC$ with generating set $\cL$ as in Assumption \ref{assume:Countable}.) We mimic the construction of GJSW to obtain a factor $M_{0}$ and rigid $C^*$-tensor categories $\Bim$ and $\CF$ of bifinite bimodules over $M_0$ such that $\Pro$ is equivalent to $\Bim$ and $\CF$.

\begin{rem}
Recall that when we suppress the $\star$ of an input rectangle, it is assumed that $\star$ is in the \underline{lower-left} corner.
Recall that if a string is labelled by the word $\alpha\in \Lambda$, it is read either \underline{top to bottom} or \underline{left to right}.
\end{rem}

\subsection{The graded algebras and their orthogonalized pictures} \label{sec:graded}

To start, we set $\Gr_{0}(P) = \bigoplus_{\alpha \in \Lambda} P_{\alpha}$ where $\Lambda$ denotes the set of all finite sequences of colorings for strings and endow $\Gr_{0}(P_\bullet)$  with a multiplication $\wedge$ which satisfies
$$
x \wedge y =
\Mbox{\alpha}{x}
\,
\Mbox{\beta}{y}
$$
where $x \in P_{\alpha}$ and $y \in P_{\beta}$. We endow $\Gr_{0}(P)$ with the following trace:
\begin{equation}\label{r}
\tr(x) =
\begin{tikzpicture}[baseline=.5cm]
	\draw (0,0)--(0,.8);
	\filldraw[unshaded,thick] (-.4,.4)--(.4,.4)--(.4,-.4)--(-.4,-.4)--(-.4,.4);
	\draw[thick, unshaded] (-.7, .8) -- (-.7, 1.6) -- (.7, 1.6) -- (.7,.8) -- (-.7, .8);
	\node at (0,0) {$x$};
	\node at (0,1.2) {$\Sigma CTL$};
	\node at (.2,.6) {{\scriptsize{$\alpha$}}};
\end{tikzpicture}
\end{equation}
where $x \in P_{\alpha}$ and $\sum CTL$ denotes the sum of all colored Temperely-Lieb diagrams, i.e. all planar ways of pairing the colors on top of $x$ in a way which respects the word $\alpha$.

\begin{lem} \label{lem:PositiveBounded}
The inner product on $\Gr_{0}(P_\bullet)$ given by $\langle x, y \rangle = \tr(y^{*}x)$ is positive definite.  Furthermore, left and right multiplication by elements in $\Gr_{0}(P_\bullet)$ is bounded with respect to this inner product
\end{lem}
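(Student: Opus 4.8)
The plan is to adapt the orthogonalization method of Guionnet--Jones--Shlyakhtenko--Walker \cite{MR2732052,MR2645882}, in the form Brothier uses in \cite{1202.1298}; the only new feature is that strings carry several colors from $\cL$ rather than one, which adds bookkeeping but does not affect the argument. Since both $\langle\,\cdot\,,\cdot\,\rangle$ and the products $x\wedge y$ respect the $\Lambda$-grading up to concatenation of words, it suffices to treat homogeneous elements $x\in P_\alpha$. The first step is to introduce, for each word $\alpha\in\Lambda$, the subspace $\mathring P_\alpha\subseteq P_\alpha$ of \emph{orthogonalized} elements, namely those $x$ killed by every ``cup-removal'' partial trace $P_\alpha\to P_{\alpha'}$ that caps off a nested pair of equally-labelled strands of $\alpha$. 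Using only that each box algebra $P_{\alpha\to\alpha}$ is a finite-dimensional $C^*$-algebra with positive definite planar-algebra inner product, an induction on word length gives a direct sum decomposition $P_\alpha=\bigoplus_D\iota_D(\mathring P_{\beta_D})$, where $D$ ranges over planar label-consistent ``cup diagrams'' capped onto $\alpha$, $\beta_D$ is the subword of strands of $\alpha$ left uncapped by $D$, and $\iota_D$ plugs the cups of $D$ onto an element of $P_{\beta_D}$. Summing over $\alpha$, this is a linear isomorphism of $\Gr_0(P_\bullet)$ onto $\bigoplus_\alpha\bigoplus_D\iota_D(\mathring P_{\beta_D})$, the \emph{orthogonalized picture}.

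Second, I would show that under this change of variables the trace \eqref{r} becomes the vacuum expectation of a full Fock-type construction, whence positivity is automatic. Concretely, after a further Gram--Schmidt among the cup diagrams $D$, the orthogonalized picture realizes the completion of $\Gr_0(P_\bullet)$ as a Hilbert space $\cH$ that is a direct sum of finitely many copies of each $\mathring P_\alpha$, with $\tr$ corresponding to the vacuum state on $\cH$. The mechanism: in $\tr\bigl((\iota_{D'}y)^*(\iota_D x)\bigr)$ one caps $(\iota_{D'}y)^*\wedge(\iota_D x)$ with $\sum CTL$, i.e.\ sums over all planar colored pairings of the boundary; any pairing joining two strands of $x$ (or of $y$) yields, by planarity, a nested cap on $x$ (resp.\ $y$) and so vanishes on orthogonalized vectors, so that only pairings matching the $\overline{\beta}$-strands of $y^*$ straight through to the $\beta$-strands of $x$ while closing the cups of $D$ against those of $D'$ survive; these contribute the planar-algebra inner product $\langle x,y\rangle_{P_\beta}$ times a nonnegative scalar built from loop parameters $\delta_a>0$. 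Since the planar-algebra inner product is positive definite on each $\mathring P_\alpha$ and all $\delta_a$ are strictly positive, the resulting Gram form — block-diagonal over reduced words after the Gram--Schmidt — is positive definite, i.e.\ $\langle x,y\rangle=\tr(y^*x)$ is positive definite on $\Gr_0(P_\bullet)$.

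Third, for boundedness I would fix a homogeneous $a\in P_\alpha$ and expand $a=\sum_D\iota_D(a_D)$ with $a_D\in\mathring P_{\beta_D}$, a \emph{finite} sum. On $\cH$, left $\wedge$-multiplication by $\iota_D(a_D)$ sends a vector supported on $\mathring P_\gamma$ to the result of first contracting the ``right legs'' of $a_D$ against a matching initial segment of $\gamma$ — an annihilation-type operator — and then concatenating the remaining legs on the left — a creation-type operator. Up to the positive constants of the previous step, each creation operator is a partial isometry onto a subspace of $\cH$, so its norm is controlled by a finite-dimensional operator norm of $a_D$; each contraction is the adjoint of such a map post-composed with multiplication by a finite product of loop parameters, hence bounded; so each summand is bounded and, summing over the finitely many $D$, $\xi\mapsto a\wedge\xi$ is bounded on $\cH$. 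Right multiplication then follows: $(a\wedge\xi)^*=\xi^*\wedge a^*$ and $z\mapsto z^*$ is a conjugate-linear $\|\cdot\|_2$-isometry (since $\tr$ is a trace), so right multiplication by $a$ is unitarily conjugate to left multiplication by $a^*$.

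The main obstacle lies entirely in the second step: setting up the orthogonalized picture correctly with several colors — choosing which caps define $\mathring P_\alpha$, proving the direct sum decomposition of $P_\alpha$, performing the Gram--Schmidt among cup diagrams, and above all checking that $\sum CTL$ really does collapse to the rainbow-plus-cup pairings on orthogonalized vectors, so that the Fock model is faithful and $\tr$ is its vacuum state. Once that identification is in hand, positive definiteness is immediate and the boundedness estimates reduce to routine bookkeeping of finitely many creation/annihilation operators on $\cH$.
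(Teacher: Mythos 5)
Your outline is viable, but it takes a noticeably heavier route than the one in the paper; the difference is worth spelling out. The paper never introduces the finer orthogonalized subspaces $\mathring P_\alpha$ of each $P_\alpha$: it keeps the decomposition $\Gr_0(P_\bullet)=\bigoplus_\alpha P_\alpha$ fixed as a vector space and instead changes the \emph{multiplication}, defining $F_0(P_\bullet)$ to be the same vector space equipped with the $\star$-product and the trace $\tr_F(x)=x_\emptyset$. Under $\tr_F$, distinct $P_\alpha$'s are automatically orthogonal and the inner product on each $P_\alpha$ is \emph{literally} the planar-algebra pairing, so positive definiteness is immediate from the $C^*$-axiom — no analysis of which $CTL$ pairings survive, no direct-sum claim $P_\alpha=\bigoplus_D\iota_D(\mathring P_{\beta_D})$, and no Gram--Schmidt among cup diagrams. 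The one nontrivial ingredient is the map $\Phi(x)=\sum_{E\in\Epi(CTL)}E(x)$ (sum over colored TL diagrams with only through-strings on top), which is a trace-preserving $*$-isomorphism $\Gr_0\to F_0$ and transports positivity back; that this is a bijection and a homomorphism is exactly the GJS "orthogonalization" the paper cites. For boundedness, the paper then applies a bare $C^*$-inequality: for each splitting $\alpha=\overline\beta\gamma$ the capped-off element $x^*x$ is positive in the finite-dimensional $C^*$-algebra $P_{\overline\gamma\to\overline\gamma}$, giving $\tr(x^*xww^*)\le\|x^*x\|_{P_{\overline\gamma\to\overline\gamma}}\,\|ww^*\|_{L^2}$, and summing over the finitely many splittings yields $\|x\star w\|\le\bigl(\sum_{\alpha=\overline\beta\gamma}\|x\|_{P_{\overline\gamma\to\overline\gamma}}\bigr)\|w\|$. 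What your approach buys is a genuine Fock-model picture of $L^2(\Gr_0)$ (useful in later sections), but it purchases it at the cost of proving the $\mathring P_\alpha$-decomposition and the faithfulness of the Fock realization — precisely the "main obstacle" you flag, and precisely what the paper's change-of-product trick avoids. Two small slips worth fixing if you pursue your route: the Fock space contains \emph{infinitely} many copies of each $\mathring P_\beta$ (one for each pair of a longer word $\alpha$ and a cup diagram on $\alpha$ leaving $\beta$ uncapped), not finitely many; and the cup/creation operators in this interacting Fock model are not partial isometries — one should only claim they are bounded with norms controlled by finite-dimensional operator norms and loop parameters, which is the analogue of the paper's $C^*$-estimate.
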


The proof of the above lemma will closely follow the orthogonalization approach in \cite{MR2645882}.  To begin, we define a new algebraic structure $\star$ on $\Gr_{0}(P_\bullet)$ defined as follows.  Suppose $x \in P_{\alpha}$ and $y \in P_{\beta}$.  Then by letting $|\alpha|$ denote the length of $\alpha$, we have
$$
x \star y =
\sum_{
\substack{
\gamma \text{ s.t.}\\
\alpha=\alpha'\gamma\\
\beta=\overline{\gamma}\beta'
}}
\begin{tikzpicture}[baseline = .6cm]
	\draw (-.2,0)--(-.2,1);	
	\draw (1.4,0)--(1.4,1);
	\draw (.2,.4) arc (180:0:.4);	
	\filldraw[unshaded,thick] (-.4,.4)--(.4,.4)--(.4,-.4)--(-.4,-.4)--(-.4,.4);
	\filldraw[unshaded,thick] (.8,.4)--(1.6,.4)--(1.6,-.4)--(.8,-.4)--(.8,.4);
	\node at (0,0) {$x$};
	\node at (1.2,0) {$y$};
	\node at (-.4,.6) {{\scriptsize{$\alpha'$}}};
	\node at (1.6,.6) {{\scriptsize{$\beta'$}}};
	\node at (.6,1) {{\scriptsize{$\gamma$}}};
\end{tikzpicture}
$$
where it is understood that if a string connects two different colors, then that term in the sum is zero. We let $F_{0}(P_\bullet)$ be the vector space $\Gr_{0}(P_\bullet)$ endowed with the multiplication $\star$.  Given $x$ in $F_0(P_{\bullet})$, let $x_{\emptyset}$ denote the component of $x$ in $P_{\emptyset} \cong \C$.  We define a trace $\tr_{F}$ on $F_{0}(P_\bullet)$ by $\tr_{F}(x) = x_\emptyset$.  Since $P_\bullet$ is a $C^{*}$-planar algebra, the sesquilinear form
$$
\langle x, y \rangle
= \tr_{F_0(P_\bullet)}(x\star y^{*})
=
\begin{tikzpicture}[baseline = -.1cm]
	\draw (0,0)--(1.2,0);	
	\filldraw[unshaded,thick] (-.4,.4)--(.4,.4)--(.4,-.4)--(-.4,-.4)--(-.4,.4);
	\filldraw[unshaded,thick] (.8,.4)--(1.6,.4)--(1.6,-.4)--(.8,-.4)--(.8,.4);
	\node at (0,0) {$x$};
	\node at (1.2,0) {$y^*$};
\end{tikzpicture}
$$
is a positive definite inner product.

Set $\Epi(CTL)$ to be the set of colored Temperely-Lieb boxes with strings at the top and bottom where any string touching the top of the box must be through.  One can argue exactly as in section 5 of \cite{MR2645882} that the map $\Phi: \Gr_{0}(P_\bullet)\rightarrow F_{0}(P_\bullet)$ given by
$$
\Phi(x) = \sum_{E \in \Epi(CTL)}
\begin{tikzpicture}[baseline=.7cm]
	\draw (0,0)--(0,2);
	\filldraw[unshaded,thick] (-.4,.4)--(.4,.4)--(.4,-.4)--(-.4,-.4)--(-.4,.4);
	\draw[thick, unshaded] (-.4, .8) -- (-.4, 1.6) -- (.4, 1.6) -- (.4,.8) -- (-.4, .8);
	\node at (0,0) {$x$};
	\node at (0,1.2) {$E$};
\end{tikzpicture}
$$
is a bijection with the property that $\Phi(x \wedge y) = \Phi(x) \star \Phi(y)$, $\Phi(x^{*}) = \Phi(x)^{*}$ and $\tr(x) = \tr_{F}(\Phi(x))$.  Hence $\star$ is an associative multiplication, $F_{0}(P_\bullet)$ and $\Gr_{0}(P_\bullet)$ are isomorphic as $*$-algebras, and the inner product on $\Gr_{0}(P_\bullet)$ is positive definite.

We now prove that left multiplication by $x \in F_{0}(P_\bullet)$ is bounded (this will closely follow arguments in \cite{1202.1298}).   We may assume $x \in P_{\alpha}$ for a fixed word $\alpha$.  For fixed words $\beta$ and $\gamma$ such that $\alpha = \overline{\beta}\gamma$, the element
$$
\begin{tikzpicture}[baseline = -.1cm]
	\draw (-.8,0)--(2,0);	
	\filldraw[unshaded,thick] (-.4,.4)--(.4,.4)--(.4,-.4)--(-.4,-.4)--(-.4,.4);
	\filldraw[unshaded,thick] (.8,.4)--(1.6,.4)--(1.6,-.4)--(.8,-.4)--(.8,.4);
	\node at (0,0) {$x^*$};
	\node at (1.2,0) {$x$};
	\node at (-.6,.2) {{\scriptsize{$\gamma$}}};
	\node at (.6,.2) {{\scriptsize{$\beta$}}};
	\node at (1.8,.2) {{\scriptsize{$\gamma$}}};
\end{tikzpicture}
$$
is positive in the finite dimensional $C^{*}$ algebra $P_{\overline{\gamma}\to\overline{\gamma}}$, since for any $y \in P_{\overline{\gamma}\to\overline{\gamma}}$,
$$
\langle x^*x\cdot y, y \rangle_{P_{\overline{\gamma}\to\overline{\gamma}}}
=
\begin{tikzpicture}[baseline = .4cm]
	\draw (-.4,0)--(1.6,0) arc (-90:90:.5cm) -- (-.4,1) arc (90:270:.5cm);	
	\filldraw[unshaded,thick] (-.4,1.4)--(.4,1.4)--(.4,.6)--(-.4,.6)--(-.4,1.4);
	\filldraw[unshaded,thick] (.8,1.4)--(1.6,1.4)--(1.6,.6)--(.8,.6)--(.8,1.4);
	\filldraw[unshaded,thick] (-.4,.4)--(.4,.4)--(.4,-.4)--(-.4,-.4)--(-.4,.4);
	\filldraw[unshaded,thick] (.8,.4)--(1.6,.4)--(1.6,-.4)--(.8,-.4)--(.8,.4);
	\node at (0,0) {$x^*$};
	\node at (1.2,0) {$x$};
	\node at (0,1) {$y^*$};
	\node at (1.2,1) {$y$};
	\node at (-.6,.2) {{\scriptsize{$\gamma$}}};
	\node at (.6,.2) {{\scriptsize{$\beta$}}};
	\node at (.6,1.2) {{\scriptsize{$\overline{\gamma}$}}};
	\node at (1.8,.2) {{\scriptsize{$\gamma$}}};
	\node at (0,-.55) {$\star$};
	\node at (1.2,-.55) {$\star$};
	\node at (0,1.55) {$\star$};
	\node at (1.2,1.55) {$\star$};
\end{tikzpicture}
=
\left\|
\begin{tikzpicture}[baseline = .3cm]
	\draw (-.2,0)--(-.2,1);	
	\draw (1.4,0)--(1.4,1);
	\draw (.2,.4) arc (180:0:.4);	
	\filldraw[unshaded,thick] (-.4,.4)--(.4,.4)--(.4,-.4)--(-.4,-.4)--(-.4,.4);
	\filldraw[unshaded,thick] (.8,.4)--(1.6,.4)--(1.6,-.4)--(.8,-.4)--(.8,.4);
	\node at (0,0) {$x$};
	\node at (1.2,0) {$y$};
	\node at (-.4,.6) {{\scriptsize{$\overline{\beta}$}}};
	\node at (1.6,.6) {{\scriptsize{$\gamma$}}};
	\node at (.6,1) {{\scriptsize{$\gamma$}}};
\end{tikzpicture}
\right\|^2_{L^2(F_0(P_\bullet))}\geq 0.
$$
Given $x$ and $w$ with $x \in P_{\alpha}$ with $\alpha=\overline{\beta}\gamma$, $x \star w$ is a sum of terms of the form
$$
\begin{tikzpicture}[baseline = .3cm]
	\draw (-.2,0)--(-.2,1);	
	\draw (1.4,0)--(1.4,1);
	\draw (.2,.4) arc (180:0:.4);	
	\filldraw[unshaded,thick] (-.4,.4)--(.4,.4)--(.4,-.4)--(-.4,-.4)--(-.4,.4);
	\filldraw[unshaded,thick] (.8,.4)--(1.6,.4)--(1.6,-.4)--(.8,-.4)--(.8,.4);
	\node at (0,0) {$x$};
	\node at (1.2,0) {$w$};
	\node at (-.4,.6) {{\scriptsize{$\overline{\beta}$}}};
	\node at (.6,1) {{\scriptsize{$\gamma$}}};
\end{tikzpicture}\,,
$$
and we see that the 2-norm of the above diagram is
\begin{equation} \label{eqn:TwoNorm}
\begin{tikzpicture}[baseline = .4cm]
	\draw (-.4,0)--(1.6,0) arc (-90:90:.5cm) -- (-.4,1) arc (90:270:.5cm);	
	\filldraw[unshaded,thick] (-.4,1.4)--(.4,1.4)--(.4,.6)--(-.4,.6)--(-.4,1.4);
	\filldraw[unshaded,thick] (.8,1.4)--(1.6,1.4)--(1.6,.6)--(.8,.6)--(.8,1.4);
	\filldraw[unshaded,thick] (-.4,.4)--(.4,.4)--(.4,-.4)--(-.4,-.4)--(-.4,.4);
	\filldraw[unshaded,thick] (.8,.4)--(1.6,.4)--(1.6,-.4)--(.8,-.4)--(.8,.4);
	\node at (0,0) {$x^*$};
	\node at (1.2,0) {$x$};
	\node at (0,1) {$w^*$};
	\node at (1.2,1) {$w$};
	\node at (-.6,.2) {{\scriptsize{$\gamma$}}};
	\node at (.6,.2) {{\scriptsize{$\beta$}}};
	\node at (1.8,.2) {{\scriptsize{$\gamma$}}};
	\node at (0,-.55) {$\star$};
	\node at (1.2,-.55) {$\star$};
	\node at (0,1.55) {$\star$};
	\node at (1.2,1.55) {$\star$};
\end{tikzpicture}
=
\tr(x^*xww^*)
\leq \|x^*x\|_{P_{\overline{\gamma}\to\overline{\gamma}}} \| ww^*\|_{L^2(F_0(P_\bullet))}
\end{equation}
where $\|\cdot\|_{P_{\overline{\gamma}\to\overline{\gamma}}}$ is the operator norm in the $C^*$-algebra $P_{\overline{\gamma}\to\overline{\gamma}}$.
Hence using Equation \eqref{eqn:TwoNorm} repeatedly, we have
$$
\|x \star w\|_{L^{2}(F_{0}(P))} \leq \left(\sum_{\alpha=\overline{\beta}\gamma} \|x\|_{P_{\overline{\gamma}\to\overline{\gamma}}}\right) \cdot \|w\|_{L^{2}(F_{0}(P_\bullet))},
$$
and thus left multiplication is bounded on $F_0(P_\bullet)$. The boundedness of right multiplication is similar.

Since the multiplication is bounded, we can represent $F_{0}(P_\bullet)$ on $L^{2}(F_{0}(P_\bullet))$ acting by left multiplication.  We denote $M_{0} = F_{0}(P_\bullet)''$.  We also use $M_{0}$ to denote $\Gr_{0}(P_\bullet)''$ acting on $L^{2}(\Gr_{0}(P_\bullet))$, but it will be clear from context which picture we are using.  Of course, from the discussion above, both von Neumann algebras are isomorphic.

Given $\alpha \in \Lambda$, we draw a \textcolor{\alphacolor}{\alphacolor} string for a string labelled $\alpha$. We will provide the $\alpha$ label only when it is possible to confuse $\alpha$ and $\overline{\alpha}$.

We define the graded algebra $\Gr_{\alpha}(P_\bullet) = \bigoplus_{\beta \in \Lambda} P_{\overline{\alpha} \beta \alpha}$ with multiplication $\wedge_{\alpha}$ by
$$
x \wedge_{\alpha} y =
\begin{tikzpicture}[baseline = -.1cm]
	\draw [thick, \alphacolor] (-.8,0)--(2,0);	
	\draw (0,0)--(0,.8);
	\draw (1.2,0)--(1.2,.8);
	\filldraw[unshaded,thick] (-.4,.4)--(.4,.4)--(.4,-.4)--(-.4,-.4)--(-.4,.4);
	\filldraw[unshaded,thick] (.8,.4)--(1.6,.4)--(1.6,-.4)--(.8,-.4)--(.8,.4);
	\node at (0,0) {$x$};
	\node at (1.2,0) {$y$};
	\node at (.2,.6) {{\scriptsize{$\beta$}}};
	\node at (1.4,.6) {{\scriptsize{$\gamma$}}};
\end{tikzpicture}
$$
for $x\in P_{\overline{\alpha}\beta\alpha}$ and $x\in P_{\overline{\alpha}\gamma\alpha}$, and trace
$$
\tr(x) = \frac{1}{\delta^{\alpha}}
\begin{tikzpicture}[baseline=.3cm]
	\draw (0,0)--(0,.8);
	\draw [thick, \alphacolor] (.4,0) arc (90:-90:.4cm) -- (-.4,-.8) arc (270:90:.4cm);
	\filldraw[unshaded,thick] (-.4,.4)--(.4,.4)--(.4,-.4)--(-.4,-.4)--(-.4,.4);
	\draw[thick, unshaded] (-.7, .8) -- (-.7, 1.6) -- (.7, 1.6) -- (.7,.8) -- (-.7, .8);
	\node at (0,0) {$x$};
	\node at (0,1.2) {$\Sigma CTL$};
	\node at (.2,.6) {{\scriptsize{$\beta$}}};
	\node at (.6,.2) {{\scriptsize{$\alpha$}}};
	\node at (-.6,.2) {{\scriptsize{$\alpha$}}};
	\node at (0,-.6) {{\scriptsize{$\overline{\alpha}$}}};
\end{tikzpicture}.
$$
\begin{note}\label{note:ReverseMultiplication}
Be warned that the multiplication $\wedge_{\alpha}$ in the GJSW diagrams when restricted to $P_{\overline{\alpha} \to \overline{\alpha}}$ is in the opposite order with the multiplication in the introduction!
\end{note}
The inner product
$$
\langle \cdot, \cdot \rangle_{F_\alpha(P_\bullet)}=
\begin{tikzpicture}[baseline = -.1cm]
	\draw [thick, \alphacolor] (-.4,0)--(1.6,0) arc (90:-90:.4cm) -- (-.4,-.8) arc (270:90:.4cm);	
	\draw (0, .4) .. controls ++(90:.4cm) and ++(90:.4cm) .. (1.2,.4);
	\filldraw[unshaded,thick] (-.4,.4)--(.4,.4)--(.4,-.4)--(-.4,-.4)--(-.4,.4);
	\filldraw[unshaded,thick] (.8,.4)--(1.6,.4)--(1.6,-.4)--(.8,-.4)--(.8,.4);
	\node at (0,0) {};
	\node at (1.2,0) {};
	\node at (-.6,.2) {{\scriptsize{$\alpha$}}};
\end{tikzpicture}
$$
makes $P_{\overline{\alpha}\beta\alpha}\perp P_{\overline{\alpha}\gamma\alpha}$ for $\beta\neq \gamma$. We get a trace on $F_\alpha(P_\bullet)$ by $\tr_{F_\alpha(P_\bullet)}(x)=\langle x, 1_\alpha\rangle_{F_\alpha(P_\bullet)}$ where $1_\alpha$ the the horizontal strand labelled $\alpha$.

The multiplication $\star_\alpha$ given by
$$
x \star_\alpha y =
\sum_{
\substack{
\kappa \text{ s.t.}\\
\beta=\beta'\kappa\\
\gamma=\overline{\kappa}\gamma'
}}
\begin{tikzpicture}[baseline = .6cm]
	\draw [thick, \alphacolor] (-.8,0)--(2,0);
	\draw (-.2,0)--(-.2,1);	
	\draw (1.4,0)--(1.4,1);
	\draw (.2,.4) arc (180:0:.4);	
	\filldraw[unshaded,thick] (-.4,.4)--(.4,.4)--(.4,-.4)--(-.4,-.4)--(-.4,.4);
	\filldraw[unshaded,thick] (.8,.4)--(1.6,.4)--(1.6,-.4)--(.8,-.4)--(.8,.4);
	\node at (0,0) {$x$};
	\node at (1.2,0) {$y$};
	\node at (-.4,.6) {{\scriptsize{$\beta'$}}};
	\node at (1.6,.6) {{\scriptsize{$\gamma'$}}};
	\node at (.6,1) {{\scriptsize{$\kappa$}}};
\end{tikzpicture}
$$
for $x\in P_{\overline{\alpha}\beta\alpha}$ and $x\in P_{\overline{\alpha}\gamma\alpha}$ makes $F_{\alpha}(P)$ isomorphic as a $*$-algebra to $\Gr_{\alpha}(P_\bullet)$, preserving the inner product.  The same techniques as above with heavier notation show that the inner product on $F_{\alpha}(P_\bullet)$ (hence $\Gr_{\alpha}(P_\bullet)$) is positive definite and that left and right multiplication in $F_{\alpha}(P_\bullet)$ (hence $\Gr_{\alpha}(P_\bullet)$) is bounded.  We can therefore form the von Neumann algebra $M_{\alpha} = (F_{\alpha}(P_\bullet))''$ acting by left multiplication on $L^{2}(F_{\alpha})(P_\bullet)$ (or $(\Gr_{\alpha}(P_\bullet))''$ acting by left multiplication on $L^{2}(\Gr_{\alpha})(P_\bullet)$).  Again, it will be clear from context which picture we are considering.

\subsection{Factorality of $M_{\alpha}$} \label{sec:factor}

In this section, we aim to prove the following theorem:  \begin{thm} \label{thm:Factor}

The algebra $M_{\alpha}$ is a $II_{1}$ factor. We have an embedding $\iota_{\alpha}: M_0 \hookrightarrow M_{\alpha}$ which is the extension of the map $F_0(P_\bullet)\to F_\alpha(P_\bullet)$ given by
$$
\iota_{\alpha}
\left(
\Mbox{}{x}
\right)
=
\begin{tikzpicture}[baseline = 0cm]
	\draw (0,0)--(0,.8);
	\draw [thick, \alphacolor] (-.8,-.6)--(.8,-.6);
	\filldraw[unshaded,thick] (-.4,.4)--(.4,.4)--(.4,-.4)--(-.4,-.4)--(-.4,.4);
	\node at (0,0) {$x$};
\end{tikzpicture},
$$
and $\iota_{\alpha}(M_{0})' \cap M_{\alpha} = P_{\overline{\alpha}\to\overline{\alpha}}\op\cong P_{\alpha\to \alpha}$.
\end{thm}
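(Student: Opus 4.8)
\medskip\noindent\textbf{Proof proposal.}
The plan is to first show $\iota_\alpha$ is a trace-preserving embedding, then compute $\iota_\alpha(M_0)'\cap M_\alpha$, and deduce factoriality from that computation; the base case $\alpha=\emptyset$ of the relative-commutant statement is exactly the factoriality of $M_0$ and will be handled along the way.

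First I would check that the displayed formula defines a unital $*$-homomorphism $F_0(P_\bullet)\to F_\alpha(P_\bullet)$ (equivalently $\Gr_0(P_\bullet)\to\Gr_\alpha(P_\bullet)$). Multiplicativity and compatibility with $*$ are immediate from the diagrams, since appending a straight $\alpha$-strand below a box commutes with juxtaposing boxes and with reflection. (The order-reversal flagged in Note~\ref{note:ReverseMultiplication} only concerns how $P_{\overline\alpha\to\overline\alpha}$ itself sits inside $M_\alpha$ and does not affect $\iota_\alpha$.) The one computational point is trace preservation: in the trace picture of $\Gr_\alpha(P_\bullet)$, closing up $\iota_\alpha(x)$ produces the same colored Temperley--Lieb sum as closing up $x$ together with one extra closed loop labelled $\alpha$, i.e.\ a factor $\delta^{\alpha}$, which is exactly cancelled by the normalization $\delta^{-\alpha}$ in $\tr$ on $\Gr_\alpha(P_\bullet)$. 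Since $\iota_\alpha$ then intertwines the faithful normal tracial states, it extends to a normal trace-preserving embedding $M_0\hookrightarrow M_\alpha$.

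Next I would identify $P_{\overline\alpha\to\overline\alpha}=P_{\overline\alpha\alpha}$ with the degree-$\emptyset$ summand $P_{\overline\alpha\emptyset\alpha}$ of $\Gr_\alpha(P_\bullet)$, a finite-dimensional $C^*$-subalgebra of $M_\alpha$ for the product $\wedge_\alpha$; by Note~\ref{note:ReverseMultiplication} this product is the opposite of the planar-algebra product, so the subalgebra is $P_{\overline\alpha\to\overline\alpha}\op$, and the dual-projection (rotation-by-$\pi$) map supplies the isomorphism $P_{\overline\alpha\to\overline\alpha}\op\cong P_{\alpha\to\alpha}$. The inclusion $P_{\overline\alpha\to\overline\alpha}\op\subseteq\iota_\alpha(M_0)'\cap M_\alpha$ is a one-picture isotopy check: in $x\wedge_\alpha\iota_\alpha(w)$ and $\iota_\alpha(w)\wedge_\alpha x$ with $x$ of degree $\emptyset$, the box $x$ lives only on the $\alpha$-strands while $\iota_\alpha(w)$ lives above the straight $\alpha$-strands, so the two sides are planar isotopic. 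The reverse inclusion is the crux. Given $z\in\iota_\alpha(M_0)'\cap M_\alpha$, I would apply $z$ to the vacuum $\widehat{1_\alpha}\in L^2(F_\alpha(P_\bullet))$ and expand along the grading $L^2(F_\alpha(P_\bullet))=\bigoplus_\beta L^2(P_{\overline\alpha\beta\alpha})$, writing $z\,\widehat{1_\alpha}=\sum_\beta \xi_\beta$ with only countably many, square-summable, nonzero terms. In the orthogonalized $\star_\alpha$-picture, left multiplication by $\iota_\alpha(w)$ is a sum of a top-degree ``creation'' term (stacking $w$) and strictly lower-degree ``annihilation'' terms (capping top strands of $w$ against the argument). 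Commuting $z$ with enough $\iota_\alpha(w)$ and comparing top-degree components forces $\xi_\beta=0$ for all $\beta\neq\emptyset$; this is the argument of \cite[Section~5]{MR2645882} (see also \cite{1202.1298}), now with the through-strands $\alpha$ carried along passively. Hence $\iota_\alpha(M_0)'\cap M_\alpha=P_{\overline\alpha\to\overline\alpha}\op\cong P_{\alpha\to\alpha}$; for $\alpha=\emptyset$ this is precisely $M_0'\cap M_0=\C$, i.e.\ $M_0$ is a factor (compare GJS \cite{MR2732052,MR2645882} and \cite{1202.1298}).

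Finally, factoriality of $M_\alpha$ for general $\alpha$: since $Z(M_\alpha)\subseteq\iota_\alpha(M_0)'\cap M_\alpha=P_{\overline\alpha\to\overline\alpha}\op$ is finite-dimensional and $M_\alpha$ has a faithful normal trace, $M_\alpha$ is a finite direct sum of finite factors, and it is $II_1$ because it contains the diffuse algebra $\iota_\alpha(M_0)$ (diffuseness of $M_0$ coming from the free/semicircular structure of $\Gr_0(P_\bullet)$). The minimal central projections of $P_{\overline\alpha\to\overline\alpha}\op$ correspond to the simple summands of $\id_\alpha$; the off-diagonal summands $P_{\overline\alpha\beta\alpha}$ with $\beta\neq\emptyset$ provide partial isometries in $M_\alpha$ that identify them — concretely, for any two simples $V,W$ dominated by $\id_\alpha$, the generating property of $\cL$ yields a word $\beta$ with $W\preceq\beta\otimes V$, hence a nonzero intertwiner sitting in $P_{\overline\alpha\beta\alpha}$ — so all these central projections become equivalent in $M_\alpha$ and $Z(M_\alpha)=\C$. (Equivalently, the fusion graph is connected because $\End(1)\cong\C$.) I expect the genuine obstacle to be the reverse relative-commutant inclusion, i.e.\ making the orthogonalized creation/annihilation bookkeeping precise enough to kill every positive-degree component (this includes the factoriality of $M_0$); the embedding, the trace computation, and the ``easy'' inclusion are routine diagrammatics.
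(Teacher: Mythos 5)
Your outline is structurally reasonable, and several pieces are genuinely different from (but consistent with) the paper: the trace-preservation check for $\iota_\alpha$ is correct and routine; the ``easy'' inclusion $P_{\overline\alpha\to\overline\alpha}\op\subseteq\iota_\alpha(M_0)'\cap M_\alpha$ by isotopy is right; and your factoriality argument, once the relative commutant is known, is a valid alternative to the paper's. The paper proves factoriality directly: take $x\in Z(M_\alpha)\subseteq P_{\overline\alpha\to\overline\alpha}\op$, force it to commute with the single element of $P_{\overline\alpha(\alpha\overline\alpha)\alpha}$ whose two $\alpha$-strands bend up, and close the leftmost strands to conclude $x=\tr(x)\cdot1$. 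You instead observe $Z(M_\alpha)$ is finite-dimensional (so $M_\alpha$ is a finite direct sum of finite factors, $II_1$ because it contains the diffuse $\iota_\alpha(M_0)$) and use connectedness of the fusion graph to produce, for any two simples $V,W\preceq\alpha$, a nonzero element of $P_{\overline\alpha\beta\alpha}$ joining $z_V$ and $z_W$; a central projection of $M_\alpha$ must then be $0$ or $1$. Both arguments work, and yours makes the role of the generating set more transparent.

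However, the step you flag as the crux --- the reverse inclusion $\iota_\alpha(M_0)'\cap M_\alpha\subseteq P_{\overline\alpha\to\overline\alpha}\op$ --- has a genuine gap as proposed. You write $z\,\widehat{1_\alpha}=\sum_\beta\xi_\beta$ and want to ``compare top-degree components'' against the creation/annihilation expansion of $\star_\alpha$-multiplication by $\iota_\alpha(w)$. But $z\,\widehat{1_\alpha}$ is only an $\ell^2$-sum; there is in general no nonzero $\xi_{\beta_0}$ of maximal length, so there is no top-degree component to isolate. And fixing a target degree $|\gamma_0|$ does not help directly either, because annihilation terms from $\xi_{\beta'}$ with $|\beta'|$ arbitrarily large can all land in degree $|\gamma_0|$; nothing a priori controls those contributions. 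This is exactly what the paper's two-stage argument is designed to circumvent. It first fixes a single color $c$, sets $A=W^*(\bluecup_\alpha)$, decomposes $L^2(M_\alpha)$ as an $A\text{-}A$ bimodule (Lemma~\ref{lem:decomposition}), proves via Weyl--von Neumann and Kato--Rosenblum that $s+s^*$ and $s+s^*-\delta_c^{-1/2}e_{\xi_0}$ are unitarily equivalent, and uses a Hilbert--Schmidt argument on $\ell^2(\N)\otimes\cH\otimes\ell^2(\N)$ (no nonzero $\xi$ intertwines $(s+s^*)\otimes1\otimes1$ and $1\otimes1\otimes(s+s^*)$) to get $A'\cap M_\alpha=AP_{\overline\alpha\to\overline\alpha}\op$ (Lemma~\ref{lem:shift}). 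Only then, inside the much smaller algebra $AP_{\overline\alpha\to\overline\alpha}\op$, does an $\ell^2$-recurrence appear (Lemma~\ref{lem:RelativeCommutant}): commuting a single test element $z$ against $x=\sum_n\delta_c^{-n/2}x_n\star(\text{$n$ cups})$ forces $x_{n+1}=-\sqrt{\delta_c}\,x_n$ for $n\geq1$, incompatible with square-summability unless $x_n=0$ for $n\geq1$. Your citation of \cite[Section~5]{MR2645882} for the ``creation/annihilation'' step is not quite on point --- that section develops the orthogonalization and boundedness, not a relative-commutant computation --- whereas \cite{1202.1298} and Section~\ref{sec:factor} of this paper really do run the spectral/bimodule route. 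So the proposal needs the intermediate reduction through $A'\cap M_\alpha$ (or some equivalent handle on the infinite $\ell^2$-tail) before the degree-by-degree bookkeeping can close.
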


Throughout this subsection, we use the orthogonal picture $M_{\alpha} = (F_{\alpha}(P_\bullet))''$.  Pick a specific color, $c\in\cL$, which we will denote by the color \textcolor{\cupcolor}{green}, and recall $\delta_c>1$.
Let $A$ be the abelian von Neumann subalgebra of $M_{\alpha}$ generated by the cup element
$$
\bluecup_{\alpha}
 =
 \begin{tikzpicture} [baseline = 0cm]
 	\draw[thick, \alphacolor]  (-.4, -.2) -- (.4, -.2);
	\draw[\cupcolor, thick] (-.25, .4) arc(-180:0: .25cm);
 	\draw[thick] (-.4, -.4) -- (-.4, .4) -- (.4, .4) -- (.4, -.4) -- (-.4, -.4);
\end{tikzpicture}\, .
$$
We will obtain the factorality of $M_{\alpha}$ be first examining $L^{2}(M_{\alpha})$ as an $A-A$ bimodule.

To begin, assuming $|\gamma| \geq 2$, we set
$$
V_{\gamma} = \left\{x \in P_{\overline{\alpha} \gamma \alpha} :
 \begin{tikzpicture} [baseline = 0cm]
 	\draw [thick, \alphacolor] (-.9, 0) -- (.9,0);
	\draw (.3,.4)--(.3,.8);
	\draw[thick, \cupcolor] (-.3, .4) arc (180:0: .2cm);
 	\draw[thick, unshaded] (-.5, -.4) -- (-.5, .4) -- (.5, .4) -- (.5, -.4) -- (-.5, -.4);
	\node at (-.1, .75)  {{\scriptsize{$c$}}};
	\node at (0,0) {$x$};
\end{tikzpicture}
=
 \begin{tikzpicture} [baseline = 0cm]
 	\draw [thick, \alphacolor] (-.9, 0) -- (.9,0);
	\draw (-.3,.4)--(-.3,.8);
	\draw[thick, \cupcolor] (-.1, .4) arc (180:0: .2cm);
 	\draw[thick, unshaded] (-.5, -.4) -- (-.5, .4) -- (.5, .4) -- (.5, -.4) -- (-.5, -.4);
	\node at (.1, .75)  {{\scriptsize{$c$}}};
	\node at (0,0) {$x$};
\end{tikzpicture}
= 0 \right\},
$$
$V = \oplus_{\gamma \in \Lambda} V_{\gamma}$ and $\cV$ the $A-A$ bimodule generated by $V$.  For $b\in\cL$, let $\cV_{b}$ be the $A-A$ bimodule generated by elements of the form
$$
\begin{tikzpicture}[baseline = 0cm]
	\draw (0,0)--(0,.8);
	\draw [thick, \alphacolor] (-.8,0)--(.8,0);
	\filldraw[unshaded,thick] (-.4,.4)--(.4,.4)--(.4,-.4)--(-.4,-.4)--(-.4,.4);
	\node at (0,0) {$x$};
	\node at (.2,.6) {{\scriptsize{$b$}}};
\end{tikzpicture},
$$
and let $W$ be the $A-A$ bimodule generated by $P_{\alpha\to\alpha}$.
We claim that we have the following decomposition:

\begin{lem} \label{lem:decomposition}
 As $A-A$ bimodules, $L^{2}(M_{\alpha}) = \displaystyle W\oplus \cV\oplus  \bigoplus_{b \in \cL} \cV_{b}$.
\end{lem}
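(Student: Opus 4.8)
The plan is to verify the three conditions that turn a Hilbert-space equality into an orthogonal direct sum of $A$--$A$ bimodules: that $W$, $\cV$ and each $\cV_{b}$ is a closed $A$--$A$ sub-bimodule of $L^{2}(M_{\alpha})$ (immediate from the definitions), that they are pairwise orthogonal, and that their sum is dense. Two structural facts will be used throughout. First, since the $F_{\alpha}(P_\bullet)$-inner product makes $P_{\overline{\alpha}\beta\alpha}\perp P_{\overline{\alpha}\gamma\alpha}$ for $\beta\neq\gamma$, we have the Hilbert-space decomposition $L^{2}(M_{\alpha})=\bigoplus_{\beta\in\Lambda}\overline{P_{\overline{\alpha}\beta\alpha}}$; for $\xi\in L^{2}(M_\alpha)$ write $\xi_{\beta}$ for its component in $\overline{P_{\overline{\alpha}\beta\alpha}}$. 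Second, $A$ is the $\|\cdot\|_{2}$-closure of the $*$-algebra generated by $\bluecup_{\alpha}$, which is spanned by the ``green Temperley--Lieb boxes'' (planar systems of green cups whose middle word is a power of $c$); consequently the $A$--$A$ bimodule generated by a subset $S\subseteq L^{2}(M_{\alpha})$ is the closed span of the vectors $a\star_{\alpha}s\star_{\alpha}a'$ with $s\in S$ and $a,a'$ green Temperley--Lieb boxes.

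\emph{Orthogonality.} Left, resp.\ right, $\star_{\alpha}$-multiplication by a green Temperley--Lieb box acts on a box with middle word $\beta$ only by prepending, resp.\ appending, green cups, by leaving $\beta$ unchanged, or by closing off in pairs some of the leading, resp.\ trailing, green letters of $\beta$. Hence the $A$--$A$ action preserves the non-green skeleton of the middle word (the subword obtained by deleting all $c$'s) together with the parity of the length of each maximal block of $c$'s, both the ones between consecutive non-green letters and the ones at the two ends. These invariants are pairwise incompatible for the generating sets of $W$ (empty skeleton, all green blocks even), of $\cV_{b}$ with $b\neq c$ (skeleton $b$, all green blocks even), and of $\cV_{c}$ (empty skeleton, green block odd, since its generator has a single green strand), so $W\perp\cV_{b}$ for all $b$ and $\cV_{b}\perp\cV_{b'}$ for $b\neq b'$. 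The same invariants reduce the orthogonality of $\cV$ against the others to the $V_{\gamma}$ with $\gamma$ purely green and the $V_{\gamma}$ with exactly one non-green letter flanked by even-length green blocks. On such an overlapping component, to see that $v\in V_{\gamma}$ is orthogonal to $W$ (resp.\ to $\cV_{b}$) it suffices, by the description of the bimodules above, to show $(a\star_{\alpha}v\star_{\alpha}a')_{\emptyset}=0$ (resp.\ $(a\star_{\alpha}v\star_{\alpha}a')_{b}=0$) for all green Temperley--Lieb boxes $a,a'$; and the crucial planar point is that in any nonzero contribution the extreme green strand of $v$, being cappable only against green strands supplied by $a$ or $a'$ at that end of the diagram, is forced by planarity to be joined to its immediate green neighbor in $v$, so the contribution factors through one of the two closings that define $V_{\gamma}$ and therefore vanishes. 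Running through the finitely many shapes of $\gamma$ above finishes the orthogonality.

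\emph{Density.} Put $H=\overline{W+\cV+\sum_{b}\cV_{b}}$; this is again an $A$--$A$ bimodule, so $\bluecup_{\alpha}\star_{\alpha}H\subseteq H$ and $H\star_{\alpha}\bluecup_{\alpha}\subseteq H$. I prove $P_{\overline{\alpha}\beta\alpha}\subseteq H$ by induction on $n=|\beta|$; the cases $n\le 1$ are exactly the generators of $W$ and of the $\cV_{b}$. For $n\ge 2$ and $x\in P_{\overline{\alpha}\beta\alpha}$, first peel a green cup off the extreme left (when $\beta$ begins with $cc$): writing $e\,x\in P_{\overline{\alpha}b_{3}\cdots b_{n}\alpha}$ for the box obtained by closing the first two strands of $x$ with a green cup and $C(\cdot)$ for prepending a green cup, set $x'=x-\delta_{c}^{-1}C(e\,x)$, so that $e\,x'=0$. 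Since $\bluecup_{\alpha}\star_{\alpha}(\cdot)$ equals $C(\cdot)$ plus a term supported on middle words of length $<n$, and $e\,x\in H$ by induction, the term $C(e\,x)$ lies in $H$; thus $x\in H$ iff $x'\in H$. Peeling a green cup off the extreme right of $x'$ in the same way (using right multiplication by $\bluecup_{\alpha}$) leaves a remainder killed by both extreme green closings, that is, an element of $V_{\beta}\subseteq\cV\subseteq H$; hence $x\in H$, and $W+\cV+\sum_{b}\cV_{b}$ is dense. For the two lengths $n=2,3$, where the two extreme closings overlap, one replaces the two-step peel by a direct colored Temperley--Lieb computation in the finite-dimensional space $P_{\overline{\alpha}\beta\alpha}$.

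I expect the density step to be the main obstacle: closing the induction rests on the observation that $\star_{\alpha}$-multiplication by $\bluecup_{\alpha}$ realizes ``insert an extreme green cup'' only modulo strictly shorter corrections, and the small-length cases must be handled by hand so that the remainder genuinely lands in $V_{\beta}$. The planar argument behind the orthogonality on the overlapping (essentially all-green) components is the other place requiring care rather than mere computation; the whole scheme parallels the orthogonalization arguments of \cite{MR2645882,1202.1298}.
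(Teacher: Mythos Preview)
Your proposal is correct and follows essentially the same approach as the paper, which simply defers to \cite{1202.1298} Proposition 2.1 with the remark that one inducts on the length of the word in $\Lambda$; your density argument is exactly this induction, and your orthogonality argument via the invariants ``non-green skeleton plus parities of green blocks'' together with the planar closing argument on the overlapping sectors is the colored version of the computation there. One small streamlining for the density step: rather than sequentially peeling left then right and handling $n=2,3$ separately, note that $V_{\beta}=\ker e\cap\ker e'$ implies $V_{\beta}^{\perp}=\operatorname{im}C+\operatorname{im}C'$ in $P_{\overline{\alpha}\beta\alpha}$ (caps and cups are adjoint), so $P_{\overline{\alpha}\beta\alpha}=V_{\beta}+\operatorname{im}C+\operatorname{im}C'$; since $C(y)$ and $C'(y)$ differ from $\bluecup_{\alpha}\star_{\alpha}y$ and $y\star_{\alpha}\bluecup_{\alpha}$ only by strictly shorter terms, the induction closes uniformly for all $n\geq 2$ with no special cases.
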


\begin{proof}
The proof is exactly the same as \cite{1202.1298} Proposition 2.1 except that we induct on the length of a word in $\Lambda$ as opposed to the number of strands of a single color.
\end{proof}

Notice that we can decompose $V$ further as $V = V_{cc} \oplus V_{co} \oplus V_{oc} \oplus V_{oo}$.  Here, $V_{cc}$ consists of elements in $V$ whose leftmost and rightmost strings are colored $c$, $V_{co}$ consists of elements of $V$ whose leftmost string is colored $c$ and rightmost string is colored differently than $c$, $V_{oc}$ consists of elements of $V$ whose leftmost string is colored differently than $c$ and whose rightmost string is colored $c$, and $V_{oo}$ consists of the elements in $V$ whose leftmost and rightmost strings are colored other than $c$.  We set $\cV_{cc}$ the $A-A$ bimodule generated by $V_{cc}$ and we define $\cV_{co}$, $\cV_{oc}$, and $\cV_{oo}$ analogously.

Let $(V_{cc})_{n}$ be the subspace of $V_{cc}$ spanned by boxes with a word of length $n$ on top and let $\{\zeta_{n, i}\}$ be an orthonormal basis for $(V_{cc})_{n}$.  It straightforward to see that the set
$$
\left\{
\zeta_{n, i}^{l, r} = \delta_{c}^{-(l + r)/2}\,
\begin{tikzpicture} [baseline = -.1 cm]
	\draw (0,.4) -- (0, .7);
	\draw [thick, \cupcolor] (-1.8, .7) arc(-180:0: .15cm);
	\draw [thick, \cupcolor] (-.8, .7) arc(-180:0: .15cm);
	\draw [thick, \cupcolor] (.4, .7) arc(-180:0: .15cm);
	\draw [thick, \cupcolor] (1.4, .7) arc(-180:0: .15cm);
	\draw [thick, \alphacolor] (-2, 0) -- (2, 0);
	\filldraw[unshaded,thick] (-.4,.4)--(.4,.4)--(.4,-.4)--(-.4,-.4)--(-.4,.4);
	\node at (0, 0) {$\zeta_{n,i}$};
	\node at (-1.1, .6) {{\scriptsize{$\cdots$}}};
	\node at (1.1, .6)  {{\scriptsize{$\cdots$}}};
	\node at (1.1, .4)  {{\scriptsize{$r$}}};
	\node at (-1.1, .4)  {{\scriptsize{$l$}}};
	\draw [thick] (-2, .7) -- (2, .7) -- (2, -.7) -- (-2, -.7) -- (-2, .7);
\end{tikzpicture}\,
\colon
n, \, l, \, r \in \N
\right\}
 $$
is an orthonormal basis for $\cV_{cc}$.  There are similar orthonormal bases for $\cV_{co}$, $\cV_{oc}$ and $\cV_{oo}$. Let $B$ denote $V_{cc}$, $V_{co}$, $V_{oc}$, $V_{oo}$ or $P_{\overline{\alpha} b \alpha}$ for $b \neq c$ and let $\cB$ denote $\cV_{cc}$, $\cV_{co}$, $\cV_{oc}$, $\cV_{oo}$, or $\cV_{b}$.   Let $\pi,\rho$ denote the left, right representation of $M_{0}$ on $L^{2}(M_{0})$ respectively. We have the following lemma whose proof is straightforward:

\begin{lem} \label{lem:unitary}
Let $\phi: \cB \rightarrow \ell^{2}(\N) \otimes B \otimes \ell^{2}(\N)$ be defined on the orthonormal basis of $\cB$ by
$$
\phi\left(\delta_{c}^{-(l + r)/2} \,
\begin{tikzpicture} [baseline = -.1 cm]
	\draw (0,.4) -- (0, .7);
	\draw [thick, \cupcolor] (-1.8, .7) arc(-180:0: .15cm);
	\draw [thick, \cupcolor] (-.8, .7) arc(-180:0: .15cm);
	\draw [thick, \cupcolor] (.4, .7) arc(-180:0: .15cm);
	\draw [thick, \cupcolor] (1.4, .7) arc(-180:0: .15cm);
	\draw [thick, \alphacolor] (-2, 0) -- (2, 0);
	\filldraw[unshaded,thick] (-.4,.4)--(.4,.4)--(.4,-.4)--(-.4,-.4)--(-.4,.4);
	\node at (0, 0) {$v$};
	\node at (-1.1, .6) {{\scriptsize{$\cdots$}}};
	\node at (1.1, .6)  {{\scriptsize{$\cdots$}}};
	\node at (1.1, .4)  {{\scriptsize{$r$}}};
	\node at (-1.1, .4)  {{\scriptsize{$l$}}};
	\draw [thick] (-2, .7) -- (2, .7) -- (2, -.7) -- (-2, -.7) -- (-2, .7);
\end{tikzpicture}\,
\right)
= \xi_{l} \otimes v \otimes \xi_{r}
$$
with $v \in B_{n}$, where $\set{\xi_i}{i\in\N}$ is the usual orthonormal basis of $\ell^2(\N)$.  Then $\phi$ extends to a unitary operator and we have the following representations of $\bluecup_{\alpha}$:

\begin{itemize}
\item
If $\cB = \cV_{cc}$, $\cV_{co}$, $\cV_{oc}$ or $\cV_{oo}$ then
$$
\phi \cdot \pi\left(\frac{\bluecup_{\alpha} - 1}{\delta_{c}^{1/2}}\right)\cdot \phi^{*} = x \otimes 1 \otimes 1 \text{ where }
$$
$x$ is $s + s^{*}$ if the top leftmost color in for boxes in $B$ is $c$ and $s + s^{*} - \delta_{c}^{-1/2}e_{\xi_{0}}$ if the this color differs from $c$.  Similarly,
$$
\phi \cdot \rho\left(\frac{\bluecup_{\alpha} - 1}{\delta_{c}^{1/2}}\right)\cdot \phi^{*} = 1 \otimes 1 \otimes x \text{ where }
$$
$x$ is $s + s^{*}$ if the top rightmost color in for boxes in $B$ is $c$ and $s + s^{*} - \delta_{c}^{-1/2}e_{\xi_{0}}$ if the this color differs from $c$.
\item
If $\cB = \cV_{b}$ and $b \neq c$ then
\begin{align*}
\phi\cdot \pi\left(\frac{\bluecup_{\alpha} - 1}{\delta_{c}^{1/2}}\right)\cdot \phi^{*}
&= (s + s^{*} - \delta_{c}^{-1/2}e_{\xi_{0}}) \otimes 1 \otimes 1 \textrm{ and }\\
\phi\cdot \rho\left(\frac{\bluecup_{\alpha} - 1}{\delta_{c}^{1/2}}\right)\cdot \phi^{*}
&= 1 \otimes 1 \otimes (s + s^{*} - \delta_{c}^{-1/2}e_{\xi_{0}}).
\end{align*}
\item
(See \cite{1202.1298}) If $\cB = \cV_{c}$ then there is a unitary (which is \underline{not} $\phi$) $v: \cV_{c} \rightarrow \ell^{2}(\cN) \otimes L^{2}(P_{\overline{\alpha}c\alpha}) \otimes \ell^{2}(\N)$ such that
\begin{align*}
v \cdot \pi\left(\frac{\bluecup_{\alpha} - 1}{\delta_{c}^{1/2}}\right)\cdot v^{*} &= (s+s^{*}) \otimes 1 \otimes 1 \textrm{ and }\\
v\cdot \rho\left(\frac{\bluecup_{\alpha} - 1}{\delta_{c}^{1/2}}\right)\cdot v^{*} &= 1 \otimes 1 \otimes (s + s^{*}).
\end{align*}
\end{itemize}
Here, $s$ is the unilateral shift operator on $\ell^{2}(\N)$ and $e_{\xi_{0}}$ is the orthogonal projection in $\cB(\ell^{2}(\N))$ onto the one-dimensional space spanned by $\xi_{0}$.
\end{lem}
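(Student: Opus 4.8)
The plan is to treat the two assertions in turn: first that the stated map is a unitary, and then that it conjugates the left and right actions of $\bluecup_\alpha$ into the operators written on the right-hand side. For the first point I would observe that the map is nothing more than a relabelling of orthonormal bases. By the discussion preceding the lemma, $\{\zeta_{n,i}^{l,r}\}$ is an orthonormal basis of $\cB$: orthogonality comes from the defining relations of $V$, which kill any diagram in which two adjacent $c$-coloured strings at an end of a box coming from $B$ are capped off, so two such vectors can fail to be orthogonal only when they carry the same numbers $l,r$ of green caps and the same word-length $n$; and the $l+r$ closed green loops produced when pairing a vector with its adjoint contribute exactly $\delta_c^{l+r}$, which is cancelled by the normalisation $\delta_c^{-(l+r)/2}$, leaving $\langle\zeta_{n,i},\zeta_{n,j}\rangle_{F_\alpha}=\delta_{ij}$. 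Since $\{\xi_l\otimes v\otimes\xi_r\}$ (with $v$ running over an orthonormal basis of $B=\bigoplus_n B_n$) is the standard orthonormal basis of $\ell^2(\N)\otimes B\otimes\ell^2(\N)$, and $\phi$ carries the first basis bijectively onto the second, it extends uniquely to a unitary.

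The substance is the computation of $\pi(\bluecup_\alpha)\zeta_{n,i}^{l,r}=\bluecup_\alpha\star_\alpha\zeta_{n,i}^{l,r}$ straight from the definition of $\star_\alpha$. Since the top word of $\bluecup_\alpha$ is $cc$ and its two strings form a single cap, the sum runs over $\kappa\in\{\emptyset,c,cc\}$. The term $\kappa=\emptyset$ places $\bluecup_\alpha$ to the far left, i.e.\ adds one more green cap there, and after tracking normalisations equals $\delta_c^{1/2}\zeta_{n,i}^{l+1,r}$. The term $\kappa=c$ is present only when the leftmost top colour of $\zeta_{n,i}^{l,r}$ is $c$ — automatic once $l\geq1$, and for $l=0$ precisely when the leftmost colour of the $B$-box is $c$ — and an isotopy shows that the single connecting arc merely reroutes that string, so this term returns $\zeta_{n,i}^{l,r}$ itself. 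The term $\kappa=cc$ joins the cap of $\bluecup_\alpha$ to the first two top strings of $\zeta_{n,i}^{l,r}$: if $l\geq1$ these form the leftmost green cap, so one gets a closed green loop (a factor $\delta_c$) and that cap is deleted, giving $\delta_c^{1/2}\zeta_{n,i}^{l-1,r}$; if $l=0$ and the leftmost $B$-colour is $c$ this would cap off the outermost two strings of the $B$-box and vanishes by the definition of $V$; and if $l=0$ with that colour $\neq c$ the term is simply absent.

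Subtracting the identity and dividing by $\delta_c^{1/2}$ now removes the $\kappa=c$ contribution, so $\pi\bigl(\delta_c^{-1/2}(\bluecup_\alpha-1)\bigr)$ sends $\zeta_{n,i}^{l,r}$ to $\zeta_{n,i}^{l+1,r}+\zeta_{n,i}^{l-1,r}$ for $l\geq1$, to $\zeta_{n,i}^{1,r}$ for $l=0$ when the leftmost $B$-colour is $c$, and to $\zeta_{n,i}^{1,r}-\delta_c^{-1/2}\zeta_{n,i}^{0,r}$ for $l=0$ otherwise; transporting through $\phi$ this is exactly $(s+s^*)\otimes1\otimes1$ in the first case and $(s+s^*-\delta_c^{-1/2}e_{\xi_0})\otimes1\otimes1$ in the second, since $s^*\xi_0=0$. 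The identical computation carried out on the right, using the left--right symmetry of the picture, gives the stated formula for $\rho\bigl(\delta_c^{-1/2}(\bluecup_\alpha-1)\bigr)$, with the rightmost colour of the $B$-box now deciding between the two cases. When $\cB=\cV_b$ with $b\neq c$ the $B$-box has a single top string of colour $b\neq c$, so both ends produce the $-\delta_c^{-1/2}e_{\xi_0}$ correction, as claimed.

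Finally, the case $\cB=\cV_c$ is genuinely different and is where $\phi$ breaks down: the $B$-box $P_{\overline\alpha c\alpha}$ has a single top $c$-string, so when one side carries no green cap the $\kappa=cc$ term connects that string to a cap on the \emph{other} side, and the left and right caps no longer decouple. For this case I would quote the unitary $v$ constructed in \cite{1202.1298}, which reorganises the single $c$-string into the Fock structure so that $\bluecup_\alpha$ again acts as a pure creation-plus-annihilation operator $s+s^*$ on each leg, with no $e_{\xi_0}$ correction. I expect the only delicate point in the whole argument to be the $\kappa=cc$ analysis: correctly deciding when it yields a closed green loop (and the correct power of $\delta_c$), when it is annihilated by a relation of $V$, and when it is simply not present — this trichotomy is precisely what produces the two possible forms of $x$ and accounts for the $-1$ in $\bluecup_\alpha-1$.
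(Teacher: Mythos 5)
Your proof is correct and supplies the diagrammatic details that the paper declares ``straightforward'' and omits: the trichotomy $\kappa \in \{\emptyset, c, cc\}$ in the $\star_\alpha$ product, with the $\kappa = c$ term cancelling against the subtracted identity, is exactly the right mechanism producing the three claimed forms of $\bluecup_\alpha$'s action, and your isolation of $\cV_c$ as the one case where the $\kappa=cc$ term couples the two sides correctly explains why $\phi$ must be replaced there. One very minor imprecision: when $l = 0$ and the leftmost $B$-colour is $c$, the $\kappa = cc$ term may simply be absent (if the second top string of $v$ is not $c$) rather than annihilated by the relation defining $V$; either way the contribution is zero, so the conclusion is unaffected.
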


We now show that the operators $s+s^{*}$ and $s + s^{*} - \delta_{c}^{-1/2}e_{\xi_{0}}$ are unitary equivalent in $\cB(\ell^{2}(\N))$.  We begin with the following lemma
\begin{lem} \label{lem:essential}
The spectra of $s+s^{*}$ and $y = s + s^{*} - \delta_{c}^{-1/2}e_{\xi_{0}}$ are the same.
\end{lem}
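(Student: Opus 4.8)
The plan is to compute both spectra explicitly. The operator $s+s^{*}$ is the free Jacobi matrix on $\ell^{2}(\N)$ (off-diagonal entries $1$, diagonal $0$), so $\sigma(s+s^{*})=[-2,2]$; I would recall this either by passing to $\ell^{2}(\Z)$ via Fourier analysis and restricting, or by recognizing it as the Jacobi matrix of the arcsine/Chebyshev measure. Since $[-2,2]$ has no isolated points, $\sigma_{\mathrm{ess}}(s+s^{*})=[-2,2]$ as well. Now $y = s+s^{*}-\delta_{c}^{-1/2}e_{\xi_{0}}$ is a self-adjoint rank-one perturbation of $s+s^{*}$, so Weyl's theorem on stability of the essential spectrum under compact perturbations gives $\sigma_{\mathrm{ess}}(y)=\sigma_{\mathrm{ess}}(s+s^{*})=[-2,2]$. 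In particular $[-2,2]\subseteq\sigma(y)$, and any point of $\sigma(y)\setminus[-2,2]$ must be an isolated eigenvalue of finite multiplicity.

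So the real content is to rule out eigenvalues of $y$ outside $[-2,2]$. Suppose $\lambda\in\R\setminus[-2,2]$ and $y\psi=\lambda\psi$ for some nonzero $\psi=(\psi_{n})_{n\geq0}\in\ell^{2}(\N)$. Reading off the matrix of $y$, the eigenvalue equation is the recursion $\psi_{n+1}=\lambda\psi_{n}-\psi_{n-1}$ for $n\geq1$ together with the boundary relation $\psi_{1}=(\lambda+\delta_{c}^{-1/2})\psi_{0}$. If $\psi_{0}=0$ the recursion forces $\psi\equiv0$, so $\psi_{0}\neq0$ and we normalize $\psi_{0}=1$. Parametrizing $\lambda=\mu+\mu^{-1}$ with $\mu\in\R$, $|\mu|>1$ (possible precisely because $\lambda\notin[-2,2]$), the solution space of the recursion is spanned by $\mu^{n}$ and $\mu^{-n}$, and membership in $\ell^{2}$ forces $\psi_{n}=\mu^{-n}$. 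The boundary relation then reads $\mu^{-1}=\lambda+\delta_{c}^{-1/2}=\mu+\mu^{-1}+\delta_{c}^{-1/2}$, i.e.\ $\mu=-\delta_{c}^{-1/2}$. But $\delta_{c}>1$ gives $|\mu|=\delta_{c}^{-1/2}<1$, contradicting $|\mu|>1$. Hence $y$ has no eigenvalue outside $[-2,2]$, so $\sigma(y)=[-2,2]=\sigma(s+s^{*})$.

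I do not anticipate a serious obstacle; the only care needed is the bookkeeping at the boundary row of the Jacobi matrix, and noting that the argument is uniform in the two cases $\lambda>2$ and $\lambda<-2$ (the sign of $\mu$ never enters beyond $|\mu|>1$). It is worth remarking that the hypothesis $\delta_{c}>1$ is exactly sharp: for $\delta_{c}<1$ the root $\mu=-\delta_{c}^{-1/2}$ would satisfy $|\mu|>1$ and produce a genuine bound state at $\lambda=-(\delta_{c}^{1/2}+\delta_{c}^{-1/2})<-2$, so the two operators would have different spectra. (An alternative to the transfer-matrix computation is to invoke Aronszajn--Krein rank-one perturbation theory via the Weyl--Titchmarsh $m$-function of $s+s^{*}$, but the direct recursion is the shortest route.)
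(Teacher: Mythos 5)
Your proof is correct and follows essentially the same strategy as the paper's: invoke Weyl's theorem (the paper cites the Weyl--von Neumann theorem) to conclude $\sigma_{\mathrm{ess}}(y) = [-2,2]$, then rule out isolated eigenvalues outside $[-2,2]$ via the three-term recursion and the boundary condition at $n=0$. The one notable streamlining is your parametrization $\lambda = \mu + \mu^{-1}$ with $|\mu| > 1$, which lets you handle $\lambda > 2$ and $\lambda < -2$ in a single computation and reduce the contradiction to $|\mu| = \delta_c^{-1/2} < 1$; the paper treats the two sign cases separately with explicit expressions for the roots $l, r$ of the characteristic polynomial, but the content is the same.
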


\begin{proof}
Since the operators differ by a finite rank operator, by the Weyl-von Neumann Theorem (see \cite{MR1335452} p.523), they have the same essential spectrum.  The operator $s+s^{*}$ has essential spectrum $[-2, 2]$ and since the complement of the essential spectrum in the spectrum is an isolated set of eigenvalues, we just need to show that $y$ has no eigenvalues outside of $[-2, 2]$ since $y$ is self adjoint and must have real spectrum.

To this end, let $\lambda > 2$ and $\xi = \sum_{j} x_{j}\xi_{j}$.  If $\xi$ is an eigenvector of $y$ with eigenvalue $\lambda$ then we have the equations
$$
x_{1} = \left(\lambda + \delta_{c}^{-1/2}\right)x_{0} \textrm{ and } \lambda x_{n+1} = x_{n} + x_{n+2}
$$
for $n \geq 0$. The characteristic equation for this linear recurrence is $x^{2} - \lambda x + 1 = 0$ which has roots
$$
l = \frac{\lambda + \sqrt{\lambda^{2} - 4}}{2} \textrm{ and } r = \frac{\lambda - \sqrt{\lambda^{2} - 4}}{2}.
$$
This implies the existence of constants $C$ and $D$ with $x_{n} = C\cdot l^{n} + D \cdot r^{n}$ for all $n$.  Since the sequence $x_{n}$ is $\ell^{2}$, $C = 0$ meaning $x_{n} = x_{0}\cdot r^{n}$ for all $n$.  However, this means $x_{1} = (\lambda + 1/\sqrt{\delta_{c}})x_{0}$ and $x_{1} = rx_{0}$ must both be satisfied.  This can only happen if $x_{0} = 0$ since $\lambda + 1/\sqrt{\delta_{c}} > 1$ and $r < 1$.  This implies $\xi = 0$.\\

Now let $\lambda < -2$.  With $l$ and $r$ as above we must have $x_{n} = C \cdot l^{n} + D \cdot r^{n}$.  This time we must have $D = 0$ and we obtain the equation $x_{n} = x_{0}l^{n}$.  This gives us the two equations
$$
x_{1} = \left(\lambda + \delta_{c}^{-1/2}\right)x_{0} \text{ and }  x_{1} = l x_{0}
$$
which implies $(\lambda - \frac{1}{2}(\lambda + \sqrt{\lambda^{2} - 4}) + \delta_{c}^{-1/2})x_{0} = 0$.  This forces $x_{0}$ to be 0 since by the choice of $\lambda$, $\lambda - \frac{1}{2}(\lambda + \sqrt{\lambda^{2} - 4}) < -1$ and $|\delta_{c}^{-1/2}| < 1$.  Therefore $\xi=0$ and $y$ has no eigenvectors.
\end{proof}

For any self-adjoint operator $a \in B(\cH)$, we set $\sigma(a)$ to be the spectrum of $a$.  Given $\xi \in \cH$, a Radon measure $\mu_{\xi}$ on the real line is induced by the formula $\mu_{\xi}(f) = \langle f(a)\xi,  \xi \rangle$ for any bounded continuous $f$.  We set $\cH_{ac}$ the Hilbert space of vectors $\xi$ where $\mu_{\xi}$ is absolutely continuous with respect to the Lebesgue measure, $\cH_{sc}$ the Hilbert space of vectors $\xi$ where $\mu_{\xi}$ is singular with respect to the Lebesgue measure, and $\cH_{pp}$ the Hilbert space of vectors $\xi$ where $\mu_{\xi}$ has purely atomic measure.  We define the absolutely continuous spectrum of $a$ as the spectrum of $a$ on $\cH_{ac}$ and denote it as  $\sigma_{ac}(a)$.  We define $\sigma_{cc}(a)$ and $\sigma_{pp}(a)$ in a similar manner.

We say $a$ has uniform multiplicity 1 if there is a measure $\mu$ on the spectrum of $a$ and a unitary $w: \cH \rightarrow L^{2}(\sigma(a), \mu)$ such that for any $f \in L^{2}(\sigma(a), \mu)$, $waw^{*}(f)(x) = xf(x)$.  In this case $\sigma_{ac}(a)$, $\sigma_{cc}(a)$ and $\sigma_{pp}(a)$ form a partition of $\sigma(a)$.  See \cite{MR1335452} for more details.

\begin{lem}

$y$ is unitary equivalent to $s + s^{*}$.

\end{lem}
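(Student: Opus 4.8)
The plan is to prove that $y$, like $s+s^{*}$, is a self-adjoint operator with uniform multiplicity one, purely absolutely continuous spectrum, and spectrum equal to $[-2,2]$; any two such operators are unitarily equivalent, since each is unitarily equivalent to multiplication by $x$ on $L^{2}$ of $[-2,2]$ with respect to Lebesgue measure. So the three things to establish are: (a) $s+s^{*}$ has this structure (classical); (b) $y$ has uniform multiplicity one; and (c) $y$ is purely absolutely continuous with $\sigma(y)=[-2,2]$.

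For (a), the vector $\xi_{0}$ is cyclic for $s+s^{*}$: from $(s+s^{*})\xi_{0}=\xi_{1}$ and $(s+s^{*})\xi_{n}=\xi_{n+1}+\xi_{n-1}$ for $n\ge 1$, one obtains $\xi_{n+1}$ inductively as a polynomial in $s+s^{*}$ applied to $\xi_{0}$. Hence $s+s^{*}$ has uniform multiplicity one, and identifying $\xi_{n}$ with the (rescaled) $n$-th Chebyshev polynomial of the second kind exhibits a unitary conjugating $s+s^{*}$ to multiplication by $x$ on $L^{2}\bigl([-2,2],\tfrac{1}{2\pi}\sqrt{4-x^{2}}\,dx\bigr)$; in particular $s+s^{*}$ is purely absolutely continuous, $\sigma_{ac}(s+s^{*})=[-2,2]$, and its scalar spectral measure is equivalent to Lebesgue measure on $[-2,2]$.

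For (b) and (c): a short induction shows $\xi_{0}$ is still cyclic for $y$, since $y\xi_{0}=\xi_{1}-\delta_{c}^{-1/2}\xi_{0}$ and, because $e_{\xi_{0}}\xi_{n}=0$ for $n\ge1$, $y\xi_{n}=\xi_{n+1}+\xi_{n-1}$ for $n\ge1$; thus each $\xi_{n}$ lies in the cyclic subspace of $\xi_{0}$. Hence $y$ has uniform multiplicity one, and by the structure recalled above $\sigma(y)$ is the disjoint union of $\sigma_{ac}(y)$, $\sigma_{sc}(y)$ and $\sigma_{pp}(y)$. Lemma~\ref{lem:essential} and its proof give $\sigma(y)=[-2,2]$ and that $y$ has no eigenvalues, so $\sigma_{pp}(y)=\emptyset$. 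Since $y-(s+s^{*})=-\delta_{c}^{-1/2}e_{\xi_{0}}$ is rank one, hence trace class, the Kato--Rosenblum theorem gives that the absolutely continuous parts of $y$ and $s+s^{*}$ are unitarily equivalent; as $s+s^{*}$ is entirely absolutely continuous with spectrum $[-2,2]$, this yields $\sigma_{ac}(y)=[-2,2]$. Now $[-2,2]=\sigma(y)=\sigma_{ac}(y)\sqcup\sigma_{sc}(y)$ forces $\sigma_{sc}(y)=\emptyset$, so $y$ is purely absolutely continuous with spectrum $[-2,2]$ and uniform multiplicity one, and is therefore unitarily equivalent to $s+s^{*}$.

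The crux is the absence of a singular continuous part for $y$, i.e.\ that a rank-one perturbation of the free Jacobi matrix creates no singular continuous spectrum; this is exactly what Kato--Rosenblum delivers (note that the Weyl--von Neumann theorem used in Lemma~\ref{lem:essential} is of no help here, as it can convert absolutely continuous spectrum into pure point spectrum). If one prefers a self-contained route, one can instead compute the Cauchy transform $m_{y}(z)=\langle(y-z)^{-1}\xi_{0},\xi_{0}\rangle$ via the rank-one perturbation formula $m_{y}=m_{0}/\bigl(1-\delta_{c}^{-1/2}m_{0}\bigr)$, where $m_{0}(z)=\tfrac12\bigl(\sqrt{z^{2}-4}-z\bigr)$ is the Cauchy transform of the semicircle law: the fractional-linear map $m\mapsto m/(1-\delta_{c}^{-1/2}m)$ preserves the upper half-plane, so $\operatorname{Im}m_{y}(x+i0)>0$ for $x\in(-2,2)$, while a pole of $m_{y}$ on $\R\setminus[-2,2]$ would be an eigenvalue of $y$ (excluded by Lemma~\ref{lem:essential}) and $x=\pm2$ contributes no atom since neither $1-\delta_{c}^{-1/2}$ nor $1+\delta_{c}^{-1/2}$ vanishes. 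Hence the spectral measure of $\xi_{0}$ under $y$ is purely absolutely continuous and equivalent to Lebesgue measure on $[-2,2]$, so, $\xi_{0}$ being cyclic for both operators, it has the same measure class as the spectral measure of $\xi_{0}$ under $s+s^{*}$, and the two operators are unitarily equivalent.
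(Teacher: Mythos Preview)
Your primary argument is the paper's: you show $\xi_0$ is cyclic for $y$ (the paper writes down the polynomials $p_n$ with $p_n(y)\xi_0=\xi_n$ explicitly, you obtain them from the same recurrence), invoke Kato--Rosenblum to match the absolutely continuous parts, and then use the paper's stated partition $\sigma(y)=\sigma_{ac}\sqcup\sigma_{sc}\sqcup\sigma_{pp}$ for multiplicity-one operators to force $\sigma_{sc}=\emptyset$.

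Your alternative route via the rank-one perturbation formula $m_y=m_0/(1-\delta_c^{-1/2}m_0)$ for the Cauchy transform is not in the paper and is in fact the more robust argument on precisely the point you single out as the crux. Kato--Rosenblum only matches the absolutely continuous parts and does not by itself rule out singular continuous spectrum, and the ``partition'' of $\sigma$ into spectral types need not be a disjoint union of closed sets; so your sentence ``this is exactly what Kato--Rosenblum delivers'' overstates what that theorem gives. Your direct computation sidesteps this entirely: the M\"obius map $m\mapsto m/(1-\delta_c^{-1/2}m)$ preserves the upper half-plane, so $\operatorname{Im}m_y(x+i0)>0$ on $(-2,2)$, and since $m_0(\pm2)=\mp1$ with $\delta_c^{-1/2}<1$ the denominator never vanishes on $[-2,2]$, whence the spectral measure of the cyclic vector $\xi_0$ is purely absolutely continuous and equivalent to Lebesgue on $[-2,2]$. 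This yields the unitary equivalence directly and closes the only delicate step independently of the partition claim.
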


\begin{proof}

We first show that $y$ has uniform multiplicity $1$.  To this end, consider the following sequence of polynomials:
\begin{align*}
&p_{0}(x) = 1 \\
&p_{1}(x) = x + \delta_{c}^{-1/2}\\
&p_{n+2}(x) = xp_{n+1} - p_{n}(x) \textrm{ for } n \geq 0
\end{align*}

By induction, it is straightforward to check that $p_{n}(y)(\xi_{0}) = \xi_{n}$ and hence the map $w: \cH \rightarrow L^{2}(\sigma(a), \mu_{\xi_{0}})$ given by $w(\xi_{n}) = p_{n}$ is a unitary satisfying $wyw^{*}(f)(x) = xf(x)$ for all $f \in L^{2}(\sigma(a), \mu_{\xi_{0}})$.  By the Kato-Rosenblum theorem (see \cite{MR1335452} p.540), if $a$ and $b$ are self adjoint operators and $b$ is trace class then the absolutely continuous parts of $b$ and $a+b$ are unitary equivalent.  This implies that the absolutely continuous parts of $y$ and $s + s^{*}$ are unitary equivalent.  The absolutely continuous spectrum of $s+s^{*}$ is $[-2, 2] = \sigma(s + s^{*})$ so the same must hold for $y$.  As was discussed in the previous lemma, $y$ has no spectral values outside $[-2, 2]$ and from above, the spectrum of $y$ is partitioned into $\sigma_{ac}$, $\sigma_{cc}$ and $\sigma_{pp}$.  Therefore $\sigma_{pp} = \sigma_{cc} = \emptyset$ and $\sigma_{ac} = \sigma$ showing the unitary equivalence.
\end{proof}

Therefore by conjugating by a unitary, we may assume that $L^{2}(M_{\alpha}) \cong L^{2}(A) \oplus \ell^{2}(\N) \otimes \cH \otimes \ell^{2}(\N)$ for some Hilbert space $\cH$ where on the second summand, $\pi\left(\frac{\cup - 1}{\sqrt{\delta_{c}}}\right)$ acts as $(s + s^{*}) \otimes 1 \otimes 1$ and $\rho\left(\frac{\cup - 1}{\sqrt{\delta_{c}}}\right)$ acts as $1 \otimes 1 \otimes (s+s^{*})$. The lemmas below follow \cite{MR2645882} but we supply proofs for the readers' convenience.

\begin{lem} \label{lem:shift}
$A' \cap M_{\alpha} = AP_{\overline{\alpha}\to\overline{\alpha}}\op$.
\end{lem}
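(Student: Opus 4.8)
The plan is to prove the two inclusions that make up the asserted equality, the inclusion $A'\cap M_{\alpha}\subseteq AP_{\overline{\alpha}\to\overline{\alpha}}\op$ being the substantive one. Throughout I would work in the orthogonalized model $M_{\alpha}=(F_{\alpha}(P_\bullet))''$ together with the unitary conjugation established just above, so that $L^{2}(M_{\alpha})\cong L^{2}(A)\oplus \ell^{2}(\N)\otimes\cH\otimes\ell^{2}(\N)$, with $\pi\!\left(\frac{\bluecup_{\alpha}-1}{\sqrt{\delta_{c}}}\right)$ acting as $(s+s^{*})\otimes 1\otimes 1$ and $\rho\!\left(\frac{\bluecup_{\alpha}-1}{\sqrt{\delta_{c}}}\right)$ acting as $1\otimes 1\otimes(s+s^{*})$ on the second summand. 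Since $\pi(\bluecup_{\alpha})$ restricted to the second summand is $1+\sqrt{\delta_{c}}\,(s+s^{*})\otimes1\otimes1$, the von Neumann algebra it generates there is $\{s+s^{*}\}''\otimes1\otimes1$, and similarly on the right.

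For $AP_{\overline{\alpha}\to\overline{\alpha}}\op\subseteq A'\cap M_{\alpha}$: first, $A$ is abelian, so $A\subseteq A'$. Second, an element of $P_{\overline{\alpha}\to\overline{\alpha}}\op$ lives on the through-$\alpha$-strand of the $\wedge_{\alpha}$-picture, while the cap of $\bluecup_{\alpha}$ sits above that strand, attached only to the top boundary; a planar isotopy slides $\bluecup_{\alpha}$, and likewise $\bluecup_{\alpha}^{*}$, freely past any such element, so $P_{\overline{\alpha}\to\overline{\alpha}}\op\subseteq\{\bluecup_{\alpha},\bluecup_{\alpha}^{*}\}'=A'$. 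As $P_{\overline{\alpha}\to\overline{\alpha}}\op$ is finite dimensional and commutes with the von Neumann algebra $A$, the product $AP_{\overline{\alpha}\to\overline{\alpha}}\op$ is already a von Neumann subalgebra of $M_{\alpha}$, and it is contained in $A'\cap M_{\alpha}$.

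For $A'\cap M_{\alpha}\subseteq AP_{\overline{\alpha}\to\overline{\alpha}}\op$, take $x\in A'\cap M_{\alpha}$. Since $x\in M_{\alpha}=\pi(M_{\alpha})$ it also commutes with $\rho(M_{\alpha})\ni\rho(\bluecup_{\alpha})$, so on the second summand $x$ commutes with both $\{s+s^{*}\}''\otimes 1\otimes 1$ and $1\otimes 1\otimes\{s+s^{*}\}''$. Because $s+s^{*}$ has uniform multiplicity one (as used in the proofs of Lemma \ref{lem:essential} and the lemma following it), $\{s+s^{*}\}''$ is maximal abelian in $B(\ell^{2}(\N))$, so on the second summand $x$ lies in $\{s+s^{*}\}''\otimes B(\cH)\otimes\{s+s^{*}\}''$. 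The left action of $M_{\alpha}$ on the second summand modifies only the left $\ell^{2}(\N)$ leg and the middle leg (this is visible from the $\star_{\alpha}$-picture and Lemma \ref{lem:unitary}, where right multiplication acts on the right end), so in fact $x$ lies in $\{s+s^{*}\}''\otimes B(\cH)\otimes\C$ there, while on the first summand $x$ commutes with $A$ in its standard representation and hence acts as an element of $A$. To pin down the $B(\cH)$-component I would unwind Lemmas \ref{lem:decomposition} and \ref{lem:unitary} to display $\cH$ as a direct sum of the reduced box spaces ($V_{cc},V_{co},V_{oc},V_{oo}$ and the $P_{\overline{\alpha}b\alpha}$) carrying the residual left/right $\iota_{\alpha}(M_{0})$-actions, and then use that commuting with the full right action of $\iota_{\alpha}(M_{0})$ (not merely with the abelian $A$) forces the $B(\cH)$-part of $x$ into the relative commutant computed inside the ambient finite dimensional box algebras, which is exactly $P_{\overline{\alpha}\to\overline{\alpha}}\op$. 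Reassembling the two summands yields $x\in AP_{\overline{\alpha}\to\overline{\alpha}}\op$.

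I expect the last step — extracting precisely the finite dimensional algebra $P_{\overline{\alpha}\to\overline{\alpha}}\op$ from the $B(\cH)$-slot — to be the main obstacle: it is the point at which one must pass back from the Hilbert-space model to the planar algebra and verify that commutation with the \emph{whole} right $\iota_{\alpha}(M_{0})$-action, rather than with $A$ alone, performs the cutting down, keeping track of the coloring data. This is the colored, graded analogue of the corresponding argument in Section 3 of \cite{MR2645882}, with correspondingly heavier bookkeeping (see also \cite{1202.1298} for the unshaded case).
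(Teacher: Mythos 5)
Your easy inclusion matches the paper's. For the reverse inclusion, however, your plan differs fundamentally from the paper's and contains a genuine gap. The paper's argument is a short vector-level one: an $x\in A'\cap M_\alpha$ gives an $A$-central vector $\hat{x}\in L^2(M_\alpha)$; the orthogonal complement of $L^2(AP_{\overline{\alpha}\to\overline{\alpha}}\op)$ is an $A$-$A$ bimodule of the form $\ell^2(\N)\otimes\cH\otimes\ell^2(\N)$ on which $\bluecup_\alpha$ acts as $s+s^*$ on each outer leg; so a nonzero $A$-central component of $\hat{x}$ there would produce a nonzero vector $\xi\in\ell^2(\N)\otimes\ell^2(\N)$ satisfying $((s+s^*)\otimes 1)\xi=(1\otimes(s+s^*))\xi$, i.e.\ a nonzero Hilbert--Schmidt operator commuting with $s+s^*$, which is impossible since $s+s^*$ has no eigenvalues. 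That is the entire argument: commutation with $A$ alone, fed through this intertwiner-nonexistence observation, already forces $\hat{x}\in L^2(AP_{\overline{\alpha}\to\overline{\alpha}}\op)$ and hence $x\in AP_{\overline{\alpha}\to\overline{\alpha}}\op$.

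Your operator-level plan never reaches this point, and the step you yourself flag as ``the main obstacle'' is both not carried out and misdirected. You propose to cut the $B(\cH)$-slot down to $P_{\overline{\alpha}\to\overline{\alpha}}\op$ by exploiting commutation of $x$ with the \emph{right} action $\rho(\iota_\alpha(M_0))$; but $\pi(x)\in\pi(M_\alpha)\subseteq\rho(M_\alpha)'\subseteq\rho(\iota_\alpha(M_0))'$ holds for \emph{every} $x\in M_\alpha$, so that commutation is automatic and yields no constraint. The ingredient that actually cuts down to $P_{\overline{\alpha}\to\overline{\alpha}}\op$ is commutation with the \emph{left} $\iota_\alpha(M_0)$-action, which is precisely the content of Lemma \ref{lem:RelativeCommutant} --- a lemma the paper proves \emph{using} Lemma \ref{lem:shift}, so appealing to it here would be circular in any case. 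The missing idea in your proposal is the Hilbert--Schmidt intertwiner argument; without it the proof does not close.
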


\begin{proof}
First, note that $AP_{\overline{\alpha}\to\overline{\alpha}}\op\subseteq A'\cap M_\alpha$ in the obvious way
$$
\begin{tikzpicture} [baseline = -.1cm]
	\draw [thick, \cupcolor] (-1.4, .8) arc(180:360: .3cm);
	\draw [thick, \alphacolor] (-1.4, 0)--(.8, 0);
	\filldraw[thick, unshaded] (-.4, -.4) -- (-.4, .4) -- (.4, .4) -- (.4, -.4) -- (-.4, -.4);
	\node at (0,0) {$x$};
\end{tikzpicture}
=
\begin{tikzpicture} [baseline = -.1cm]
	\draw [thick, \cupcolor] (-.3, .8) arc(180:360: .3cm);
	\draw [thick, \alphacolor] (-.8, 0)--(.8, 0);
	\filldraw[thick, unshaded] (-.4, -.4) -- (-.4, .4) -- (.4, .4) -- (.4, -.4) -- (-.4, -.4);
	\node at (0,0) {$x$};
\end{tikzpicture}
=
\begin{tikzpicture} [baseline = -.1cm]
	\draw [thick, \cupcolor] (.8, .8) arc(180:360: .3cm);
	\draw [thick, \alphacolor] (-.8, 0)--(.8, 0);
	\filldraw[thick, unshaded] (-.4, -.4) -- (-.4, .4) -- (.4, .4) -- (.4, -.4) -- (-.4, -.4);
	\node at (0,0) {$x$};
\end{tikzpicture}
$$
where we need the ``op" since multiplication in the GJSW picture happens in the opposite order (see Note \ref{note:ReverseMultiplication}).

If $A' \cap M_{\alpha}$ were larger than $AP_{\overline{\alpha}\to\overline{\alpha}}\op$ then by looking at the orthogonal compliment of $AP_{\overline{\alpha}\to\overline{\alpha}}\op$ in $L^{2}(M_{\alpha})$, there is a nonzero vector $\xi \in \ell^{2}(\N) \otimes \ell^{2}(\N)$ with $((s+s^{*}) \otimes 1)\xi = (1 \otimes s+s^{*})\xi$.  Viewing $\ell^{2}(\N) \otimes \ell^{2}(\N)$ as the Hilbert Schmidt operators on $\ell^{2}(\N)$ this means $\xi(s+s^{*}) = (s+s^{*})\xi$ which is impossible since $s+s^{*}$ has no eigenvalues.
\end{proof}

We will realize $M_{0}$ as a unital subalgebra of $M_{\alpha}$ via the map $\iota_{\alpha}$.

\begin{lem} \label{lem:RelativeCommutant}
$M_{0}' \cap M_{\alpha} = P_{\overline{\alpha}\to\overline{\alpha}}\op\cong P_{\alpha\to \alpha}$ as an algebra.
\end{lem}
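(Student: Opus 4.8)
The plan is to squeeze $M_{0}'\cap M_{\alpha}$ between the evident subalgebra $P_{\overline{\alpha}\to\overline{\alpha}}\op$ and the larger relative commutant $A'\cap M_{\alpha}=A\,P_{\overline{\alpha}\to\overline{\alpha}}\op$ computed in Lemma \ref{lem:shift}, and then to argue that commuting with \emph{all} of $\iota_{\alpha}(M_{0})$, rather than just with $A$, kills the $A$--component. First I would record the inclusion $P_{\overline{\alpha}\to\overline{\alpha}}\op\subseteq M_{0}'\cap M_{\alpha}$: this is the same picture as the ``obvious'' inclusion in the proof of Lemma \ref{lem:shift}, since an element sitting on the bottom $\alpha$--strands and an element $\iota_{\alpha}(x)$, whose box sits on the ``middle'' strands, occupy disjoint regions of the GJSW diagram, hence $\star_{\alpha}$--commute; the reversal of multiplication order (Note \ref{note:ReverseMultiplication}) is precisely why the opposite algebra appears, and the asserted isomorphism $P_{\overline{\alpha}\to\overline{\alpha}}\op\cong P_{\alpha\to\alpha}$ is rotation of boxes by $\pi$, which reverses multiplication.

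Next I would note that the cup element $\cup_{\alpha}$ is $\iota_{\alpha}$ applied to the $c$--colored cup regarded inside $F_{0}(P_{\bullet})$, so $A=W^{*}(\cup_{\alpha})\subseteq\iota_{\alpha}(M_{0})$, and therefore Lemma \ref{lem:shift} gives $M_{0}'\cap M_{\alpha}\subseteq A'\cap M_{\alpha}=A\,P_{\overline{\alpha}\to\overline{\alpha}}\op$. Since $A$ and $P_{\overline{\alpha}\to\overline{\alpha}}\op$ commute and are in general position (one checks on diagrams that their traces multiply), any $z\in M_{0}'\cap M_{\alpha}$ can be written $z=\sum_{j}p_{j}a_{j}$ with $\{p_{j}\}$ a matrix--unit basis of the finite--dimensional algebra $P_{\overline{\alpha}\to\overline{\alpha}}\op$ and $a_{j}\in A$; the remaining point is to show that every $a_{j}$ is a scalar, after which the first inclusion forces $z\in P_{\overline{\alpha}\to\overline{\alpha}}\op$ and we are done.

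For that last step I would use the explicit model $L^{2}(M_{\alpha})\cong L^{2}(A)\oplus\bigl(\ell^{2}(\N)\otimes\cH\otimes\ell^{2}(\N)\bigr)$ established just before Lemma \ref{lem:shift}, on which $\pi\bigl(\tfrac{\cup_{\alpha}-1}{\sqrt{\delta_{c}}}\bigr)$ acts as $(s+s^{*})\otimes1\otimes1$ and $\rho\bigl(\tfrac{\cup_{\alpha}-1}{\sqrt{\delta_{c}}}\bigr)$ as $1\otimes1\otimes(s+s^{*})$ on the second summand. Because $z$ commutes with $\pi(\iota_{\alpha}(M_{0}))$ (in particular with $\pi(\cup_{\alpha})$) and automatically with the whole right algebra $\rho(M_{\alpha})$ (in particular with $\rho(\cup_{\alpha})$), and $s+s^{*}$ has uniform multiplicity one, $z$ restricted to the second summand lies in $L^{\infty}[-2,2]\,\overline{\otimes}\,\End(\cH)\,\overline{\otimes}\,L^{\infty}[-2,2]$. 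The crux is then to show that the planar--algebra action of $\iota_{\alpha}(M_{0})$ on the middle factor $\cH$ --- which by Lemmas \ref{lem:decomposition} and \ref{lem:unitary} is assembled from the box spaces $P_{\overline{\alpha}b\alpha}$ and the spaces $V_{\gamma}$ --- is ``standard'', so that the operators on $\cH$ commuting with $\iota_{\alpha}(M_{0})$ and actually lying in $M_{\alpha}$ are exactly those implemented by $P_{\overline{\alpha}\to\overline{\alpha}}\op$ acting on the $\alpha$--strands; equivalently, one shows a nonscalar element of $A$ cannot lie in $M_{0}'\cap M_{\alpha}$ because $\cup_{\alpha}$ fails to commute with $\iota_{\alpha}(y)$ for $y$ a box carrying a single $c$--string on top (the two $c$--strings of the cup close up with $y$ in one multiplication order but not the other). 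This ``standardness'' verification, which I would carry out following \cite{MR2645882} and \cite{1202.1298} essentially verbatim with extra bookkeeping for the $\wedge_{\alpha}$ and $\star_{\alpha}$ conventions, is the main obstacle; granting it, each $a_{j}$ is scalar and $z\in P_{\overline{\alpha}\to\overline{\alpha}}\op$, completing the proof.
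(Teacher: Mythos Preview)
Your overall strategy---establishing $P_{\overline{\alpha}\to\overline{\alpha}}\op\subseteq M_{0}'\cap M_{\alpha}\subseteq A'\cap M_{\alpha}=A\,P_{\overline{\alpha}\to\overline{\alpha}}\op$ via Lemma~\ref{lem:shift} and then killing the $A$--component---is exactly the paper's route. The difference lies entirely in how the last step is executed, and there your argument is left incomplete at precisely the point where the paper's is concrete.

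The paper does not invoke any ``standardness'' of the $\iota_{\alpha}(M_{0})$--action on $\cH$, nor does it try to extract coefficients from a tensor decomposition $A\otimes P_{\overline{\alpha}\to\overline{\alpha}}\op$. Instead it expands $x\in A\,P_{\overline{\alpha}\to\overline{\alpha}}\op$ as an $\ell^{2}$--sum
\[
x=\sum_{n\geq 0}\delta_{c}^{-n/2}\,x_{n}\star(\text{$n$ cups}),\qquad x_{n}\in P_{\overline{\alpha}\to\overline{\alpha}}\op,
\]
and then commutes $x$ with a \emph{single} explicit element of $\iota_{\alpha}(M_{0})\setminus A$: the nested double $c$--cup $z$. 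A direct diagrammatic computation shows $z\star x-x\star z$ is an orthogonal sum whose $n$--th piece is $(x_{n}+\delta_{c}^{-1/2}x_{n+1})\star(l_{n}-r_{n})$, forcing the recursion $x_{n+1}=-\sqrt{\delta_{c}}\,x_{n}$ for $n\geq 1$. Since $\delta_{c}>1$, this is incompatible with square--summability unless $x_{n}=0$ for all $n\geq 1$, so $x=x_{0}\in P_{\overline{\alpha}\to\overline{\alpha}}\op$.

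By contrast, your proposed finish has a genuine gap. The observation that a nonscalar element of $A$ fails to commute with $\iota_{\alpha}(M_{0})$ only yields $A\cap(M_{0}'\cap M_{\alpha})=\C$; it does not by itself give $(A\,P_{\overline{\alpha}\to\overline{\alpha}}\op)\cap(M_{0}'\cap M_{\alpha})=P_{\overline{\alpha}\to\overline{\alpha}}\op$. To pass from one to the other you would need to know that the matrix units $p_{j}$ of $P_{\overline{\alpha}\to\overline{\alpha}}\op$ are linearly independent over the commutant computation---concretely, that from $\sum_{j}[a_{j},\iota_{\alpha}(y)]\,p_{j}=0$ in $M_{\alpha}$ one can conclude each $[a_{j},\iota_{\alpha}(y)]=0$. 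This is plausible but not free, and your ``standardness'' verification is, as you acknowledge, the unfinished obstacle. The paper's recursion argument simply sidesteps this issue.
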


\begin{proof}

If $x \in M_{0}' \cap M_{\alpha}$ then by Lemma \ref{lem:shift}, $x \in AP_{\overline{\alpha}\to\overline{\alpha}}\op$, so we write $x$ as an $\ell^{2}$ sum
$$
x = \sum_{n=0}^{\infty}\frac{1}{\sqrt{\delta_{c}^{n}}}\cdot x_{n}
\star
\,
\begin{tikzpicture} [baseline = -.1cm]
	\draw [thick, \cupcolor] (-.8, .4) arc(180:360: .2cm);
	\draw [thick, \cupcolor] (.8, .4)  arc(0:-180: .2cm);
	\node at (0, .3) {{\scriptsize{$\cdots$}}};
	\node at (0,.1) {{\scriptsize{$n$}}};
	\draw [thick, \alphacolor] (-1, -.2)--(1, -.2);
	\draw [thick] (-1, -.4) -- (-1, .4) -- (1, .4) -- (1, -.4) -- (-1, -.4);
\end{tikzpicture}
\, ,
$$
where $x_{n} \in P_{\overline{\alpha}\to\overline{\alpha}}\op$ for all $n$.
Consider the following elements of $M_{0}$:
$$
z =
\begin{tikzpicture} [baseline = -.1cm]
	\draw [thick, \cupcolor] (-.3, .4) arc(180:360: .3cm);
	\draw [thick, \cupcolor] (-.15, .4) arc(180:360: .15cm);
	\draw [thick, \alphacolor] (-.4, -.1)--(.4, -.1);
	\draw[thick] (-.4, -.4) -- (-.4, .4) -- (.4, .4) -- (.4, -.4) -- (-.4, -.4);
\end{tikzpicture} \,,
l_{n}
=  \frac{1}{\sqrt{\delta_{c}^{n}}}\,
\begin{tikzpicture} [baseline = -.1cm]
	\draw [thick, \cupcolor] (-.8, .4) arc(180:360: .2cm);
	\draw [thick, \cupcolor] (.8, .4)  arc(0:-180: .2cm);
	\draw [thick, \cupcolor] (-1.8, .4) arc(180:360: .4cm);
	\draw [thick, \cupcolor](-1.6, .4) arc(180:360: .2cm);
	\draw [thick, \alphacolor] (-2, -.2)--(1, -.2);
	\node at (0, .3) {{\scriptsize{$\cdots$}}};
	\node at (0,.1) {{\scriptsize{$n$}}};
	\draw [thick] (-2, -.4) -- (-2, .4) -- (1, .4) -- (1, -.4) -- (-2, -.4);
\end{tikzpicture}\,,
\text{ and }
r_{n} = \frac{1}{\sqrt{\delta_{c}^{n}}} \,
\begin{tikzpicture} [baseline = -.1cm]
	\draw [thick, \cupcolor] (.4, .4) arc(-180:0: .2cm);
	\draw [thick, \cupcolor] (-.8, .4)  arc(-180:0: .2cm);
	\draw [thick, \cupcolor] (1, .4) arc(180:360: .4cm);
	\draw [thick, \cupcolor](1.2, .4) arc(180:360: .2cm);
	\draw [thick, \alphacolor] (2, -.2)--(-1, -.2);
	\node at (0, .3) {{\scriptsize{$\cdots$}}};
	\node at (0,.1) {{\scriptsize{$n$}}};
	\draw [thick] (2, -.4) -- (2, .4) -- (-1, .4) -- (-1, -.4) -- (2, -.4);
\end{tikzpicture}\,.
$$
By a direct diagrammatic computation, $z\star x - x\star z$ is the orthogonal sum
$$
\sum_{n=1}^{\infty} (x_{n} + \frac{1}{\sqrt{\delta_{c}}}\cdot x_{n+1})\star(l_{n} - r_{n}).
$$
For this to be zero, we must have $x_{n+1} = -\sqrt{\delta_{c}}\cdot x_{n}$ for $n \geq 1$.  Since the sum for $x$ must be $\ell^{2}$, this implies $x_{n} = 0$ for $n \geq 1$, i.e., $x \in P_{\overline{\alpha}\to\overline{\alpha}}\op$.

Obviously the rotation by 180 degrees gives the isomorphism $P_{\overline{\alpha}\to\overline{\alpha}}\op\cong P_{\alpha\to \alpha}$.
\end{proof}

\begin{proof} [Proof of Theorem \ref{thm:Factor}]
If $x$ were in the center of $M_{\alpha}$ then we know that $x \in P_{\overline{\alpha}\to\overline{\alpha}}\op$.  The element $x$ must commute with
$$
\begin{tikzpicture}[baseline = -.1cm]
	\draw [thick, \alphacolor] (-.4, .1) arc(-90:0: .3cm);
	\draw [thick, \alphacolor] (.4, .1) arc(-90:-180: .3cm);
	\draw[thick] (-.4, -.4) -- (-.4, .4) -- (.4, .4) -- (.4, -.4) -- (-.4, -.4);
\end{tikzpicture}\,,
$$
giving the equation
$$
\begin{tikzpicture} [baseline = -.1cm]
	\draw[thick, \alphacolor]  (-1,0) -- (-.4, 0) arc(-90:0: .3cm) -- (-.1,.4);
	\draw[thick, \alphacolor] (1, 0) -- (.4, 0) arc(-90:-180: .3cm)--(.1,.4);
	\draw[thick] (-1, -.4) -- (-1, .4) -- (1, .4) -- (1, -.4) -- (-1, -.4);
	\draw[thick, unshaded] (-.8, -.2) -- (-.8, .2) -- (-.4, .2) -- (-.4, -.2) -- (-.8, -.2);
	\node at (-.6, 0) {$x$};
\end{tikzpicture}
=
\begin{tikzpicture} [baseline = -.1cm]
	\draw[thick, \alphacolor]  (-1,0) -- (-.4, 0) arc(-90:0: .3cm) -- (-.1,.4);
	\draw[thick, \alphacolor] (1, 0) -- (.4, 0) arc(-90:-180: .3cm)--(.1,.4);
	\draw[thick] (-1, -.4) -- (-1, .4) -- (1, .4) -- (1, -.4) -- (-1, -.4);
	\draw[thick, unshaded] (.8, -.2) -- (.8, .2) -- (.4, .2) -- (.4, -.2) -- (.8, -.2);
	\node at (.6, 0) {$x$};
\end{tikzpicture}  \, .
$$
Joining the leftmost strings to the top implies that $x$ is a scalar multiple of the identity.
\end{proof}

\subsection{Jones' Towers Associated to $M_{0}$}\label{sec:towers}

If $\alpha$ and $\beta$ are in $\Lambda$ such that $\alpha = \beta\gamma $ with $\gamma \in \Lambda$ then we have a unital, trace-preserving inclusion $\iota: M_{\beta} \rightarrow M_{\alpha}$ given by
$$
\iota(x) =
\begin{tikzpicture}[baseline = -.1cm]
	\draw (0,0)--(0,.8);
	\draw (-.8,0)--(.8,0);
	\draw (-.8,-.6)--(.8,-.6);
	\filldraw[unshaded,thick] (-.4,.4)--(.4,.4)--(.4,-.4)--(-.4,-.4)--(-.4,.4);
	\node at (0,-.75) {{\scriptsize{$\gamma$}}};
	\node at (-.6,.2) {{\scriptsize{$\beta$}}};
	\node at (.6,.2) {{\scriptsize{$\beta$}}};
	\node at (0,0) {$x$};
\end{tikzpicture}\,.
$$
When we write $M_{\beta} \subset M_{\alpha}$ we mean that $M_{\beta}$ is included into $M_{\alpha}$ in the manner described above.  We have the following theorem:

\begin{thm} \label{thm:tower}
The following is a Jones' tower of factors:
$$
M_{0} \subset M_{\alpha} \subset M_{\alpha\overline{\alpha}} \subset \cdots \subset M_{(\alpha\overline{\alpha})^n\alpha} \subset M_{(\alpha\overline{\alpha})^{n+1}} \subset \cdots .
$$
Moreover $[M_{\alpha}:M_{0}] = \delta_{\alpha}^{2}$.
\end{thm}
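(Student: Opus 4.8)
The plan is to isolate a single ``basic construction'' statement and then iterate it. Fix $\alpha=\beta\gamma$ and regard $M_\beta\subset M_\alpha$ via the inclusion $\iota$ of Section~\ref{sec:towers}. The first step is to write down the trace-preserving conditional expectation $E_{M_\beta}\colon M_\alpha\to M_\beta$: on $\Gr_\alpha(P_\bullet)=\bigoplus_\mu P_{\overline\alpha\mu\alpha}$ it is $\delta_\gamma^{-1}$ times the Temperley--Lieb tangle that closes off the $\gamma$- and $\overline\gamma$-strands, landing in $\Gr_\beta(P_\bullet)$. That $E_{M_\beta}$ is unital, $*$-preserving, positive, $M_\beta$-bimodular and satisfies $\tr\circ E_{M_\beta}=\tr$ is immediate from the graphical calculus together with the description of the graded traces in Section~\ref{sec:graded}; normality is automatic since $E_{M_\beta}$ is the restriction to $M_\alpha$ of the orthogonal projection $L^2(M_\alpha)\to L^2(M_\beta)$.

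The second step is the basic construction itself. Inside $M_{\alpha\overline\gamma}$ one exhibits the candidate Jones projection $e_\gamma$: the suitably rescaled colored Temperley--Lieb element of $\Gr_{\alpha\overline\gamma}(P_\bullet)$ that pairs the ``inner'' $\gamma$ and $\overline\gamma$ strands by a cup on top and a cap on the bottom, the rescaling being forced by $e_\gamma=e_\gamma^2$ and arranged so that $E_{M_\alpha}(e_\gamma)=\delta_\gamma^{-2}$. One then checks, purely diagrammatically and exactly as in \cite{MR2645882} (with the unshaded bookkeeping of \cite{1202.1298} and \cite{1208.2933}, and minding the opposite multiplication of Note~\ref{note:ReverseMultiplication}): (a) $e_\gamma=e_\gamma^*=e_\gamma^2$; (b) $e_\gamma\,\iota(x)\,e_\gamma=\iota(E_{M_\beta}(x))\,e_\gamma$ for all $x\in M_\alpha$, and $\{e_\gamma\}'\cap M_\alpha=M_\beta$; (c) $(M_\alpha\cup\{e_\gamma\})''=M_{\alpha\overline\gamma}$, because every spanning diagram of $\Gr_{\alpha\overline\gamma}(P_\bullet)$ can be cut along a copy of $e_\gamma$ and rewritten as a finite sum of products $\iota(a)\,e_\gamma\,\iota(b)$ with $a,b$ ranging over an amplification of $M_\alpha$, so that the $*$-algebra generated by $M_\alpha$ and $e_\gamma$ is weakly dense. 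Since $M_\beta$, $M_\alpha$ and $M_{\alpha\overline\gamma}$ are all $II_1$ factors by Theorem~\ref{thm:Factor}, properties (a)--(c) say precisely that $M_\beta\subset M_\alpha\subset M_{\alpha\overline\gamma}$ is an instance of Jones' basic construction with Jones projection $e_\gamma$; in particular the graded trace on $M_{\alpha\overline\gamma}$ is the Markov trace of $M_\beta\subset M_\alpha$, and $[M_\alpha:M_\beta]=E_{M_\alpha}(e_\gamma)^{-1}=\delta_\gamma^2$.

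Finally one assembles the tower by iterating. Applying the above with $\beta=\emptyset$ and $\gamma=\alpha$ gives $[M_\alpha:M_0]=\delta_\alpha^2$ and identifies $M_{\alpha\overline\alpha}$ with the basic construction of $M_0\subset M_\alpha$; applying it with $\beta=\alpha$ and $\gamma=\overline\alpha$ (note $\delta_{\overline\alpha}=\delta_\alpha$) identifies $M_{(\alpha\overline\alpha)\alpha}$ with the basic construction of $M_\alpha\subset M_{\alpha\overline\alpha}$; and in general, for each $n\ge 0$, the inclusions $M_{(\alpha\overline\alpha)^n}\subset M_{(\alpha\overline\alpha)^n\alpha}$ and $M_{(\alpha\overline\alpha)^n\alpha}\subset M_{(\alpha\overline\alpha)^{n+1}}$ are the two basic constructions provided by the lemma with $\gamma=\alpha$ and $\gamma=\overline\alpha$. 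Hence every term of the displayed sequence is a $II_1$ factor, each is the Jones basic construction of the preceding inclusion, and all indices equal $\delta_\alpha^2$, which is the assertion. I expect the main obstacle to be step (c): verifying that adjoining the single projection $e_\gamma$ to $M_\alpha$ already recovers all of $M_{\alpha\overline\gamma}$, i.e. the combinatorial ``cut along $e_\gamma$'' density argument in the graded algebras; the remaining points are bookkeeping with the graphical calculus.
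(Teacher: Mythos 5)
Your skeleton -- conditional expectation, Jones projection $e_\gamma$, idempotence, the pushdown $e_\gamma\,\iota(x)\,e_\gamma=\iota(E_{M_\beta}(x))\,e_\gamma$, iteration up the tower -- matches the paper's. The genuine difference, and the genuine gap, is your step (c).

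The paper does \emph{not} prove density of $M_\alpha e_0 M_\alpha$ in the big algebra $M_{\alpha\overline\alpha}$. It only uses that $M_\alpha\cup M_\alpha e_0 M_\alpha$ generates the a priori smaller algebra $W^*(M_\alpha,e_0)$ (the easy direction), checks the pushdown identity $z e_0=\delta_\alpha^2 E_{M_\alpha}(z e_0)\,e_0$ on that generating set, deduces that $x\mapsto\delta_\alpha x e_0$ is a surjective isometry $M_\alpha\to W^*(M_\alpha,e_0)\,e_0$, so $W^*(M_\alpha,e_0)$ is the basic construction of $M_0\subset M_\alpha$ with $[M_\alpha:M_0]=\tr(e_0)^{-1}=\delta_\alpha^2$, and then compares indices: the same argument one level up gives $[M_{\alpha\overline\alpha}:M_\alpha]=\delta_\alpha^2$, hence $[M_{\alpha\overline\alpha}:W^*(M_\alpha,e_0)]=1$ and the two agree.

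Your ``cut along $e_\gamma$'' claim asserts the stronger density directly, and it does not hold as phrased. A single box $x\in P_{\gamma\overline\alpha\,\mu\,\alpha\overline\gamma}$ has its $\gamma$- and $\overline\gamma$-boundary strands attached to the box, not to one another, so there is no copy of $e_\gamma$ inside $x$ to cut along; the diagram $\iota(a)\wedge e_\gamma\wedge\iota(b)$ has a very particular factored shape that a generic $x$ does not share. This is quantitative: in the $\mu=\emptyset$ graded piece $P_{\gamma\overline\alpha\,\alpha\overline\gamma}\cong\End_\sC(\gamma\otimes\overline\alpha)$, a product $\iota(a)\wedge e_\gamma\wedge\iota(b)$ can only land when $a,b$ lie in the $\mu=\emptyset$ piece $\End_\sC(\alpha)$ of $M_\alpha$, so together with $\iota(\End_\sC(\alpha))$ they span a subspace of dimension at most $(\dim\End_\sC\alpha)^2+\dim\End_\sC\alpha$, which is generically strictly smaller than $\dim\End_\sC(\gamma\otimes\overline\alpha)=\sum_Z\dim\sC(\gamma\otimes\overline\alpha,Z)^2$ for the non-simple generators used here. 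Letting $a,b$ range over an ``amplification of $M_\alpha$'' only takes the products outside $W^*(M_\alpha,e_\gamma)$ and hence says nothing about what that algebra contains. (The density is true a posteriori, by expanding $1=\sum_i m_i e_\gamma m_i^*$ over a Pimsner--Popa basis $\{m_i\}$ for $M_\alpha$ over $M_\beta$, but constructing that basis already presupposes the index computation you are trying to establish.) Step (c) should therefore be replaced by the index comparison.
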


\begin{proof}
This proof closely follows Section 4 in \cite{MR2645882}.  We will show that $M_{\alpha\overline{\alpha}}$ is the basic construction of $M_{0} \subset M_{\alpha}$.  The proof for higher steps in the tower is the same but with heavier notation.  To begin, set
$$
e_{0} =  \frac{1}{\delta_{\alpha}}
\begin{tikzpicture} [baseline = -.1cm]
	\draw [thick, \alphacolor] (-.4, .25) arc(90:-90: .25cm);
	\draw [thick, \alphacolor] (.4, .25) arc(90:270: .25cm);
	\draw[thick] (-.4, -.4) -- (-.4, .4) -- (.4, .4) -- (.4, -.4) -- (-.4, -.4);
\end{tikzpicture}
\in
P^{op}_{\alpha\overline{\alpha}\to \alpha\overline{\alpha}}\subset
M_{\alpha\overline{\alpha}}
$$
then $e_{0}$ is a projection.  It is a straightforward diagrammatic computation to show that if $x \in M_{\alpha}$ then $e_{0}xe_{0} = E_{M_{0}}(x)$ with $E_{M_{0}}$ the trace preserving conditional expectation.

 We now claim that $(M_{\alpha}, e_{0})''$ is a $II_{1}$ factor.  Indeed, if $y$ were in the center of $(M_{\alpha}, e_{0})''$ it would have to commute with $M_{0}$, implying that $y \in P_{\alpha\overline{\alpha}\to\alpha\overline{\alpha}}$.  The element $y$ also has to commute with $M_{\alpha}$, in particular it has to commute with the element
$$
\begin{tikzpicture}[baseline = -.1cm]
	\draw [thick, \alphacolor] (-.4, -.1) -- (.4, -.1);
	\draw [thick, \alphacolor] (-.4, .1) arc(-90:0: .3cm);
	\draw [thick, \alphacolor] (.4, .1) arc(-90:-180: .3cm);
	\node at (0, -.25) {\scriptsize{$\overline{\alpha}$}};
	\draw[thick] (-.4, -.4) -- (-.4, .4) -- (.4, .4) -- (.4, -.4) -- (-.4, -.4);
\end{tikzpicture}\,.
$$
This implies $y$ must be of the form
$$
\begin{tikzpicture} [baseline = 0cm]
	\draw [thick, \alphacolor] (-1, .3) -- (1, .3);
	\node at (0, -.4) {$y$};
	\draw [thick, \alphacolor] (-1, -.4) -- (-.4, -.4);
	\draw [thick, \alphacolor] (.4, -.4) -- (1, -.4);
	\node at (.7, -.6) {\scriptsize{$\overline{\alpha}$}};
	\node at (-.7, -.6) {\scriptsize{$\overline{\alpha}$}};
	\draw[thick] (-.4, -.8) -- (-.4, 0) -- (.4, 0) -- (.4, -.8) -- (-.4, -.8);
\end{tikzpicture}\,,
$$
but commuting with $e_{0}$ forces this diagram to be a scalar multiple of the identity.

One then observes that if $z \in (M_{\alpha}, e_{0})''$, then $ze_{0} = (\delta^{\alpha})^{2}E_{M_{\alpha}}(ze_{0})e_{0}$.  This is done by realizing that as a von Neumann algebra, $(M_{\alpha}, e_{0})''$ is generated by $M_{\alpha}$ and $M_{\alpha}e_{0}M_{\alpha}$ and hence one can assume $z$ is in either of these spaces.  The equality then becomes a straightforward diagrammatic check.  From this, one can deduce that the map $x \mapsto \delta^{u}xe_{0}$ from $M_{\alpha}$ to $(M_{\alpha}, e_{0})''$ is a surjective isometry intertwining $E_{M_{0}}$ on $L^{2}(M_{\alpha})$ and left multiplication by $e_{0}$.  From this, we deduce that $(M_{\alpha}, e_{0})''$ is the basic construction of $M_{0} \subset M_{\alpha}$ and evaluating $[M_{\alpha}:M_{0}]$ is a matter of calculating the trace of $e_{0}$.

The same arguments applied to $M_{\alpha} \subset M_{\alpha\overline{\alpha}}$ implies that $[M_{\alpha\overline{\alpha}}: M_{\alpha}] = (\delta^{\alpha})^{2}$ hence $[M_{\alpha\overline{\alpha}}: (M_{\alpha}, e_{0})''] = 1$ i.e. $M_{\alpha\overline{\alpha}} = (M_{\alpha}, e_{0})''$.
\end{proof}

As an aside, since $M_{0}' \cap M_{\beta} = P_{\beta\to\beta}$ for any $\beta$, it follows that the sequence of vector spaces $P_{0}, \, P_{\alpha\to\alpha}, \, P_{\alpha\overline{\alpha}\to\alpha\overline{\alpha}},\cdots$ forms a subfactor planar algebra.

\subsection{Some bifinite bimodules over $M$} \label{sec:MoreBimodules}

\begin{nota}
We now use the notation $M$ instead of $M_0$ as it makes the rest of this section easier on the eyes.
\end{nota}

In this subsection and the next, we define a category $\sC_{bim}$ of bifinite bimodules over $M$.  To begin, we use $P_\bullet$ to construct another von Neumann algebra $\cM_{\alpha, \beta}$ which contains the factors $M_{\alpha}$ and $M_{\beta}$ as corners/cut-downs.  Let $\Gr_{\alpha, \beta}(P) = \bigoplus_{\gamma \in \Lambda} P_{\overline{\alpha}\gamma\beta}$.  If $x \in P_{\overline{\alpha}\gamma\beta}$, we view $x$ as
$$
\begin{tikzpicture}[baseline=-.1cm]
	\draw [thick, \alphacolor]  (-.8,0)--(.4,0);
	\draw [thick, \betacolor] (.4, 0)--(.8, 0);
	\draw (0,.4)--(0,.8);
	\draw[thick, unshaded] (-.4, -.4) -- (-.4, .4) -- (.4, .4) -- (.4, -.4) -- (-.4, -.4);
	\node at (0, 0) {$x$};
	\node at (.2,.6) {\scriptsize{$\gamma$}};
\end{tikzpicture}\,.
$$
where we now draw \textcolor{\betacolor}{\betacolor} strings for strings labelled $\beta$.
There is a sesquilinear form $\langle \cdot, \cdot \rangle$ on $Gr_{\alpha, \beta}(P)$ given by
$$
\langle x, y \rangle =
\begin{tikzpicture} [baseline = .3cm]
	\draw [thick, \alphacolor] (-.4, 0) arc(90:270: .4cm) -- (1.6, -.8) arc(-90:90: .4cm);
	\draw [thick, \betacolor]( .4, 0)--(.8, 0);
	\draw (0,.4)--(0,.8);
	\draw (1.2,.4)--(1.2,.8);
	\draw[thick, unshaded] (-.4,.8) -- (-.4, 1.6) -- (1.6,1.6) -- (1.6,.8) -- (-.4,.8);
	\draw[thick, unshaded] (-.4, -.4) -- (-.4, .4) -- (.4, .4) -- (.4, -.4) -- (-.4, -.4);
	\draw[thick, unshaded] (.8, -.4) -- (.8, .4) -- (1.6, .4) -- (1.6, -.4) -- (.8, -.4);
	\node at (0, 0) {$x$};
	\node at (1.2, 0) {$y^{*}$};
	\node at (.6, 1.2) {$\sum CTL$};
\end{tikzpicture} \, .
$$
As a vector space, $\Gr_{\alpha, \alpha}(P_\bullet) = \Gr_{\alpha}(P_\bullet)$.  For $\alpha \neq \beta$, we form the $*$-algebra $\cG_{\alpha, \beta}$ which is generated by the vector spaces $\Gr_{\alpha}(P_\bullet)$, $\Gr_{\beta}(P_\bullet)$, $\Gr_{\alpha, \beta}(P_\bullet)$ and $\Gr_{\beta, \alpha}(P_\bullet)$, under the multiplication
$$
\begin{tikzpicture}[baseline=-.1cm]
	\draw (-.8,0)--(.8,0);
	\draw (0,.4)--(0,.8);
	\draw[thick, unshaded] (-.4, -.4) -- (-.4, .4) -- (.4, .4) -- (.4, -.4) -- (-.4, -.4);
	\node at (0, 0) {$x$};
	\node at (-.6, .2) {\scriptsize{$\kappa$}};
	\node at (.2,.6) {\scriptsize{$\gamma$}};
	\node at (.6, .2) {\scriptsize{$\theta$}};
\end{tikzpicture}
\wedge
\begin{tikzpicture}[baseline=-.1cm]
	\draw (-.8,0)--(.8,0);
	\draw (0,.4)--(0,.8);
	\draw[thick, unshaded] (-.4, -.4) -- (-.4, .4) -- (.4, .4) -- (.4, -.4) -- (-.4, -.4);
	\node at (0, 0) {$x$};
	\node at (-.6, .2) {\scriptsize{$\omega$}};
	\node at (.2,.6) {\scriptsize{$\gamma'$}};
	\node at (.6, .2) {\scriptsize{$\chi$}};
\end{tikzpicture}
= \delta_{\omega,\theta}
\begin{tikzpicture} [baseline = -.1cm]
	\draw (0,.4)--(0,.8);
	\draw (1.2,.4)--(1.2,.8);
	\draw (-.8,0)--(2,0);
	\draw[thick, unshaded] (-.4, -.4) -- (-.4, .4) -- (.4, .4) -- (.4, -.4) -- (-.4, -.4);
	\draw[thick, unshaded] (.8, -.4) -- (.8, .4) -- (1.6, .4) -- (1.6, -.4) -- (.8, -.4);
	\node at (0, 0) {$x$};
	\node at (1.2, 0) {$y^{*}$};
	\node at (-.6,.2) {\scriptsize{$\kappa$}};
	\node at (.2,.6) {\scriptsize{$\gamma$}};
	\node at (.6,.2) {\scriptsize{$\theta$}};
	\node at (1.4,.6) {\scriptsize{$\gamma'$}};
	\node at (1.8,.2) {\scriptsize{$\chi$}};
\end{tikzpicture}\,,
 $$
 where $\kappa,\theta,\omega,\chi\in\{\alpha,\beta\}$. There is also a (non-normalized) trace on $\cG_{\alpha, \beta}$ given by
$$
\tr(x) =
\begin{tikzpicture}[baseline=.3cm]
	\draw (0,0)--(0,.8);
	\draw (.4,0) arc (90:-90:.4cm) -- (-.4,-.8) arc (270:90:.4cm);
	\filldraw[unshaded,thick] (-.4,.4)--(.4,.4)--(.4,-.4)--(-.4,-.4)--(-.4,.4);
	\draw[thick, unshaded] (-.7, .8) -- (-.7, 1.6) -- (.7, 1.6) -- (.7,.8) -- (-.7, .8);
	\node at (0,0) {$x$};
	\node at (0,1.2) {$\Sigma CTL$};
\end{tikzpicture}
$$
if $x \in \Gr_{\alpha}(P_\bullet)$ or $\Gr_{\beta}(P_\bullet)$, and is zero otherwise.  Just as in the case for the algebras $\Gr_{\alpha}(P_\bullet)$ one can show (by orthogonalizing) that the trace is positive definite and that multiplication is bounded.  Therefore, one can form the von Neumann algebra $\cM_{\alpha, \beta} = \cG_{\alpha, \beta}''$ acting on $L^{2}(\cG_{\alpha, \beta})$ by left and right multiplication.  Set
$$
p_{\alpha} =
\begin{tikzpicture} [baseline = -.1cm]
	\draw [thick, \alphacolor] (-.4, 0) -- (.4, 0);
	\node at (0, .2) {\scriptsize{$\alpha$}};
	\draw[thick] (-.4,.4)--(.4,.4)--(.4,-.4)--(-.4,-.4)--(-.4,.4);
\end{tikzpicture}
\text{ and }
p_{\beta} =
\begin{tikzpicture} [baseline = -.1cm]
	\draw [thick, \betacolor] (-.4, 0) -- (.4, 0);
	\node at (0, .2) {\scriptsize{$\beta$}};
	\draw[thick] (-.4,.4)--(.4,.4)--(.4,-.4)--(-.4,-.4)--(-.4,.4);
\end{tikzpicture} \, .
$$
We see that $p_{\gamma}\cM_{\alpha, \beta}p_{\gamma} = M_{\gamma}$ (with the non-normalized trace) for $\gamma = \{\alpha,\beta\}$ so $L^{2}(\cM_{\alpha, \beta})$ is naturally an $M_{\alpha} \oplus M_{\beta}$ bimodule.  Hence we may consider $L^{2}(\cM_{\alpha, \beta})$ as an $M-M$ bimodule via the embedding $x \mapsto \iota_{\alpha}(x) \oplus \iota_{\beta}(x) \in M_{\alpha} \oplus M_{\beta}$.  Under this identification, we define $\cH_{\alpha, \beta}$ to be the the $M-M$ bimodule
$$
\cH_{\alpha, \beta} = p_{\alpha}\wedge L^{2}(\cM_{\alpha, \beta})\wedge p_{\beta}.
$$

As mentioned above, we can give $\cG_{\alpha,\beta}$ an orthogonalized inner product and multiplication exactly as in Section \ref{sec:graded}.  We use the notation $\cF_{\alpha,\beta}$ to denote the vector space $\cG_{\alpha,\beta}$ with the orthogonalized inner product and multiplication $\star$.  The vector space $p_{\alpha} \star L^{2}(\cF_{\alpha,\beta})\star p_{\beta}$ is naturally an $M-M$ bimodule which is isomorphic to $p_{\alpha}\wedge L^{2}(\cM_{\alpha, \beta})\wedge p_{\beta}$.  Our first lemma is proven by making use of the orthogonal picture:

\begin{lem} \label{lem:central}
The vector space of $M-M$ central vectors of $p_{\alpha} \star L^{2}(\cF_{\alpha,\beta}) \star p_{\beta}$ is the vector space $P_{\overline{\alpha} \to \overline{\beta}}$ (rotating the GJS diagrams 90 degrees clockwise).
\end{lem}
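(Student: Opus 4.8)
The plan is to promote the relative‑commutant computation of Lemma~\ref{lem:RelativeCommutant} to the ``off‑diagonal'' corner. Conceptually, if $\Delta\colon M\hookrightarrow M_{\alpha}\oplus M_{\beta}\subseteq\cM_{\alpha,\beta}$ denotes the diagonal embedding $m\mapsto\iota_{\alpha}(m)\oplus\iota_{\beta}(m)$ (note $p_{\alpha}+p_{\beta}=1$ in $\cM_{\alpha,\beta}$ and $p_{\alpha},p_{\beta}\in\Delta(M)'$), then a vector $\xi\in\cH_{\alpha,\beta}=p_{\alpha}L^{2}(\cM_{\alpha,\beta})p_{\beta}$ is $M$--$M$ central iff $\Delta(m)\xi=\xi\Delta(m)$ for all $m$; so the space of central vectors is the $L^{2}$‑closure of $p_{\alpha}\bigl(\Delta(M)'\cap\cM_{\alpha,\beta}\bigr)p_{\beta}$, and the statement is the exact off‑diagonal analogue of $p_{\alpha}(\Delta(M)'\cap\cM_{\alpha,\beta})p_{\alpha}=M'\cap M_{\alpha}=P_{\overline{\alpha}\to\overline{\alpha}}\op$ from Lemma~\ref{lem:RelativeCommutant}. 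I would work throughout in the orthogonalized picture, so that $p_{\alpha}\star L^{2}(\cF_{\alpha,\beta})\star p_{\beta}$ is the $\ell^{2}$‑completion of $\bigoplus_{\gamma\in\Lambda}P_{\overline{\alpha}\gamma\beta}$, a box in $P_{\overline{\alpha}\gamma\beta}$ carrying the $\alpha$‑leg on its lower left, the $\beta$‑leg on its lower right, and the word $\gamma$ on top; the left and right actions of $m\in M=M_{0}$ are $\xi\mapsto\iota_{\alpha}(m)\star\xi$ and $\xi\mapsto\xi\star\iota_{\beta}(m)$.

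First I would dispatch the easy inclusion $P_{\overline{\alpha}\beta}\subseteq\{\text{central vectors}\}$: if $\zeta$ lies in the degree‑$\emptyset$ summand then it has empty word on top, so both $\iota_{\alpha}(m)\star\zeta$ and $\zeta\star\iota_{\beta}(m)$ reduce to plain juxtapositions (no cap in the definition of $\star$ can attach to $\zeta$) in which the box of $m$ is disconnected from the $\alpha$‑ (resp.\ $\beta$‑)arc of $\iota_{\alpha}(m)$ (resp.\ $\iota_{\beta}(m)$); an isotopy sliding that box past the arc identifies the two diagrams, so $\zeta$ is central. Rotating such diagrams $90^{\circ}$ clockwise identifies $P_{\overline{\alpha}\beta}$ with $P_{\overline{\alpha}\to\overline{\beta}}$. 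This is the off‑diagonal version of the ``obvious'' inclusion opening the proof of Lemma~\ref{lem:RelativeCommutant}.

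For the reverse inclusion I would run the two‑step argument of Lemmas~\ref{lem:shift} and~\ref{lem:RelativeCommutant}, testing centrality of a given $\xi$ only against a few distinguished elements of $F_{0}(P_\bullet)\subseteq M_{0}$. Fix the green colour $c$, with $\delta_{c}>1$. \emph{Step 1.} Adapt the $A$--$A$ bimodule decomposition of $L^{2}(M_{\alpha})$ (Lemma~\ref{lem:decomposition}) and the cup model (Lemma~\ref{lem:unitary}) to $p_{\alpha}\star L^{2}(\cF_{\alpha,\beta})\star p_{\beta}$, now with the two abelian algebras generated by $\bluecup_{\alpha}$ acting on the left and $\bluecup_{\beta}$ acting on the right, inducting on the length of the word $\gamma\in\Lambda$ instead of on the number of strands of one colour. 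On the orthogonal complement of the closed $A$--$A$ span of $P_{\overline{\alpha}\beta}$, the operators $\xi\mapsto\bluecup_{\alpha}\star\xi$ and $\xi\mapsto\xi\star\bluecup_{\beta}$ act (after conjugating by the relevant unitaries, and up to finite‑rank perturbations and scalars as in Lemmas~\ref{lem:essential}--\ref{lem:unitary}) as $(s+s^{*})\otimes 1\otimes 1$ and $1\otimes 1\otimes(s+s^{*})$; since $s+s^{*}$ has absolutely continuous spectrum of uniform multiplicity one, no nonzero Hilbert--Schmidt operator intertwines the two copies (this is precisely the argument in the proof of Lemma~\ref{lem:shift}), so a central $\xi$ must have the form $\xi=\sum_{l,r\ge 0}\delta_{c}^{-(l+r)/2}\,\zeta_{l,r}\star(l\text{ green cups on the left},\,r\text{ on the right})$ with each $\zeta_{l,r}\in P_{\overline{\alpha}\beta}$ and $\sum_{l,r}\|\zeta_{l,r}\|^{2}<\infty$. \emph{Step 2.} Test centrality of $\xi$ against the elements $z,l_{n},r_{n}$ of $M_{0}$ built from nested green cups exactly as in the proof of Lemma~\ref{lem:RelativeCommutant}; the same diagrammatic computation of $z\star\xi-\xi\star z$ (and its left--right mirror) yields recursions forcing $\zeta_{l+1,r}=-\sqrt{\delta_{c}}\,\zeta_{l,r}$ and $\zeta_{l,r+1}=-\sqrt{\delta_{c}}\,\zeta_{l,r}$ for $l,r\ge 1$, so $\ell^{2}$‑summability kills every term except $(l,r)=(0,0)$, i.e.\ $\xi=\zeta_{0,0}\in P_{\overline{\alpha}\beta}$. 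Since the central vectors form a closed subspace and all of the above makes sense verbatim for $L^{2}$ vectors (the test elements act boundedly), this gives the claimed identification.

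The difficulty is one of bookkeeping rather than of concept. Because the top word $\gamma$ is an arbitrary word on $\cL$ rather than a string of a single colour, in Step~1 one must peel green cups off both ends of $\gamma$ and organize the $L^{2}$‑decomposition and the creation/annihilation action of $\bluecup_{\alpha},\bluecup_{\beta}$ by total word length (as already flagged in the proof of Lemma~\ref{lem:decomposition}), keeping track of whether the extreme colours of $\gamma$ equal $c$ in order to know whether the relevant operator is $s+s^{*}$ or $s+s^{*}-\delta_{c}^{-1/2}e_{\xi_{0}}$; this is the ``heavier notation'' adaptation of \cite{1202.1298} and \cite{MR2645882}. One must also check in Step~2 that the two recursions jointly leave only the $(0,0)$ term (e.g.\ run the $l$‑recursion at each fixed $r$, then the $r$‑recursion on what survives) and that the surviving $\zeta_{0,0}$ genuinely lies in $P_{\overline{\alpha}\beta}$; both follow from the same diagrammatic identities used in Lemma~\ref{lem:RelativeCommutant}.
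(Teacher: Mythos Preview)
Your approach is essentially the paper's own: decompose $p_{\alpha}\star L^{2}(\cF_{\alpha,\beta})\star p_{\beta}$ as an $A$--$A$ bimodule (with $A$ generated by the green cup in $M_{0}$) so that on the complement of the $A$--$A$ span $W$ of $P_{\overline{\alpha}\beta}$ the left and right cup actions become $(s+s^{*})\otimes 1\otimes 1$ and $1\otimes 1\otimes(s+s^{*})$, then invoke the Hilbert--Schmidt argument of Lemma~\ref{lem:shift} to force central vectors into $W$, and finally repeat the $z,l_{n},r_{n}$ computation of Lemma~\ref{lem:RelativeCommutant} to cut $W$ down to $P_{\overline{\alpha}\beta}$. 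One small simplification: since an element $\zeta\in P_{\overline{\alpha}\beta}$ has no top strings, a green cup placed on its left (via $\iota_{\alpha}$) or on its right (via $\iota_{\beta}$) yields the \emph{same} diagram, so $W$ is parametrized by a single cup-count $n$ rather than a pair $(l,r)$; your two recursions collapse to the single recursion $x_{n+1}=-\sqrt{\delta_{c}}\,x_{n}$ already appearing in Lemma~\ref{lem:RelativeCommutant}.
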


\begin{proof}
As in Section \ref{sec:factor}, we let $A$ be the von Neumann subalgebra of $M$ generated by $\bluecup$ and let $W$ be the $A-A$ bimodule generated by $P_{\overline{\alpha} \to \overline{\beta}}$. With the same approach as \ref{sec:factor}, we see that as an $A-A$ bimodule, we have
$$
\cH_{\alpha, \beta} \cong W \bigoplus \left(\ell^{2}(\N) \otimes \cH \otimes \ell^{2}(\N)\right)
$$
for $\cH$ an auxiliary Hilbert space.  The operator $\bluecup$ acts on the left of the second factor by $(s + s^{*}) \otimes 1 \otimes 1$  and on the right by $1 \otimes 1 \otimes (s + s^{*})$.  Therefore the proof of Lemma \ref{lem:shift} applies here and we see that the $A-A$ central vectors of $L^{2}(\cF_{\alpha, \beta})$ are exactly $W$.  To finish the proof, one repeats the argument in Lemma \ref{lem:RelativeCommutant}.
\end{proof}

\begin{cor} \label{cor:central}
The vector space of $M-M$ central vectors of $p_{\alpha} \wedge L^{2}(\cF_{\alpha,\beta}) \wedge p_{\beta}$ is the vector space $P_{\overline{\alpha} \to \overline{\beta}}$ (rotating the GJS diagrams 90 degrees clockwise).
\end{cor}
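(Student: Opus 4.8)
The plan is to reduce the corollary to Lemma~\ref{lem:central} by transporting that computation along the $M-M$ bimodule isomorphism
$$
p_{\alpha} \star L^{2}(\cF_{\alpha,\beta}) \star p_{\beta} \;\cong\; p_{\alpha} \wedge L^{2}(\cM_{\alpha, \beta}) \wedge p_{\beta} = \cH_{\alpha, \beta}
$$
recorded just before Lemma~\ref{lem:central}, where the right-hand side is the bimodule appearing in the corollary. Concretely, I would first recall that the orthogonalization procedure of Section~\ref{sec:graded}, carried out now for the $*$-algebra $\cG_{\alpha,\beta}$, produces a linear bijection $\Phi$ of the common underlying vector space $\bigoplus_{\gamma\in\Lambda} P_{\overline{\alpha}\gamma\beta}$ which sends $\wedge$ to $\star$, commutes with $*$, and preserves the inner products; hence $\Phi$ extends to a unitary $L^{2}(\cM_{\alpha,\beta}) \to L^{2}(\cF_{\alpha,\beta})$. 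Because $\Phi$ stacks colored Temperley--Lieb ``epi'' boxes only along the \emph{top} of a diagram, while the inclusions $\iota_{\alpha},\iota_{\beta}$ of $M$ insert strands along the \emph{bottom}, $\Phi$ intertwines the two operations; thus this unitary carries the left and right $M$-actions in the $\wedge$-picture to those in the $\star$-picture and restricts to the displayed bimodule isomorphism.

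Next I would observe that $\Phi$ is the identity on the summand $P_{\overline{\alpha}\beta} = P_{\overline{\alpha}\to\overline{\beta}}$ indexed by the empty middle word: such a box has no through-strands on top, and the only element of $\Epi(CTL)$ that can be stacked on zero strands is the empty diagram, so $\Phi(x) = x$ there. Consequently the copy of $P_{\overline{\alpha}\to\overline{\beta}}$ (with the $90^{\circ}$ clockwise rotation of the GJS diagrams) inside $p_{\alpha} \star L^{2}(\cF_{\alpha,\beta}) \star p_{\beta}$ is taken isometrically onto the identical copy of $P_{\overline{\alpha}\to\overline{\beta}}$ inside $\cH_{\alpha,\beta}$. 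Since a bimodule isomorphism sends $M-M$ central vectors to $M-M$ central vectors, Lemma~\ref{lem:central} gives at once that the $M-M$ central vectors of $\cH_{\alpha,\beta}$ are exactly $P_{\overline{\alpha}\to\overline{\beta}}$, which is the assertion of the corollary.

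The content here is essentially bookkeeping and I expect no real obstacle; the one point that genuinely needs attention is the compatibility of identifications --- that the subspace labelled $P_{\overline{\alpha}\to\overline{\beta}}$ in Lemma~\ref{lem:central} and the one named in the corollary are literally the same subspace of $\bigoplus_{\gamma} P_{\overline{\alpha}\gamma\beta}$ rather than merely abstractly isomorphic copies. This is precisely what the observation $\Phi|_{P_{\overline{\alpha}\beta}} = \id$ secures, so once that (together with the routine verification that $\Phi$ is $M-M$ bimodular) is in hand, the corollary follows in one line.
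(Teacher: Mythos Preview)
Your proposal is correct and is essentially the argument the paper has in mind: the corollary is stated without proof precisely because it is the immediate transport of Lemma~\ref{lem:central} along the $M$--$M$ bimodule isomorphism $p_{\alpha}\star L^{2}(\cF_{\alpha,\beta})\star p_{\beta}\cong p_{\alpha}\wedge L^{2}(\cM_{\alpha,\beta})\wedge p_{\beta}$ recorded just before that lemma. Your observation that $\Phi$ is the identity on the $\gamma=\emptyset$ summand (so the two copies of $P_{\overline{\alpha}\to\overline{\beta}}$ literally coincide) is the only point needing a word, and you handle it correctly.
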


For the rest of this section, we use the non-orthogonalized picture for $\cH_{\alpha, \beta}$.  We first write down some straightforward isomorphisms between these bimodules:

 \begin{lem} \label{lem:rotate}
 Let $\alpha, \beta, \gamma \in \Lambda$. Then as $M-M$ bimodules, $ \cH_{\alpha\beta,\gamma}\cong \cH_{\alpha, \overline{\beta}\gamma}$.
\end{lem}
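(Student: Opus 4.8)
The plan is to write down the obvious diagrammatic ``rotation'' identification and then verify the three things one needs: that it is a grading-preserving linear bijection of the underlying pre-Hilbert spaces, that it is isometric, and that it intertwines the two $M$-$M$ actions. Recall that $\cH_{\mu,\nu}$ has as underlying space $p_\mu\wedge\left(\bigoplus_{\delta\in\Lambda}P_{\overline{\mu}\delta\nu}\right)\wedge p_\nu$, drawn in the GJSW picture as a box with a $\overline{\mu}$-strand exiting the left, a $\delta$-strand exiting the top, and a $\nu$-strand exiting the right; the left $M$-action is $\iota_\mu$ applied through the $\overline{\mu}$-strand and the right $M$-action is $\iota_\nu$ applied through the $\nu$-strand. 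Since $\overline{\alpha\beta}=\overline{\beta}\,\overline{\alpha}$, an element of $\cH_{\alpha\beta,\gamma}$ is a box whose left boundary reads $\overline{\beta}\,\overline{\alpha}$ (the $\overline{\beta}$-portion nearer the lower-left corner), with $\delta$ on top and $\gamma$ on the right, and I would define
$$
\Psi\colon \cH_{\alpha\beta,\gamma}\longrightarrow \cH_{\alpha,\overline{\beta}\gamma},\qquad P_{\overline{\beta}\,\overline{\alpha}\,\delta\,\gamma}\ni x\longmapsto \Psi(x)\in P_{\overline{\alpha}\,\delta\,\overline{\beta}\,\gamma},
$$
where $\Psi(x)$ is obtained by the planar rotation that drags the $\overline{\beta}$-portion of the boundary around the bottom of the box so that it now exits on the right, immediately above the $\gamma$-strand; this is the same ``rotating the GJS diagrams'' move already used in Lemma \ref{lem:central}. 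Its inverse drags $\overline{\beta}$ back, so $\Psi$ is a grading-preserving linear bijection between the two dense graded subspaces.

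Next I would check that $\Psi$ is isometric for the $L^2$-inner products, so that it extends to a unitary $\cH_{\alpha\beta,\gamma}\to\cH_{\alpha,\overline{\beta}\gamma}$. In the non-orthogonalized picture $\langle x,y\rangle$ is computed by stacking $x$ over $y^*$, joining the top strands by $\sum CTL$, and closing the side strands around; since the $\overline{\beta}$-portion is a side strand both before and after applying $\Psi$ (it is never a top strand), the $\sum CTL$ sum runs over the same set of diagrams, and applying $\Psi$ to both $x$ and $y$ changes the closed-up diagram only by a planar isotopy, which by sphericality does not change its value. Hence $\langle\Psi x,\Psi y\rangle=\langle x,y\rangle$, and $\Psi$ restricts to the desired unitary between the corners $p_{\alpha\beta}\wedge L^2(\cM_{\alpha\beta,\gamma})\wedge p_\gamma$ and $p_\alpha\wedge L^2(\cM_{\alpha,\overline{\beta}\gamma})\wedge p_{\overline{\beta}\gamma}$.

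Finally I would verify $\Psi$ is $M$-$M$ bilinear by two diagrammatic computations. For the left action, $\iota_{\alpha\beta}(m)$ is the box $m$ sitting above a horizontal $\alpha\beta$-strand, and acting on $x$ glues that strand to the $\overline{\alpha\beta}$-boundary of $x$; because the strand passes below $m$ without touching it, dragging its $\overline{\beta}$-half around the bottom (i.e. applying $\Psi$ to $\iota_{\alpha\beta}(m)\cdot x$) pulls it away from the neighbourhood of $m$ and leaves exactly $\iota_\alpha(m)$ acting through the $\overline{\alpha}$-boundary of $\Psi(x)$, so $\Psi(\iota_{\alpha\beta}(m)\cdot x)=\iota_\alpha(m)\cdot\Psi(x)$. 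For the right action, $\iota_\gamma(m)$ is glued to the $\gamma$-boundary of $x$ while the $\overline{\beta}$-portion is untouched, so after dragging $\overline{\beta}$ around to the right one obtains $m$ acting through the $\gamma$-part of the right boundary of $\Psi(x)$ with the $\overline{\beta}$-part running freely past the box $m$; sliding $\overline{\beta}$ past $m$ (legal, since $m$ does not meet it) puts the picture into the form $\iota_{\overline{\beta}\gamma}(m)$ acting through the $\overline{\beta}\gamma$-boundary of $\Psi(x)$, i.e. $\Psi(x\cdot\iota_\gamma(m))=\Psi(x)\cdot\iota_{\overline{\beta}\gamma}(m)$. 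I expect the only real work to be exactly this bookkeeping---pinning down the GJSW conventions (location of $\star$, direction of the rotation, which strand each copy of $M$ enters through) and checking that the $\overline{\beta}$-strand can be isotoped past $m$ in the last step; there are no analytic subtleties, since boundedness of multiplication and positive-definiteness of the inner products were already established in the construction of $\cM_{\alpha,\beta}$.
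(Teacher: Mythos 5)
Your proposal is correct and takes essentially the same route as the paper: the paper's entire proof consists of displaying the diagrammatic rotation map (dragging the $\beta$-strand around the bottom of the box) and asserting that it is a unitary intertwining the two $M$-actions. You write down the same map $\Psi$ and then carefully spell out the three things the paper leaves implicit---that it is a grading-preserving linear bijection, that isometry follows because $\Psi$ never touches the top strands (so the $\Sigma CTL$ sum is unchanged and the closed-up diagrams differ only by planar isotopy, using sphericality), and that the left and right $M$-actions intertwine by isotoping the $\beta$-strand past the $m$-box which it never meets. These verifications are exactly the ``bookkeeping'' the paper compresses into one line, so you have supplied a correct, if more verbose, version of the same argument.
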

 \begin{proof}
 The map
 $$
\begin{tikzpicture}[baseline=-.1cm]
	\draw (0,0)--(.8,0);
	\draw[thick, \alphacolor] (-.8,.2)--(0,.2);
	\draw[thick, \betacolor] (-.8,-.2)--(0,-.2);
	\draw (0,.4)--(0,.8);
	\draw[thick, unshaded] (-.4, -.4) -- (-.4, .4) -- (.4, .4) -- (.4, -.4) -- (-.4, -.4);
	\node at (0, 0) {$x$};
	\node at (.6, .2) {\scriptsize{$\gamma$}};
\end{tikzpicture}
 \longmapsto
\begin{tikzpicture}[baseline=-.1cm]
	\draw (0,0)--(.8,0);
	\draw[thick, \alphacolor] (-.8,.2)--(0,.2);
	\draw[thick, \betacolor] (-.4,-.2) arc(90:270:.2cm)--(.8,-.6);
	\draw (0,.4)--(0,.8);
	\draw[thick, unshaded] (-.4, -.4) -- (-.4, .4) -- (.4, .4) -- (.4, -.4) -- (-.4, -.4);
	\node at (0, 0) {$x$};
	\node at (.6, .2) {\scriptsize{$\gamma$}};
\end{tikzpicture}
$$
is a unitary operator intertwining the left and right $M$-actions.
\end{proof}

\begin{defn}
By Lemma \ref{lem:rotate}, we now define the bimodule $H_\alpha:=\cH_{\alpha,\emptyset}\cong \cH_{\emptyset,\overline{\alpha}}$. We draw elements of $H_\alpha$ with strings emanating from the bottom instead of the sides
$$
\ColorNBox{\gamma}{x}{\alphacolor}\,,
$$
and the left and right $M$-actions are given by the obvious diagrams. For $\xi\in p_{\alpha}\wedge \cM_{\alpha, \emptyset}\wedge p_{\emptyset}\cong p_{\emptyset} \wedge \cM_{\emptyset,\overline{\alpha}}\wedge p_{\overline{\alpha}}$ and $x,y\in M$,
$$
x\xi y =
\Mbox{}{x}
\,
\ColorNBox{}{\xi}{\alphacolor}
\,
\Mbox{}{y}
\,.
$$
The inner product on $H_\alpha$ is given by
\begin{equation}\label{eqn:InnerProduct}
\langle x, y \rangle_{H_\alpha} =
\begin{tikzpicture} [baseline = .3cm]
	\draw (0,.8)--(0, -.4);
	\draw (1.2,-.4) -- (1.2,.8);
	\draw[thick, \alphacolor] (0, -.4) .. controls ++(270:.4cm) and ++(270:.4cm) .. (1.2,-.4);
	\draw[thick, unshaded] (-.4,.8) -- (-.4, 1.6) -- (1.6,1.6) -- (1.6,.8) -- (-.4,.8);
	\draw[thick, unshaded] (-.4, -.4) -- (-.4, .4) -- (.4, .4) -- (.4, -.4) -- (-.4, -.4);
	\draw[thick, unshaded] (.8, -.4) -- (.8, .4) -- (1.6, .4) -- (1.6, -.4) -- (.8, -.4);
	\node at (0, 0) {$\overline{y^*}$};
	\node at (1.2, 0) {$x$};
	\node at (.6, 1.2) {$\sum CTL$};
	\node at (1.4,-.6) {\scriptsize{$\alpha$}};
	\node at (-.2,-.6) {\scriptsize{$\overline{\alpha}$}};
\end{tikzpicture}
\end{equation}
where $\overline{y^*}$ is the rotation of $y^*$ by 180 degrees.
\end{defn}

Our goal will now be to show that $H_\alpha \otimes_M H_\beta\cong H_{\alpha\beta}$. In diagrams:
$$
\ColorNBox{}{}{\alphacolor}
\,
\ColorNBox{}{}{\betacolor}
\leftrightarrow
\begin{tikzpicture}[baseline=-.1cm]
	\draw (0,0)--(0,.8);
	\draw[thick,\alphacolor] (-.2,-.8)--(-.2,0);
	\draw[thick,\betacolor] (.2,-.8)--(.2,0);
	\draw[thick, unshaded] (-.4, -.4) -- (-.4, .4) -- (.4, .4) -- (.4, -.4) -- (-.4, -.4);
	\node at (0, 0) {};
\end{tikzpicture}
$$
We will need the following lemma, whose purpose is to ``close up" the space between the boxes.

\begin{lem} \label{lem:cup}
The $\alpha-$cup element
$\,
\begin{tikzpicture} [baseline=-.1cm]
	\filldraw[unshaded,thick] (-.4,.4)--(.4,.4)--(.4,-.4)--(-.4,-.4)--(-.4,.4);
	\draw[thick, \alphacolor] (-.3, .4) arc(180:360: .3cm);
\end{tikzpicture}
$
is positive and invertible in $M$.
\end{lem}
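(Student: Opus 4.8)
The plan is to exploit the fact that Lemma~\ref{lem:cup} is stated in the non-orthogonalized picture (the default from Section~\ref{sec:MoreBimodules} onward): after applying the isomorphism $\Phi\colon\Gr_0(P_\bullet)\to F_0(P_\bullet)$ of Section~\ref{sec:graded}, the statement reduces to the spectral computations of Section~\ref{sec:factor}. (For $\alpha=\emptyset$ the element is just $1\in M$, so assume $\alpha\neq\emptyset$.)

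First I would record that $\cup_\alpha\in P_{\alpha\overline{\alpha}}\subseteq\Gr_0(P_\bullet)\subseteq M$ is self-adjoint: $\alpha\overline{\alpha}$ is a palindrome, so $*$ preserves $P_{\alpha\overline{\alpha}}$, and reflecting the nested-cup diagram returns the same diagram, whence $\cup_\alpha=\cup_\alpha^*$. So it suffices to show that the spectrum of $\cup_\alpha$ in $M$ lies in $(0,\infty)$. Next I would compute $\Phi(\cup_\alpha)=\sum_{E\in\Epi(CTL)}\big[\,\cup_\alpha\text{ with }E\text{ stacked on top}\,\big]$: the summand $E=\mathrm{id}$ gives $\cup_\alpha$ back; the summand in which $E$ caps off all $2|\alpha|$ boundary points in the same nested pattern as $\cup_\alpha$ closes $|\alpha|$ loops and so contributes $\delta_\alpha\cdot 1\in P_\emptyset$ (the other matchings of the $|\alpha|$ cups give $P_\emptyset$-contributions with strictly fewer loops, hence strictly smaller); and every remaining summand lands in some $P_\beta$ with $0<|\beta|<2|\alpha|$, carrying a strictly smaller product of loop parameters. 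Thus, inside $F_0(P_\bullet)$, $\Phi(\cup_\alpha)=\cup_\alpha+\delta_\alpha\cdot 1+(\text{lower-order terms})$.

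Finally I would rerun Lemmas~\ref{lem:decomposition} and~\ref{lem:unitary} with $\cup_\alpha$ in place of $\bluecup$: decomposing $L^2(M)$ over the abelian von Neumann algebra $(\cup_\alpha)''$ by induction on word length yields $L^2(M)\cong L^2\big((\cup_\alpha)''\big)\oplus\bigoplus_i\ell^2(\N)\otimes\cH_i\otimes\ell^2(\N)$, with $\cup_\alpha$ acting on the $i$-th summand as $a_i\otimes 1\otimes 1$ for an explicit bounded self-adjoint $a_i\in\cB(\ell^2(\N))$. The computation above shows that $a_i$ is, up to a finite-rank perturbation, the operator $\delta_\alpha^{1/2}(s+s^*)$ translated upward by the constant coming from the $\delta_\alpha\cdot 1$ term — for $|\alpha|=1$ exactly $(1+\delta_\alpha)+\delta_\alpha^{1/2}(s+s^*)$, with spectrum $[(\sqrt{\delta_\alpha}-1)^2,(\sqrt{\delta_\alpha}+1)^2]$, and in general still an interval whose left endpoint is positive once $\delta_\alpha>1$. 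As in Lemma~\ref{lem:essential}, the Weyl--von Neumann theorem shows the perturbation does not move the essential spectrum, and the accompanying linear recurrence rules out stray eigenvalues (in particular any at or below $0$); since $\delta_\alpha=\prod_i\delta_{a_i}>1$ by hypothesis, $\sigma(\cup_\alpha)\subseteq(0,\infty)$ and $\cup_\alpha$ is positive and invertible in $M$. I expect this last step to be the real work: one must port the single-string bookkeeping of Section~\ref{sec:factor} to nested cups of $|\alpha|$ layers — tracking both how many layers a colored Temperley--Lieb diagram cancels against $\cup_\alpha$ and which of the two outermost colors of a box it meets — and one must check that the lower-order terms of $\Phi(\cup_\alpha)$ contribute only a bounded perturbation that, thanks to $\delta_c>1$ for every $c$, cannot pull the spectrum down to $0$.
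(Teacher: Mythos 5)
Your approach is genuinely different from the paper's and, as written, has a real gap. The paper proves Lemma~\ref{lem:cup} by induction on $|\alpha|$ with a sandwich argument: writing $\alpha=\beta c$, it observes that the $\alpha$-cup equals $z^{*}\wedge x\wedge z$ in $\cM_{\emptyset,c}$, where $x=\iota_{c}(\cup_{\beta})$ is positive invertible in $M_{c}$ by the inductive hypothesis and $z^{*}\wedge z=\cup_{c}$ is free Poisson (hence invertible) by \cite{MR2732052}. Positivity is then immediate, and invertibility follows from the two lower bounds $\langle r\wedge\eta,r\wedge\eta\rangle\ge k_{r}\|\eta\|^{2}$ and $\langle z^{*}\wedge z\wedge\xi,\xi\rangle\ge k_{z}\|\xi\|^{2}$ with $r=x^{1/2}$, giving $\cup_{\alpha}\ge k_{r}k_{z}>0$. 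This avoids all spectral bookkeeping for $|\alpha|>1$.

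The gap in your plan is the step you yourself flag as ``the real work.'' You want to re-run Lemmas~\ref{lem:decomposition} and~\ref{lem:unitary} with the nested $\alpha$-cup in place of the single-color cup, i.e.\ decompose $L^{2}(M)$ over $(\cup_{\alpha})''$ and show $\cup_{\alpha}$ acts on each cyclic piece as a shift-type operator $a_{i}\otimes 1\otimes 1$ plus a finite-rank perturbation. But the argument of Section~\ref{sec:factor} is tied to the Temperley--Lieb structure of a single color: the basis $\zeta_{n,i}^{l,r}$ is indexed by how many $c$-cups sit to the left and right, and $\cup_{c}$ shifts those counts. For $|\alpha|\ge 2$ there is no analogous orthonormal basis on which $\cup_{\alpha}$ acts as a weighted shift; one would have to build it from scratch (and if $\alpha$ has repeated letters, $\Phi(\cup_{\alpha})$ acquires extra Epi-CTL terms from caps joining non-adjacent repeated colors, which you do not account for). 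Moreover the Weyl--von Neumann step does not transfer: in Lemma~\ref{lem:essential} the perturbation $e_{\xi_{0}}$ is genuinely finite rank, whereas the ``lower-order terms'' of $\Phi(\cup_{\alpha})$ (for example $\delta_{b}\cup_{a}+\delta_{a}\cup_{b}$ when $\alpha=ab$) act as non-compact operators on $L^{2}(M)$, so the essential spectrum is not automatically preserved and the recurrence-relation argument ruling out stray eigenvalues is not available verbatim. In short, what you propose is a plausible program but requires substantially new combinatorial and spectral work, none of which is needed in the paper's inductive sandwich proof.
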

\begin{proof}
We induct on the size of $\alpha$.  If $|\alpha| = 1$ then it follows from \cite{MR2732052} that the operator is a free-poisson element whose spectrum is supported away from 0 hence the lemma holds for this case.  Suppose that the lemma holds for some color pattern $\beta$ and suppose $\alpha = \beta c$ for a fixed color $c$, denoted by a \textcolor{\cupcolor}{green} string.  It follows that the element
$$
x =
\begin{tikzpicture} [baseline=-.1cm]
	\filldraw[unshaded,thick] (-.4,.4)--(.4,.4)--(.4,-.4)--(-.4,-.4)--(-.4,.4);
	\draw[thick, \betacolor] (-.2, .4) arc (180:360: .2cm);
	\draw[thick, \cupcolor] (-.4,-.15)--(.4,-.15);
\end{tikzpicture}
$$
is positive invertible in $M_{c}$.  Let $z \in \cM_{\emptyset,c}$ be the element
$$
z =
\begin{tikzpicture} [baseline=-.1cm]
	\filldraw[unshaded,thick] (-.4,.4)--(.4,.4)--(.4,-.4)--(-.4,-.4)--(-.4,.4);
	\draw[thick, \cupcolor] (-.4, 0) arc(-90:0: .4cm);
\end{tikzpicture}
$$
Then the $\alpha-$cup element has the form $z^{*}\wedge x\wedge z \in \cM_{\emptyset, c}$ so it immediately follows that it is positive.  Note that $z^{*}\wedge z$ is invertible so there is a positive constant $k_{z}$ so that $\langle z^{*}\wedge z \wedge \xi, \xi \rangle \geq k_{z}\|\xi\|_{2}^{2}$ for all $\xi \in L^{2}(M)$.  Letting $r$ be the positive square root of $x$, there is a strictly positive constant $k_{r}$ so that for all $\eta \in L^{2}(M_{c})$, $\langle r\wedge\eta, r\wedge\eta \rangle \geq k_{r} \|\eta\|_{2}^{2}$.  Therefore, for all $\xi \in L^{2}(M)$,
\begin{align*}
\langle z^{*}\wedge x\wedge z \wedge \xi, \xi \rangle
&= \langle r\wedge z \wedge \xi, r\wedge z \wedge \xi \rangle
\geq k_{r} \langle z\wedge \xi, z\wedge \xi \rangle \\
 &= k_{r} \langle z^{*}\wedge z \wedge\xi, \xi \rangle \geq k_{r}k_{z}\|\xi\|_{2}^{2}
\end{align*}
implying invertibility.
\end{proof}

\begin{defn}
Recall from \cite{MR561983,MR1424954,bimodules} that given a bimodule $\sb{M}K_M$, a vector $\xi\in K$ is called \underline{right $M$-bounded} if the map $L(\xi)^0\colon M\to K$ given by $m\mapsto \xi m$ extends to a bounded linear operator $L(\xi)\colon L^2(M)\to K$. There is a similar definition of a left $M$-bounded vector, and if $K$ is bifinite, then the sets of left and right $M$-bounded vectors agree. For such a bimodule $K$, we denote the set of left/right $M$-bounded vectors by $D(K)$.
\end{defn}

\begin{ex}
For all $\alpha,\beta\in\Lambda$, we have
$D(H_{\alpha\beta})=D(\cH_{\alpha,\overline{\beta}})=p_\alpha \wedge\cM_{\alpha,\overline{\beta}}\wedge p_{\overline{\beta}}$.
\end{ex}

\begin{lem} \label{lem:TensorProduct}
As $M - M$ bimodules, the Connes' fusion $H_\alpha\otimes_M H_\beta\cong H_{\alpha\beta}$.
\end{lem}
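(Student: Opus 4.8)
The plan is to construct an explicit unitary $M$–$M$ bimodule map $\Psi\colon H_\alpha\otimes_M H_\beta\to H_{\alpha\beta}$ by diagrammatic concatenation, following the template used for the analogous statements in \cite{MR2645882,1202.1298,1208.2933}. First I would make the bounded vectors concrete: by the Example preceding the lemma together with Lemma \ref{lem:rotate}, $D(H_\alpha)=p_\alpha\wedge\cM_{\alpha,\emptyset}\wedge p_\emptyset\cong\bigoplus_\gamma P_{\overline\alpha\gamma}$, $D(H_\beta)\cong p_\emptyset\wedge\cM_{\emptyset,\overline\beta}\wedge p_{\overline\beta}\cong\bigoplus_{\gamma'}P_{\gamma'\overline\beta}$, and $D(H_{\alpha\beta})\cong p_\alpha\wedge\cM_{\alpha,\overline\beta}\wedge p_{\overline\beta}\cong\bigoplus_\mu P_{\overline\alpha\mu\overline\beta}$, with the outer left/right $M$-actions given by horizontal juxtaposition of $\Gr_0(P_\bullet)$-boxes as in Section \ref{sec:MoreBimodules}. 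On bounded vectors I then define $\Psi$ by placing the two boxes side by side: if $\xi$ has top word $\gamma$ and $\eta$ has top word $\gamma'$, then $\Psi(\xi\otimes\eta)$ is the box with top word $\gamma\gamma'$ obtained by juxtaposing $\xi$ on the left and $\eta$ on the right, keeping the $\overline\alpha$- and $\overline\beta$-strands on the outside.

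The routine steps come next: (i) $\Psi$ is right-$M$-middle-linear, $\Psi(\xi\cdot m\otimes\eta)=\Psi(\xi\otimes m\cdot\eta)$ for $m\in M$ — a direct diagrammatic identity, since right-multiplying $\xi$ by $m$ and left-multiplying $\eta$ by $m$ both amount to inserting $m$ into the region between the two boxes; and (ii) $\Psi$ intertwines the outer $M$–$M$ actions, which is immediate from the definition. The technical heart is (iii), the isometry property. Here I would compute the $M$-valued inner product $\langle\xi_1,\xi_2\rangle_M=L(\xi_1)^*L(\xi_2)\in M$ diagrammatically — it is the $\Gr_0(P_\bullet)$-box obtained by capping $\xi_1^*$ against $\xi_2$ along their $\gamma$-strands with a $\Sigma CTL$ inserted on the closed-up region and the $\overline\alpha$-strands bent down to the $\emptyset$-boundary — plug it into the Connes-fusion inner product $\langle\xi_1\otimes\eta_1,\xi_2\otimes\eta_2\rangle=\langle\eta_1,\langle\xi_1,\xi_2\rangle_M\,\eta_2\rangle_{H_\beta}$ (see \cite{MR561983,MR1424954,bimodules}), expand the right-hand side using the trace defining \eqref{eqn:InnerProduct} on $H_{\alpha\beta}$, and observe that the colored Temperley–Lieb sums on the two sides agree term by term: every colored $TL$ diagram on the word $\overline\alpha\,\gamma\,\gamma'\,\overline\beta$ splits into its ``$\xi$-part'' and ``$\eta$-part'' together with a pairing of the two $\overline\alpha$-boundaries (resp.\ the two $\overline\beta$-boundaries), which is exactly the structure produced on the fusion side. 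This shows $\Psi$ preserves the inner product and hence extends to an isometric $M$–$M$ bimodule map $H_\alpha\otimes_M H_\beta\to H_{\alpha\beta}$.

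Finally I would prove surjectivity, and this is exactly where Lemma \ref{lem:cup} is used to ``close up'' the region between the two boxes. Let $c_\alpha\in D(H_\alpha)$ be the $\alpha$-cup vector; then $\langle c_\alpha,c_\alpha\rangle_M$ is (up to normalization) the $\alpha$-cup element of $M$, which is positive and invertible by Lemma \ref{lem:cup}. Consequently $L(c_\alpha)$ is bounded below, so after inserting the inverse of the cup element one recognizes every bounded vector of $H_{\alpha\beta}$ as lying in the closed span of vectors of the form $\Psi(\xi\otimes\eta)$; thus the image of $\Psi$ is dense and $\Psi$ is onto. Hence $\Psi$ is a unitary intertwiner and $H_\alpha\otimes_M H_\beta\cong H_{\alpha\beta}$.

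I expect step (iii) — matching the two colored Temperley–Lieb expansions with the correct normalizations — to be the main obstacle; the only nonformal analytic input for surjectivity, namely invertibility of the cup element, has already been isolated as Lemma \ref{lem:cup}, and steps (i)–(ii) are purely formal diagram manipulations.
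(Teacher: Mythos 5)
Your plan matches the paper's proof for everything except the surjectivity step, and that is where the real gap lies. The identification of bounded vectors, the definition of the concatenation map, the verification of $M$-middle-linearity and of the intertwining property, and the isometry computation by matching the two colored Temperley--Lieb expansions are all exactly what the paper does. (If anything, your ``main obstacle'' estimate is backwards: the isometry computation is routine once the $M$-valued inner product is written out diagrammatically, while surjectivity is the genuinely nontrivial part.)

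The gap is in the last step. You invoke Lemma \ref{lem:cup} to conclude that $L(c_\alpha)$ is bounded below, which is correct, but then jump to ``after inserting the inverse of the cup element one recognizes every bounded vector of $H_{\alpha\beta}$ as lying in the closed span.'' That does not follow. Invertibility of $\langle c_\alpha\,|\,c_\alpha\rangle_M$ tells you that the right support of $c_\alpha$ (in $p_\alpha\wedge\cM_{\alpha,\emptyset}\wedge p_\emptyset$) is the full projection $p_\emptyset$, so $L(c_\alpha)\colon L^2(M)\to H_\alpha$ is an embedding with closed range. But the left support of $c_\alpha$ under $p_\alpha$ is a projection $e$ with $\Tr(e)=\Tr(p_\emptyset)=1<\delta_\alpha=\Tr(p_\alpha)$, i.e.\ $e<p_\alpha$ strictly whenever $\delta_\alpha>1$ (which is always the case here). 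So $\overline{c_\alpha\wedge M}$ is a proper right-$M$-submodule of $H_\alpha$, and you cannot factor an arbitrary bounded vector of $H_{\alpha\beta}$ through the cup; no amount of ``inserting the inverse of the cup'' closes this.

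What the paper actually does, after extracting the polar part $v$ of the cup with right support $p_\emptyset$ and left support $e\le p_\alpha$, is to exploit that $M_\alpha$ is a $II_1$ factor: it chooses partial isometries $x_1,\dots,x_n\in M_\alpha$ whose right supports sit under $e$ and whose left supports tile $p_\alpha$, does the symmetric construction on the $\overline\beta$ side to get $\tilde v$ and $y_1,\dots,y_n\in M_{\overline\beta}$ (this is where the normalization $\delta_\alpha\le\delta_\beta$ is used, so that the supports of the $y_i$ can be matched to those of $x_i v\tilde v$), and then shows that $z=\sum_i x_i v\,\tilde v\, y_i$ is a partial isometry in $D(H_{\alpha\beta})$ with \emph{full} left support $p_\alpha$. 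From there every $r\in D(H_{\alpha\beta})$ is $r=zx$ with $x=z^*r\in M_{\overline\beta}$, and $zx=\sum_i(x_i v)\cdot(\tilde v y_i x)$ is manifestly in the image of $u$. This tiling step via partial isometries in $M_\alpha$ and $M_{\overline\beta}$ is exactly the content your sketch is missing.
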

\begin{proof}
By  \cite{MR1424954}, $H_\alpha\otimes_M H_\beta$ is the completion of the algebraic tensor product
$
D(H_\alpha) \odot D(H_\beta)
$
(first modding out by vectors of length zero) under the semi-definite inner product given by
$$
\langle \xi_{1} \otimes \eta_{1}, \xi_{2} \otimes \eta_{2} \rangle_{H_\alpha \otimes_M H_\beta}
= \langle  \langle \xi_2|\xi_1 \rangle_M \cdot \eta_{1}, \eta_{2} \rangle_{H_\beta}
$$
where $\langle \xi_2|\xi_1 \rangle_M$ is the unique element in $M$ satisfying
$$
\langle \xi_{1}x, \xi_{2} \rangle_{H_\alpha} = \tr(x\langle \xi_2|\xi_1 \rangle_M) \text{ for all }x\in M.
$$
We define a map $u$ by the linear extension of
\begin{align*}
D(H_\alpha) \odot D(H_\beta)
&\longrightarrow \cH_{\alpha,\overline{\beta}}\cong H_{\alpha\beta}
\text{ by}\\
\ColorNBox{}{\xi}{\alphacolor}
\otimes
\ColorNBox{}{\eta}{\betacolor}
&\longmapsto
\ColorNBox{}{\xi}{\alphacolor}
\,
\ColorNBox{}{\eta}{\betacolor}
\,.
\end{align*}
Obviously the map $u$ intertwines the left and right $M$-actions and is $M$-middle linear. We will show it is isometric with dense range, and thus $u$ has a unique extension to a $M-M$ bilinear unitary giving the desired isomorphism.

First, it is not hard to check directly that the $M$-valued inner product of the $M$-bounded vectors $\xi_{1}, \xi_{2} \in D(H_\alpha)$ is given by
$$
\langle \xi_2|\xi_1\rangle_M
=
\begin{tikzpicture}[baseline=-.1cm]
	\draw (0,.6)--(0, -.4);
	\draw[thick, \alphacolor] (0, -.4) .. controls ++(270:.4cm) and ++(270:.4cm) .. (1.2,-.4);
	\draw (1.2,-.4) -- (1.2,.6);
	\draw[thick, unshaded] (1.6, -.4) -- (1.6, .4) -- (.8, .4) -- (.8, -.4) -- (1.6, -.4);
	\draw[thick, unshaded] (-.4, -.4) -- (-.4, .4) -- (.4, .4) -- (.4, -.4) -- (-.4, -.4);
	\node at (1.2, 0) {$\xi_1$};
	\node at (-.2,-.6) {\scriptsize{$\overline{\alpha}$}};
	\node at (1.4,-.6) {\scriptsize{$\alpha$}};
	\node at (0, 0) {$\overline{\xi_2^*}$};
\end{tikzpicture}
\, .
$$
Hence if
$$
\zeta = \sum_{i=1}^{n} \xi_{i} \otimes \eta_{i} \in
D(H_\alpha) \odot D(H_\beta),
$$
we immediately have
\begin{equation}\label{eqn:InnerProductForTensorProduct}
\|\zeta\|_{2}^{2}
= \sum_{i,j=1}^n \langle  \langle \xi_j|\xi_i \rangle_M \cdot \eta_{i}, \eta_{j} \rangle_{H_\beta}
=
\begin{tikzpicture} [baseline = .3cm]
	\draw (0,.8)--(0, -.4);
	\draw (1.2,-.4) -- (1.2,.8);
	\draw[thick, \alphacolor] (0, -.4) .. controls ++(270:.4cm) and ++(270:.4cm) .. (1.2,-.4);
	\draw (-1.2,.8)-- (-1.2,-.4);
	\draw (2.4,-.4)--(2.4,.8);
	\draw[thick, \betacolor] (-1.2,-.4) .. controls ++(270:1cm) and ++(270:1cm) .. (2.4,-.4);
	\draw[thick, unshaded] (-1.6,.8) -- (-1.6, 1.6) -- (2.8,1.6) -- (2.8,.8) -- (-1.6,.8);
	\draw[thick, unshaded] (-.4, -.4) -- (-.4, .4) -- (.4, .4) -- (.4, -.4) -- (-.4, -.4);
	\draw[thick, unshaded] (-.8, -.4) -- (-.8, .4) -- (-1.6, .4) -- (-1.6, -.4) -- (-.8, -.4);
	\draw[thick, unshaded] (.8, -.4) -- (.8, .4) -- (1.6, .4) -- (1.6, -.4) -- (.8, -.4);
	\draw[thick, unshaded] (2, -.4) -- (2, .4) -- (2.8, .4) -- (2.8, -.4) -- (2, -.4);
	\node at (-1.2, 0) {$\overline{\eta_j^*}$};
	\node at (0, 0) {$\overline{\xi_j^*}$};
	\node at (1.2, 0) {$\xi_i$};
	\node at (2.4, 0) {$\eta_i$};
	\node at (.6, 1.2) {$\sum CTL$};
\end{tikzpicture} \, .
\end{equation}
Since
$$
u(\zeta) =
\sum_{i=1}^n
\ColorNBox{}{\xi_i}{\alphacolor}
\,
\ColorNBox{}{\eta_i}{\betacolor}
\,,
$$
we see that $\|u(\zeta)\|_{2}^{2}$ is the same diagram as in \eqref{eqn:InnerProductForTensorProduct}.  Hence $u$ can be extended to be an isometry on $H_{\alpha}\otimes_M H_\beta$.

For the rest of this proof we will only use the $\wedge$ multiplication, so to clean up notation we will omit the $\wedge$. We must show the image of $u$ is dense in $H_{\alpha\beta}$. We may assume $\delta_\alpha\leq \delta_\beta$, and the proof is similar if $\delta_\beta <\delta_\alpha$.
Set
$$
w =
\begin{tikzpicture} [baseline=-.1cm]
	\filldraw[unshaded,thick] (-.4,.4)--(.4,.4)--(.4,-.4)--(-.4,-.4)--(-.4,.4);
	\draw[thick,\alphacolor] (-.4, 0) arc(-90:0: .4cm);
\end{tikzpicture}\,.
$$
Then $w^{*}w$ is positive and invertible in $M$, so if $v$ is the polar part of $w$, then
$v \in D(H_\alpha)$
with right support $p_{\emptyset}$ and left support $e$ under $p_{\alpha}$.
Choose partial isometries $x_{1}, ..., x_{n}$ in $M_{\alpha}$ such that $x_{1}, ..., x_{n-1}$ have right support $e$ and orthogonal left supports.
Choose the partial isometry $x_{n}$ so that its left support is $1 - \sum_{i=1}^{n-1}x_{i}x_{i}^{*}$ and its right support is under $e$.

Similarly, there is a partial isometry $\tilde{v} \in D(H_\beta)$ whose right support is $f \leq p_{\overline{\beta}}$ and whose left support is $p_{\emptyset}$.
Choose partial isometries $y_{1}, ..., y_{n} \in M_{\overline{\beta}}$ such that $y_{1}, ..., y_{n-1}$ have left support $f$ and orthogonal right supports.
Choose the partial isometry $y_{n}$ so its right support is orthogonal to $\sum_{i=1}^{n-1}y_{i}^{*}y_{i}$ and its left support is the right support of $x_{n}v\tilde{v}$ (we can do this since $\delta_{\alpha} \leq \delta_{\overline{\beta}}$).

It now follows that $z = \sum_{i=1}^{n}x_{i}v\tilde{v}y_{i}$ is a partial isometry in $D(H_{\alpha\beta})$ with full left support $p_{\alpha}$.  Note that every element, $r$ in $D(H_{\alpha\beta})$ is of the form $zx$ for $x \in M_{\overline{\beta}}$ since we simply choose $x = z^{*}r$.  Since $zx = \sum_{i=1}^{n}(x_{i}v)\cdot(\tilde{v}y_{i}x)$ is in the image of $u$, we are finished.
\end{proof}

\begin{lem} \label{lem:bifinite}
As an $M-M$ bimodule, $H_\alpha$ is bifinite. Thus by Lemma \ref{lem:rotate}, so is $\cH_{\alpha,\beta}$.
\end{lem}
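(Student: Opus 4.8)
The plan is to check the two halves of bifiniteness separately — that $H_\alpha$ has finite von Neumann dimension as a left $M$-module and as a right $M$-module — working directly from the model $H_\alpha = p_\alpha \wedge L^2(\cM_{\alpha,\emptyset}) \wedge p_\emptyset$. By \cite{MR561983,MR1424954,bimodules} these two finiteness conditions are exactly what ``bifinite'' means, and once both hold the left and right $M$-bounded vectors coincide automatically. The facts I would keep in mind are: $\cM_{\alpha,\emptyset}$ is a finite von Neumann algebra (the construction endows it with a faithful normal trace $\tr$, so $L^2(\cM_{\alpha,\emptyset})$ is its standard module); $p_\alpha\cM_{\alpha,\emptyset}p_\alpha = M_\alpha$ and $p_\emptyset\cM_{\alpha,\emptyset}p_\emptyset = M$; the right $M$-action on $H_\alpha$ is the native $p_\emptyset$-corner action; and the left $M$-action on $H_\alpha$ is the native $p_\alpha$-corner action of $M_\alpha$ precomposed with $\iota_\alpha\colon M\hookrightarrow M_\alpha$ (indeed $x\wedge\xi$ in $\cM_{\alpha,\emptyset}$ only makes sense for $\xi\in\cH_{\alpha,\emptyset}$ after replacing $x$ by $\iota_\alpha(x)$).

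For right finiteness I would observe that, as a right module over $M=p_\emptyset\cM_{\alpha,\emptyset}p_\emptyset$, $H_\alpha = p_\alpha L^2(\cM_{\alpha,\emptyset})p_\emptyset$ is a submodule of $L^2(\cM_{\alpha,\emptyset})p_\emptyset$. The cut-down $L^2(\cN)p$ of the standard module of a finite von Neumann algebra $\cN$ by a nonzero projection $p$ is a finite right $p\cN p$-module (its right dimension is $\tr(p)/\tr(1)$ after normalization), so $L^2(\cM_{\alpha,\emptyset})p_\emptyset$ — and hence the submodule $H_\alpha$ — is a finite right $M$-module. For left finiteness I would argue symmetrically that $H_\alpha\subset p_\alpha L^2(\cM_{\alpha,\emptyset})$ is a finite left $M_\alpha$-module, and then invoke Theorem~\ref{thm:tower}: since $[M_\alpha:\iota_\alpha(M)]=\delta_\alpha^2<\infty$, restriction of scalars along $\iota_\alpha$ preserves finiteness, so $H_\alpha$ is a finite left $M$-module. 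Together these give that $H_\alpha$ is bifinite, and a single application of Lemma~\ref{lem:rotate} gives $\cH_{\alpha,\beta}\cong H_{\alpha\overline{\beta}}$, so $\cH_{\alpha,\beta}$ is bifinite as well.

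I expect there is no serious obstacle here; the two points to handle with a little care are the elementary fact that a cut-down of a standard module by a finite-trace projection is a finite module over the corner algebra, and the fact that the \emph{left} action of $M$ on $H_\alpha$ is not the native corner action but factors through $\iota_\alpha$, which is precisely why the finite-index statement of Theorem~\ref{thm:tower} is needed. As an alternative that stays closer to \cite{MR2645882} and reuses the partial-isometry technique from the proof of Lemma~\ref{lem:TensorProduct}, one could instead exhibit explicit finite bases: take the cup vector $w\in D(H_\alpha)$, whose modulus $w^{*}w$ is the $\alpha$-cup element, which is invertible in $M$ by Lemma~\ref{lem:cup}; its polar part $v\in D(H_\alpha)$ is then a partial isometry with right support $p_\emptyset$ and left support $e\le p_\alpha$, and choosing partial isometries $x_1,\dots,x_n\in M_\alpha$ with $x_i^{*}x_i\le e$ and $\sum_i x_ix_i^{*}=p_\alpha$ (finitely many suffice since $\tr(p_\alpha)/\tr(e)=\delta_\alpha<\infty$), the vectors $\eta_i=x_i\wedge v\in D(H_\alpha)$ satisfy $\sum_i L(\eta_i)L(\eta_i)^{*}=\id_{H_\alpha}$, so $(\eta_i)$ is a finite right $M$-basis for $H_\alpha$; a mirror construction on the left, using a cup vector that is full on the left, would give left finiteness. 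The only real bookkeeping is tracking which algebra acts on which side and invoking the finite-index step only for the left action.
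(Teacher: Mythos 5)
Your proposal is correct, and it is interesting that your \emph{alternative} route is essentially the one the paper takes, while your \emph{primary} route is genuinely different. The paper's proof is exactly the partial-isometry argument you sketch at the end: it takes the polar part $v$ of the cup vector (with right support $p_\emptyset$ and left support $e\le p_\alpha$), notes from the end of the proof of Lemma~\ref{lem:TensorProduct} that every element of $p_\alpha\wedge\cM_{\alpha,\emptyset}\wedge p_\emptyset$ is $x\wedge v$ for some $x\in M_\alpha$, concludes that $v$ is cyclic for the left $M_\alpha$-action (so $\dim_{M_\alpha-}(H_\alpha)\le 1$), and then invokes $[M_\alpha:M]<\infty$ from Theorem~\ref{thm:tower}; the right side is said to be similar. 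Your basis-building variant with the $\eta_i=x_i\wedge v$ is the same construction, just packaged as a finite Pimsner--Popa basis rather than a cyclic vector plus finite index.

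Your primary, ``softer'' route is a valid alternative and avoids re-deriving $v$ for the right-finiteness half: $H_\alpha$ sits inside $L^2(\cM_{\alpha,\emptyset})p_\emptyset$, which is automatically a finite right module over the corner factor $M=p_\emptyset\cM_{\alpha,\emptyset}p_\emptyset$, and then the left half is obtained from left $M_\alpha$-finiteness plus restriction along the finite-index inclusion $\iota_\alpha$. This buys you a one-line argument on the right at the cost of appealing to general module-dimension theory rather than an explicit basis. Two small corrections: the dimension formula should read $\tr(1)/\tr(p)$, not $\tr(p)/\tr(1)$ (cutting down the algebra on the right makes the module \emph{larger} over the smaller corner), though this does not affect finiteness; and the finiteness of $L^2(\cN)p$ over $p\cN p$ implicitly requires $p\cN p$ to be a factor so that the dimension is a number --- here $p\cN p=M$ is a $II_1$ factor by the results already established, so this is fine, but it is worth saying. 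Your observation that the left $M$-action factors through $\iota_\alpha$, and that this is precisely where Theorem~\ref{thm:tower} is needed, is exactly the right point to flag.
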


\begin{proof}
By the proof of Lemma \ref{lem:TensorProduct}, there is a partial isometry $v \in p_{\alpha}\wedge \cM_{\alpha, \emptyset}\wedge p_{\emptyset}$ with right support $p_{\emptyset}$.  The argument at the end of Lemma \ref{lem:TensorProduct} shows that every element of $p_{\alpha}\wedge\cM_{\alpha, \emptyset}\wedge p_{\emptyset}$ is of the form $x\wedge v$ for $x \in M_{\alpha}$.  Therefore $v$ is cyclic for the left action of $M_{\alpha}$ on $\cH_{\alpha,\emptyset}\cong H_\alpha$, and since $[M_{\alpha}:M] < \infty$, $\dim_{M-}(H_{\alpha})<\infty$. Similarly $\dim_{-M}(H_{\alpha})<\infty$.
\end{proof}

\subsection{The categories $\Bim(P_\bullet)$ and $\CF(P_\bullet)$} \label{sec:CategoriesOfBimodules}

We now define two rigid $C^*$-tensor categories $\Bim(P_\bullet)$ and $\CF(P_\bullet)$ whose objects are bifinite $M-M$ bimodules. We show given a factor planar algebra $P_\bullet$, we have equivalences $\Pro(P_\bullet)\cong \Bim(P_\bullet)\cong \CF(P_\bullet)$.

\begin{defn}
If $p$ is a projection in $P_{\alpha\to \alpha}$, we define $H_p = \cH_{\emptyset, \overline{\alpha}}\wedge p$, which is a bifinite $M-M$ bimodule. Note that elements of $H_p$ are obtained from elements in $H_\alpha$ by putting a $p$ on the bottom. Thus the linear span of elements of the form
$$
\ColorMultiply{x}{p}{\alphacolor}
$$
where $x\in p_{\emptyset}\wedge \cM_{\emptyset, \overline{\alpha}}\wedge p$ forms a dense subset of $H_p$. Recall that elements of $M$ act on the left and right as in the non-orthogonal picture. These actions clearly do not affect the $p$ on the bottom.
\end{defn}

\begin{defn}\label{defn:Bim}
Let $\Bim(P_\bullet)$ (abbreviated $\Bim$) be the strict rigid $C^*$-tensor category defined as follows.
\begin{enumerate}
\item[\text{\underline{Objects:}}] The objects of $\Bim$ are finite direct sums of the bimodules $H_p$ for the projections $p\in P_{\alpha\to \alpha}$ for $\alpha\in \Lambda$. Note that the unit object $1_\Bim=H_\emptyset\cong L^2(M)$.
\item[\text{\underline{Tensor:}}]
For $p\in P_{\alpha\to \alpha}$ and $q\in P_{\beta\to \beta}$, we define $H_p\otimes_{\Bim} H_q=H_{p\otimes q}$ where $p\otimes q$ is the tensor product in $P_\bullet$. The tensor product is extended to direct sums linearly.

Note that since the tensor product of projections in $P_\bullet$ is strict, so is the tensor product in $\Bim$.
\item[\text{\underline{Morphisms:}}]
For $p\in P_{\alpha\to \alpha}$ and $q\in P_{\beta\to \beta}$, we define $\Bim(H_p\to H_q)=qP_{\alpha\to \beta}p$ and composition is the usual composition in $P_\bullet$. Morphisms between direct sums are matrices of such maps.
 \item[\text{\underline{Tensoring:}}]
For $x_i\in \Bim(H_{p_i}\to H_{q_i})=q_iP_{\alpha_i\to \beta_i}p_i$ for $i=1,2$, we define $x_1\otimes_\Bim x_2$ as the tensor product of morphisms in $P_\bullet$. Similarly for matrices of such maps.
\item[\text{\underline{Duality:}}]
The dual of $H_p$ is $H_{\overline{p}}$. The evaluation map $\ev_{H_p}\colon H_{\overline{p}}\otimes_\Bim H_p \to H_\emptyset=1_\Bim$ is given by the $\alpha$-cup with projections $\overline{p},p$ on top:
$$
\MInnerProduct{\overline{p}}{\alphacolor}{p}
$$
Of course, the coevalutation map $1_\Bim\to H_p\otimes H_{\overline{p}}$ is given by the adjoint of $\ev_{H_{\overline{p}}}$, which is the $\alpha$-cap with projections underneath:
$$
\MInnerProductOp{p}{\alphacolor}{\overline{p}}\,.
$$
One easily checks that the necessary relations hold.

The dual map is extended to direct sums linearly.
\item[\text{\underline{Adjoint:}}]
The adjoint map $*$ is the identity on all objects, and the adjoint of a morphism $x\in \Bim(H_p\to H_q)=qP_{\alpha\to \beta}p$ is the adjoint in the planar algebra $x^*\in pP_{\beta\to \alpha}q=\Bim(H_q\to H_p)$. The adjoint of a matrix of maps is the $*$-transpose of the matrix.
\end{enumerate}
\end{defn}

\begin{thm}
The map $\Pro\to \Bim$ by $p\mapsto H_p$ and the identity on morphisms is an equivalence of categories.
\end{thm}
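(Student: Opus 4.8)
The plan is to verify directly that $F\colon \Pro\to\Bim$, $p\mapsto H_p$ and the identity on morphism spaces, is a fully faithful, essentially surjective, strict monoidal $*$-functor that respects the duality; since $\Pro$ and $\Bim$ were both built from exactly the same combinatorial data attached to $P_\bullet$ (projections as objects, the spaces $qP_{\alpha\to\beta}p$ as morphisms), almost everything is a matter of unwinding Definition~\ref{defn:Bim} and the definition of $\Pro$, the only genuine input being the analytic lemmas of Section~\ref{sec:MoreBimodules}.

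First I would record that $F$ is well defined on objects: for each projection $p\in P_{\alpha\to\alpha}$, Lemma~\ref{lem:bifinite} guarantees $H_p$ is a bifinite $M-M$ bimodule, and by construction every object of $\Bim$ is a finite direct sum of such $H_p$; thus $F$, extended linearly over direct sums, is essentially surjective (in fact surjective on objects). Next, for projections $p\in P_{\alpha\to\alpha}$ and $q\in P_{\beta\to\beta}$ one has, literally by the two definitions, $\Pro(p\to q)=qP_{\alpha\to\beta}p=\Bim(H_p\to H_q)$, and $F$ is the identity between them; composition on both sides is vertical concatenation in $P_\bullet$, identities map to identities, and on direct sums morphisms are matrices to which $F$ is applied entrywise. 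Hence $F$ is a fully faithful functor, and together with essential surjectivity it is an equivalence of categories.

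Then I would check that $F$ transports the remaining structure. For the monoidal structure, $F(1_\Pro)=H_\emptyset=1_\Bim$, and for projections $p,q$ we have $F(p\otimes q)=H_{p\otimes q}=H_p\otimes_\Bim H_q=F(p)\otimes_\Bim F(q)$ straight from the definition of $\otimes_\Bim$; the point is that this definition is the correct one, since Lemma~\ref{lem:TensorProduct} identifies the Connes fusion $H_\alpha\otimes_M H_\beta$ with $H_{\alpha\beta}$, so $\otimes_\Bim$ really is Connes fusion up to canonical isomorphism. On morphisms $F$ sends the tensor product of intertwiners in $\Pro$ (horizontal concatenation in $P_\bullet$) to that in $\Bim$ by construction, and the associativity and unit constraints are strict on both sides, so $F$ is strict monoidal. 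For duality, the dual of $H_p$ in $\Bim$ is $H_{\overline p}=F(\overline p)$, and the evaluation/coevaluation maps of $\Bim$ are the $\alpha$-cup and $\alpha$-cap with $p,\overline p$ attached, which are exactly the images under $F$ of $\ev_p,\coev_p$ in $\Pro$ (positivity and invertibility of the relevant cup being Lemma~\ref{lem:cup}). Finally the adjoint in $\Bim$ is the planar-algebra adjoint, matching that of $\Pro$, so $F$ is a $*$-functor, and I would conclude that $F$ is an equivalence of rigid $C^*$-tensor categories.

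The only places where something could genuinely have failed are the consistency of the formal definition of $\Bim$ with the honest bimodule theory---namely that $H_p$ is bifinite and that the prescribed tensor product $H_{p\otimes q}$ coincides with Connes fusion---so these are the real obstacles, and they have already been settled by Lemmas~\ref{lem:bifinite} and~\ref{lem:TensorProduct} (which themselves rest on Lemma~\ref{lem:cup} and the relative-commutant computations of Section~\ref{sec:factor}). Granting those, the present theorem is a routine bookkeeping verification, and I would spend most of the write-up simply checking that $F$ respects composition, the tensor product of morphisms, adjoints, and the evaluation and coevaluation maps on the nose.
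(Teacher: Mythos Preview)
Your proof is correct and takes essentially the same approach as the paper: since $\Pro(p\to q)=qP_{\alpha\to\beta}p=\Bim(H_p\to H_q)$ by definition, and all remaining structure (composition, tensor, duality, adjoint) on both sides is defined identically via $P_\bullet$, the functor $p\mapsto H_p$ is a tautological equivalence, and the paper's one-line proof simply lists the properties to be checked.

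One clarification, however: you overstate the role of the analytic lemmas. Lemmas~\ref{lem:TensorProduct}, \ref{lem:bifinite}, and~\ref{lem:cup} are \emph{not} needed for this theorem, because $\otimes_\Bim$ is \emph{defined} to be $H_{p\otimes q}$, not Connes fusion, and the morphisms in $\Bim$ are \emph{defined} to be the planar-algebra intertwiners, not bounded $M$--$M$ bimodule maps. The analytic content---that $H_p$ is genuinely bifinite, that $H_{p\otimes q}\cong H_p\otimes_M H_q$, and that the formal morphisms agree with honest bimodule intertwiners---is exactly what is deferred to the subsequent Theorem~\ref{thm:BimCF} establishing $\Bim\simeq\CF$. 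The present theorem is purely formal, which is why the paper disposes of it in two sentences; the ``real obstacles'' you identify belong to the next theorem, not this one.
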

\begin{proof}
Note that $\Pro(p\to q) = qP_{\alpha\to \beta}p=\Bim(H_p\to H_q)$.
One now checks that the described map is an additive, monoidal, dual-preserving, $*$-preserving, fully faithful, essentially surjective functor.
\end{proof}

\begin{defn}\label{defn:CF}
Let $\CF(P_\bullet)$ (abbreviated $\CF$, which stands for \emph{Connes' fusion}) be the rigid $C^*$-tensor category defined as follows.
\begin{enumerate}
\item[\text{\underline{Objects:}}] The objects of $\CF$ are finite direct sums of the bimodules $H_p$ as in $\Bim$.
\item[\text{\underline{Tensor:}}]
For bimodules $K,L\in \CF$, we define $K\otimes_{\CF} L=K\otimes_M L$, the Connes' fusion of $II_1$-factor bimodules.

The associator $a_\CF$ is defined by restricting to $M$-bounded vectors as in \cite{MR1424954}.
\item[\text{\underline{Morphisms:}}]
$\CF(K\to L)$ is the set of $M-M$ bilinear maps $K\to L$. Composition is the usual composition of linear maps.
\item[\text{\underline{Tensoring:}}]
For $M-M$ bilinear maps $\varphi_i\colon K_{i}\to L_{i}$ for $i=1,2$, we define $\varphi_1\otimes_{M} \varphi_2$ by the Connes' fusion of intertwiners. If we have $M$-bounded vectors $\xi_i\in K_{i}$ for $i=1,2$, then the map $\xi_1\otimes \xi_2\mapsto \varphi_1(\xi_1)\otimes \varphi_2(\xi_2)$ is clearly $M$-middle linear and bounded, so it extends to a unique $M-M$ bilinear map.
\item[\text{\underline{Duality:}}]
The dual of $K$ is the contragredient bimodule $\overline{K}=\set{\overline{\xi}}{\xi\in K}$ where  $\lambda \overline{\xi}+\overline{\eta}=\overline{\overline{\lambda}\xi+\eta}$ for all $\lambda\in\C$ and $\eta,\xi\in K$, and the action is given by $a\overline{\xi}b = \overline{b^*\xi a^*}$. The evaluation map
$$
\ev_{K}\colon \overline{K}\otimes_M K\longrightarrow H_\emptyset=L^2(M)=1_\CF
$$
is the unique extension of the map $\overline{\xi}\otimes \eta\mapsto \langle \xi|\eta\rangle_M$ where $\xi,\eta$ are $M$-bounded vectors in $K$.
The coevaluation map $\coev_{K}$ is the unique map in $\CF(1\to K\otimes_M \overline{K})$ corresponding to $\id_{K}\in\CF(K\to K)$ under the natural isomorphism given by Frobenius reciprocity. For an explicit formula, just pick an orthonormal left $M$-basis $\{\zeta\}\subset K$ (e.g., see \cite{MR561983,1110.3504}), and we have that
$$
\coev_{K}(1_M)=\sum_\zeta \zeta\otimes \overline{\zeta}
$$
is $M$-central and independent of the choice of $\{\zeta\}$. The zig-zag relation is now given by
$$
\sum_\zeta \zeta \langle \zeta|\xi\rangle_M = \xi
$$
for all $M$-bounded $\xi\in K$.

\item[\text{\underline{Adjoint:}}]
The adjoint map $*$ is the identity on all objects and on a morphism $x\in \CF(K\to L)$ is the adjoint linear operator $x^*\in \CF(L\to K)$.
\end{enumerate}
\end{defn}

Note $\CF$ is a rigid $C^*$-tensor category by well known properties of Connes' fusion (e.g., see \cite{MR1424954,1110.3504}).
It is now our task to prove the following theorem:

\begin{thm}\label{thm:BimCF}
Define a map $\Phi\colon \Bim\to \CF$ as follows. First, $\Phi$ is the identity on objects. Second, for a morphism $x\in qP_{\alpha\to \beta}p$, we get an $M-M$ bimodule map $\Phi_x\colon H_p\to H_q$ by
\begin{equation}\label{eqn:BimoduleMaps}
\ColorMultiply{\xi}{p}{\alphacolor}
\longmapsto
\ColorMultiplyThree{\xi}{p}{x}{\alphacolor}{\betacolor}\in H_q,
\end{equation}
Finally, $\Phi$ is applied entry-wise to matrices over such morphisms.

The map $\Phi$ is an equivalence of categories $\Bim\simeq \CF$.
\end{thm}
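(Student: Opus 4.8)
The plan is to check, in this order, that $\Phi$ is a well-defined additive $*$-functor, that it carries a monoidal structure, and that it is fully faithful; essential surjectivity is automatic, since $\Phi$ is the identity on objects and both $\Bim$ and $\CF$ have as objects the same finite direct sums of the bimodules $H_p$. Once these are in place, $\Phi$ is an equivalence of rigid $C^*$-tensor categories, and compatibility of $\Phi$ with the evaluation and coevaluation maps of Definitions~\ref{defn:Bim} and~\ref{defn:CF} either follows formally or is checked directly by comparing the $\alpha$-cup description of $\ev_{H_p}$ with the formula for $\langle\xi_2|\xi_1\rangle_M$ recorded in the proof of Lemma~\ref{lem:TensorProduct}.

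First I would verify that each $\Phi_x$ in~\eqref{eqn:BimoduleMaps} is a well-defined bounded $M$-$M$ bimodule map. Bilinearity is visible from the picture: the left and right $M$-actions on $H_p$ occur at the top of the diagram while $x$ is stacked at the bottom, so the two operations commute with each other. Boundedness follows from the boundedness estimates of Section~\ref{sec:graded} applied to $x^*x\in P_{\beta\to\beta}$, exactly as in Lemma~\ref{lem:PositiveBounded}. Functoriality is immediate from the diagrammatic description of composition: $\Phi_{\id_p}=\id_{H_p}$ because $x=qxp$ makes the extra $p$ harmless, and $\Phi_{y\circ x}=\Phi_y\circ\Phi_x$ because stacking $x$ and then $y$ is stacking $y\circ x$. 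Likewise $\Phi_{x^*}=(\Phi_x)^*$ with respect to the inner product~\eqref{eqn:InnerProduct} is a short diagrammatic check using that $*$ in $P_\bullet$ is reflection and that the inner product on $H_q$ is the $\sum CTL$ pairing. All of this extends entrywise to matrices, giving an additive $*$-functor.

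Next I would install the monoidal structure. The tensorator $\mu_{p,q}\colon H_p\otimes_M H_q\to H_{p\otimes q}=\Phi(H_p\otimes_\Bim H_q)$ is the restriction of the unitary $u$ of Lemma~\ref{lem:TensorProduct} to the sub-bimodules of $H_\alpha\otimes_M H_\beta$ and $H_{\alpha\beta}$ cut down by $p$ on the left tensor factor and $q$ on the right: $u$ places two boxes side by side, and a $p$ below the first box together with a $q$ below the second is precisely a $p\otimes q$ below the combined box. Naturality of $\mu$ in $p$ and $q$, compatibility with the unit ($H_\emptyset=L^2(M)=1_\CF$), and the coherence (pentagon and triangle) axioms with the associator $a_\CF$ of Connes' fusion all reduce to associativity of juxtaposition of GJSW diagrams; the associator comparison is the one point requiring slight care, but it works because a triple Connes' fusion $H_\alpha\otimes_M H_\beta\otimes_M H_\gamma$ is, however one brackets it, canonically the $\cH$-space carrying three side-by-side boxes. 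Hence $\Phi$ is a monoidal $*$-functor.

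The main work is full faithfulness, and I expect the bookkeeping here — keeping track of bars, rotations and cut-downs — rather than any genuine analytic difficulty, to be the principal obstacle. Faithfulness is easy: $\Phi_x(\xi)$ is $\xi$ with $x$ stacked below it, and choosing $\xi\in H_p$ appropriately (for instance the partial isometry $v$ of the proof of Lemma~\ref{lem:TensorProduct}, or elements built from $\bluecup$) the assignment $x\mapsto\Phi_x(\xi)$ is already injective on $qP_{\alpha\to\beta}p$, so $\Phi_x$ determines $x$. For fullness, I would apply Frobenius reciprocity in $\CF$ to identify $\CF(H_p\to H_q)$ with the space of $M$-$M$ central vectors of $\overline{H_p}\otimes_M H_q$; using the tensorator $\mu$ and the easily checked identification $\overline{H_p}\cong H_{\overline p}$ of $M$-$M$ bimodules (reflecting GJSW diagrams), this is $H_{\overline p}\otimes_M H_q\cong H_{\overline p\otimes q}$, whose space of $M$-$M$ central vectors is computed by Corollary~\ref{cor:central} (equivalently Lemma~\ref{lem:central}) to be the cut-down of the relevant $P$-space by $\overline p$ and $q$, namely $qP_{\alpha\to\beta}p$, after rotating the GJSW diagrams back by $90$ degrees. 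Tracing a central vector through these identifications shows it corresponds precisely to $\Phi_x$ for the associated planar element $x$, so $\Phi$ restricts to a linear bijection $\Bim(H_p\to H_q)=qP_{\alpha\to\beta}p\to\CF(H_p\to H_q)$; together with additivity this gives full faithfulness, completing the proof.
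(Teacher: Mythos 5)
Your proposal tracks the paper's own proof essentially step for step: the well-definedness and functoriality of $\Phi$ are Remark~\ref{rem:Functor}, the tensorator and its coherence are Lemma~\ref{lem:TensorProduct2}, the $*$-preservation is Lemma~\ref{lem:StarPreserving}, the identification $\overline{H_p}\cong H_{\overline p}$ is Lemma~\ref{lem:DualPreserving}, and the full-faithfulness argument via Frobenius reciprocity, $\overline{H_p}\otimes_M H_q\cong H_{\overline p\otimes q}$, and the central-vector computation of Corollary~\ref{cor:central} is exactly Lemma~\ref{lem:FullyFaithful}. The only cosmetic difference is that you argue faithfulness directly (evaluating $\Phi_x$ on a nonzero bounded vector), whereas the paper finishes Lemma~\ref{lem:FullyFaithful} by a dimension count once the linear injection is in hand; both are routine and the overall structure of the argument is the same.
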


In the lemmas below, unless otherwise stated, $p,q$ are projections in $P_{\alpha \to \alpha},P_{\beta \to \beta}$ respectively.

\begin{rem}\label{rem:Functor}
Note that composition of the bimodule maps given by Equation \eqref{eqn:BimoduleMaps} corresponds to the usual composition in $P_\bullet$, i.e., if we have $x\in qP_{\alpha\to \beta}p$ and $y\in rP_{\beta\to \gamma}q$, then $\Phi_y\circ \Phi_x =\Phi_{yx}\colon H_p\to H_q$, where
$$
\nbox{\alpha}{yx}{\gamma}=\PAMultiply{\alpha}{x}{\beta}{y}{\gamma}\,.
$$
It is obvious that $\Phi_{p}=\id_{H_p}$, so $\Phi$ is a functor.
\end{rem}

\begin{lem}\label{lem:TensorProduct2}
For $p,q$ projections in $P_\bullet$, the maps
$$
\phi_{p,q}\colon H_p\otimes_M H_q\to H_p\otimes_\Bim H_q=H_{p\otimes q}
$$
(where $p\otimes q$ is the tensor product in $P_\bullet$) given by the unique extension of
$$
\ColorMultiply{\xi}{p}{\alphacolor}
\otimes
\ColorMultiply{\eta}{q}{\betacolor}
\longmapsto
\ColorMultiply{\xi}{p}{\alphacolor}
\,
\ColorMultiply{\eta}{q}{\betacolor}
$$
for $\xi\in D(H_p)$ and $\eta\in D(H_q)$, are $M-M$ bilinear isomorphisms which satisfy associativity, i.e., the following diagram commutes:
$$
\xymatrix{
(H_p\otimes_M H_q)\otimes_M  H_r \ar[rr]^{a_\CF}\ar[d]_{\phi_{p,q}\otimes_M\id_{H_r}} && H_p\otimes_M (H_q\otimes_M H_r)\ar[d]^{\id_{H_p}\otimes_M \phi_{q,r}}\\
(H_{p}\otimes_\Bim H_q) \otimes_M H_r\ar[d]_{\phi_{p\otimes q,r}} && H_p\otimes_M ( H_q \otimes_\Bim H_r)\ar[d]^{\phi_{p,q\otimes r}}\\
(H_p\otimes_\Bim H_q)\otimes_\Bim H_r \ar[rr]^{=} && H_p\otimes_\Bim (H_q\otimes_\Bim H_r).
}
$$
\end{lem}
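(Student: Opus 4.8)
The plan is to realize $\phi_{p,q}$ as a corner of the unitary $u\colon H_\alpha\otimes_M H_\beta\to H_{\alpha\beta}$ constructed in Lemma~\ref{lem:TensorProduct}, and then to verify the associativity square by evaluating on $M$-bounded vectors.

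For a projection $p\in P_{\alpha\to\alpha}$, the operation of ``putting $p$ on the bottom'' defines an operator $R_p$ on $H_\alpha$; since $p=p^2=p^*$ and the left and right $M$-actions do not affect the $p$ on the bottom, $R_p$ is an $M$--$M$ bilinear orthogonal projection with range $H_p$. Thus $H_p$ is an $M$--$M$ direct summand of the bifinite bimodule $H_\alpha$ (Lemma~\ref{lem:bifinite}), hence itself bifinite, and $D(H_p)=R_p D(H_\alpha)$ spans a dense subspace of $H_p$; likewise $H_q\subseteq H_\beta$. Since Connes' fusion is functorial in bounded bimodule maps, $H_p\otimes_M H_q$ is the $M$--$M$ summand of $H_\alpha\otimes_M H_\beta$ cut out by the commuting projections $R_p\otimes_M\id_{H_\beta}$ and $\id_{H_\alpha}\otimes_M R_q$, and $u$ carries the dense set $\{\xi\wedge\eta:\xi\in D(H_p),\ \eta\in D(H_q)\}$ onto the set of concatenations of such $\xi$ and $\eta$. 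On the $H_{\alpha\beta}$ side, putting $p\otimes q$ on the bottom means putting $p$ below the first box and $q$ below the second, so $R_{p\otimes q}(\xi\wedge\eta)=(R_p\xi)\wedge(R_q\eta)$; hence those concatenations are exactly $R_{p\otimes q}$ applied to the (dense) range of $u$, i.e.\ a dense subset of $H_{p\otimes q}$. Therefore $u$ restricts to a unitary $H_p\otimes_M H_q\to H_{p\otimes q}$ which on bounded vectors is $\xi\otimes\eta\mapsto\xi\wedge\eta$, and this is precisely $\phi_{p,q}$; it is $M$--$M$ bilinear since $u$ is. Alternatively one may copy the proof of Lemma~\ref{lem:TensorProduct} verbatim with $p$ and $q$ inserted into every diagram: the inner-product computation \eqref{eqn:InnerProductForTensorProduct} is unchanged, giving an isometry, and the partial-isometry argument at the end of that proof, cut down by $p$ and $q$, gives dense range.

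It remains to check the associativity square. By \cite{MR1424954}, on $M$-bounded vectors the associator $a_\CF$ acts by $(\xi\otimes\eta)\otimes\zeta\mapsto\xi\otimes(\eta\otimes\zeta)$, and each of $\phi_{p,q}\otimes_M\id_{H_r}$, $\id_{H_p}\otimes_M\phi_{q,r}$, $\phi_{p\otimes q,r}$, $\phi_{p,q\otimes r}$ is given on bounded vectors by concatenation (for the first two, combine the description of $\phi$ above with functoriality of Connes' fusion). Fix $\xi\in D(H_p)$, $\eta\in D(H_q)$, $\zeta\in D(H_r)$; the vectors $(\xi\otimes\eta)\otimes\zeta$ span a dense subspace of $(H_p\otimes_M H_q)\otimes_M H_r$. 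Traversing the square down the left side and along the bottom sends $(\xi\otimes\eta)\otimes\zeta$ to $(\xi\wedge\eta)\wedge\zeta$, while traversing it along the top and down the right side sends it to $\xi\wedge(\eta\wedge\zeta)$; these agree because concatenation in $P_\bullet$ is strictly associative, and the bottom arrow is literally the identity because $(p\otimes q)\otimes r=p\otimes(q\otimes r)$ in $P_\bullet$. By density the square commutes.

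There is no deep obstacle here: the mathematical content is the identification of $\phi_{p,q}$ with a corner of $u$, and everything else is bookkeeping on the dense set of $M$-bounded vectors. I expect the only real nuisances to be making the diagrammatic intertwining identity $u\circ(R_p\otimes_M\id)=R_{p\otimes\id_\beta}\circ u$ (and its $q$-analogue) precise, and confirming that cutting the surjectivity argument of Lemma~\ref{lem:TensorProduct} down by $p$ and $q$ still produces partial isometries with the supports required there.
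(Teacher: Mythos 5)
Your proposal is correct and takes essentially the same approach as the paper, which simply notes that the isometry computation of Lemma~\ref{lem:TensorProduct} goes through with $p,q$ inserted, invokes the surjectivity from that proof, and cites \cite{MR1424954} for associativity on $M$-bounded vectors. Your ``corner of $u$'' framing via the identity $R_{p\otimes q}(\xi\wedge\eta)=(R_p\xi)\wedge(R_q\eta)$ makes explicit what the paper leaves implicit, but the underlying argument is the same.
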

\begin{proof}
As in the proof of Lemma \ref{lem:TensorProduct}, the map for $\xi\in D(H_p),\eta\in D(H_q)$ is an isometry intertwining the left and right $M$ actions.  Since the linear span of elements of the form $x\wedge y$ with $x\in D(\cH_{\alpha, \emptyset})$ and $y\in D(\cH_{\emptyset, \overline{\beta}})$ is equal to $D(\cH_{\alpha, \overline{\beta}}) \cong D(H_{\alpha\beta})$ by the proof of Lemma \ref{lem:TensorProduct}, the above map is also surjective.

Associativity follows from looking at $M$-bounded vectors (see \cite{MR1424954}).
\end{proof}

\begin{lem}\label{lem:FullyFaithful}
As complex vector spaces,
$$
\CF(H_p\to H_q)\cong q(P_{\alpha \to \beta})p=\Bim(H_p\to H_q).
$$
Moreover, the composition of maps $\varphi\in \CF(H_p\to H_q)$ and $\psi\in \CF(H_q\to H_r)$ corresponds to the composition in $\Bim$.
\end{lem}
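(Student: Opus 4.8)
The plan is to prove the sharper statement that the linear map $x\mapsto\Phi_x$ of Theorem~\ref{thm:BimCF} restricts to a bijection $qP_{\alpha\to\beta}p\xrightarrow{\sim}\CF(H_p\to H_q)$; the assertion about composition is then exactly Remark~\ref{rem:Functor}, which already records $\Phi_y\circ\Phi_x=\Phi_{yx}$. First I would reduce to the case $p=\id_\alpha$, $q=\id_\beta$: the projection $p$, acting on the bottom of the diagrams in $H_\alpha$, commutes with both the left and right $M$-actions by the relative commutant computation of Lemma~\ref{lem:RelativeCommutant}, so it is an $M$--$M$ bilinear projection on $H_\alpha$ with range $H_p$, and likewise for $q$ on $H_\beta$. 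Hence $\CF(H_p\to H_q)=q\,\CF(H_\alpha\to H_\beta)\,p$, compatibly with $\Phi$ (a morphism $x=qxp$ viewed in $P_{\alpha\to\beta}$ gives $\Phi_x=q\circ\Phi_x^{(\alpha,\beta)}\circ p$). So it suffices to show $\CF(H_\alpha\to H_\beta)\cong P_{\alpha\to\beta}$ via $x\mapsto\Phi_x$.

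Injectivity is the easy half: if $\Phi_x=0$, then evaluating $\Phi_x$ on the $M$-bounded vectors of $H_\alpha$ and reading off the inner product \eqref{eqn:InnerProduct} recovers $x$ from $\Phi_x$, so $x=0$. For surjectivity the idea is to exploit that $\CF$ is a rigid $C^*$-tensor category: Frobenius reciprocity gives $\CF(H_\alpha\to H_\beta)\cong\CF(1\to\overline{H_\alpha}\otimes_M H_\beta)$, and for any bifinite $M$--$M$ bimodule $N$ the space $\CF(1\to N)$ is identified with the space of $M$--$M$ central vectors of $N$ via $\psi\mapsto\psi(\widehat{1})$ (see \cite{MR1424954}). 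So I must compute the central vectors of $\overline{H_\alpha}\otimes_M H_\beta$. A diagrammatic reflection identifies the contragredient $\overline{H_\alpha}$ with $H_{\overline\alpha}$; by Lemma~\ref{lem:TensorProduct}, $\overline{H_\alpha}\otimes_M H_\beta\cong H_{\overline\alpha}\otimes_M H_\beta\cong H_{\overline\alpha\beta}$; and by Lemma~\ref{lem:rotate}, $H_{\overline\alpha\beta}=\cH_{\overline\alpha\beta,\emptyset}\cong\cH_{\overline\alpha,\overline\beta}$ (straighten the $\beta$-strings). Finally Corollary~\ref{cor:central} identifies the $M$--$M$ central vectors of $\cH_{\overline\alpha,\overline\beta}$ with $P_{\alpha\to\beta}$ (after rotating the GJS diagrams back). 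Chasing a class through this chain shows the resulting isomorphism $P_{\alpha\to\beta}\xrightarrow{\sim}\CF(H_\alpha\to H_\beta)$ is precisely $x\mapsto\Phi_x$; together with injectivity this completes the $p=q=\id$ case, reinstating general $p,q$ by compression gives the stated formula, and the composition claim is Remark~\ref{rem:Functor}.

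The main obstacle is the bookkeeping in that last chain: one must check that the composite of the contragredient identification, the Connes'-fusion isomorphism of Lemma~\ref{lem:TensorProduct}, the rotation of Lemma~\ref{lem:rotate}, and the central-vector identification of Corollary~\ref{cor:central} carries a planar element $x\in P_{\alpha\to\beta}$ to the bimodule map \eqref{eqn:BimoduleMaps}, with all strings in the right place; everything else is formal. A more hands-on alternative, closer in spirit to \cite{MR2645882,1202.1298}, is to avoid Frobenius reciprocity altogether: restrict a given $\varphi\in\CF(H_\alpha\to H_\beta)$ to the left-$M_\alpha$-cyclic vector $v$ of Lemma~\ref{lem:bifinite}, use $M$--$M$ bilinearity of $\varphi$ together with the structure of $D(H_\alpha)$ to force $\varphi(v)$ to be $v$ with a single box $x\in P_{\alpha\to\beta}$ stacked underneath, and then verify $\varphi=\Phi_x$.
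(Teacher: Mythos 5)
Your proposal takes essentially the same route as the paper: Frobenius reciprocity reduces the problem to computing $M$--$M$ central vectors of $\overline{H_p}\otimes_M H_q$, which are identified with $qP_{\alpha\to\beta}p$ by Corollary~\ref{cor:central} after tracing through Lemmas~\ref{lem:rotate} and~\ref{lem:TensorProduct}. Your version is slightly more careful than the printed one in two respects: you reduce first to $p=\id_\alpha$, $q=\id_\beta$ via the $M$--$M$ bilinear cut-down by $p$ and $q$, and you explicitly argue injectivity of $x\mapsto\Phi_x$ by recovering $x$ from the inner-product formula~\eqref{eqn:InnerProduct} — whereas the paper only matches dimensions and leaves injectivity of the specific map implicit. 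The bookkeeping concern you raise (checking the composite isomorphism really is $x\mapsto\Phi_x$) is real, but the paper glosses it at the same level you do.
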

\begin{proof}
Recall that an element $x\in q(P_{\alpha \to \beta})p=\Bim(H_p\to H_q)$ gives a map $\Phi_x\in \CF(H_p\to H_q)$ as in Equation \eqref{eqn:BimoduleMaps}, and composition of morphisms is exactly multiplication in $P_\bullet$ by Remark \ref{rem:Functor}. Note further that $\Bim(H_p\to H_q)$ and $\CF(H_p\to H_q)$ are finite dimensional, so it remains to show they have the same dimension.

By Frobenius reciprocity, we have a natural isomorphism
$$
\Hom_{M-M}(H_p\to H_q)\cong \Hom_{M-M}(1\to \overline{H_p}\otimes H_q),
$$
and the latter space is naturally identified with the $M-M$ central vectors in $\overline{H_p}\otimes H_q$. Note that
$$
\overline{H_{p}} \otimes_M H_{q}
\cong
H_{\overline{p}} \otimes_{M} H_{q}
\cong
H_{\overline{p}\otimes q}
\cong
p\wedge\cH_{\overline{\alpha},\overline{\beta}}\wedge q.
$$
From Corollary \ref{cor:central}, the set of central vectors in $p\wedge\cH_{\overline{\alpha},\overline{\beta}}\wedge q$ is $q(P_{\alpha \to \beta})p$
$$
p\wedge_{\overline{\alpha}}\cH_{\overline{\alpha},\overline{\beta}}\wedge_{\overline{\beta}} q
\ni
\begin{tikzpicture} [baseline = -.1cm]
	\draw[thick, \betacolor] (0,0)--(2,0);
	\draw[thick, \alphacolor] (-2,0)--(0,0);
	\draw[thick, unshaded] (-.4, -.4) -- (-.4, .4) -- (.4, .4) -- (.4, -.4) -- (-.4, -.4);
	\draw[thick, unshaded] (.8, -.4) -- (.8, .4) -- (1.6, .4) -- (1.6, -.4) -- (.8, -.4);
	\draw[thick, unshaded] (-.8, -.4) -- (-.8, .4) -- (-1.6, .4) -- (-1.6, -.4) -- (-.8, -.4);
	\node at (0, 0) {$x$};
	\node at (1.2, 0) {$q$};
	\node at (-1.2, 0) {$p$};
\end{tikzpicture}
\longleftrightarrow
\begin{tikzpicture}[baseline=-.1cm]
	\draw[thick, \betacolor] (0,0)--(0,-2);
	\draw[thick, \alphacolor] (0,0)--(0,2);
	\filldraw[unshaded,thick] (-.4,.4)--(.4,.4)--(.4,-.4)--(-.4,-.4)--(-.4,.4);
	\draw[thick, unshaded] (-.4, .8) -- (-.4, 1.6) -- (.4, 1.6) -- (.4,.8) -- (-.4, .8);
	\draw[thick, unshaded] (-.4, -.8) -- (-.4, -1.6) -- (.4, -1.6) -- (.4,-.8) -- (-.4, -.8);
	\node at (0,1.2) {$p$};
	\node at (0,0) {$x$};	
	\node at (0,-1.2) {$q$};
\end{tikzpicture}
\in q(P_{\alpha \to \beta})p,
$$
proving $\dim( \CF(H_p\to H_q))=\dim(\Bim(H_p\to H_q))$.
\end{proof}

\begin{lem}\label{lem:StarPreserving}
If $x\in \Bim(H_p\to H_q)=qP_{\alpha\to \beta}p$ and $\Phi_x\in\CF( H_p\to H_q)$ is as in Equation \eqref{eqn:BimoduleMaps}, then $\Phi_x^* = \Phi_{x^*}$.
\end{lem}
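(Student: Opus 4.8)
The plan is to verify the operator identity $\Phi_x^\ast=\Phi_{x^\ast}$ directly from the definition of the Hilbert-space adjoint. Since $\Phi_x$ and $\Phi_{x^\ast}$ are morphisms in $\CF$ (Lemma \ref{lem:FullyFaithful}), hence bounded $M$--$M$ bilinear maps, the adjoint $\Phi_x^\ast\colon H_q\to H_p$ is the unique bounded map satisfying $\langle\Phi_x(\xi),\eta\rangle_{H_q}=\langle\xi,\Phi_x^\ast(\eta)\rangle_{H_p}$ for all $\xi\in H_p$, $\eta\in H_q$; and two bounded operators agreeing on dense subspaces coincide. So it suffices to check $\langle\Phi_x(\xi),\eta\rangle_{H_q}=\langle\xi,\Phi_{x^\ast}(\eta)\rangle_{H_p}$ for $\xi$ ranging over vectors of the form $\zeta\wedge p$ with $\zeta\in p_\emptyset\wedge\cM_{\emptyset,\overline\alpha}\wedge p$ and $\eta$ over vectors $\omega\wedge q$ with $\omega\in p_\emptyset\wedge\cM_{\emptyset,\overline\beta}\wedge q$, which are dense in $H_p$ and $H_q$ respectively.

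Next I would expand both sides as closed planar diagrams. By \eqref{eqn:BimoduleMaps}, $\Phi_x(\xi)=\zeta\wedge p\wedge x\in H_q$; inserting this into the inner product \eqref{eqn:InnerProduct} on $H_q$ produces a closed diagram with the box $x$ (with $p$ and $\zeta$ stacked above it) on one side, $\overline{\eta^\ast}$ on the other side, a $\sum CTL$ cap joining their free strings on top, and the $\beta$-coloured cup closing the bottom. On the other side, $\Phi_{x^\ast}(\eta)=\omega\wedge q\wedge x^\ast\in H_p$ with $x^\ast\in pP_{\beta\to\alpha}q$; forming $\overline{(\Phi_{x^\ast}(\eta))^\ast}$ reflects this stack, and using compatibility of the planar-algebra involution with the $180^\circ$ rotation $\overline{\,\cdot\,}$ together with $p=p^\ast=p^2$, $q=q^\ast=q^2$ and $(x^\ast)^\ast=x$, we get the box $x$ back — now sitting on the $\eta$-side of the diagram — with the bottom closed by the $\alpha$-coloured cup.

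The heart of the argument is then that these two closed diagrams are equal: the box $x$ has its $\alpha$-legs attached (through $p$) to $\zeta$ and its $\beta$-legs attached (through $q$) to $\omega$, and sliding $x$ along the connecting strings from the $\zeta$-side to the $\omega$-side — which simultaneously converts the $\beta$-cup into the $\alpha$-cup — is an isotopy of planar tangles, so the value is unchanged by the isotopy axiom for $P_\bullet$; one also checks that the $\sum CTL$ caps on the two sides are identified under this isotopy, using that $x$ has only through-strings. This yields $\langle\Phi_x(\xi),\eta\rangle_{H_q}=\langle\xi,\Phi_{x^\ast}(\eta)\rangle_{H_p}$ on the dense subspaces, hence $\Phi_x^\ast=\Phi_{x^\ast}$. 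I expect the only real care to be needed in the bookkeeping of the reflection-and-rotation step — confirming that the operation in \eqref{eqn:InnerProduct} applied to the stack $\omega,q,x^\ast$ returns precisely $x$ in the claimed position (and not $\overline{x}$ or $x$ shifted) — and in matching up the two Temperley--Lieb sums under the sliding isotopy; everything else is formal.
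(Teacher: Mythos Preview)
Your proposal is correct and follows essentially the same route as the paper: verify $\langle \Phi_x\xi,\eta\rangle_{H_q}=\langle\xi,\Phi_{x^*}\eta\rangle_{H_p}$ on dense vectors by writing both sides as closed planar diagrams and observing that they differ by an isotopy sliding $x$ around the bottom cup. One small correction on the point you yourself flagged: the box that appears in $\overline{(\Phi_{x^*}\eta)^*}$ on the $\eta$-side is $\overline{x}$ (the $180^\circ$ rotation of $x$), not $x$ --- and this is exactly what is needed, since sliding $x$ from the $\xi$-side around the cup to the $\eta$-side on the other diagram also rotates it to $\overline{x}$; the paper's one-line proof makes precisely this observation.
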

\begin{proof}
By Equation \eqref{eqn:InnerProduct}, if $\xi\in D(H_p)$ and $\eta\in D(H_q)$, then
$$
\langle \Phi_x \xi,\eta\rangle_{H_q}
=
\begin{tikzpicture}[baseline=1.7cm]
	\draw (0,2.4)--(0,3.2);
	\draw (1.2,2.4)--(1.2,3.2);
	\draw [thick, red] (0,2.4)-- (0, -.4) .. controls ++(270:.4cm) and ++(270:.4cm) .. (1.2,-.4);
	\draw [thick, blue] (1.2,0) -- (1.2,2.8);
	\draw[thick, unshaded] (-.4,3.2) -- (-.4, 4) -- (1.6, 4) -- (1.6, 3.2) -- (-.4, 3.2);
	\draw[thick, unshaded] (1.6, -.4) -- (1.6, .4) -- (.8, .4) -- (.8, -.4) -- (1.6, -.4);
	\draw[thick, unshaded] (1.6, .8) -- (1.6, 1.6) -- (.8, 1.6) -- (.8, .8) -- (1.6, .8);
	\draw[thick, unshaded] (1.6, 2) -- (1.6, 2.8) -- (.8, 2.8) -- (.8, 2) -- (1.6, 2);
	\draw[thick, unshaded] (-.4, .8) -- (-.4, 1.6) -- (.4, 1.6) -- (.4, .8) -- (-.4, .8);
	\draw[thick, unshaded] (-.4, 2) -- (-.4, 2.8) -- (.4, 2.8) -- (.4, 2) -- (-.4, 2);
	\node at (.6, 3.6) {$\sum CTL$};
	\node at (0, 2.4) {$\overline{\eta^*}$};
	\node at (1.2, 2.4) {$\xi$};
	\node at (0, 1.2) {$\overline{q}$};
	\node at (1.2, 1.2) {$p$};
	\node at (1.2,0) {$x$};
\end{tikzpicture}
=
\begin{tikzpicture}[baseline=1.7cm]
	\draw (0,2.4)--(0,3.2);
	\draw (1.2,2.4)--(1.2,3.2);
	\draw [thick, red] (0,2.4)--(0,0);
	\draw [thick, blue] (0, -.4) .. controls ++(270:.4cm) and ++(270:.4cm) .. (1.2,-.4) -- (1.2,2.4);;
	\draw[thick, unshaded] (-.4,3.2) -- (-.4, 4) -- (1.6, 4) -- (1.6, 3.2) -- (-.4, 3.2);
	\draw[thick, unshaded] (1.6, .8) -- (1.6, 1.6) -- (.8, 1.6) -- (.8, .8) -- (1.6, .8);
	\draw[thick, unshaded] (1.6, 2) -- (1.6, 2.8) -- (.8, 2.8) -- (.8, 2) -- (1.6, 2);
	\draw[thick, unshaded] (-.4, -.4) -- (-.4, .4) -- (.4, .4) -- (.4, -.4) -- (-.4, -.4);
	\draw[thick, unshaded] (-.4, .8) -- (-.4, 1.6) -- (.4, 1.6) -- (.4, .8) -- (-.4, .8);
	\draw[thick, unshaded] (-.4, 2) -- (-.4, 2.8) -- (.4, 2.8) -- (.4, 2) -- (-.4, 2);
	\node at (.6, 3.6) {$\sum CTL$};
	\node at (0, 2.4) {$\overline{\eta^*}$};
	\node at (1.2, 2.4) {$\xi$};
	\node at (0, 1.2) {$\overline{q}$};
	\node at (1.2, 1.2) {$p$};
	\node at (0,0) {$\overline{x}$};
\end{tikzpicture}
=
\langle \xi, \Phi_{x^*} \eta\rangle_{H_p}
$$
where $\overline{x}$ is the 180 degree rotation of $x$.
\end{proof}

For the next lemma, recall that the conjugate Hilbert space of $K$, is the set of formal symbols $\set{\overline{\xi}}{\xi\in H_p}$ such that $\lambda \overline{\xi}+\overline{\eta}=\overline{\overline{\lambda}\xi+\eta}$ for all $\lambda\in\C$ and $\eta,\xi\in H_p$, together with left and right $M$-actions given by $x \overline{\xi} y = \overline{y^* \xi x^*}$ for $x,y\in M$.

\begin{lem} \label{lem:DualPreserving}
For $p\in P_{\alpha\to\alpha}$, define $\psi_p\colon H_{\overline{p}}\to \overline{H_p}$ (where $\overline{p}$ is the dual projection of $p$ in $P_\bullet$) by the unique extension of the map
$$
\ColorMultiply{x}{\overline{p}}{\alphacolor}
\longmapsto
\overline{
\ColorMultiply{x^*}{p}{\alphacolor}
}
$$
for $\xi\in D(H_{\overline{p}})$ (the {\alphacolor} strand on the left is labelled $\overline{\alpha}$). Then $\psi_p$ is an $M-M$ bilinear isomorphism such that the following diagrams commute:
$$
\xymatrix{
H_{\overline{p}}\otimes_M H_{\overline{q}}
\ar[rr]^{\psi_p\otimes_M \psi_q}
\ar[d]_{\phi_{\overline{p},\overline{q}}}
&&
\overline{H_p}\otimes_M \overline{H_q}
\ar[rr]^{\cong}
&&
\overline{H_q\otimes_M H_p}
\ar[d]^{\overline{\phi_{q,p}}}
\\
H_{\overline{p}\otimes \overline{q}}
\ar[rr]^{=}
&&
H_{\overline{q\otimes p}}
\ar[rr]^{\psi_{p\otimes q}}
&&
\overline{H_{q\otimes p}}
}
$$
and
$$
\xymatrix{
H_{\overline{\overline{p}}}
\ar[rr]^{\psi_p}\ar[d]_{=}
&&
\overline{H_{\overline{p}}}
\ar[d]^{\overline{\psi_{p}}}
\\
H_p
\ar[rr]^{\cong}
&&
\overline{\overline{H_p}}.
}
$$
where we just write $\cong$ for the obvious isomorphism.
\end{lem}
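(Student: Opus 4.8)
The plan is to verify, in order, that $\psi_p$ is a well-defined isometric $M-M$ bilinear bijection, and then to chase $M$-bounded vectors through the two displayed squares. The first observation is that $\psi_p$ factors as $\psi_p=\overline{(\,\cdot\,)}\circ\theta_p$, where $\overline{(\,\cdot\,)}\colon H_p\to\overline{H_p}$ is the tautological conjugation $\eta\mapsto\overline\eta$ and $\theta_p\colon H_{\overline p}\to H_p$ is the conjugate-linear map determined on the dense subspace $D(H_{\overline p})$ of $M$-bounded vectors by $x\wedge\overline p\mapsto x^*\wedge p$. Each of these two factors is conjugate-linear, and, by the action convention $a\overline\xi b=\overline{b^*\xi a^*}$ on a contragredient bimodule, each reverses the left and right $M$-actions; hence $\psi_p$ is complex-linear and $M-M$ bilinear once $\theta_p$ is shown to be a well-defined bounded conjugate-bimodule map.

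To see that $\theta_p$ is well-defined and isometric I would expand both $\langle\theta_p(x\wedge\overline p),\theta_p(y\wedge\overline p)\rangle_{H_p}$ and $\langle x\wedge\overline p,\,y\wedge\overline p\rangle_{H_{\overline p}}$ using the inner-product formula \eqref{eqn:InnerProduct}; each becomes a closed labelled diagram built from the two boxes, a cup on the $\alpha$- (resp. $\overline\alpha$-) strands at the bottom, and a $\sum CTL$ closure at the top. These two diagrams differ only by a rotation of the plane through $\pi$: such a rotation interchanges the $\alpha$-cup with the $\overline\alpha$-cup, sends each occurrence of $p$ to $\overline p$ (which is precisely the definition of $\overline p$ as the $\pi$-rotation of $p$), and carries the $\sum CTL$ closure to an equally valid one, since the set of colored Temperley-Lieb diagrams is rotation-invariant and $P_\bullet$ is spherical. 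Hence $\theta_p$ preserves the inner product, so it is isometric and injective. The same recipe produces $\theta_{\overline p}\colon H_{\overline{\overline p}}=H_p\to H_{\overline p}$, and $\theta_{\overline p}\circ\theta_p=\id$ because $*$ is involutive and $\overline{\overline p}=p$, so $\theta_p$ --- and therefore $\psi_p$ --- is bijective. Finally $\theta_p(a\,\xi\,b)=b^*\,\theta_p(\xi)\,a^*$ is immediate, since stacking $a$ on the left and $b$ on the right of $x\wedge\overline p$ and then applying $*$ puts $a^*$ on the right and $b^*$ on the left of $x^*\wedge p$; this finishes the proof that $\psi_p$ is an $M-M$ bilinear isomorphism.

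For the two commuting squares I would evaluate each composite on an elementary $M$-bounded vector and compare. For the first square, apply both paths to $(x\wedge\overline p)\otimes(y\wedge\overline q)$: down the left column, $\phi_{\overline p,\overline q}$ (Lemma \ref{lem:TensorProduct2}) gives $(x\wedge\overline p)\wedge(y\wedge\overline q)\in H_{\overline p\otimes\overline q}=H_{\overline{q\otimes p}}$, and the bottom isomorphism then produces $\overline{\bigl((x\wedge\overline p)\wedge(y\wedge\overline q)\bigr)^{*}\wedge(q\otimes p)}$; across the top row, $\psi_p\otimes_M\psi_q$ followed by the canonical identification $\overline{H_p}\otimes_M\overline{H_q}\cong\overline{H_q\otimes_M H_p}$ and by $\overline{\phi_{q,p}}$ produces $\overline{(y^*\wedge q)\wedge(x^*\wedge p)}$, and these agree because tensoring in $\Bim$ is horizontal concatenation, $*$ reverses the order of a $\wedge$-product, and $\overline p\otimes\overline q=\overline{q\otimes p}$ as projections in $P_\bullet$. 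The second, double-dual square is handled identically, using $\theta_{\overline p}\circ\theta_p=\id$ and the compatibility of the double conjugation $H_p\to\overline{\overline{H_p}}$ with $\overline{(\,\cdot\,)}$, together with the identification $\overline{\overline p}=p$ coming from the symmetric self-duality of the objects of $\cL$. The only genuinely delicate point is the isometry and well-definedness of $\theta_p$: one must track carefully how $*$, the passage from $p$ to $\overline p$, and the sum over colored Temperley-Lieb diagrams interact under the $\pi$-rotation; the two diagram chases afterwards are routine bookkeeping.
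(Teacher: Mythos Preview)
Your proposal is correct and follows exactly the approach the paper takes: the paper's own proof is a two-line remark that $\psi_p$ being a well-defined $M$--$M$ bilinear isomorphism is ``trivial'' and that commutativity of the diagrams ``follows by looking at $M$-bounded vectors.'' Your write-up is precisely the natural elaboration of those two sentences --- the factorization through the conjugate-linear $\theta_p$, the sphericality/rotation argument for the isometry, and the diagram chases on elementary tensors of $M$-bounded vectors are all implicit in the paper's terse proof. One small point of phrasing: since $\theta_p$ is conjugate-linear, it conjugates rather than preserves the inner product, i.e.\ $\langle\theta_p\xi,\theta_p\eta\rangle=\overline{\langle\xi,\eta\rangle}$; this still yields the isometry you need, so the argument stands.
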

\begin{proof}
That $\psi_p$ is a well-defined $M-M$ bilinear isomorphism is trivial. Commutativity of the diagrams follows by looking at $M$-bounded vectors.
\end{proof}

\begin{lem}\label{lem:Rigid}
For $p\in P_{\alpha\to\alpha}$, $\Phi_{\ev_{H_p}^\Bim}=\ev_{H_p}^\CF$ and $\Phi_{\coev_{H_p}^\Bim}=\coev_{H_p}^\CF$. Hence $\Phi$ preserves the rigid structure, and $\overline{\Phi_x}=\Phi_{\overline{x}}\in \CF(H_{\overline{q}}\to H_{\overline{p}})$ for all $x\in \Bim(H_p\to H_q)$.
\end{lem}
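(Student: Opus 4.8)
The plan is to verify the two identities $\Phi_{\ev_{H_p}^\Bim}=\ev_{H_p}^\CF$ and $\Phi_{\coev_{H_p}^\Bim}=\coev_{H_p}^\CF$ directly, on a dense set of $M$-bounded vectors, after inserting the canonical identifications already constructed: $\overline{H_p}\cong H_{\overline p}$ via $\psi_p$ (Lemma \ref{lem:DualPreserving}) and $H_{\overline p}\otimes_M H_p\cong H_{\overline p\otimes p}$ via $\phi_{\overline p,p}$ (Lemma \ref{lem:TensorProduct2}). After these identifications both $\Phi_{\ev_{H_p}^\Bim}$ and $\ev_{H_p}^\CF$ become $M$--$M$ bilinear maps $H_{\overline p}\otimes_M H_p\to L^2(M)=1_\CF$, so it suffices to check they agree on elementary tensors $\xi\otimes\eta$ with $\xi\in D(H_{\overline p})$ and $\eta\in D(H_p)$. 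For the $\CF$ side, $\ev_{H_p}^\CF(\psi_p(\xi)\otimes\eta)=\langle\xi'|\eta\rangle_M$, where $\psi_p(\xi)=\overline{\xi'}$; unfolding the defining formula for $\psi_p$ and the diagram for the $M$-valued inner product recorded in the proof of Lemma \ref{lem:TensorProduct}, this is the planar picture obtained by joining $\xi$ and $\eta$ with an $\alpha$-cup, the projections $\overline p,p$ being absorbed into the boundary boxes. For the $\Bim$ side, $\ev_{H_p}^\Bim$ is by definition the $\alpha$-cup with $\overline p,p$ on top, and $\Phi_{\ev_{H_p}^\Bim}$ applied to $\xi\wedge\eta$ (viewed in $H_{\overline p\otimes p}$ via $\phi_{\overline p,p}$) is, by Equation \eqref{eqn:BimoduleMaps}, the very same picture. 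Hence the two maps coincide.

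For the coevaluation one uses $\coev_{H_p}^\Bim=(\ev_{H_{\overline p}}^\Bim)^*$ (Definition \ref{defn:Bim}) and $\coev_{H_p}^\CF=(\ev_{H_{\overline p}}^\CF)^*$ (a standard consequence of the definition of Connes' fusion, together with the double-dual identification $\overline{\overline{H_p}}\cong H_p$, which is compatible with $\psi$ by the second diagram of Lemma \ref{lem:DualPreserving}); since $\Phi$ is $*$-preserving by Lemma \ref{lem:StarPreserving} and we have just shown $\Phi_{\ev_{H_{\overline p}}^\Bim}=\ev_{H_{\overline p}}^\CF$, taking adjoints gives $\Phi_{\coev_{H_p}^\Bim}=\coev_{H_p}^\CF$. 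Equivalently, one checks this directly on $M$-bounded vectors using the description $\coev^\CF_{H_p}(1_M)=\sum_\zeta\zeta\otimes\overline\zeta$.

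Once $\ev$ and $\coev$ are preserved, $\Phi$ equipped with the monoidal structure isomorphisms $\phi_{p,q}$ and the duality isomorphisms $\psi_p$ is an additive, $*$-preserving, fully faithful (Lemma \ref{lem:FullyFaithful}), essentially surjective monoidal functor carrying the rigid structure of $\Bim$ to that of $\CF$; in particular $\Phi$ is an equivalence of rigid $C^*$-tensor categories, completing the proof of Theorem \ref{thm:BimCF}. For the last assertion, the dual morphism $\overline{\Phi_x}$ is built from $\Phi_x$, $\ev_{H_q}^\CF$, $\coev_{H_p}^\CF$ and identity maps by the standard rigidity formula; transporting this expression through $\Phi$, $\phi_{\bullet,\bullet}$ and $\psi_\bullet$ and invoking what we have just proved, it equals $\Phi$ applied to the corresponding composite in $\Bim$, which is $\Phi_{\overline x}$, where $\overline x$ is the rotation of $x$ by $\pi$ in $P_\bullet$ (the $180$-degree rotation appearing in Lemma \ref{lem:StarPreserving}). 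I expect the only real obstacle to be the diagrammatic bookkeeping in the first paragraph: one must track how the $\sum CTL$ sums in the inner products on $H_p$ and in the $M$-valued inner product interact with the $\alpha$-cup, and check that the projections $\overline p,p$ can be slid onto the boundary so that the $\CF$-picture and the $\Bim$-picture are literally the same planar diagram; this is routine but easy to get wrong.
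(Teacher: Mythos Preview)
Your proposal is correct and follows essentially the same approach as the paper: verify $\Phi_{\ev_{H_p}^\Bim}=\ev_{H_p}^\CF$ on $M$-bounded vectors by transporting through $\psi_p$ and $\phi_{\overline p,p}$ and comparing the resulting planar diagrams, then deduce the coevaluation identity from $*$-preservation (Lemma \ref{lem:StarPreserving}) together with $\coev=(\ev)^*$ in both categories, and finally obtain $\overline{\Phi_x}=\Phi_{\overline x}$ from the rigidity formula. Your write-up is in fact a bit more explicit than the paper's about why the coevaluation step works and about how the dual-morphism assertion follows.
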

\begin{proof}
In diagrams, $\ev_{H_p}^\CF$ is given for $M$-bounded vectors by
$$
\ev_{H_p}^\CF(\overline{x}\otimes y)=
\ColorMultiply{\overline{x}}{\overline{p}}{\alphacolor}
\otimes
\ColorMultiply{y}{p}{\alphacolor}
\longmapsto
\begin{tikzpicture}[baseline=.5cm]
	\draw [thick, \alphacolor] (0,1)--(0, -.4) .. controls ++(270:.4cm) and ++(270:.4cm) .. (1.2,-.4) -- (1.2,1);
	\draw (0,2)--(0,1);
	\draw (1.2,2)--(1.2,1);
	\draw[thick, unshaded] (1.6, -.4) -- (1.6, .4) -- (.8, .4) -- (.8, -.4) -- (1.6, -.4);
	\draw[thick, unshaded] (1.6, .8) -- (1.6, 1.6) -- (.8, 1.6) -- (.8, .8) -- (1.6, .8);
	\draw[thick, unshaded] (-.4, -.4) -- (-.4, .4) -- (.4, .4) -- (.4, -.4) -- (-.4, -.4);
	\draw[thick, unshaded] (-.4, .8) -- (-.4, 1.6) -- (.4, 1.6) -- (.4, .8) -- (-.4, .8);
	\node at (0, 0) {$\overline{p}$};
	\node at (1.2, 0) {$p$};
	\node at (0, 1.2) {$\overline{x}$};
	\node at (1.2, 1.2) {$y$};
\end{tikzpicture}
=\langle x| y\rangle_M.
$$
Hence using the isomorphism
$$
\xymatrix{
\overline{H_p}\otimes_M H_p
\ar[rr]^{\psi_p\otimes_M \id_{H_p}}
&&
H_{\overline{p}}\otimes_M H_p
\ar[rr]^(.55){\phi_{\overline{p},p}}
&&
H_{\overline{p}\otimes p}
}
$$
from Lemmas \ref{lem:TensorProduct2} and \ref{lem:DualPreserving}, we have that $\ev_{H_p}^\CF=\Phi_{\ev_{H_p}^\Bim}$.
Since $\Phi$ is $*$-preserving by Lemma \ref{lem:StarPreserving}, we must have that $\Phi_{\coev_{H_p}^\Bim}=\coev_{H_p}^\CF$. Thus
$$
\coev_{H_p}^\CF(1_M)=\MInnerProductOp{p}{\alphacolor}{\overline{p}}\,.
$$
It is now easy to check that $\overline{\Phi_x}=\Phi_{\overline{x}}\in \CF(H_{\overline{q}}\to H_{\overline{p}})$ for all $x\in \Bim(H_p\to H_q)$.
\end{proof}

\begin{proof}[Proof of Theorem \ref{thm:BimCF}]
We must show $\Phi$ is additive, monoidal, dual-preserving, $*$-preserving, fully faithful, and essentially surjective.

Additivity on objects and essentially surjective come for free, since the objects are the same. Additivity on morphisms and fully faithful follows from Lemma \ref{lem:FullyFaithful}. Monoidal follows from Lemma \ref{lem:TensorProduct2},  $*$-preserving follows from Lemmas \ref{lem:FullyFaithful} and \ref{lem:StarPreserving}, and dual-preserving follows from Lemmas \ref{lem:DualPreserving} and \ref{lem:Rigid}. Note that the results of the previous five lemmas extend in the obvious ways to direct sums by looking at matrices over the maps $H_p\to H_q$.
\end{proof}

\section{The isomorphism class of $M$}\label{sec:vNa}

We now determine the isomorphism class of the $II_1$-factor $M$ from Section \ref{sec:GJS}.

\begin{assumption}
Note that if our rigid $C^*$-tensor category $\sC$ is countably generated, then the isomorphism classes of objects of $\sC$ form a countable set. Let $\cS$ consist of a set of representatives for the isomorphism classes of objects in $\sC$, and let $\cL=\set{X\oplus\overline{X}}{X\in\cS}$ as in Assumption \ref{assume:Countable}.
Again, the objects in $\cL$ are not simple, and all have dimension greater than 1.
In particular, $\cF_\sC(\cL)$ is not locally finite, and each vertex has self loops, since objects of the form $2\oplus X\oplus \overline{X}$ are in $\cL$.
\end{assumption}

With this assumption, we prove in Section \ref{sec:vNAGraph} that $M \cong L(\F_{\I})$. In the case that $\sC$ has finitely many isomorphism classes of simple objects, i.e., $\sC$ is a unitary fusion category, we can find a single object $X$ that generates $\sC$. We explain in Remark \ref{rem:Finite} that if we choose $\cL=\{X\oplus\overline{X}\}$, then $M\cong L(\F_t)$ with
$$
t = 1 + \dim(\sC)(2\dim(X) - 1),
$$
similar to the result in \cite{MR2807103}.

To begin, we describe in the next two sections a semifinite algebra associated to $P_\bullet$.  Many of the ideas in these two sections mirror \cite{MR2807103}, except that the semifinite algebra makes all box-spaces orthogonal. One also must be more careful since the fusion graph $\cF_{\sC}(\cL)$ is not bipartite.

\subsection{A semifinte algebra associated to $P_\bullet$}

Set $\displaystyle \cG_{\infty} = \bigoplus_{\alpha, \beta \in \Lambda} \Gr_{\alpha, \beta}(P_\bullet)$.  We endow $\cG_{\infty}$ with the multiplication
$$
\begin{tikzpicture}[baseline=-.1cm]
	\draw (-.8,0)--(.8,0);
	\draw (0,.4)--(0,.8);
	\draw[thick, unshaded] (-.4, -.4) -- (-.4, .4) -- (.4, .4) -- (.4, -.4) -- (-.4, -.4);
	\node at (0, 0) {$x$};
	\node at (-.6, .2) {\scriptsize{$\kappa$}};
	\node at (.2,.6) {\scriptsize{$\gamma$}};
	\node at (.6, .2) {\scriptsize{$\theta$}};
\end{tikzpicture}
\wedge
\begin{tikzpicture}[baseline=-.1cm]
	\draw (-.8,0)--(.8,0);
	\draw (0,.4)--(0,.8);
	\draw[thick, unshaded] (-.4, -.4) -- (-.4, .4) -- (.4, .4) -- (.4, -.4) -- (-.4, -.4);
	\node at (0, 0) {$x$};
	\node at (-.6, .2) {\scriptsize{$\omega$}};
	\node at (.2,.6) {\scriptsize{$\gamma'$}};
	\node at (.6, .2) {\scriptsize{$\chi$}};
\end{tikzpicture}
= \delta_{\omega,\theta}
\begin{tikzpicture} [baseline = -.1cm]
	\draw (0,.4)--(0,.8);
	\draw (1.2,.4)--(1.2,.8);
	\draw (-.8,0)--(2,0);
	\draw[thick, unshaded] (-.4, -.4) -- (-.4, .4) -- (.4, .4) -- (.4, -.4) -- (-.4, -.4);
	\draw[thick, unshaded] (.8, -.4) -- (.8, .4) -- (1.6, .4) -- (1.6, -.4) -- (.8, -.4);
	\node at (0, 0) {$x$};
	\node at (1.2, 0) {$y^{*}$};
	\node at (-.6,.2) {\scriptsize{$\kappa$}};
	\node at (.2,.6) {\scriptsize{$\gamma$}};
	\node at (.6,.2) {\scriptsize{$\theta$}};
	\node at (1.4,.6) {\scriptsize{$\gamma'$}};
	\node at (1.8,.2) {\scriptsize{$\chi$}};
\end{tikzpicture}\,,
 $$
where $\kappa,\theta,\omega,\chi\in\Lambda$.  There is a (semifinite) trace, $\Tr$ on $\cG_{\infty}$ given by
$$
\Tr(x) =
\begin{tikzpicture}[baseline=.3cm]
	\draw (0,0)--(0,.8);
	\draw (.4,0) arc (90:-90:.4cm) -- (-.4,-.8) arc (270:90:.4cm);
	\filldraw[unshaded,thick] (-.4,.4)--(.4,.4)--(.4,-.4)--(-.4,-.4)--(-.4,.4);
	\draw[thick, unshaded] (-.7, .8) -- (-.7, 1.6) -- (.7, 1.6) -- (.7,.8) -- (-.7, .8);
	\node at (0,0) {$x$};
	\node at (0,1.2) {$\Sigma CTL$};
\end{tikzpicture}
$$
for $x \in \Gr_{\alpha}$, and $\Tr(x) = 0$ for $x \in \Gr_{\alpha, \beta}$ with $\alpha \neq \beta$.

\begin{defn}

As in Section \ref{sec:graded}, one argues (by once again orthogonalizing) that $\Tr$ is positive definite on $\cG_{\infty}$ and multiplication is bounded on $L^{2}(\cG_{\infty})$.  Therefore, we form the (semifinite) von Neumann algebra $\cM_{\infty}=\cG_{\infty}''$ acting on $L^{2}(\cG_{\infty},\Tr)$.

We also use the von Neumann subalgebra $\cA_{\infty} \subset \cM_{\infty}$ which is generated by all boxes in $\cG_{\infty}$ with \underline{no} strings on top.
\end{defn}

As in Section \ref{sec:CategoriesOfBimodules}, let
$$p_{\alpha} =
\begin{tikzpicture} [baseline = -.1cm]
	\filldraw[unshaded,thick] (-.4,.4)--(.4,.4)--(.4,-.4)--(-.4,-.4)--(-.4,.4);
	\draw [thick, \alphacolor] (-.4, 0) -- (.4, 0);
\end{tikzpicture}\,.
$$
It is easy to see that $p_{\alpha}\wedge \cM_{\infty} \wedge p_{\alpha} = M_{\alpha}$ so that all factors in the various Jones towers in Section \ref{sec:towers} appear as cut-downs/corners of $\cM_{\infty}$.   We now record some lemmas about the structure of $\cM_{\infty}$ and $\cA_{\infty}$.

\begin{lem} \label{lem:2inftyfactor}
$\cM_{\infty}$ is a $II_{\infty}$ factor.
\end{lem}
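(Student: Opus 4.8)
The plan is to establish three facts about $\cM_\infty$: that it is a factor, that it has no minimal projections, and that its identity is an infinite projection. Combined with the statement, recorded in the Definition preceding this lemma, that $\Tr$ extends to a normal, faithful, semifinite trace on $\cM_\infty$, these give that $\cM_\infty$ is a $II_\infty$ factor. Throughout I would work with the orthogonal family of projections $p_\alpha = 1_\alpha\in\Gr_{\alpha,\alpha}(P_\bullet)\subseteq\cG_\infty$, $\alpha\in\Lambda$, which satisfy $\sum_{\alpha\in\Lambda}p_\alpha = 1$ (strong convergence over finite subsets of $\Lambda$) and $p_\alpha\wedge\cM_\infty\wedge p_\alpha = M_\alpha$, a $II_1$ factor by Theorem \ref{thm:Factor}; in particular $p_\emptyset\wedge\cM_\infty\wedge p_\emptyset = M$.

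\emph{Factoriality.} Let $z$ lie in the center of $\cM_\infty$. Since $z$ commutes with each $p_\alpha$, the element $zp_\alpha = p_\alpha z p_\alpha$ lies in $M_\alpha$ and commutes with all of $p_\alpha\wedge\cM_\infty\wedge p_\alpha$, hence is central in the factor $M_\alpha$; thus $zp_\alpha = \lambda_\alpha p_\alpha$ for a scalar $\lambda_\alpha$. To see that the $\lambda_\alpha$ all agree, I would reuse the partial isometries constructed in the proof of Lemma \ref{lem:bifinite} (via Lemma \ref{lem:cup} applied to $w^{*}w$): for each $\alpha$ there is a partial isometry $v_\alpha\in p_\alpha\wedge\cM_{\alpha,\emptyset}\wedge p_\emptyset\subseteq\cM_\infty$ with $v_\alpha^{*}v_\alpha = p_\emptyset$ and $v_\alpha v_\alpha^{*}\le p_\alpha$, so that $p_\alpha v_\alpha = v_\alpha = v_\alpha p_\emptyset$. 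Then $zv_\alpha = v_\alpha z$ gives $\lambda_\alpha v_\alpha = (zp_\alpha)v_\alpha = zv_\alpha = v_\alpha z = v_\alpha(zp_\emptyset) = \lambda_\emptyset v_\alpha$, and since $v_\alpha\neq 0$ this forces $\lambda_\alpha = \lambda_\emptyset$. Hence $z = \sum_\alpha zp_\alpha = \lambda_\emptyset\sum_\alpha p_\alpha = \lambda_\emptyset\,1$, so $\cM_\infty$ is a factor.

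\emph{Type and size.} The corner $p_\emptyset\wedge\cM_\infty\wedge p_\emptyset = M$ is a $II_1$ factor, hence has no minimal projections; since corners of a type $I$ algebra are type $I$, $\cM_\infty$ is not of type $I$, and being a semifinite factor it is therefore of type $II$. Finally $\Tr(p_\emptyset) = 1 < \infty$, while $\Tr(1) = \sum_{\alpha\in\Lambda}\Tr(p_\alpha) = \sum_{\alpha\in\Lambda}\delta_\alpha = +\infty$ because $\Lambda$ is infinite and each $\delta_\alpha\ge 1$; thus $1$ is an infinite projection and $\cM_\infty$ is not finite. A semifinite type $II$ factor that is not finite is a $II_\infty$ factor, completing the proof.

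I do not expect a genuine obstacle here: the only nonroutine ingredient is the existence of the connecting partial isometries $v_\alpha$, and that is exactly what the proofs of Lemmas \ref{lem:TensorProduct} and \ref{lem:bifinite} already extract from Lemma \ref{lem:cup}; the traciality, faithfulness, and semifiniteness of $\Tr$ on $\cM_\infty$ are asserted in the Definition above and follow by the same orthogonalization argument used for $\Gr_\alpha(P_\bullet)$ in Section \ref{sec:graded}. So the work is mainly assembling these pieces.
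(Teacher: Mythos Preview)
Your proposal is correct and follows essentially the same approach as the paper: both use that each corner $p_\alpha\cM_\infty p_\alpha=M_\alpha$ is a $II_1$ factor, the partial isometries $v_\alpha$ (polar parts of the corner elements $w_\alpha$) linking $p_\emptyset$ to subprojections of each $p_\alpha$, and $\Tr(1)=\infty$. The only cosmetic difference is that the paper packages the factoriality step as ``the central support of $p_\emptyset$ equals $1$'' (since $e=v_\alpha v_\alpha^*\leq p_\alpha$ forces the central support of $p_\emptyset$ to dominate each $p_\alpha$), whereas you compute directly that a central element acts by the same scalar on every $p_\alpha$; these are equivalent formulations of the same argument.
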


\begin{proof}
Note that $p_{\emptyset}\wedge \cM_{\infty}\wedge p_{\emptyset} = M$ is a $II_{1}$ factor and the trace of $1 = 1_{\cM_{\infty}} = \sum_{\alpha \in \Lambda} p_{\alpha}$ (with convergence of the orthogonal sum in the strong operator topology) is infinite, so we only need to show that $\cM_{\infty}$ is a factor.  To do so, we show that the central support of $p_{\emptyset}$ in $\cM_{\infty}$ is 1, which is enough since $p_{\emptyset}\wedge \cM_{\infty}\wedge p_{\emptyset}$ is a factor.  We let
$$
w_{\alpha} =
\begin{tikzpicture} [baseline=-.1cm]
	\filldraw[unshaded,thick] (-.4,.4)--(.4,.4)--(.4,-.4)--(-.4,-.4)--(-.4,.4);
	\draw [thick, \alphacolor] (-.4, 0) arc(-90:0: .4cm);
\end{tikzpicture}\,,
$$
and we let $v_\alpha$ be the polar part of $w_\alpha$.  As was discussed in the proof of Lemma \ref{lem:TensorProduct}, $v_\alpha$ induces an equivalence of projections between $p_{\emptyset}$ and $e \leq p_{\alpha}$.  Let $z$ be the central support of $p_{\emptyset}$ (and $e$).  Since $p_{\alpha}\wedge \cM_{\I}\wedge p_{\alpha}$ is a factor, the central support of $e$ must lie above $p_{\alpha}$, so $z \geq p_{\alpha}$.  Since this holds for all $\alpha \in \Lambda$, we have $z \geq 1$. Hence $z = 1$, and we are finished.
\end{proof}

\begin{cor}
The factor $M_{\alpha}$ is a $\delta_{\alpha}$-amplification of $M$.
\end{cor}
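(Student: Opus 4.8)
The plan is to exhibit both $M$ and $M_{\alpha}$ as corners of the $II_{\infty}$ factor $\cM_{\infty}$ and then appeal to the standard structure theory of $II_{\infty}$ factors. By Lemma \ref{lem:2inftyfactor}, $\cM_{\infty}$ is a $II_{\infty}$ factor with faithful normal semifinite trace $\Tr$, and we have already observed that $p_{\gamma}\wedge \cM_{\infty}\wedge p_{\gamma}=M_{\gamma}$ for every $\gamma\in\Lambda$, where $M_{\gamma}$ carries its non-normalized trace (as in Section \ref{sec:MoreBimodules}); in particular $p_{\emptyset}\wedge\cM_{\infty}\wedge p_{\emptyset}=M$. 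Recall that in a $II_{\infty}$ factor $\cN$ with trace $\Tr$, any two corners cut down by projections of finite trace are related by amplification: if $e,f\in\cN$ are projections with $0<\Tr(e),\Tr(f)<\infty$, then $e\cN e$ is a $\big(\Tr(e)/\Tr(f)\big)$-amplification of $f\cN f$. Applying this to $e=p_{\alpha}$ and $f=p_{\emptyset}$ will give that $M_{\alpha}$ is a $\big(\Tr(p_{\alpha})/\Tr(p_{\emptyset})\big)$-amplification of $M$, so it remains only to compute the two traces.

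Computing $\Tr(p_{\emptyset})$ and $\Tr(p_{\alpha})$ is a one-line diagrammatic check. The projection $p_{\emptyset}$ is the empty box, i.e.\ $1\in P_{\emptyset}\cong\C$ sitting in the $\Gr_{\emptyset}(P_\bullet)$-summand of $\cG_{\infty}$, so $\Tr(p_{\emptyset})$ is the value of the empty diagram, namely $1$; in particular $\Tr$ is already normalized so that $M$ has trace $1$. The projection $p_{\alpha}$ is the identity $\id_{P_{\overline{\alpha}\alpha}}$ viewed in the $\Gr_{\alpha}(P_\bullet)$-summand (with empty word on top). By the definition of $\Tr$ on $\Gr_{\alpha}(P_\bullet)$ we close this box up: there is nothing on top for $\Sigma CTL$ to act on, and joining the $\overline{\alpha}$ and $\alpha$ strands around the bottom turns the through-strings into nested closed loops labelled by the letters of $\alpha$, whose value is $\delta_{\alpha}=\delta_{a_{1}}\cdots\delta_{a_{n}}$. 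Hence $\Tr(p_{\alpha})=\delta_{\alpha}$, and therefore $M_{\alpha}\cong M^{\delta_{\alpha}}$, as claimed.

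Alternatively, one can bypass the abstract amplification lemma by reusing the computation inside the proof of Lemma \ref{lem:2inftyfactor}: the polar part $v_{\alpha}$ of $w_{\alpha}$ implements a Murray--von Neumann equivalence $p_{\emptyset}\sim e$ in $\cM_{\infty}$ with $e\le p_{\alpha}$, so $e\wedge\cM_{\infty}\wedge e\cong p_{\emptyset}\wedge\cM_{\infty}\wedge p_{\emptyset}=M$ and $\Tr(e)=\Tr(p_{\emptyset})=1$; then $M_{\alpha}=p_{\alpha}\wedge\cM_{\infty}\wedge p_{\alpha}$ is the amplification of $e\wedge\cM_{\infty}\wedge e$ by $\Tr(p_{\alpha})/\Tr(e)=\delta_{\alpha}$, and one concludes as before. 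I do not expect a genuine obstacle here: the only subtle point is keeping track of normalizations, specifically that the corner $p_{\alpha}\wedge\cM_{\infty}\wedge p_{\alpha}$ carries the \emph{non-normalized} trace of $\cM_{\infty}$ (this is exactly why the amplification constant is $\delta_{\alpha}$ and not $1$), together with the compatible fact from Section \ref{sec:graded} that $p_{\alpha}$ has normalized trace $1$ in $M_{\alpha}$.
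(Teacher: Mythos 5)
Your proof is correct and is the natural argument that the paper leaves implicit (the corollary is stated without proof immediately after Lemma \ref{lem:2inftyfactor}): realize both $M = p_{\emptyset}\wedge\cM_{\infty}\wedge p_{\emptyset}$ and $M_{\alpha} = p_{\alpha}\wedge\cM_{\infty}\wedge p_{\alpha}$ as corners of the $II_{\infty}$ factor $\cM_{\infty}$ and compare the (non-normalized) traces $\Tr(p_{\emptyset})=1$ and $\Tr(p_{\alpha})=\delta_{\alpha}$. Your attention to the normalization issue — that the $\cG_{\infty}$-trace lacks the $1/\delta_{\alpha}$ factor appearing in the $\Gr_{\alpha}$-trace of Section \ref{sec:graded} — is exactly the point one needs to be careful about, and your alternative route via the polar part $v_{\alpha}$ from the proof of Lemma \ref{lem:2inftyfactor} is equally valid.
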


\begin{cor}
The algebras $\cM_{\alpha, \beta}$ are $II_{1}$ factors.
\end{cor}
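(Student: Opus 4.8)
The plan is to realize $\cM_{\alpha,\beta}$ as a corner of the $II_{\infty}$ factor $\cM_{\infty}$ cut down by a finite projection.

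First, observe that as a $*$-algebra $\cG_{\alpha,\beta}$ sits inside $\cG_{\infty}=\bigoplus_{\kappa,\chi\in\Lambda}\Gr_{\kappa,\chi}(P_\bullet)$ as the compression $(p_{\alpha}+p_{\beta})\cG_{\infty}(p_{\alpha}+p_{\beta})$: it is generated exactly by the four box spaces $\Gr_{\alpha}(P_\bullet)$, $\Gr_{\beta}(P_\bullet)$, $\Gr_{\alpha,\beta}(P_\bullet)$, $\Gr_{\beta,\alpha}(P_\bullet)$ with the same multiplication, and the trace on $\cG_{\alpha,\beta}$ is by definition the restriction of $\Tr$. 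Hence $L^{2}(\cG_{\alpha,\beta})=(p_{\alpha}+p_{\beta})L^{2}(\cG_{\infty},\Tr)$ with compatible left and right actions, and taking weak closures gives $\cM_{\alpha,\beta}=(p_{\alpha}+p_{\beta})\cM_{\infty}(p_{\alpha}+p_{\beta})$.

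Next, $p_{\alpha}+p_{\beta}$ is a nonzero finite projection of $\cM_{\infty}$: each $p_{\gamma}\cM_{\infty}p_{\gamma}=M_{\gamma}$ is a $II_{1}$ factor, so each $p_{\gamma}$ is finite, and a sum of two finite projections is finite. Since $\cM_{\infty}$ is a $II_{\infty}$ factor by Lemma \ref{lem:2inftyfactor}, the corner $(p_{\alpha}+p_{\beta})\cM_{\infty}(p_{\alpha}+p_{\beta})$ is a factor (a corner of a factor by a nonzero projection is again a factor), it is finite (the projection is finite), and it is not of type $I$ since it contains the $II_{1}$ factor $M_{\alpha}=p_{\alpha}\cM_{\infty}p_{\alpha}$ as a corner. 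Therefore $\cM_{\alpha,\beta}$ is a $II_{1}$ factor.

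There is no serious obstacle here; the only point requiring a moment of care is the identification of the GNS data (trace, $L^{2}$-space, and the two-sided action) used to build $\cM_{\alpha,\beta}$ with the compression of the corresponding data for $\cM_{\infty}$, which is immediate from the definitions. Alternatively, one may avoid $\cM_{\infty}$ entirely and argue directly: since $p_{\alpha}\cM_{\alpha,\beta}p_{\alpha}=M_{\alpha}$ is a factor, it suffices to show that the central support of $p_{\alpha}$ in $\cM_{\alpha,\beta}$ equals $p_{\alpha}+p_{\beta}$, and this follows by the same polar-decomposition argument as in Lemma \ref{lem:2inftyfactor}, using a suitable half-cap element of $\Gr_{\alpha,\beta}(P_\bullet)$ (whose square-on-$p_\beta$ is positive and invertible, as in Lemma \ref{lem:cup}) to exhibit an equivalence between $p_{\alpha}$ and a subprojection of $p_{\beta}$, or conversely.
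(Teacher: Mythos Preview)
Your proof is correct and takes exactly the same approach as the paper: the paper's one-line proof simply states that $\cM_{\alpha,\beta}$ is the compression of $\cM_{\infty}$ by the finite projection $p_{\alpha}+p_{\beta}$. You have filled in the details of this identification and also sketched a direct alternative via central supports, but the core idea is identical.
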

\begin{proof}
$\cM_{\alpha, \beta}$ is the compression of $\cM_{\I}$ by the finite projection $p_{\alpha} + p_{\beta}$.
\end{proof}

\begin{lem} \label{lem:1infty}
We have a direct sum decomposition
$$\displaystyle \cA_{\infty} = \bigoplus_{v \in V(\cF_\sC(\cL))} \cA_{v}
$$
where the sum is over all vertices $v$ in the fusion graph $\cF_\sC(\cL)$, and each $\cA_{v}$ is a type $I_{\infty}$ factor.  If $p$ is a minimal projection in $P_{\alpha \to \alpha}$ whose equivalence class represents the vertex $v$, then $p \leq 1_{\cA_{v}}$.
\end{lem}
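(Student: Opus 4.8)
The plan is to exhibit $\cA_\infty$ explicitly as an algebra built from the box spaces with no strings on top, decompose it along the minimal central projections coming from simple projections in the planar algebra, and then identify the resulting pieces. Recall that $\cA_\infty$ is generated inside $\cM_\infty$ by all boxes in $\cG_\infty$ of the form $x\in P_{\overline\alpha\beta}$ with $\beta=\emptyset$, i.e., by the spaces $P_{\overline\alpha\alpha'}$ with no through-strings at the top; under the multiplication $\wedge$ these multiply by stacking and capping off, so that $\cA_\infty$ is (the weak closure of) $\bigoplus_{\alpha,\alpha'} P_{\overline\alpha\to\overline{\alpha'}}$ with matrix-unit-style multiplication $x\wedge y=\delta_{\text{middle}}\,(x\text{ stacked on }y)$. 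The key structural fact is that each finite-dimensional $C^*$-algebra $P_{\overline\alpha\to\overline\alpha}$ decomposes as a direct sum of matrix algebras indexed by the isomorphism classes of simple projections $[p]$ with a representative $p\in P_{\overline\alpha\to\overline\alpha}$, and the ``off-diagonal'' spaces $P_{\overline\alpha\to\overline{\alpha'}}$ decompose compatibly: $qP_{\overline\alpha\to\overline{\alpha'}}p\neq(0)$ precisely when $[p]=[q]$.

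First I would make this bookkeeping precise. Fix, for every vertex $v\in V(\cF_\sC(\cL))$, a word $\alpha(v)$ and a minimal (simple) projection $p_v\in P_{\overline{\alpha(v)}\to\overline{\alpha(v)}}$ representing $v$ (this uses the identification of $V(\cF_\sC(\cL))$ with the set of isomorphism classes of simple projections from the Corollary after Theorem \ref{thm:PAandTC}). For each word $\alpha$ and each $v$, the spectral projection of $P_{\overline\alpha\to\overline\alpha}$ onto the $v$-isotypic component is a sum of minimal projections each equivalent to $p_v$; choosing a maximal family of partial isometries in the planar algebra realizing these equivalences, one builds a system of matrix units inside $\cA_\infty$ whose linear span (over all $\alpha$, all $v$) is weakly dense. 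Grouping these matrix units by $v$ gives the claimed decomposition $\cA_\infty=\bigoplus_v \cA_v$, where $\cA_v$ is generated by all matrix units of ``color'' $v$. The summands are mutually orthogonal and central in $\cA_\infty$ because a partial isometry in $P_{\overline\alpha\to\overline{\alpha'}}$ intertwining a $v$-summand of one box space with a $v'$-summand of another must vanish unless $v=v'$ (two non-isomorphic simple projections have no nonzero intertwiners, exactly as in the tensor category).

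Next I would identify each $\cA_v$ as a type $I_\infty$ factor. By construction $\cA_v$ is generated by a system of matrix units $\{e^{(v)}_{(\alpha,i),(\alpha',j)}\}$ indexed by a countably infinite set (there are infinitely many words $\alpha$, and for each the $v$-multiplicity is finite, possibly zero), with the usual relations $e_{ab}e_{cd}=\delta_{bc}e_{ad}$ and $e_{ab}^*=e_{ba}$; the index set is nonempty since $v$ is by definition represented by some simple projection, and infinite since one can always tensor on extra strings. A countably infinite system of matrix units generates a copy of $B(\ell^2(\N))$ weakly, so $\cA_v\cong B(\ell^2)$ is a type $I_\infty$ factor. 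Finally, if $p\in P_{\alpha\to\alpha}$ is a minimal projection with $[p]=v$, then viewed inside $\cA_\infty$ it is one of the diagonal matrix units of color $v$ (it lies in the $v$-isotypic part of $P_{\overline\alpha\to\overline\alpha}\op\subseteq \cA_\infty$, up to the rotation identification), hence $p\le 1_{\cA_v}$.

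The main obstacle is the careful verification that the naive ``stacking'' multiplication $\wedge$ on the no-strings-on-top boxes really does produce a system of matrix units with the right orthogonality — in particular that the trace normalizations match up so that the diagonal elements are genuine projections and that distinct vertices give orthogonal central summands rather than merely orthogonal but non-central ones. This is a bookkeeping argument about how $\sum CTL$-caps act on box spaces, parallel to the factoriality computations in Section \ref{sec:factor} and to \cite{MR2807103}, and the non-bipartiteness of $\cF_\sC(\cL)$ (self-loops at every vertex) means one cannot split $\Lambda$ into even/odd words, so the indexing of matrix units must be done uniformly over all words at once; I expect this to be the only genuinely delicate point, the rest being a translation of standard finite-dimensional $C^*$-algebra structure theory into the planar-algebraic language.
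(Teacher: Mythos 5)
Your proposal is correct and rests on the same two structural facts as the paper's proof: that equivalence of minimal projections in $P_\bullet$ coincides with equivalence in $\cA_\infty$ (since $q\wedge\cA_\infty\wedge p$ is the planar-algebra intertwiner space), and that each $p_\alpha$ decomposes as an orthogonal sum of projections in definite color classes, which together with $\sum_\alpha p_\alpha = 1$ forces $\sum_v 1_{\cA_v}=1$. The paper packages this using central supports $1_v$ of the $p_v$ rather than building an explicit matrix-unit system, which neatly disposes of the "delicate point" you flag (the absence of an extra summand $\cB$ is immediate once $\sum_v 1_v=1$), but the underlying argument is the same.
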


\begin{proof}
For each vertex $v$, choose a minimal projection $p_{v} \in P_{\alpha_{v} \to \alpha_{v}}$ whose equivalence class corresponds to the vertex $v$.  We first see that
$$
p_{v} \wedge \cA_{\infty} \wedge p_{v} = p_{\alpha_v}P_{\alpha_{v} \to \alpha_{v}}p_{\alpha_{v}} \cong \C
$$
so $\cA_{\infty}$ has minimal projections. Letting $p\in P_{\alpha \to \alpha}$ and $q \in P_{\beta \to \beta}$, then the observation that $p\wedge \cA_{\I} \wedge q = qP_{\alpha \to \beta}p$ implies $p$ is equivalent to $q$ in the planar algebra sense if and only if $p$ is equivalent to $q$ in $\cA_{\I}$.  Let $1_{v}$ be the central support of $p_{v}$.  Then by construction, $\cA_{v} = 1_{v}\wedge \cA_{\I}\wedge 1_{v}$ is a type $I$ factor which must be type $I_{\I}$ since there are infinitely many mutually orthogonal projections equivalent to $p_{v}$ in $P_\bullet$.  It is easy to see that by construction, if $v \neq w$ then $1_{v}\wedge 1_{w} = 0$, which implies that
$$
\cA_{\I} = \bigoplus_{v \in V(\cF_\sC(\cL))} \cA_{v} \oplus \cB
$$
for some von Neumann algebra $\cB$.

We claim that $\cB = \{0\}$.  Indeed we know that $p_{\alpha}$ can be written as an orthogonal sum of projections equivalent to some of the $p_{w}$'s.  Since $1 = \sum_{\alpha \in \Lambda} p_{\alpha}$, this implies $\sum_{v} 1_{v} = 1$, and thus $\cB = \{0\}$.
\end{proof}

\begin{rem}
For the rest of this section, all multiplication will be the $\wedge$ multiplication in the GJS picture. Hence for the rest of this section, we just write $xy$ for $x\wedge y$ for convenience.
\end{rem}

We now show that we can obtain $\cM_{\I}$ from a base ``building block" $\cA_{\infty}$ and various free ``corner elements."  Fixing a color $c$, which we again represent by the color \textcolor{\cupcolor}{green}, we define
$$
X_{c} = \sum_{\substack{ \alpha \in \Lambda\\
|\alpha| \in 2\N}}
\,
\begin{tikzpicture}[baseline=-.1cm]
    \draw [thick] (-.4, -.4)--(-.4, .4)--(.4, .4)--(.4, -.4)--(-.4, -.4);
    \draw [thick, \cupcolor] (-.4, .1) arc(-90:0: .3cm);
    \draw [thick, \alphacolor] (-.4, -.2)--(.4, -.2);
\end{tikzpicture}
+
\begin{tikzpicture}[baseline=-.1cm]
    \draw [thick] (-.4, -.4)--(-.4, .4)--(.4, .4)--(.4, -.4)--(-.4, -.4);
    \draw [thick, \cupcolor] (.4, .1) arc(-90:-180: .3cm);
    \draw [thick, \alphacolor] (-.4, -.2)--(.4, -.2);
\end{tikzpicture}
\,.
$$

\begin{rem}

We note that this sum defines a bounded operator.  Indeed, the individual terms in the sum are supported under the mutually orthogonal family of projections $\{p_{c\alpha} + p_{\alpha} : |\alpha| \in 2\N\}$ and each term in the sum has operator norm
$$\left\|\,
\begin{tikzpicture} [baseline = -.1cm]
    \draw [thick] (-.4, -.4)--(-.4, .4)--(.4, .4)--(.4, -.4)--(-.4, -.4);
    \draw [thick, \cupcolor] (-.3, .4) arc(-180:0: .3cm);
    \draw [thick, \alphacolor] (-.4, -.2)--(.4, -.2);
\end{tikzpicture} +
\begin{tikzpicture} [baseline = -.1cm]
    \draw [thick] (-.4, -.4)--(-.4, .4)--(.4, .4)--(.4, -.4)--(-.4, -.4);
    \draw [thick, \cupcolor] (.4, .1) arc(-90:-180: .3cm);
    \draw [thick, \cupcolor] (-.4, .1) arc(-90:0: .3cm);
    \draw [thick, \alphacolor] (-.4, -.2)--(.4, -.2);
    \end{tikzpicture}
\,
\right\|_{\infty}^{1/2} =
\left\|\,
\begin{tikzpicture} [baseline = -.1cm]
    \draw [thick] (-.4, -.4)--(-.4, .4)--(.4, .4)--(.4, -.4)--(-.4, -.4);
    \draw [thick, \cupcolor] (-.3, .4) arc(-180:0: .3cm);
\end{tikzpicture} +
\begin{tikzpicture} [baseline = -.1cm]
    \draw [thick] (-.4, -.4)--(-.4, .4)--(.4, .4)--(.4, -.4)--(-.4, -.4);
    \draw [thick, \cupcolor] (.4, .1) arc(-90:-180: .3cm);
    \draw [thick, \cupcolor] (-.4, .1) arc(-90:0: .3cm);
\end{tikzpicture}
\,
\right\|_{\infty}^{1/2}.
$$
We also note the simple fact that for $|\alpha|$ even, $p_{c\alpha}X_{c}p_{\alpha}$ is the corner diagram
$$
\begin{tikzpicture}[baseline=-.1cm]
    \draw [thick] (-.4, -.4)--(-.4, .4)--(.4, .4)--(.4, -.4)--(-.4, -.4);
    \draw [thick, \cupcolor] (-.4, .1) arc(-90:0: .3cm);
    \draw [thick, \alphacolor] (-.4, -.2)--(.4, -.2);
\end{tikzpicture}\,,
$$
so each term in the sum defining $X_{c}$ appears in the von Neumann algebra $W^{*}(\cA_{\infty}, X_{c})$.  We only sum over $\alpha$ with $|\alpha|$ even as it makes computations involving freeness in Section \ref{sec:free1} much easier.
\end{rem}

The $X_{c}$ elements give us a very nice way of obtaining $\cM_{\infty}$ from $\cA_{\infty}$.

\begin{lem}
$\cM_{\I} \cong W^{*}(\cA_{\I}, \, \{X_{c} : c \in \cL\})$.
\end{lem}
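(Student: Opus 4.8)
The plan is to prove the two inclusions separately; $\subseteq$ is routine and $\supseteq$ is a diagrammatic induction. For $\subseteq$: $\cA_{\infty}\subseteq\cM_{\infty}$ by definition, and by the remark preceding the lemma each $X_{c}$ is the (strongly convergent) sum of the corner elements $p_{c\alpha}X_{c}p_{\alpha}$ and $p_{\alpha}X_{c}p_{c\alpha}$ with $|\alpha|\in 2\N$, all of which lie in $\cG_{\infty}\subseteq\cM_{\infty}$; hence $X_{c}\in\cM_{\infty}$ and $W^{*}(\cA_{\infty},\{X_{c}:c\in\cL\})\subseteq\cM_{\infty}$.

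For $\supseteq$, since $\cM_{\infty}=\cG_{\infty}''$ and $\cG_{\infty}$ is spanned by the homogeneous spaces $P_{\overline{\alpha}\gamma\beta}\subseteq\Gr_{\alpha,\beta}(P_\bullet)$, it suffices to show each such homogeneous $x$ lies in $N:=W^{*}(\cA_{\infty},\{X_{c}:c\in\cL\})$. I would induct on $|\gamma|$, the number of strings on the top of the box $x$. If $|\gamma|=0$ then $x\in\cA_{\infty}$ by definition. If $|\gamma|\geq1$, write $\gamma=\gamma_{0}c$ and let $\widehat{x}\in P_{\overline{\alpha}\gamma_{0}\beta c}$ be the element obtained from $x$ by the planar isotopy that bends the rightmost top string down the right side of the box; then $\widehat{x}$ has $|\gamma|-1$ strings on top, so $\widehat{x}\in N$ by the inductive hypothesis, and a direct diagrammatic check (bending that $c$-string back up) should give $x=\widehat{x}\wedge(p_{\beta c}X_{c}p_{\beta})$. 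The induction then closes once we know $p_{\beta c}X_{c}p_{\beta}\in N$.

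The hard part is exactly this last point: $p_{\beta c}X_{c}p_{\beta}$ is literally a summand of $X_{c}$ only when $|\beta|$ is even. When $|\beta|$ is odd I would reduce to the even case -- e.g. writing $\beta=\beta_{0}d$ with $|\beta_{0}|$ even, one expresses $p_{\beta c}X_{c}p_{\beta}$ as a product of the corners $p_{\beta_{0}d}X_{d}p_{\beta_{0}}$, $p_{\beta_{0}c}X_{c}p_{\beta_{0}}$ and their adjoints (a conjugation that temporarily bends the $d$-string away to reach an even word, applies $X_{c}$, then restores it), or, equivalently, bends the last top string of $x$ down the \emph{left} side and uses $X_{c}$ on the $\alpha$-side when $|\alpha|$ is even, alternating the two choices. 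I expect this parity bookkeeping to be the only genuinely delicate ingredient; it is precisely what the ``$|\alpha|\in 2\N$'' restriction in the definition of $X_{c}$ is designed to permit, and the same restriction is what keeps the freeness computations of Section~\ref{sec:free1} clean. Granting $p_{\beta c}X_{c}p_{\beta}\in N$ in all cases, every homogeneous element of $\cG_{\infty}$ lies in $N$, so $\cM_{\infty}=\cG_{\infty}''\subseteq N\subseteq\cM_{\infty}$ and the two coincide.
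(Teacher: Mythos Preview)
Your overall strategy matches the paper's: both reduce to showing that every single-colour corner element lies in $N=W^{*}(\cA_{\infty},\{X_{c}\})$, the even-length case being immediate from the definition of $X_{c}$ and the odd-length case requiring a trick. (The paper phrases this without an explicit induction on $|\gamma|$, simply observing that any $\cG_{\infty}$-element is an $\cA_{\infty}$-element multiplied on each side by a product of such corners; your inductive formulation is equivalent.)

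The gap is in your odd-parity step. Neither of your two suggestions works as written. The ``alternating sides'' idea fails when both $|\alpha|$ and $|\beta|$ are odd: bending the rightmost top string to the right requires the right-hand corner $p_{c\beta}X_{c}p_{\beta}$, hence $|\beta|$ even, while bending the leftmost top string to the left requires $|\alpha|$ even, and nothing forces at least one of these to hold (take $\alpha=a$, $\beta=b$, $\gamma=c$). The ``$d$-conjugation'' idea cannot succeed because $\wedge$-multiplication places boxes side by side; it cannot insert a through-string at the \emph{bottom} of a diagram. The corners $p_{d\beta_{0}}X_{d}p_{\beta_{0}}$ and $p_{c\beta_{0}}X_{c}p_{\beta_{0}}$ all have bottom word $\beta_{0}$, and no product of them with $\cA_{\infty}$-elements will land in $\Gr_{c\beta_{0}d,\beta_{0}d}$.

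The paper's fix is simpler than either of your proposals and stays with the single colour $c$: for $|\alpha|$ odd, the corner in $\Gr_{c\alpha,\alpha}$ factors as the $\wedge$-product of (i) the $X_{c}$-summand with \emph{bottom} word $c\alpha$ (now of even length), namely the ``top-to-right'' corner lying in $\Gr_{c\alpha,\,cc\alpha}$, and (ii) the $\cA_{\infty}$-element in $\Gr_{cc\alpha,\alpha}$ that caps the two leftmost $c$-strings together and lets $\alpha$ pass through. In other words, one \emph{adds} an extra $c$-string to reach even parity and then removes it with an $\cA_{\infty}$ cap; no auxiliary colour $d$ is needed.
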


\begin{proof}
Every element in $\cG_{\infty}$ is a linear combination of elements of the following form:
$$
\begin{tikzpicture}[baseline=-.1cm]
    \draw (-.7, .7) arc(-180:0: .7cm);
    \draw (-.8, -.2)--(.8, -.2);
    \draw [thick, unshaded] (-.4, -.4)--(-.4, .4)--(.4, .4)--(.4, -.4)--(-.4, -.4);
    \node at (0, 0) {$x$};
    \node at (-.8, .6) {\scriptsize{$\alpha$}};
    \node at (.8, .6) {\scriptsize{$\beta$}};
\end{tikzpicture}.
$$
The above diagram is $x \in \cA_{\infty}$ multiplied on the left and right by diagrams of the form
$$
\begin{tikzpicture}[baseline=-.1cm]
    \draw [thick] (-.4, -.4)--(-.4, .4)--(.4, .4)--(.4, -.4)--(-.4, -.4);
    \draw (-.4, .1) arc(-90:0: .3cm);
    \draw (-.4, -.2)--(.4, -.2);
    \node at (0, .2) {\scriptsize{$\gamma$}};
\end{tikzpicture}
$$
and their adjoints.  This diagram is a product of diagrams of the form
$$
\begin{tikzpicture}[baseline=-.1cm]
    \draw [thick] (-.4, -.4)--(-.4, .4)--(.4, .4)--(.4, -.4)--(-.4, -.4);
    \draw [thick, \cupcolor] (-.4, .1) arc(-90:0: .3cm);
    \draw [thick, \alphacolor] (-.4, -.2)--(.4, -.2);
\end{tikzpicture}\,,
$$
so we only need to check that the above diagram is in $W^{*}(\cA_{\I}, \{X_{c} : c \in \cL\})$ when $|\alpha|$ is odd.  This is easy since it can be written as the product
$$
\begin{tikzpicture} [baseline = -.1cm]
    \draw [thick] (-.4, -.4)--(-.4, .4)--(.4, .4)--(.4, -.4)--(-.4, -.4);
    \draw[thick, \cupcolor] (.4,.2) arc (270:180:.2cm);
    \draw[thick, \cupcolor] (-.4, -.1)--(.4, -.1);
    \draw [thick, \alphacolor] (-.4, -.25)--(.4, -.25);
\end{tikzpicture}
\cdot
\begin{tikzpicture} [baseline = -.1cm]
    \draw [thick] (-.4, -.4)--(-.4, .4)--(.4, .4)--(.4, -.4)--(-.4, -.4);
    \draw [thick, \alphacolor] (-.4, -.25)--(.4, -.25);
    \draw [thick, \cupcolor] (-.4, .2) arc(90:-90: .15cm);
\end{tikzpicture} \, .
$$
\end{proof}

There is a $\Tr$-preserving conditional expectation $E: \cM_{\I} \rightarrow \cA_{\I}$ given by
$$
E(x) =
\begin{tikzpicture}[baseline=.3cm]
	\draw (0,0)--(0,.8);
	\draw (-.8, 0)--(.8, 0);
	\filldraw[unshaded,thick] (-.4,.4)--(.4,.4)--(.4,-.4)--(-.4,-.4)--(-.4,.4);
	\draw[thick, unshaded] (-.7, .8) -- (-.7, 1.6) -- (.7, 1.6) -- (.7,.8) -- (-.7, .8);
	\node at (0,0) {$x$};
	\node at (0,1.2) {$\Sigma CTL$};
\end{tikzpicture}\,,
$$
and $E$ induces normal completely positive maps $(\eta_{c,c} = \eta_{c})_{c \in \cL}$ on $\cA_{\infty}$ satisfying
$$
\eta_{c}(y) = E(X_{c}yX_{c}) =
\begin{tikzpicture}[baseline = 0cm]
    \draw(-2, 0)--(2, 0);
    \filldraw[unshaded,thick] (-.4,.4)--(.4,.4)--(.4,-.4)--(-.4,-.4)--(-.4,.4);
    \filldraw[unshaded,thick] (-1.6,.4)--(-.8,.4)--(-.8,-.4)--(-1.6,-.4)--(-1.6,.4);
    \filldraw[unshaded,thick] (1.6,.4)--(.8,.4)--(.8,-.4)--(1.6,-.4)--(1.6,.4);
    \draw[thick, \cupcolor] (-1.2, .4) arc(180:90: .3cm) -- (.9, .7) arc(90:0: .3cm);
    \node at (0, 0) {$y$};
    \node at (-1.2, 0) {$X_{c}$};
    \node at (1.2, 0) {$X_{c}$};
\end{tikzpicture}\,.
$$
For $b \neq a$, we have trivial ``off-diagonal" maps $\eta_{a, b}$ on $\cA_{\I}$ satisfying $\eta_{a, b}(y) = E(X_{a}yX_{b}) = 0$.  This gives a straightforward diagrammatic procedure for evaluating $E(y_{0}X_{c_{1}}y_{1}X_{c_{2}}\cdots X_{c_{n}}y_{n})$ for $y_{i} \in \cA_{\infty}$.  First, write the word $y_{0}X_{c_{1}}y_{1}X_{c_{2}}\cdots X_{c_{n}}y_{n}$ as
$$
\begin{tikzpicture} [baseline = 0cm]
    \draw(-2, 0)--(2, 0);
    \filldraw[unshaded,thick] (-.4,.4)--(.4,.4)--(.4,-.4)--(-.4,-.4)--(-.4,.4);
    \filldraw[unshaded,thick] (-1.6,.4)--(-.8,.4)--(-.8,-.4)--(-1.6,-.4)--(-1.6,.4);
    \filldraw[unshaded,thick] (1.6,.4)--(.8,.4)--(.8,-.4)--(1.6,-.4)--(1.6,.4);
    \node at (0, 0) {$X_{c_{1}}$};
    \node at (-1.2, 0) {$y_{0}$};
    \node at (1.2, 0) {$y_{1}$};
    \draw (0, .4)--(0, .8);
\end{tikzpicture}
\cdots
\begin{tikzpicture} [baseline = 0cm]
    \draw(-2, 0)--(2, 0);
    \filldraw[unshaded,thick] (-.4,.4)--(.4,.4)--(.4,-.4)--(-.4,-.4)--(-.4,.4);
    \filldraw[unshaded,thick] (-1.6,.4)--(-.8,.4)--(-.8,-.4)--(-1.6,-.4)--(-1.6,.4);
    \filldraw[unshaded,thick] (1.6,.4)--(.8,.4)--(.8,-.4)--(1.6,-.4)--(1.6,.4);
    \node at (0, 0) {$X_{c_{n}}$};
    \node at (-1.2, 0) {$y_{n-1}$};
    \node at (1.2, 0) {$y_{n}$};
    \draw (0, .4)--(0, .8);
\end{tikzpicture}\,.
$$
Then sum over all planar ways to connect the strings on top.  Whenever we see a term of the form
$$
\begin{tikzpicture}[baseline = 0cm]
    \draw(-2, 0)--(2, 0);
    \filldraw[unshaded,thick] (-.4,.4)--(.4,.4)--(.4,-.4)--(-.4,-.4)--(-.4,.4);
    \filldraw[unshaded,thick] (-1.6,.4)--(-.8,.4)--(-.8,-.4)--(-1.6,-.4)--(-1.6,.4);
    \filldraw[unshaded,thick] (1.6,.4)--(.8,.4)--(.8,-.4)--(1.6,-.4)--(1.6,.4);
    \draw (-1.2, .4) arc(180:90: .3cm) -- (.9, .7) arc(90:0: .3cm);
    \node at (0, 0) {$y$};
    \node at (-1.2, 0) {$X_{c_{i}}$};
    \node at (1.2, 0) {$X_{c_{j}}$};
\end{tikzpicture}\, ,
$$
we replace it with $\eta_{c_{i}, c_{j}}(y)$. It is straightforward to check that $E$ and the $\eta_{c_{i}, c_{j}}$ satisfy the following recurrence relation:

\begin{align}
E(y_{0}X_{c_{1}}y_{1}X_{c_{2}}&\cdots y_{n-1}X_{c_{n}}y_{n}) \notag\\
&= \sum_{k=2}^{n} y_{0} \cdot \eta_{c_{1}, c_{k}}(E(y_{1}X_{c_{2}}\cdots X_{c_{k-1}}y_{k-1}))\cdot E(y_{k}X_{c_{k+1}}...X_{c_{n}}y_{n}).\label{eqn:recurrence}
\end{align}
Also, by definition of $\eta_{c_{i}, c_{j}}$, it follows that the map on $\cA_{\I} \otimes B(\cH)$ given by
$(y_{i, j}) \mapsto (\eta_{c_{i}, c_{j}}(y_{i,j}))$
is normal and completely positive.  This, combined with Recurrence \eqref{eqn:recurrence} implies that the elements $(X_{c})_{c \in \cL}$ form an $\cA_{\I}$-valued semicircular family with covariance $(\eta_{c_{i}, c_{j}})$ as in \cite{MR1704661}. Since $\eta_{c_{i}, c_{j}} = 0$ for $c_{i}\neq c_{j}$, the family $(X_{c})_{c \in \cL}$ is free with amalgamation over $\cA_{\I}$ with respect to $E$ \cite{MR1704661}.  We record what we have established above in the following lemma.

\begin{lem} \label{lem:free1}
$\cM_{\I} = W^{*}(\cA_{\I}, \{X_{c}: c \in \cL\})$, and the elements  $(X_{c})_{c \in \cL}$ form an $\cA_{\I}$-valued semicircular family with covariance $(\eta_{c_{i}, c_{j}})$ and are free with amalgamation over $\cA_{\I}$.
\end{lem}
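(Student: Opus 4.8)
The plan is to recognize the statement as an instance of Shlyakhtenko's operator-valued semicircular systems \cite{MR1704661}, so that, once the generation claim is in hand, everything reduces to verifying one moment recurrence together with a complete-positivity assertion. The first half of the lemma, $\cM_\I = W^*(\cA_\I,\{X_c:c\in\cL\})$, is exactly the content of the lemma immediately preceding it: one shows that $\cG_\I$ is spanned by diagrams obtained from a box $x\in\cA_\I$ with no strings on top by multiplying on the left and right by ``half-cup'' diagrams and their adjoints, and that each such half-cup attached to a color word $\gamma$ lies in $W^*(\cA_\I,\{X_c\})$ --- directly as a corner $p_{c\gamma}X_cp_\gamma$ when $|\gamma|$ is even, and as a product of such a corner with the elementary single-color half-cup when $|\gamma|$ is odd. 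Taking strong closures then gives the identity.

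For the second half I would work with the $\Tr$-preserving conditional expectation $E\colon\cM_\I\to\cA_\I$ (sum over all planar pairings of the strings on top of a box) and the induced maps $\eta_{c_i,c_j}(y)=E(X_{c_i}yX_{c_j})$, recording that $\eta_{c_i,c_j}=0$ whenever $c_i\neq c_j$ because a string joining a $c_i$-colored strand to a $c_j$-colored strand vanishes. The key computation is the expansion of $E(y_0X_{c_1}y_1\cdots X_{c_n}y_n)$ for $y_i\in\cA_\I$: stacking all the boxes and expanding $E$ as a sum over planar pairings of the top strands, the non-crossing constraint forces the strand of $X_{c_1}$ to be matched with the strand of some $X_{c_k}$; the subword between them collapses to $E(y_1X_{c_2}\cdots X_{c_{k-1}}y_{k-1})$, the matched pair of $X$'s acts on it by $\eta_{c_1,c_k}$, and the remainder to the right is independently $E(y_kX_{c_{k+1}}\cdots X_{c_n}y_n)$. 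This is precisely recurrence \eqref{eqn:recurrence}.

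To conclude I would check that $(y_{i,j})\mapsto(\eta_{c_i,c_j}(y_{i,j}))$ is a normal completely positive map on $\cA_\I\otimes B(\cH)$ --- immediate from exhibiting it as a compression of the completely positive map $E$ by the row of corner elements built from the $X_c$ --- and then invoke \cite{MR1704661}: a family whose mixed $\cA_\I$-valued moments are governed exactly by \eqref{eqn:recurrence} with a completely positive covariance is, by definition, an $\cA_\I$-valued semicircular family with that covariance. Finally, since the covariance $(\eta_{c_i,c_j})$ is ``diagonal'', the standard correspondence between block-diagonal covariances and freeness for operator-valued semicircular systems \cite{MR1704661} yields that $(X_c)_{c\in\cL}$ is free with amalgamation over $\cA_\I$ with respect to $E$.

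The main obstacle is the diagrammatic moment computation of the second paragraph: one must be sure that the sum over planar pairings of the top strands genuinely reproduces the non-crossing pair-partition combinatorics underlying operator-valued semicircularity, i.e. that each pairing decomposes through the $X_{c_1}$-strand exactly as in \eqref{eqn:recurrence}, with no leftover closed loops or spurious scalar factors, and that every intermediate expression lands back in $\cA_\I$ so the recursion genuinely closes. The restriction to $|\alpha|$ even in the definition of $X_c$ is precisely what keeps this bookkeeping tractable --- in particular it prevents partial-cup terms of odd length from appearing as ``diagonal'' pieces --- and I would carry that restriction through the whole argument.
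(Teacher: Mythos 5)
Your proposal follows the paper's argument step for step: the generation claim is delegated to the preceding lemma, the conditional expectation $E$ and covariance maps $\eta_{c_i,c_j}$ are introduced diagrammatically with the off-diagonal vanishing observed, the planar-pairing expansion is shown to satisfy recurrence \eqref{eqn:recurrence}, complete positivity of $(y_{i,j})\mapsto(\eta_{c_i,c_j}(y_{i,j}))$ is noted, and \cite{MR1704661} is invoked for both semicircularity and freeness from the diagonal covariance. The only differences are cosmetic — you spell out why the leftmost $X_{c_1}$ strand must close with some $X_{c_k}$ whereas the paper calls the recurrence a ``straightforward check,'' and your phrasing of the CP step as a ``compression by the row of corner elements'' is better rendered as conjugation by the diagonal element $\operatorname{diag}(X_{c_1},\dots,X_{c_n})\in M_n(\cM_\I)$ followed by $E\otimes\id_{M_n}$, which is what the paper implicitly uses.
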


\subsection{$M$ as an amalgamated free product} \label{sec:free1}

By compressing the algebra $\cA_{\infty}$ by the projection $1_{e} = \sum_{|\alpha| \in 2\N} p_{\alpha}$, Lemma \ref{lem:1infty} implies that if we set $\cA_{e} = W^{*}((P_{\beta \rightarrow \alpha})_{\alpha, \beta \in 2\N})$ then
$$
\cA_{e} = \bigoplus_{v \in V(\cF_\sC(\cL))} \cB_{v}.
$$
$\cB_{v}$ is a type $I_{\infty}$ factor which is a cut-down of $\cA_{v}$ by $1_{e}$.
Note that every vertex in $v\in V(\cF_\sC(\cL))$ appears in the direct sum because every vertex possesses at least one self-loop.

Similarly, if one sets $1_{o} = \sum_{|\alpha| \in (1 + 2\N)} p_{\alpha}$ and $\cA_{o} = 1_{o}\cA_{\I}1_{o}$, then we have
$$
\cA_{o} = \bigoplus_{v \in V(\cF_\sC(\cL))} \cC_{v}
$$
where $\cC_{v}$ is a type $I_{\I}$ factor which is a cut-down of $\cA_{v}$ by $1_{o}$.

For each vertex $v \in \cF_\sC(\cL)$, we choose a minimal projection $p_{v} \in \cA_{e}$ whose equivalence class is represented by $v$ with $p_{\emptyset}$ the empty diagram.  If $x \in \cA_{e}$, it follows that
$$
\eta_{c}(x) =
\begin{tikzpicture} [baseline = 0cm]
    \draw (-.8, 0)--(.8, 0);
    \filldraw[thick, unshaded] (-.4, .4)--(-.4, -.4)--(.4, -.4)--(.4,.4)--(-.4, .4);
    \draw [thick, \cupcolor] (-.8, .6)--(.8, .6);
    \node at (0,0) {$x$};
\end{tikzpicture}
$$
so that $\eta_{c}(p_{v})$ is a finite projection in $\cA_{o}$, and each $\eta_{c}$ acts as a (non-unital) $W^{*}$ algebra homomorphism from $\cA_{e}$ into $\cA_{o}$.  Set $Q = \sum_{v} p_{v}$.  Then there is a family of partial isometries $(V_{i})_{i \in I}$ satisfying $V_{i}^{*}V_{i} = Q$ and $\sum_{i \in I} V_{i}V_{i}^{*} = 1_{\cA_{e}}$.  Note that $\sum_{c \in \cL}\eta_{c}(Q)$ defines a projection in $\cA_{o}$ since $\eta_{a}(Q) \perp \eta_{b}(Q)$ for $a \neq b$. We examine the compression of $\cM_{\I}$ by $T = Q + \sum_{c \in \cL}\eta_{c}(Q)$.

\begin{lem} \label{lem:compression1}
$T\cM_{\infty}T = W^{*}(T\cA_{\infty}T, \, (TX_{c}T)_{x \in \cL})$
\end{lem}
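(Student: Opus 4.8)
The inclusion $W^*(T\cA_\infty T,(TX_cT)_{c\in\cL})\subseteq T\cM_\infty T$ is immediate, since $T=Q+\sum_{c\in\cL}\eta_c(Q)$ lies in $\cA_\infty\subseteq\cM_\infty$, so every generator on the left lies in $T\cM_\infty T$. Write $\cN=W^*(T\cA_\infty T,(TX_cT)_{c\in\cL})$ for this subalgebra. By Lemma \ref{lem:free1} the $*$-algebra generated by $\cA_\infty$ and $\{X_c:c\in\cL\}$ is $\sigma$-weakly dense in $\cM_\infty$ and is the linear span of the words $y_0X_{c_1}y_1\cdots X_{c_n}y_n$ with $y_i\in\cA_\infty$; hence $T\cM_\infty T$ is the $\sigma$-weak closure of the span of the compressed words $Ty_0X_{c_1}y_1\cdots X_{c_n}y_nT$, and it suffices to show each of these lies in $\cN$.

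The plan, modeled on the compression arguments in \cite{MR2807103} and \cite{1208.2933}, is to rewrite such a word as a strongly convergent sum of products of elements of $T\cA_\infty T$ and of the $TX_cT$. Using Lemma \ref{lem:1infty} --- $\cA_\infty=\bigoplus_{v\in V(\cF_\sC(\cL))}\cA_v$ with each $\cA_v$ a type $I_\infty$ factor and $p_v\le Q\le T$ a minimal projection of $\cA_v$ --- one produces a family of partial isometries $(u_a)_a$ in $\cA_\infty$ with $u_a^*u_a\le T$ for all $a$ and $\sum_a u_au_a^*=1$; these satisfy $u_aT=u_a$ and $Tu_a^*=u_a^*$, and consequently $Tu_a,\,u_a^*T\in T\cA_\infty T$. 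Inserting the resolutions $\sum_a u_au_a^*=1$ between consecutive letters and writing each interior factor as the strongly convergent sum $y_i=\sum_{a,b}u_a(u_a^*y_iu_b)u_b^*$ with $u_a^*y_iu_b\in T\cA_\infty T$, the compressed word becomes a strongly convergent sum of products in which every factor lies in $T\cA_\infty T\subseteq\cN$ except for factors of the form $u_b^*X_cu_a$ with $u_a^*u_a,u_b^*u_b\le T$. Thus the lemma reduces to showing these mixed corner elements lie in $\cN$. Equivalently, one checks --- by compressing the recurrence relation \eqref{eqn:recurrence}, which is a purely algebraic identity --- that $(TX_cT)_{c\in\cL}$ is a $T\cA_\infty T$-valued semicircular family which is free with amalgamation over $T\cA_\infty T$, so that $\cN$ is the operator-valued semicircular von Neumann algebra over $T\cA_\infty T$ with the compressed covariance; the remaining point is that this algebra exhausts all of $T\cM_\infty T$, i.e.\ that no information is lost under the compression.

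This last point is the technical heart of the lemma, and is where the precise choice $T=Q+\sum_{c\in\cL}\eta_c(Q)$ is used. Under $T$, the algebra $\cA_\infty$ becomes a ``subdivided-graph'' algebra: $T$ is an orthogonal sum of minimal projections of $\cA_\infty$, one $p_v$ for each vertex of $\cF_\sC(\cL)$ and one for each colored half-edge --- the latter arising from the decompositions of the projections $\eta_c(p_v)=c\otimes p_v$ --- and the corner $TX_cT$ realizes exactly the incidences between $c$-colored half-edges and their endpoint vertices. Since $\cF_\sC(\cL)$ is connected and every vertex appears, $\cN$ acts transitively enough on these minimal projections to recover the missing pieces of $\cA_\infty$ and then, together with the covariance identities $\eta_{a,b}=\delta_{a,b}\eta_a$, to produce the mixed terms $u_b^*X_cu_a$; assembling these facts gives $T\cM_\infty T\subseteq\cN$ and hence the lemma. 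The main obstacle is precisely carrying out this bookkeeping: because $\cF_\sC(\cL)$ is non-bipartite with a self-loop at every vertex (in contrast to the bipartite principal graph of \cite{MR2807103}), the minimal decompositions of the $\eta_c(p_v)$ must be tracked carefully by color and by incident half-edge, and one must also verify convergence of the several strongly convergent sums introduced along the way.
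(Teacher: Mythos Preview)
Your setup is right: reduce to showing that the mixed corner elements $u_b^*X_cu_a\in\cN$ for partial isometries $u_a$ in $\cA_\infty$ with $u_a^*u_a\le T$ and $\sum_a u_au_a^*=1$. But at exactly that point you abandon the direct computation and pivot to a vague argument about compressed semicircular families, graph connectivity, and ``transitivity,'' conceding that ``the main obstacle is precisely carrying out this bookkeeping.'' That is not a proof: knowing that $(TX_cT)_c$ is $T\cA_\infty T$-semicircular with the compressed covariance does not by itself tell you the resulting algebra is all of $T\cM_\infty T$; that surjectivity is the very thing to be shown, and appealing to connectivity of $\cF_\sC(\cL)$ does not produce the missing elements $u_b^*X_cu_a$.

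The idea you are missing, which turns this into a two-line argument, is the intertwining relation
\[
y\cdot X_c \;=\; X_c\cdot\eta_c(y)\qquad\text{for all }y\in\cA_e,
\]
an immediate diagrammatic check from the definition of $X_c$ (the $c$-string on the corner simply slides over $y$). The paper takes $u_a=V_i+W_i$ with $V_i\in\cA_e$ satisfying $V_i^*V_i=Q$, $\sum_iV_iV_i^*=1_{\cA_e}$, and $W_i=\sum_{b\in\cL}\eta_b(V_i)\in\cA_o$. Applying the relation (and its adjoint $X_c\,z=\eta_c(z)\,X_c$ for $z\in\cA_e$), together with the support observation $W_i^*X_c=\eta_c(V_i)^*X_c$, collapses
\[
(V_i+W_i)^*\,X_c\,(V_j+W_j)\;=\;X_c\,\eta_c(V_i^*V_j)\;+\;\eta_c(V_i^*V_j)\,X_c,
\]
and since $V_i^*V_j\in Q\cA_\infty Q\subset T\cA_\infty T$ and $\eta_c(V_i^*V_j)\in\eta_c(Q)\cA_\infty\eta_c(Q)\subset T\cA_\infty T$, both summands equal $(TX_cT)$ times an element of $T\cA_\infty T$ and hence lie in $\cN$. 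No graph bookkeeping or semicircular machinery is needed.
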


\begin{proof}
By the choices of the partial isometries $V_{i}$, we know that if we let $W_{i} = \sum_{c \in \cL} \eta_{c}(V_{i})$ then $\sum_{i \in I}(V_{i} + W_{i})(V_{i}+W_{i})^{*} = 1$, so
$$
T\cM_{\infty}T = W^{*}\left(T\cA_{\I}T, \, \left\{\left(V_{i} + W_{i}\right)^{*} X_{c} \left(V_{j} + W_{j}\right): i,j \in I \,  \, c \in \cL\right\}\right)
$$
The relation $y \cdot X_{c} = X_{c} \eta_{c}(y)$ for any $y \in \cA_{e}$ is a straightforward diagrammatic check.  Applying this relation to each $\left(V_{i} + W_{i} \right)^{*} X_{c} \left(V_{j} + W_{j}\right)$ gives $TX_{c}T$.
\end{proof}

Since $\sum_{i \in I} W_{i}W_{i}^{*} = 1_{\cA_{o}}$, it follows that if $v \in V(\cF_{\sC}(\cL))$ then there is a minimal projection $q_{v} \in \cA_{o}$ whose equivalence class represents $v$ and sits under $1_{\cA_{o}}\cdot T$.  Note that this means that for all $v$, $q_{v}$ and $p_{v}$ are orthogonal minimal projections that sit under $1_{v}$.   We will next examine the cut-down of $T\cM_{\I}T$ by $F = \sum_{v} p_{v} + q_{v}$.

Suppose the vertices $v$ and $w$ are distinct and connected in $\cF_{\sC}(\cL)$ and let $C(e)$ be the color of an edge connecting $v$ and $w$.  Let $e_{1},...,e_{k}$ be the (necessarily finite) collection of edges connecting $v$ and $w$ with $C(e_{i}) = c$.  We let $u_{e_{i}}^{v}$ be a collection of partial isometries such that
$$
(u_{e_{i}}^{v})^{*}u_{e_{j}}^{v} = \delta_{ij}q_{v} \text{ and } \sum_{i=1}^{k} u_{e_{i}}^{v}(u_{e_{i}}^{v})^{*} = \eta_{c}(p_{w})\cdot 1_{v}
$$

Let $r_{v} \in F\cA_{\I}F$ be a partial isometry satisfying $r_{v}r_{v}^{*} = p_{v}$ and $r_{v}^{*}r_{v} = q_{v}$ and $r_{w} \in F\cA_{\I}F$ be a partial isometry satisfying $r_{w}r_{w}^{*} = p_{w}$ and $r_{w}^{*}r_{w} = q_{w}$.  The elements defined can be described diagrammatically as follows:
\begin{align*}
p_{v} =
\begin{tikzpicture} [baseline = 0cm]
    \draw  (-.8, 0)--(.8, 0);
    \filldraw [thick, unshaded] (.4, .4) --(.4, -.4)--(-.4, -.4)--(-.4, .4)--(.4, .4);
    \node at (0, 0) {$p_{v}$};
    \node at (-.6, .2) {\scriptsize{$\alpha_{v}$}};
    \node at (.6, .2) {\scriptsize{$\alpha_{v}$}};
\end{tikzpicture}\, , \,
p_{w} =
\begin{tikzpicture} [baseline = 0cm]
    \draw  (-.8, 0)--(.8, 0);
    \filldraw [thick, unshaded] (.4, .4) --(.4, -.4)--(-.4, -.4)--(-.4, .4)--(.4, .4);
    \node at (0, 0) {$p_{w}$};
    \node at (-.6, .2) {\scriptsize{$\alpha_{w}$}};
    \node at (.6, .2) {\scriptsize{$\alpha_{w}$}};
\end{tikzpicture}\, , \,
q_{v} &=
\begin{tikzpicture} [baseline = 0cm]
    \draw  (-.8, 0)--(.8, 0);
    \filldraw [thick, unshaded] (.4, .4) --(.4, -.4)--(-.4, -.4)--(-.4, .4)--(.4, .4);
    \node at (0, 0) {$q_{v}$};
    \node at (-.6, .2) {\scriptsize{$\beta_{v}$}};
    \node at (.6, .2) {\scriptsize{$\beta_{v}$}};
\end{tikzpicture}\, , \,
q_{w} =
\begin{tikzpicture} [baseline = 0cm]
    \draw  (-.8, 0)--(.8, 0);
    \filldraw [thick, unshaded] (.4, .4) --(.4, -.4)--(-.4, -.4)--(-.4, .4)--(.4, .4);
    \node at (0, 0) {$q_{w}$};
    \node at (-.6, .2) {\scriptsize{$\beta_{w}$}};
    \node at (.6, .2) {\scriptsize{$\beta_{w}$}};
\end{tikzpicture}\, , \\
u_{e_{i}}^{v} =
\begin{tikzpicture} [baseline = 0cm]
    \draw  (-.8, -.2)--(.8, -.2);
    \draw [thick, \cupcolor] (-.8, .2)--(-.4, .2);
    \filldraw [thick, unshaded] (.4, .4) --(.4, -.4)--(-.4, -.4)--(-.4, .4)--(.4, .4);
    \node at (0, 0) {$u_{e_{i}}^{v}$};
    \node at (-.6, 0) {\scriptsize{$\alpha_{w}$}};
    \node at (.6, 0) {\scriptsize{$\beta_{v}$}};
\end{tikzpicture}\, , \,
r_{v} &=
\begin{tikzpicture} [baseline = 0cm]
    \draw  (-.8, 0)--(.8, 0);
    \filldraw [thick, unshaded] (.4, .4) --(.4, -.4)--(-.4, -.4)--(-.4, .4)--(.4, .4);
    \node at (0, 0) {$r_{v}$};
    \node at (-.6, .2) {\scriptsize{$\alpha_{v}$}};
    \node at (.6, .2) {\scriptsize{$\beta_{v}$}};
\end{tikzpicture}\, , \,
r_{w} =
\begin{tikzpicture} [baseline = 0cm]
    \draw  (-.8, 0)--(.8, 0);
    \filldraw [thick, unshaded] (.4, .4) --(.4, -.4)--(-.4, -.4)--(-.4, .4)--(.4, .4);
    \node at (0, 0) {$r_{w}$};
    \node at (-.6, .2) {\scriptsize{$\alpha_{w}$}};
    \node at (.6, .2) {\scriptsize{$\beta_{w}$}};
\end{tikzpicture}\, .
\end{align*}

Set $x_{i}$ to be the following diagram:
$$
x_{i} =
\begin{tikzpicture} [baseline = 0cm]
    \draw  (-.8, -.2)--(.8, -.2);
    \draw [thick, \cupcolor] (-.4, .2) arc(270:90: .2cm) -- (.8, .6);
    \filldraw [thick, unshaded] (.4, .4) --(.4, -.4)--(-.4, -.4)--(-.4, .4)--(.4, .4);
    \node at (0, 0) {$u_{e_{i}}^{v}$};
    \node at (-.6, 0) {\scriptsize{$\alpha_{w}$}};
    \node at (.6, 0) {\scriptsize{$\beta_{v}$}};
\end{tikzpicture}
$$
We have the following lemma about the left and right supports of $x_{i}$:

\begin{lem} \label{lem:orthogonalsupport}
The left support of $x_{i}$ is $p_{w}$ and the right support of $x_{i}$ lies under the projection $1_{w} \cdot i_{c}(q_{v})$ where we have for $z \in P_{\overline{\gamma} \rightarrow \overline{\chi}}$
$$
i_{c}(z) =
\begin{tikzpicture} [baseline = 0cm]
    \draw  (-.8, 0)--(.8, 0);
    \filldraw [thick, unshaded] (.4, .4) --(.4, -.4)--(-.4, -.4)--(-.4, .4)--(.4, .4);
    \node at (0, 0) {$z$};
    \node at (-.6, .2) {\scriptsize{$\gamma$}};
    \node at (.6, .2) {\scriptsize{$\chi$}};
    \draw [thick, \cupcolor] (-.8, .6) -- (.8, .6);
\end{tikzpicture}\, .
$$
Furthermore if $i \neq j$ then the right supports of $x_{i}$ and $x_{j}$ are orthogonal.
\end{lem}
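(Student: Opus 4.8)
The plan is to reduce all three assertions to the two defining relations of the partial isometries $u_{e_i}^{v}$, namely $(u_{e_i}^{v})^{*}u_{e_j}^{v}=\delta_{ij}q_{v}$ and $\sum_{i=1}^{k}u_{e_i}^{v}(u_{e_i}^{v})^{*}=\eta_{c}(p_{w})\cdot 1_{v}$, together with the minimality of $p_{w}$. The essential preliminary step is purely diagrammatic: the box $x_{i}$ is obtained from $u_{e_i}^{v}$ by rotating the leg labelled $c$ from the left side of the box, over the top, to the right side. Thus, forming products in $\cM_{\infty}$ and performing a planar isotopy, one obtains
$$
x_{j}x_{i}^{*}=\Xi_{c}\!\left(u_{e_j}^{v}(u_{e_i}^{v})^{*}\right),
$$
where $\Xi_{c}$ denotes the operation ``join the two outer $c$-legs by a cap running over the top of the diagram.'' This $\Xi_{c}$ is a positive linear map; it satisfies $\Xi_{c}(\eta_{c}(y))=\delta_{c}\,y$ for $y\in P_{\alpha_{w}\to\alpha_{w}}$ (capping the straight $c$-strand of $\eta_{c}(y)$ produces a closed loop) and $\Tr\circ\Xi_{c}=\Tr$.

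\emph{Left and right support.} Taking $j=i$, the relation $\sum_{\ell}u_{e_\ell}^{v}(u_{e_\ell}^{v})^{*}=\eta_{c}(p_{w})1_{v}$ forces each $u_{e_i}^{v}(u_{e_i}^{v})^{*}$ to be a projection with $u_{e_i}^{v}(u_{e_i}^{v})^{*}\le\eta_{c}(p_{w})$, so by monotonicity of $\Xi_{c}$,
$$
0\le x_{i}x_{i}^{*}=\Xi_{c}\!\left(u_{e_i}^{v}(u_{e_i}^{v})^{*}\right)\le\Xi_{c}(\eta_{c}(p_{w}))=\delta_{c}\,p_{w}.
$$
Since the rotation $u_{e_i}^{v}\mapsto x_{i}$ is a linear isomorphism of box spaces and $u_{e_i}^{v}\ne 0$, we have $x_{i}x_{i}^{*}\ne 0$, and minimality of $p_{w}$ then gives $x_{i}x_{i}^{*}=\mu_{i}p_{w}$ with $\mu_{i}>0$; hence the left support of $x_{i}$ is $p_{w}$, and $w_{i}:=\mu_{i}^{-1/2}x_{i}$ is a partial isometry with $w_{i}w_{i}^{*}=p_{w}$. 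Its source projection $P_{i}:=w_{i}^{*}w_{i}$ (the right support of $x_{i}$) is equivalent to $p_{w}$, hence lies below the central support $1_{w}$ of $p_{w}$. A direct diagrammatic check also gives $x_{i}\,i_{c}(q_{v})=x_{i}$, since the $\beta_{v}$-part of the right leg of $x_{i}$ already ``ends in $q_{v}=(u_{e_i}^{v})^{*}u_{e_i}^{v}$'' while its $c$-part is the straight $\id_{c}$-strand inherited from $u_{e_i}^{v}\le\eta_{c}(p_{w})$; taking adjoints, $i_{c}(q_{v})x_{i}^{*}=x_{i}^{*}$, so $x_{i}^{*}x_{i}=i_{c}(q_{v})\,x_{i}^{*}x_{i}\,i_{c}(q_{v})$ and therefore $P_{i}\le i_{c}(q_{v})$. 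Combining, $P_{i}\le 1_{w}\cdot i_{c}(q_{v})$.

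\emph{Orthogonality.} For $i\ne j$, the element $u_{e_j}^{v}(u_{e_i}^{v})^{*}$ is a partial isometry between the orthogonal projections $u_{e_i}^{v}(u_{e_i}^{v})^{*}$ and $u_{e_j}^{v}(u_{e_j}^{v})^{*}$ — orthogonal because their sum $\eta_{c}(p_{w})1_{v}$ is a projection — so $\Tr(u_{e_j}^{v}(u_{e_i}^{v})^{*})=\Tr((u_{e_i}^{v})^{*}u_{e_j}^{v})=0$. As $x_{i}$ and $x_{j}$ both have left support $p_{w}$, we get $x_{j}x_{i}^{*}=p_{w}(x_{j}x_{i}^{*})p_{w}\in p_{w}\cA_{\infty}p_{w}=\C p_{w}$ (minimality of $p_{w}$), and, using $\Tr\circ\Xi_{c}=\Tr$, its trace equals $\Tr(u_{e_j}^{v}(u_{e_i}^{v})^{*})=0$; since $\Tr(p_{w})>0$, this forces $x_{j}x_{i}^{*}=0$. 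But $x_{j}x_{i}^{*}=0$ says precisely that the ranges of $x_{i}^{*}$ and $x_{j}^{*}$, i.e.\ the right supports $P_{i}$ and $P_{j}$, are orthogonal, which completes the proof. I expect the only genuinely delicate point to be the diagrammatic bookkeeping behind the two identities $x_{j}x_{i}^{*}=\Xi_{c}(u_{e_j}^{v}(u_{e_i}^{v})^{*})$ and $x_{i}\,i_{c}(q_{v})=x_{i}$: one must track carefully which strands are routed over the top of which box and confirm that the two $c$-legs fuse into a single over-the-top cap. Given those pictures, everything else is a formal consequence of the two defining relations, the positivity and trace-invariance of $\Xi_{c}$, and the minimality of $p_{w}$.
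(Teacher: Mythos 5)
Your proof is correct and takes essentially the same route as the paper: both arguments hinge on the ``cap the $c$-leg'' map (your $\Xi_c$ is $\delta_c$ times the paper's conditional expectation $\Phi$), the defining relations of the $u_{e_i}^v$, minimality of $p_w$, and the vanishing trace of $u_{e_j}^v(u_{e_i}^v)^*$ for $i\neq j$. The only cosmetic divergence is in establishing $P_i\le i_c(q_v)$ — the paper computes $\Phi(x_i^*x_i)=kq_v$ and invokes the conditional-expectation property, whereas you verify $x_i\, i_c(q_v)=x_i$ diagrammatically — but these are two phrasings of the same observation.
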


\begin{proof}
Note that $x_{i}x_{i}^{*}$ is the diagram
$$
\begin{tikzpicture} [baseline = 0cm]
    \draw  (-.8, -.2)--(2.2, -.2);
    \draw [thick, \cupcolor] (-.4, .2) arc(270:90: .2cm) -- (1.8, .6) arc(90:-90: .2cm);
    \filldraw [thick, unshaded] (.4, .4) --(.4, -.4)--(-.4, -.4)--(-.4, .4)--(.4, .4);
    \filldraw [thick, unshaded] (1.8, .4)--(1.8, -.4)--(.8, -.4)--(.8, .4)--(1.8,.4);
    \node at (1.3, 0) {$(u_{e_{i}}^{v})^{*}$};
    \node at (0, 0) {$u_{e_{i}}^{v}$};
    \node at (-.6, 0) {\scriptsize{$\alpha_{w}$}};
    \node at (.6, 0) {\scriptsize{$\beta_{v}$}};
    \node at (2, 0) {\scriptsize{$\alpha_{w}$}};
\end{tikzpicture}\, .
$$
For any $\gamma \in \Lambda$, let $\Phi: P\op_{\overline{\gamma}c \rightarrow \overline{\gamma}c} \rightarrow P\op_{\overline{\gamma} \rightarrow \overline{\gamma}}$ be the trace preserving conditional expectation where $P\op_{\overline{\gamma} \rightarrow \overline{\gamma}}$ includes unitally into $P\op_{\overline{\gamma}c \rightarrow \overline{\gamma}c}$ via $i_{c}$.    The above diagram is a scalar multiple of $\Phi(u_{e_{i}}^{v}\cdot (u_{e_{i}}^{v})^{*})$.  As $u_{e_{i}}^{v}\cdot (u_{e_{i}}^{v})^{*}$ lies under $\eta_{c}(p_{w}) = i_{c}(p_{w})$ it follows that this diagram is a scalar multiple of $p_{w}$, proving the claim about the left support.  As left and right supports are equivalent projections, it follows that the right support of $x_{i}$ lies under $1_{w}$.  We note that $\Phi(x_{i}^{*}x_{i}) = k (u_{e_{i}}^{v})^{*}u_{e_{i}}^{v} = k q_{v}$ for some scalar $k$, implying that the right support of $x_{i}$ must lie under $i_{c}(q_{v})$.  Finally, if $i \neq j$ then $x_{i}x_{j}^{*}$ is a constant multiple of $\Phi(u_{e_{i}}^{v}\cdot (u_{e_{j}}^{v})^{*})$.  We know this must be a scalar multiple of $p_{w}$, but that scalar must be zero as $u_{e_{i}}^{v}\cdot (u_{e_{j}}^{v})^{*}$ has trace zero.  This proves the orthogonality of the right supports of the $x_{i}$.
\end{proof}

Let $f_{i}$ be the right support of $x_{i}$.  As $i_{c}(p_{v})\cdot1_{w}$ can be written as a sum of $k$ orthogonal projections equivalent to $p_{w}$, we conclude that $\sum_{i=1}^{k} f_{i} = i_{c}(p_{v})\cdot1_{w}$.  We also recognize that $i_{c}(r_{v})$ is a partial isometry with left support $i_{c}(q_{v})$ and right support $i_{c}(p_{v})$. We consider the elements $y_{i} = r_{w}^{*}\cdot x_{i} \cdot i_{c}(r_{v}^{*})$ which diagrammatically look like
$$
y_{i} =
\begin{tikzpicture} [baseline = 0cm]
    \draw (-.8, -.2)--(3.2, -.2);
    \filldraw [thick, unshaded] (.4, .4)--(.4, -.4)--(-.4, -.4)--(-.4, .4)--(.4, .4);
    \filldraw [thick, unshaded] (1.6, .4)--(1.6, -.4)--(.8, -.4)--(.8, .4)--(1.6, .4);
    \filldraw [thick, unshaded] (2.8, .4)--(2.8, -.4)--(2, -.4)--(2, .4)--(2.8, .4);
    \draw [thick, \cupcolor] (.8, .2) arc(270:90: .2cm)--(3.2, .6);
    \node at (0,0) {$r_{w}^{*}$};
    \node at (1.2, 0) {$u_{e}^{v}$};
    \node at (2.4, 0) {$r_{v}^{*}$};
    \node at (-.6, 0) {\scriptsize{$\beta_{w}$}};
    \node at (.6, 0) {\scriptsize{$\alpha_{w}$}};
    \node at (1.8, 0) {\scriptsize{$\beta_{v}$}};
    \node at (3, 0) {\scriptsize{$\alpha_{v}$}};
\end{tikzpicture}
$$
note that the left support of $y_{i}$ is $q_{w}$.  From Lemma \ref{lem:orthogonalsupport}, the right supports of the $y_{i}$ are orthogonal and sum up to $i_{c}(p_{v})\cdot1_{w}$.  As $q_{w}$ is minimal in $\cA_{\I}$, it follows that $y_{i}$ is a scalar multiple of a partial isometry.  We define the partial isometry $u_{e}^{w}$ by the equation $(u_{e_{i}}^{w})^{*} = l \cdot y_{i}$ with $l$ an appropriate constant.  It follows that $(u_{e_{i}}^{w})^{*}u_{e_{j}}^{w} = \delta_{ij}q_{w}$ and $\sum_{i=1}^{k} u_{e_{i}}^{w}(u_{e_{i}}^{w})^{*} = \eta_{c}(p_{v})\cdot 1_{w}$.

One can manipulate the diagram above to show that the relation
$$
(u_{e}^{v})^{*} = (\overline{l})^{-1} \cdot
\begin{tikzpicture} [baseline = 0cm]
    \draw (-.8, -.2)--(3.2, -.2);
    \filldraw [thick, unshaded] (.4, .4)--(.4, -.4)--(-.4, -.4)--(-.4, .4)--(.4, .4);
    \filldraw [thick, unshaded] (1.6, .4)--(1.6, -.4)--(.8, -.4)--(.8, .4)--(1.6, .4);
    \filldraw [thick, unshaded] (2.8, .4)--(2.8, -.4)--(2, -.4)--(2, .4)--(2.8, .4);
    \draw [thick, \cupcolor] (.8, .2) arc(270:90: .2cm)--(3.2, .6);
    \node at (0,0) {$r_{v}^{*}$};
    \node at (1.2, 0) {$u_{e}^{w}$};
    \node at (2.4, 0) {$r_{w}^{*}$};
    \node at (-.6, 0) {\scriptsize{$\beta_{v}$}};
    \node at (.6, 0) {\scriptsize{$\alpha_{v}$}};
    \node at (1.8, 0) {\scriptsize{$\beta_{w}$}};
    \node at (3, 0) {\scriptsize{$\alpha_{w}$}};
\end{tikzpicture}
$$
holds.  This discussion thus proves the following useful lemma:

\begin{lem} \label{lem:relation}
Let $e_{1}, \dots, e_{k}$ be all of the edges of color $c$ in the fusion graph connecting distinct vertices $v$ and $w$.  Then one can find partial isometries $\{u_{e_{i}}^{v}: 1\leq i \leq k\}$ and $\{u_{e_{i}}^{w}: 1\leq i \leq k\}$ in $\cA_{o}$ satisfying
\begin{align*}
(u_{e_{i}}^{v})^{*}u_{e_{j}}^{v} = \delta_{ij}q_{v},& \, \, \, \sum_{i=1}^{k} u_{e_{i}}^{v}(u_{e_{i}}^{v})^{*} = \eta_{c}(p_{w})\cdot 1_{v}\\
(u_{e_{i}}^{w})^{*}u_{e_{j}}^{w} = \delta_{ij}q_{w},& \, \, \, \sum_{i=1}^{k} u_{e_{i}}^{w}(u_{e_{i}}^{w})^{*} = \eta_{c}(p_{v})\cdot 1_{w}
\end{align*}
and the relations
$$
(u_{e}^{v})^{*} = (\overline{l})^{-1} \cdot
\begin{tikzpicture} [baseline = 0cm]
    \draw (-.8, -.2)--(3.2, -.2);
    \filldraw [thick, unshaded] (.4, .4)--(.4, -.4)--(-.4, -.4)--(-.4, .4)--(.4, .4);
    \filldraw [thick, unshaded] (1.6, .4)--(1.6, -.4)--(.8, -.4)--(.8, .4)--(1.6, .4);
    \filldraw [thick, unshaded] (2.8, .4)--(2.8, -.4)--(2, -.4)--(2, .4)--(2.8, .4);
    \draw [thick, \cupcolor] (.8, .2) arc(270:90: .2cm)--(3.2, .6);
    \node at (0,0) {$r_{v}^{*}$};
    \node at (1.2, 0) {$u_{e}^{w}$};
    \node at (2.4, 0) {$r_{w}^{*}$};
    \node at (-.6, 0) {\scriptsize{$\beta_{v}$}};
    \node at (.6, 0) {\scriptsize{$\alpha_{v}$}};
    \node at (1.8, 0) {\scriptsize{$\beta_{w}$}};
    \node at (3, 0) {\scriptsize{$\alpha_{w}$}};
\end{tikzpicture}\, \text{ and }
(u_{e}^{w})^{*} = l \cdot
\begin{tikzpicture} [baseline = 0cm]
    \draw (-.8, -.2)--(3.2, -.2);
    \filldraw [thick, unshaded] (.4, .4)--(.4, -.4)--(-.4, -.4)--(-.4, .4)--(.4, .4);
    \filldraw [thick, unshaded] (1.6, .4)--(1.6, -.4)--(.8, -.4)--(.8, .4)--(1.6, .4);
    \filldraw [thick, unshaded] (2.8, .4)--(2.8, -.4)--(2, -.4)--(2, .4)--(2.8, .4);
    \draw [thick, \cupcolor] (.8, .2) arc(270:90: .2cm)--(3.2, .6);
    \node at (0,0) {$r_{w}^{*}$};
    \node at (1.2, 0) {$u_{e}^{v}$};
    \node at (2.4, 0) {$r_{v}^{*}$};
    \node at (-.6, 0) {\scriptsize{$\beta_{w}$}};
    \node at (.6, 0) {\scriptsize{$\alpha_{w}$}};
    \node at (1.8, 0) {\scriptsize{$\beta_{v}$}};
    \node at (3, 0) {\scriptsize{$\alpha_{v}$}};
\end{tikzpicture}
$$
for some nonzero constant $l$.
\end{lem}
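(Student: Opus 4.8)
The plan is to recognize that Lemma \ref{lem:relation} is, in effect, a clean restatement of the construction carried out in the paragraphs just above it, so that proving it amounts to organizing those pieces in the right order and checking that the normalizing scalars can be chosen to make every partial-isometry identity hold exactly. First I would simply take as input the partial isometries $u_{e_i}^v\in\cA_{o}$ already produced, which by construction satisfy $(u_{e_i}^v)^*u_{e_j}^v=\delta_{ij}q_v$ and $\sum_{i=1}^{k}u_{e_i}^v(u_{e_i}^v)^*=\eta_c(p_w)\cdot 1_v$; this is the first displayed line and needs no further argument.

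Next I would attach a $c$-colored cap to each $u_{e_i}^v$ to form the elements $x_i$, and apply Lemma \ref{lem:orthogonalsupport}: the left support of $x_i$ is $p_w$, the right supports $f_i$ are pairwise orthogonal, and since $i_c(p_v)\cdot 1_w$ splits into $k$ minimal projections equivalent to $p_w$ we get $\sum_{i=1}^{k}f_i=i_c(p_v)\cdot 1_w$. Conjugating by the partial isometry $r_w$ (with $r_wr_w^*=p_w$, $r_w^*r_w=q_w$) on the left and $i_c(r_v)$ (with $i_c(r_v)i_c(r_v)^*=i_c(p_v)$, $i_c(r_v)^*i_c(r_v)=i_c(q_v)$) on the right yields $y_i=r_w^*\,x_i\,i_c(r_v^*)$, whose left support is $q_w$ and whose right supports remain pairwise orthogonal. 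Because $q_w$ is minimal in $\cA_{\infty}$ (Lemma \ref{lem:1infty}), each $y_i$ is a scalar multiple of a partial isometry, and by the symmetry of the construction the scalars are the same for all $i$; defining $(u_{e_i}^w)^*=l\cdot y_i$ with $l>0$ the common normalizing constant gives $(u_{e_i}^w)^*u_{e_j}^w=\delta_{ij}q_w$, and a short trace computation (using that $\eta_c$ scales the trace and that the $f_i$ exhaust $i_c(p_v)\cdot 1_w$) gives $\sum_{i=1}^{k}u_{e_i}^w(u_{e_i}^w)^*=\eta_c(p_v)\cdot 1_w$. This is the second displayed line.

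Finally, for the two ``inverse'' relations I would start from $(u_{e_i}^w)^*=l\cdot r_w^*\,x_i\,i_c(r_v^*)$, write $x_i$ out as the $c$-capped diagram built from $u_{e_i}^v$, and rearrange: since $r_wr_w^*=p_w$ is precisely the left support of $x_i$ and $i_c(r_v)i_c(r_v)^*=i_c(p_v)$ dominates its right support, multiplying back through by $r_w$ and $i_c(r_v)$ (and taking the adjoint, with the roles of $v$ and $w$ interchanged) recovers the $c$-capped diagram of $u_{e_i}^v$ as $\overline{l}^{-1}$ times the stated diagram in $r_v^*,u_{e_i}^w,r_w^*$, and symmetrically for $(u_e^w)^*$; these manipulations are the same straightening and rotation moves used in Sections \ref{sec:factor} and \ref{sec:free1}. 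The main obstacle is not conceptual but the bookkeeping: carefully tracking the left and right supports of $x_i$, $y_i$, $r_v$, $r_w$, and $i_c(r_v)$ through the various products, and pinning down the scalars (the constant hidden in Lemma \ref{lem:orthogonalsupport} and the normalization $l$) so that the identities hold on the nose rather than merely up to a positive constant.
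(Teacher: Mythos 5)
Your proposal is correct and follows essentially the same route as the paper: the paper itself says ``This discussion thus proves the following useful lemma,'' meaning Lemma \ref{lem:relation} is a summary of the preceding construction (posit the $u_{e_i}^v$, cap them to form $x_i$, invoke Lemma \ref{lem:orthogonalsupport}, conjugate by $r_w$ and $i_c(r_v)$ to get the $y_i$, normalize to define $u_{e_i}^w$, and read off the inverse relation by pushing the partial isometries back through the capped diagram). Your observation that the normalizing scalar is the same for every $i$ (since it is determined by traces that are edge-independent) is the one point the paper glosses over, and your treatment of it is adequate.
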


We now turn our attention over to the edges $e$ which are loops.  Suppose $v$ is a vertex and $e_{i},...,e_{k}$ represent all of the loops of color $c$ connecting $v$ to itself.  As above, we can find partial isometries $u_{e_{i}}^{v}$ with right support $q_{v}$ and orthogonal left supports under $1_{v}\cdot i_{c}(p_{v})$, however for reasons that will become apparent later we desire a stronger property about these partial isometries.

\begin{lem} \label{lem:loop}
There exists a set of partial isometries $\{u_{e_{i}}^{v}: 1 \leq i \leq k\}$ satisfying
$$
(u^{v}_{e_{i}})^{*}u^{v}_{e_{j}} = \delta_{ij} q_{v}, \, \, \, \sum_{i=1}^{k} u_{e_{i}}^{v}(u_{e_{i}}^{v})^{*} = \eta_{c}(p_{v})\cdot 1_{v},
$$
and the following relation:
$$
(u_{e}^{v})^{*} = k \cdot
\begin{tikzpicture} [baseline = 0cm]
    \draw (-.8, -.2)--(3.2, -.2);
    \filldraw [thick, unshaded] (.4, .4)--(.4, -.4)--(-.4, -.4)--(-.4, .4)--(.4, .4);
    \filldraw [thick, unshaded] (1.6, .4)--(1.6, -.4)--(.8, -.4)--(.8, .4)--(1.6, .4);
    \filldraw [thick, unshaded] (2.8, .4)--(2.8, -.4)--(2, -.4)--(2, .4)--(2.8, .4);
    \draw [thick, \cupcolor] (.8, .2) arc(270:90: .2cm)--(3.2, .6);
    \node at (0,0) {$r_{v}^{*}$};
    \node at (1.2, 0) {$u_{e}^{v}$};
    \node at (2.4, 0) {$r_{v}^{*}$};
    \node at (-.6, 0) {\scriptsize{$\beta_{v}$}};
    \node at (.6, 0) {\scriptsize{$\alpha_{v}$}};
    \node at (1.8, 0) {\scriptsize{$\beta_{v}$}};
    \node at (3, 0) {\scriptsize{$\alpha_{v}$}};
\end{tikzpicture}
$$
for $k$ a unimodular constant.
\end{lem}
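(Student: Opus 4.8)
The plan is to carry out the construction of Lemma \ref{lem:relation}, but since the ``$v$-side'' and ``$w$-side'' of that argument now coincide I cannot choose the $u_{e_i}^v$ freely and then \emph{define} their partners by the rotation relation; instead the $u_{e_i}^v$ must be chosen so that they are already fixed, up to a unimodular scalar, by that rotation. First I would note that partial isometries with the first two properties certainly exist: by Lemma \ref{lem:1infty} the projection $\eta_c(p_v)\cdot 1_v$ lies in $\cA_o$, has central support $1_v$ in $\cA_\infty$, and --- via the identification of the fusion graph with the planar algebra --- is a sum of exactly $k$ mutually orthogonal projections each equivalent to the minimal projection $q_v$ inside the type $I_\infty$ factor $\cA_v$, where $k$ is the number of loops of color $c$ at $v$; hence $u_1,\dots,u_k$ with $u_i^{*}u_j=\delta_{ij}q_v$ and $\sum_i u_iu_i^{*}=\eta_c(p_v)\cdot 1_v$ exist. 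The content of the lemma is to arrange compatibility with the rotation on top of this.

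To that end set $V=q_v\,\cA_\infty\,\big(\eta_c(p_v)\cdot 1_v\big)$, a $k$-dimensional subspace of $\cA_o$; since $q_v$ is minimal, $b^{*}a$ defines a $q_v\cA_\infty q_v\cong\C$-valued inner product on $V$, and the partial isometries lying in $V$ are exactly its ``unit vectors.'' Let $\Theta\colon V\to V$ be the conjugate-linear map sending $u$ to the element whose adjoint is $l\cdot r_v^{*}\,x(u)\,i_c(r_v^{*})$, where $x(u)$ is formed from $u$ by attaching a $c$-cup exactly as $x_i$ was formed from $u_{e_i}^v$ in the discussion preceding Lemma \ref{lem:orthogonalsupport}, and $l$ is the normalizing constant there (conjugate-linearity is forced by the adjoint). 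As in the proof of Lemma \ref{lem:orthogonalsupport}, $\Theta$ maps unit vectors to unit vectors, so it is anti-unitary; and the scalar bookkeeping that in Lemma \ref{lem:relation} shows the rotations $v\to w$ and $w\to v$ to be mutually inverse, now read with $w=v$, shows $\Theta$ maps $V$ into itself and that $\Theta^{2}$ is a scalar, necessarily $+\id_V$. I expect this last point --- that the square is $+\id_V$ rather than a negative scalar --- to be the main obstacle; it is exactly where the symmetric self-duality of the objects of $\cL$ (Assumption \ref{assume:Countable}), which underlies all the diagrammatic rotations of Section \ref{sec:GJS}, is used, i.e.\ it is the statement that the relevant Frobenius--Schur sign is $+1$.

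Granting $\Theta^{2}=\id_V$, $\Theta$ is an anti-unitary involution of the finite-dimensional Hilbert space $V$, so the real subspace $\{v\in V:\Theta v=v\}$ carries a $\C$-orthonormal basis of $V$; I would take $\{u^v_{e_1},\dots,u^v_{e_k}\}$ to be such a basis. Orthonormality for the $\C$-valued inner product is precisely $(u^v_{e_i})^{*}u^v_{e_j}=\delta_{ij}q_v$, and being a \emph{full} orthonormal basis of $V$ forces $\sum_{i=1}^{k}u^v_{e_i}(u^v_{e_i})^{*}$ to be the identity of the corresponding matrix algebra, namely $\eta_c(p_v)\cdot 1_v$. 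Finally $\Theta u^v_{e_i}=u^v_{e_i}$, upon unwinding the definition of $\Theta$ and using that $|l|=1$ (forced by anti-unitarity once $\Theta^{2}=\id_V$), becomes exactly the asserted relation $(u_e^v)^{*}=k\cdot(\text{the displayed diagram})$, the unimodular constant there being $l$ itself. The remaining verifications --- that $\Theta$ preserves unit vectors and the scalar-tracking behind $\Theta^{2}=\id_V$ --- are parallel in spirit to Lemmas \ref{lem:orthogonalsupport} and \ref{lem:relation}.
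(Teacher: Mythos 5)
Your proposal is correct, but it takes a genuinely different (and more conceptual) route than the paper. The paper proceeds by an explicit Gram--Schmidt-style induction: pick an arbitrary partial isometry $u_1$ with right support $q_v$, symmetrize by adding its rotate to form $y_1^*$, treat the two cases $y_1=0$ (take $k=-1$) and $y_1\neq 0$ (normalize to a partial isometry, $k=1$), and then repeat with $u_2$ chosen with left support orthogonal to that of the newly-built $u^v_{e_1}$, etc.; Lemma~\ref{lem:orthogonalsupport} and the surrounding discussion ensure that orthogonality of left supports is preserved through the symmetrization. You instead package the whole rotation operation as a single conjugate-linear map $\Theta$ on the $k$-dimensional complex Hilbert space $V=q_v\cA_\infty(\eta_c(p_v)1_v)$, observe it is anti-unitary with $\Theta^2=\mathrm{id}_V$, and produce the desired family in one stroke as a $\C$-orthonormal basis of $V$ consisting of $\Theta$-fixed vectors. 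This is a cleaner encapsulation of the same underlying mechanism: the paper's step ``$y_1^*$ is fixed under the map'' is precisely the implicit use of $\Theta^2=\mathrm{id}$ that you flag explicitly, and indeed both arguments stand or fall with the claim that $\Theta^2$ is $+\mathrm{id}$ rather than $-\mathrm{id}$, which neither you nor the paper verifies in full diagrammatic detail (you correctly attribute it to the symmetric self-duality in Assumption~\ref{assume:Countable}, i.e.\ to the Frobenius--Schur sign being $+1$). Your approach also buys a small cosmetic improvement: because $\Theta u=-u$ can always be repaired by replacing $u$ with $iu$, the real-structure argument shows the unimodular constant in the lemma can be taken uniform (and in fact equal to $l$, not depending on $i$ or on a case split), whereas the paper's proof as written yields possibly different constants $\pm 1$ across the $u^v_{e_i}$ and only the existence claim keeps it honest.
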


\begin{proof}
Let $u_{1} \in \cA_{\I}$ be a partial isometry with right support $q_{v}$ and left support under $1_{v}\cdot i_{c}(p_{v})$.  Consider the operator $y_{1}$ satisfying
$$ y_{1}^{*} =
\begin{tikzpicture} [baseline = 0cm]
    \draw (-.8, -.2)--(.8, -.2);
    \filldraw [thick, unshaded] (-.4, -.4) -- (-.4, .4)--(.4, .4)--(.4, -.4)--(-.4, -.4);
    \draw [thick, \cupcolor] (.4, .2)--(.8, .2);
    \node at (0, 0) {$u^{*}_{1}$};
    \node at (-.6, 0) {\scriptsize{$\beta_{v}$}};
    \node at (.6, 0) {\scriptsize{$\alpha_{v}$}};
\end{tikzpicture} +
\begin{tikzpicture} [baseline = 0cm]
    \draw (-.8, -.2)--(3.2, -.2);
    \filldraw [thick, unshaded] (.4, .4)--(.4, -.4)--(-.4, -.4)--(-.4, .4)--(.4, .4);
    \filldraw [thick, unshaded] (1.6, .4)--(1.6, -.4)--(.8, -.4)--(.8, .4)--(1.6, .4);
    \filldraw [thick, unshaded] (2.8, .4)--(2.8, -.4)--(2, -.4)--(2, .4)--(2.8, .4);
    \draw [thick, \cupcolor] (.8, .2) arc(270:90: .2cm)--(3.2, .6);
    \node at (0,0) {$r_{v}^{*}$};
    \node at (1.2, 0) {$u_{1}$};
    \node at (2.4, 0) {$r_{v}^{*}$};
    \node at (-.6, 0) {\scriptsize{$\beta_{v}$}};
    \node at (.6, 0) {\scriptsize{$\alpha_{v}$}};
    \node at (1.8, 0) {\scriptsize{$\beta_{v}$}};
    \node at (3, 0) {\scriptsize{$\alpha_{v}$}};
\end{tikzpicture}.
$$
If this is zero, then by setting $k = -1$, we have produced a partial isometry satisfying the appropriate diagrammatic relation.  If not, then $y_{1}^{*}$ has left support $q_{v}$ and right support under $1_{v} \cdot i_{c}(p_{v})$ so it is a scalar multiple of a partial isometry.  The operator $y_{1}^{*}$ is also fixed under the map
$$
x \mapsto
\begin{tikzpicture} [baseline = 0cm]
    \draw (-.8, -.2)--(3.2, -.2);
    \filldraw [thick, unshaded] (.4, .4)--(.4, -.4)--(-.4, -.4)--(-.4, .4)--(.4, .4);
    \filldraw [thick, unshaded] (1.6, .4)--(1.6, -.4)--(.8, -.4)--(.8, .4)--(1.6, .4);
    \filldraw [thick, unshaded] (2.8, .4)--(2.8, -.4)--(2, -.4)--(2, .4)--(2.8, .4);
    \draw [thick, \cupcolor] (.8, .2) arc(270:90: .2cm)--(3.2, .6);
    \node at (0,0) {$r_{v}^{*}$};
    \node at (1.2, 0) {$x^{*}$};
    \node at (2.4, 0) {$r_{v}^{*}$};
    \node at (-.6, 0) {\scriptsize{$\beta_{v}$}};
    \node at (.6, 0) {\scriptsize{$\alpha_{v}$}};
    \node at (1.8, 0) {\scriptsize{$\beta_{v}$}};
    \node at (3, 0) {\scriptsize{$\alpha_{v}$}};
\end{tikzpicture}
$$
so a scalar multiple of $y_{1}$ satisfies the appropriate diagrammatic relation and is a partial isometry. In either case, we have produced a partial isometry $u_{e_{1}}^{v}$ satisfying the diagrammatic relation. To produce another partial isometry $u_{e_{2}}^{v}$ with right support $q_{v}$, left support orthogonal to that of $u_{e_{1}}^{v}$, and satisfying the diagrammatic relation, we pick a partial isometry $u_{2}$ with right support $q_{v}$ and left support orthogonal to that of $u_{e_{1}}^{v}$.  By Lemma \ref{lem:orthogonalsupport} and the discussion afterwards, the right supports of
$$
\begin{tikzpicture} [baseline = 0cm]
    \draw (-.8, -.2)--(3.2, -.2);
    \filldraw [thick, unshaded] (.4, .4)--(.4, -.4)--(-.4, -.4)--(-.4, .4)--(.4, .4);
    \filldraw [thick, unshaded] (1.6, .4)--(1.6, -.4)--(.8, -.4)--(.8, .4)--(1.6, .4);
    \filldraw [thick, unshaded] (2.8, .4)--(2.8, -.4)--(2, -.4)--(2, .4)--(2.8, .4);
    \draw [thick, \cupcolor] (.8, .2) arc(270:90: .2cm)--(3.2, .6);
    \node at (0,0) {$r_{v}^{*}$};
    \node at (1.2, 0) {$u_{2}$};
    \node at (2.4, 0) {$r_{v}^{*}$};
    \node at (-.6, 0) {\scriptsize{$\beta_{v}$}};
    \node at (.6, 0) {\scriptsize{$\alpha_{v}$}};
    \node at (1.8, 0) {\scriptsize{$\beta_{v}$}};
    \node at (3, 0) {\scriptsize{$\alpha_{v}$}};
\end{tikzpicture} \text{ and }k\cdot
\begin{tikzpicture} [baseline = 0cm]
    \draw (-.8, -.2)--(3.2, -.2);
    \filldraw [thick, unshaded] (.4, .4)--(.4, -.4)--(-.4, -.4)--(-.4, .4)--(.4, .4);
    \filldraw [thick, unshaded] (1.6, .4)--(1.6, -.4)--(.8, -.4)--(.8, .4)--(1.6, .4);
    \filldraw [thick, unshaded] (2.8, .4)--(2.8, -.4)--(2, -.4)--(2, .4)--(2.8, .4);
    \draw [thick, \cupcolor] (.8, .2) arc(270:90: .2cm)--(3.2, .6);
    \node at (0,0) {$r_{v}^{*}$};
    \node at (1.2, 0) {$u_{e_{1}}^{v}$};
    \node at (2.4, 0) {$r_{v}^{*}$};
    \node at (-.6, 0) {\scriptsize{$\beta_{v}$}};
    \node at (.6, 0) {\scriptsize{$\alpha_{v}$}};
    \node at (1.8, 0) {\scriptsize{$\beta_{v}$}};
    \node at (3, 0) {\scriptsize{$\alpha_{v}$}};
\end{tikzpicture} = (u_{e_{1}}^{v})^{*}
$$
are orthogonal so it follows that by considering the element
$$ y_{2}^{*} =
\begin{tikzpicture} [baseline = 0cm]
    \draw (-.8, -.2)--(.8, -.2);
    \filldraw [thick, unshaded] (-.4, -.4) -- (-.4, .4)--(.4, .4)--(.4, -.4)--(-.4, -.4);
    \draw [thick, \cupcolor] (.4, .2)--(.8, .2);
    \node at (0, 0) {$u^{*}_{2}$};
    \node at (-.6, 0) {\scriptsize{$\beta_{v}$}};
    \node at (.6, 0) {\scriptsize{$\alpha_{v}$}};
\end{tikzpicture} +
\begin{tikzpicture} [baseline = 0cm]
    \draw (-.8, -.2)--(3.2, -.2);
    \filldraw [thick, unshaded] (.4, .4)--(.4, -.4)--(-.4, -.4)--(-.4, .4)--(.4, .4);
    \filldraw [thick, unshaded] (1.6, .4)--(1.6, -.4)--(.8, -.4)--(.8, .4)--(1.6, .4);
    \filldraw [thick, unshaded] (2.8, .4)--(2.8, -.4)--(2, -.4)--(2, .4)--(2.8, .4);
    \draw [thick, \cupcolor] (.8, .2) arc(270:90: .2cm)--(3.2, .6);
    \node at (0,0) {$r_{v}^{*}$};
    \node at (1.2, 0) {$u_{2}$};
    \node at (2.4, 0) {$r_{v}^{*}$};
    \node at (-.6, 0) {\scriptsize{$\beta_{v}$}};
    \node at (.6, 0) {\scriptsize{$\alpha_{v}$}};
    \node at (1.8, 0) {\scriptsize{$\beta_{v}$}};
    \node at (3, 0) {\scriptsize{$\alpha_{v}$}};
\end{tikzpicture}.
$$
and arguing as in the beginning of the proof, we produce the desired element $u_{e_{2}}^{v}$.  Iterating this procedure produces the set $\{u_{e_{i}}^{v}: 1 \leq i \leq k\}$.
\end{proof}

We therefore assume in the rest of the section that our partial isometries satisfy the relations in either Lemma \ref{lem:relation} or Lemma \ref{lem:loop}.  The identity $\sum_{v} \sum_{e\sim v} u_{e}^{v}(u_{e}^{v})^{*} = \sum_{c \in \cL} \eta_{c}(Q)$ follows by the choices of the partial isometries $u_{e}^{v}$.  We therefore have the following lemma:

\begin{lem} \label{lem:compression2}
For each edge $e$ of the fusion graph $\cF_\sC(\cL))$, we define operators $Y_{e}$ as follows:  If $C(e) = c$ and $e$ connects distinct vertices $v$ and $w$ then $Y_{e} = p_{v}X_{c}u_{e}^{w} + (u_{e}^{w})^{*}X_{v}p_{v} + p_{w}X_{c}u_{e}^{v} + (u_{e}^{v})^{*}X_{c}p_{w}$ and if $e$ connects the vertex $v$ to itself then $Y_{e} = p_{v}X_{c}u_{e}^{v} + (u_{e}^{v})^{*}X_{c}p_{v}$.  Then we have $F\cM_{\I}F = W^{*}(F\cA_{\I}F, \, (Y_{e})_{e \in E})$
\end{lem}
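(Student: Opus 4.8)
The plan is to combine Lemma~\ref{lem:compression1} with the structure of $\cA_\infty$ from Lemma~\ref{lem:1infty} and the partial isometries built in Lemmas~\ref{lem:relation} and~\ref{lem:loop}, via the standard ``amplification'' trick for a corner of a generated von Neumann algebra. First note $F\leq T$: indeed $\sum_v p_v=Q\leq T$, each $q_v$ lies under $1_o T=\sum_{c\in\cL}\eta_c(Q)\leq T$, and the $p_v\in\cA_e$ are orthogonal to the $q_v\in\cA_o$. Since $F$ is a projection in $T\cA_\infty T\subseteq T\cM_\infty T$ we have $F\cM_\infty F=F(T\cM_\infty T)F$, so by Lemma~\ref{lem:compression1} it suffices to compress $W^*\bigl(T\cA_\infty T,(TX_cT)_{c\in\cL}\bigr)$ by $F$. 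The inclusion $W^*\bigl(F\cA_\infty F,(Y_e)_{e\in E}\bigr)\subseteq F\cM_\infty F$ is immediate: $F\cA_\infty F\subseteq\cA_\infty\cap F\cM_\infty F$, and each summand of each $Y_e$ has the form $aX_cb$ with $a,b$ either one of $p_v,p_w$ or one of $u_e^v,u_e^w,(u_e^v)^*,(u_e^w)^*$, so (by Lemmas~\ref{lem:relation} and~\ref{lem:loop}) its left and its right support lie in $\{p_v,p_w,q_v,q_w\}$, both $\leq F$; hence $Y_e=FY_eF\in F\cM_\infty F$.

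For the reverse inclusion I would argue as follows. By Lemma~\ref{lem:1infty}, $T\cA_\infty T=\bigoplus_v(1_vT)\cA_v(1_vT)$ is a direct sum of finite matrix algebras, and $1_vT=p_v+\sum_{e\sim v}\eta_{C(e)}(p_w)\,1_v$, where for each edge $e$ at $v$ we write $w$ for its other endpoint (so $w=v$ for a loop). The partial isometries $r_v$ and $u_e^v$ ($e\sim v$) of Lemmas~\ref{lem:relation} and~\ref{lem:loop} all have source projection $q_v\leq F$, and by the relations recorded there their range projections ($p_v$ and the $u_e^v(u_e^v)^*$) are mutually orthogonal with sum $1_vT$; re-indexed as a single family $\{W_i\}$ they satisfy $W_i^*W_i\leq F$ and $\sum_iW_iW_i^*=T=1_{T\cM_\infty T}$. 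Every element of $T\cM_\infty T$ is a weak limit of linear combinations of words $z_0(TX_{c_1}T)z_1\cdots(TX_{c_n}T)z_n$ with $z_k\in T\cA_\infty T$; inserting $\sum_iW_iW_i^*=1$ between consecutive letters of $Fz_0(TX_{c_1}T)z_1\cdots(TX_{c_n}T)z_nF$ and regrouping exhibits each such word as a sum of products of elements of $F\cA_\infty F$ and elements $FW_i^*(TX_cT)W_jF$. Thus
$$
F\cM_\infty F=W^*\!\bigl(F\cA_\infty F,\ \{FW_i^*(TX_cT)W_jF:i,j,\ c\in\cL\}\bigr),
$$
and it remains to show this generating set is contained in $W^*\bigl(F\cA_\infty F,(Y_e)_{e\in E}\bigr)$; the converse containment, that each $Y_e$ lies in the left-hand algebra, follows by writing $p_v=r_vr_v^*$ and factoring.

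Because the sources of the $W_i$ lie under $T$, $FW_i^*(TX_cT)W_jF=W_i^*X_cW_j$. Since $X_c$ interchanges the even part $\cA_e$ and the odd part $\cA_o$, while $r_v$ connects an even projection $p_v$ to an odd projection $q_v$ and $u_e^v$ has both ends odd, a parity count forces $W_i^*X_cW_j=0$ unless exactly one of $W_i,W_j$ is some $r_v$ and the other some $u_e^w$; the intertwining relation $yX_c=X_c\eta_c(y)$ for $y\in\cA_e$ (and its adjoint), together with $\sum_i u_{e_i}^v(u_{e_i}^v)^*=\eta_c(p_w)\,1_v$, then forces $v$ and $w$ to be joined by the color-$c$ edge $e$. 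In the surviving cases the relations of Lemmas~\ref{lem:relation}/\ref{lem:loop}, which express $(u_e^v)^*$ through $r_v,r_w,u_e^w$ (or $r_v,u_e^v$ for a loop), identify $W_i^*X_cW_j$ with one of the terms $p_vX_cu_e^w$, $(u_e^w)^*X_cp_v$, $p_wX_cu_e^v$, $(u_e^v)^*X_cp_w$ of $Y_e$ (two terms, for loops), up to a nonzero scalar and left/right multiplication by the matrix units $p_v,q_v,p_w,q_w,r_v,r_w\in F\cA_\infty F$. This gives $F\cM_\infty F\subseteq W^*\bigl(F\cA_\infty F,(Y_e)_{e\in E}\bigr)$, completing the proof.

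The step I expect to be the main obstacle is this last identification. It rests on the precise way $X_c$ acts along the fusion graph: the block of $X_c$ from a vertex $v$ to a vertex $w$ is supported, on its two sides, under $\eta_c(p_w)\,1_v$ and $\eta_c(p_v)\,1_w$, and these finite projections decompose into exactly as many minimal subprojections as there are color-$c$ edges between $v$ and $w$ — which is the fusion-graph edge count, encoded precisely by the identities $\sum_i u_{e_i}^v(u_{e_i}^v)^*=\eta_c(p_w)\,1_v$ of Lemmas~\ref{lem:relation} and~\ref{lem:loop}. Carrying out the diagrammatic bookkeeping, including the normalizing constants and the slightly different loop case, is the only laborious part; everything else is formal.
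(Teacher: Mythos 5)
Your argument is correct and its skeleton matches the paper's: reduce to Lemma~\ref{lem:compression1} via $F\leq T$, decompose the generating words of $T\cM_\infty T$ into pieces that lie in $F\cA_\infty F$ and pieces of the form $p_v X_c u_e^w$ (and adjoints), and observe these are cut-downs of the $Y_e$. The only real difference is bookkeeping. Where you insert the resolution $\sum_i W_iW_i^* = T$ between letters and then run a parity/support analysis to identify which $W_i^* X_c W_j$ survive, the paper replaces each $TX_cT$ directly using the identity
\[
TX_cT \;=\; \sum_{v}\ \sum_{\substack{e:\ s(e)=v\\ C(e)=c}} p_v X_c\, u_e^{t(e)}(u_e^{t(e)})^*,
\]
which follows from $QX_c = \sum_v X_c\eta_c(p_v)$ and the partition of each $\eta_c(p_v)1_w$ by the $u_e^w(u_e^w)^*$ in Lemmas~\ref{lem:relation}--\ref{lem:loop}. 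After this substitution, a word $F z_0 (TX_{c_1}T) z_1\cdots (TX_{c_n}T) z_n F$ automatically regroups as a product of elements $Fz_0 p_{v_1}$, $(u_{e_k}^{w_k})^* z_k p_{v_{k+1}}$, $(u_{e_n}^{w_n})^* z_n F$ --- all in $F\cA_\infty F$ since their supports sit under projections $p_v,q_w\leq F$ --- interleaved with the cut-downs $p_{v_k}X_{c_k}u_{e_k}^{w_k}=p_{v_k}Y_{e_k}q_{w_k}$. This sidesteps your parity count entirely; the nonzero terms are parameterized by edges from the outset. Your version is longer but reaches the same conclusion, and the step you flagged as the ``main obstacle'' is exactly what the displayed identity resolves in one line.
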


\begin{proof}
First note that by compressing by the appropriate elements, of $F\cA_{\I}F$, each term in the sums defining $Y_{e}$ is in $W^{*}(F\cA_{\I}F, \, (Y_{e})_{e \in E(\cF_\sC(\cL))})$.  The identity
$$
\sum_{v \in V(\cF_\sC(\cL))} \sum_{\substack{e: s(e) = v \\ C(e) = c}} = p_{v}X_{c}u_{e}^{t(e)}(u_{e}^{t(e)})^{*} = TX_{c}T
$$
implies that words in $TX_{c}T$ (compressed on either end by $F$) can be approximated by words in $p_{v}X_{c}u_{e}^{w}$ and their adjoints so by Lemma \ref{lem:compression1} we are done.
\end{proof}

\begin{lem} \label{lem:EasyGenerators}
If $v$ and $w$ are distinct vertices connected by an edge $e$ with $C(e) = c$ then $r_{v}(u^{v}_{e})^{*}X_{c}p_{w}r_{w}$ and $p_{v}X_{c}(u_{e}^{w})$ are nonzero scalar multiples of each other.

If $f$ is a loop at $v$ then $r_{v}(u^{v}_{f})^{*}X_{c}p_{v}r_{v}$ and $p_{v}X_{c}(u_{f}^{v})$ are nonzero scalar multiples of each other.

\end{lem}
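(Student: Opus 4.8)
\emph{Proof plan.}
Both assertions are proved the same way, so I describe only the first, for an edge $e$ joining distinct vertices $v,w$ with $C(e)=c$; the loop case is identical with Lemma~\ref{lem:loop} in place of Lemma~\ref{lem:relation}. The only substantial input is the pair of diagrammatic identities of Lemma~\ref{lem:relation}, which express $(u_e^v)^*$ and $(u_e^w)^*$ as scalar multiples of one another stacked between the partial isometries $r_v,r_w$ with the green $c$-strand threaded across. The plan is to feed these into the two words $r_v(u_e^v)^*X_cp_wr_w$ and $p_vX_cu_e^w$ and to check that, once the single copy of $X_c$ is moved into position, each reduces to the same diagram, so that the two are scalar multiples of one another; then to verify by a trace computation that the scalar is nonzero.

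For the simplification I would substitute Lemma~\ref{lem:relation} for $(u_e^v)^*$ in $r_v(u_e^v)^*X_cp_wr_w$ (equivalently, use the adjoint of the second identity there to rewrite $u_e^w$ in $p_vX_cu_e^w$) and then apply a short list of elementary moves: $r_vr_v^*=p_v$, $r_wr_w^*=p_w$, $p_wr_w=r_w$; the absorption $\eta_c(p_v)1_w\cdot u_e^w=u_e^w$ coming from $u_e^w(u_e^w)^*\le\eta_c(p_v)1_w$, read diagrammatically as ``a $p_v$-box together with the threaded $c$-strand is $\eta_c(p_v)=i_c(p_v)$, which absorbs into $u_e^w$''; minimality of $q_w$ in $\cA_o$ and of $p_w$ in $\cA_e$ (Lemma~\ref{lem:1infty}), so that an $r_w^*(\,\cdot\,)r_w$ or $r_w(\,\cdot\,)r_w^*$ sandwich landing in a corner $\cong\C$ is a scalar; and the relations governing how $X_c$ passes $\cA_\infty$, namely $yX_c=X_c\eta_c(y)$ for $y\in\cA_e$ and the covariance recurrence~\eqref{eqn:recurrence} with the conditional expectation $E\colon\cM_\infty\to\cA_\infty$, which dictate how the $c$-cup built into $X_c$ contracts the threaded $c$-strands. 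Pushing these through yields $r_v(u_e^v)^*X_cp_wr_w=\lambda\,p_vX_cu_e^w$ for a scalar $\lambda$.

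Finally one checks both sides are nonzero (hence $\lambda\ne0$). For $p_vX_cu_e^w$ this follows from $\Tr=\Tr\circ E$, the covariance identity $E(X_caX_c)=\eta_c(a)$ for $a\in\cA_\infty$, and $u_e^w(u_e^w)^*\le\eta_c(p_v)1_w$:
$$
\Tr\bigl((p_vX_cu_e^w)^*p_vX_cu_e^w\bigr)=\Tr\bigl((u_e^w)^*X_cp_vX_cu_e^w\bigr)=\Tr\bigl((u_e^w)^*\eta_c(p_v)u_e^w\bigr)=\Tr\bigl((u_e^w)^*u_e^w\bigr)=\Tr(q_w)>0,
$$
and the analogous computation, or the proportionality just obtained, gives $r_v(u_e^v)^*X_cp_wr_w\ne0$. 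The trace computation and the list of moves are routine; the genuinely delicate step is the diagram chase tying the two words together, where one must keep the even/odd parity bookkeeping straight ($\cA_e$ versus $\cA_o$, the $p_\bullet$ versus the $q_\bullet$) and, above all, track which corner each green $c$-strand meets when it is contracted against the cup inside $X_c$ --- this is what pins down $\lambda$, and it is the step most prone to a hidden sign or scalar slip.
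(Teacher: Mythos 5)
Your proof follows the same route as the paper's: first simplify using $p_v X_c u_e^w = X_c\eta_c(p_v)u_e^w = X_c u_e^w$ and $p_w r_w = r_w$, then use the diagrammatic relation of Lemma~\ref{lem:relation} (resp.\ Lemma~\ref{lem:loop} for loops) to rewrite the picture of $r_v(u_e^v)^*X_c r_w$ as a scalar multiple of $X_c u_e^w$. One minor wording slip: invoking ``the proportionality just obtained'' to conclude $r_v(u_e^v)^*X_cp_wr_w\neq 0$ is circular, since that presupposes $\lambda\neq 0$; the nonvanishing of $\lambda$ actually comes from tracking it through the diagram chase, where it appears as a power of $\delta_c$ times $\overline{l}^{\,-1}$ with $l\neq 0$ stipulated in Lemma~\ref{lem:relation} (as you anticipate in your last sentence), or else from carrying out the analogous trace computation you allude to.
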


\begin{proof}
First observe that
$$
p_{v}X_{c}u_{e}^{w} = X_{c}\eta_{c}(p_{v})u_{e}^{w} = X_{c}u_{e}^{w}.
$$
and $r_{v}(u^{v}_{e})^{*}X_{c}p_{w}r_{w} = r_{v}(u_{v}^{e})^{*}X_{c}r_{w}$.  The element $r_{v}(u_{v}^{e})^{*}X_{c}r_{w}$ is represented diagrammatically as
$$
\begin{tikzpicture} [baseline = 0cm]
    \draw (-.8, 0) -- (4.6, 0);
    \filldraw [thick, unshaded] (.4, .4) --(.4, -.4)--(-.4, -.4)--(-.4, .4)--(.4, .4);
    \filldraw [thick, unshaded] (1.8, .4) --(1.8, -.4)--(.8, -.4)--(.8, .4)--(1.8, .4);
    \filldraw [thick, unshaded] (4.2, .4) --(4.2, -.4)--(3.4, -.4)--(3.4, .4)--(4.2, .4);
    \draw [thick] (3, .4) --(3, -.4)--(2.2, -.4)--(2.2, .4)--(3, .4);
    \draw [thick, \cupcolor] (1.8, .2)--(2.4, .2) arc(-90:0: .2cm);
    \node at (0,0) {$r_{v}$};
    \node at (1.3, 0) {$(u_{e}^{v})^{*}$};
    \node at (3.8, 0) {$r_{w}$};
    \node at (-.6, .2) {\scriptsize{$\alpha_{v}$}};
    \node at (.6, .2) {\scriptsize{$\beta_{v}$}};
    \node at (2, -.2) {\scriptsize{$\alpha_{w}$}};
    \node at (4.4, .2) {\scriptsize{$\beta_{w}$}};
\end{tikzpicture}
$$
which can be rewritten as
$$
\begin{tikzpicture} [baseline = 0cm]
    \draw (-.8, 0) -- (4.6, 0);
    \draw [thick] (-.4, -.4)--(-.4, .8)--(.4, .8)--(.4, -.4)--(-.4, -.4);
    \filldraw [thick, unshaded] (1.6, .4) --(1.6, -.4)--(.8, -.4)--(.8, .4)--(1.6, .4);
    \filldraw [thick, unshaded] (3, .4) --(3, -.4)--(2, -.4)--(2, .4)--(3, .4);
    \filldraw [thick, unshaded] (4.2, .4) --(4.2, -.4)--(3.4, -.4)--(3.4, .4)--(4.2, .4);
    \node at (0, .2) {\scriptsize{$\alpha_{v}$}};
    \node at (1.8, .2) {\scriptsize{$\beta_{v}$}};
    \node at (3.2, -.2) {\scriptsize{$\alpha_{w}$}};
    \node at (4.4, .2) {\scriptsize{$\beta_{w}$}};
    \draw [thick, \cupcolor] (3, .2) arc(-90:90: .2cm) -- (.2, .6) arc(270:180: .2cm);
    \node at (1.2, 0) {$r_{v}$};
    \node at (2.5, 0) {$(u_{e}^{v})^{*}$};
    \node at (3.8, 0) {$r_{w}$};
\end{tikzpicture}\,  = k \cdot X_{c}\cdot u_{e}^{w}
$$
for an appropriate constant $k \neq 0$, proving the first statement.  The proof of the second statement uses the exact same diagrammatic argument.
\end{proof}

We now examine the algebras
$$
\cM_{e} = W^{*}(F\cA_{\I}F, Y_{e}) = W^{*}(F\cA_{\I}F, p_{v}X_{c}u_{e}^{w} + (u_{e}^{w})^{*}X_{c}p_{v})
$$
where the last equality follows from Lemma \ref{lem:EasyGenerators}. First assume $v$ and $w$ are distinct and $\Tr(p_{v}) \geq \Tr(p_{w}) = \Tr(q_{w})$.  One can show, using exactly the same techniques as in \cite{MR2732052} and \cite{MR2807103} that $p_{v}X_{c}u_{e}^{w}(u_{e}^{w})^{*}X_{c}p_{v}$ is a free Poisson element with an atom of size $\frac{\Tr(p_{v}) - \Tr(p_{w})}{\Tr(p_{v})}$ at 0 in the compressed algebra $p_{v}\cM_{\infty}p_{v}$. Also, $(u_{e}^{w})^{*}X_{c}p_{v}X_{c}u_{e}^{w}$ is a free poisson element with no atoms in the compressed algebra $q_{w}\cM_{\I}q_{w}$.  Using the polar part of $p_{v}X_{c}u_{e}^{w}$ as well as the partial isometries $r_{v}$ and $r_{w}$ and Lemma \ref{lem:EasyGenerators}, we see that
$$
(1_{v} + 1_{w})F\cM_{e}F(1_{v} + 1_{w}) = \left(L(\Z)\otimes M_{4}(\C)\right) \bigoplus \overset{p'_{v}, q'_{v}}{\underset{\Tr(p_{v}) - \Tr(p_{w})}{M_{2}(\C)}}
$$
where the notation means that the copy of $M_{2}(\C)$ has two orthogonal minimal projections $p_{v}'$ and $q_{v}'$ such that $p_{v}' \leq p_{v}$ and $q'_{v} \leq q_{v}$ and $\Tr(p_{v}') = \Tr(p_{v}) - \Tr(p_{w})$.

If $e$ is a loop at $v$ then using the partial isometry $r_{v}$ and Lemma \ref{lem:EasyGenerators}, we see that
$$
1_{v}\cdot F\cM_{e}F\cdot 1_{v} = L(\Z) \otimes M_{2}(\C).
$$
For simplicity, we set $X_{e} = p_{v}X_{c}u_{e}^{w} + (u_{e}^{w})^{*}X_{c}p_{v}$ so that $\cM_{e} = W^{*}(F\cM_{\I}F, X_{e})$.

\begin{lem} \label{lem:free2}
The algebras $(\cM_{e})_{e \in E(\cF_{\sC}(\cL))}$ are free with amalgamation over $F\cA_{\I}F$.
\end{lem}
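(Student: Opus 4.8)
The plan is to mimic the proof of Lemma \ref{lem:free1}, but to refine the $\cA_\infty$-valued semicircular family $(X_c)_{c\in\cL}$ indexed by \emph{colors} to one indexed by the \emph{edges} of $\cF_\sC(\cL)$, and only at the very end to pass from $\cA_\infty$ to the corner $F\cA_\infty F$. Concretely, for each edge $e$ set $X_e = p_{t(e)}X_{C(e)}u_e^{s(e)}+(u_e^{s(e)})^*X_{C(e)}p_{t(e)}$, the element appearing in Lemma \ref{lem:compression2}, which by Lemma \ref{lem:EasyGenerators} generates $\cM_e$ together with $F\cA_\infty F$. Since $X_e$ is a \emph{linear} function of the single semicircular element $X_{C(e)}$ with coefficients $p_{t(e)},u_e^{s(e)}\in\cA_\infty$ (followed by symmetrisation), the recurrence/Fock-space analysis that makes $(X_c)_c$ an $\cA_\infty$-valued semicircular family (à la \cite{MR1704661}, as in the proof of Lemma \ref{lem:free1}) applies verbatim to show that $(X_e)_{e\in E(\cF_\sC(\cL))}$ is again an $\cA_\infty$-valued semicircular family, with covariance $\eta'_{e,e'}(y)=E(X_e\, y\, X_{e'})$.

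The heart of the argument is the vanishing $\eta'_{e,e'}=0$ for $e\neq e'$. If $C(e)\neq C(e')$ this is immediate from $\eta_{a,b}=0$ for $a\neq b$ (Lemma \ref{lem:free1}), since $\eta'_{e,e'}$ factors through the contraction of $X_{C(e)}$ against $X_{C(e')}$. If $C(e)=C(e')=c$ but $e\neq e'$, I would expand $E(X_e\,y\,X_{e'})$ using the diagrammatic recurrence for $E$ together with $E(X_c\,z\,X_c)=\eta_c(z)$; every surviving term then contains a factor of the shape $(u_e^{\bullet})^*(\cdots)u_{e'}^{\bullet}$ or $u_e^{\bullet}(\cdots)(u_{e'}^{\bullet})^*$, which is zero because the partial isometries attached to distinct edges were arranged in Lemmas \ref{lem:relation} and \ref{lem:loop} to have a common source projection and mutually orthogonal ranges, i.e. $(u_{e_i}^v)^*u_{e_j}^v=\delta_{ij}q_v$ and $\sum_i u_{e_i}^v(u_{e_i}^v)^*$ a projection; the mixed $u^v$/$u^w$ contributions and the loop case are handled identically using Lemma \ref{lem:orthogonalsupport}. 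Granting this, $(X_e)_e$ has diagonal covariance, hence is free with amalgamation over $\cA_\infty$ by \cite{MR1704661}, so the von Neumann algebras $W^*(\cA_\infty,X_e)$ are free over $\cA_\infty$.

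Finally I would compress by $F=\sum_v(p_v+q_v)\in\cA_\infty$. Freeness with amalgamation over $\cA_\infty$ restricts to freeness with amalgamation over the corner $F\cA_\infty F$ with respect to $E(F\,\cdot\,F)$ (cutting by a projection in the core subalgebra is harmless: a vector centered over $F\cA_\infty F$ is centered over $\cA_\infty$), and because $X_e$ has both left and right support dominated by $F$ one gets $F\,W^*(\cA_\infty,X_e)\,F=W^*(F\cA_\infty F,X_e)=\cM_e$. Hence the $(\cM_e)_{e\in E(\cF_\sC(\cL))}$ are free with amalgamation over $F\cA_\infty F$, which is the assertion.

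The step I expect to be the real obstacle is the diagrammatic bookkeeping behind $\eta'_{e,e'}=0$: one must track precisely how the $c$-colored strand of $X_e$ is contracted inside $E(X_e\,y\,X_{e'})$ and confirm that in every contraction pattern the result routes through a product of partial isometries that are orthogonal when $e\neq e'$. A secondary point needing care is a clean verification that $(X_e)_e$ is genuinely an operator-valued semicircular family (only second-order cumulants survive), which should be spelled out via the explicit creation-operator model exactly as in the proof of Lemma \ref{lem:free1}.
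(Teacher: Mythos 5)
Your overall strategy — invoke Shlyakhtenko's operator-valued semicircular framework, so freeness with amalgamation reduces to verifying that the off-diagonal covariances $E(X_e\,y\,X_{e'})$ vanish for $e\neq e'$, and then check this vanishing by a case analysis on the $\cA_\infty$-basis elements $y$ — is the same as the paper's. The cosmetic difference (prove freeness over $\cA_\infty$ first, then compress by $F$) is harmless because each $X_e$ is already supported under $F$, so the two framings collapse to the same computation.

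The genuine gap is in your treatment of the loop case, which you claim is "handled identically using Lemma \ref{lem:orthogonalsupport}" and which you characterize as always yielding factors of the shape $(u_e^{\bullet})^*(\cdots)u_{e'}^{\bullet}$ or $u_e^{\bullet}(\cdots)(u_{e'}^{\bullet})^*$ that vanish by orthogonality. This is false. When $e$ is a loop at $v$, the algebra $F\cA_\infty F$ contains the partial isometry $r_v^*$ (with right support $p_v$, left support $q_v$), and the term $E(X_e\,r_v^*\,X_{e'})$ survives the support analysis. Expanding it via $\eta_c$, you get $p_v\,\eta_c(u_e^v r_v^*)\,u_{e'}^v$, i.e.\ both $u_e^v$ and $u_{e'}^v$ appear \emph{without} adjoints, so there is no manifest orthogonality of ranges or sources to invoke. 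The paper handles this by first rearranging the diagram and then appealing to the specific relation $(u_e^v)^* = k\cdot r_v^*\,u_e^v\,r_v^*$ (suitably contracted with a $c$-strand) from Lemma \ref{lem:loop}; only after that rewriting does the expression become a scalar multiple of $r_v\cdot(u_e^v)^*u_{e'}^v$, which vanishes. The orthogonality properties $(u_{e_i}^v)^*u_{e_j}^v=\delta_{ij}q_v$ you cite from Lemma \ref{lem:loop} could have been arranged by naive choices of partial isometries; the content of Lemma \ref{lem:loop} that your argument needs but omits is the additional symmetry relation, which does \emph{not} follow from orthogonality alone. Without it, your step ``every surviving term routes through a product of partial isometries that are orthogonal when $e\neq e'$'' breaks down exactly for $E(X_e\,r_v^*\,X_{e'})$.
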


\begin{proof}
From \cite{MR1704661} the elements $X_{e}$ are $F\cA_{\I}F$ semicircular, so to show freeness we need only to show that if $e \neq e'$, $E(X_{e} y X_{e'}) = 0$ for all $y \in F\cA_{\I}F$.  This can only be nonzero if $e$ and $e'$ have the came color, so we assume $C(e) = c = C(e')$.  Also, this expectation can be nonzero only if $e$ and $e'$ share a common vertex, $v$.

We first assume that $e$ and $e'$ are not loops.  Assume $e$ connects $v$ and $w$ and $e'$ connects $v$ and $w'$.  We see that $E(X_{e}p_{v}X_{e'}) = (u_{e}^{w})^{*}i_{c}(p_{v})u_{e'}^{w'} = (u_{e}^{w})^{*}u_{e'}^{w'}$ which is zero since $u_{e}^{w}$ and $u_{e'}^{w'}$ have orthogonal left supports if $w = w'$ and is clearly zero if $w \neq w'$.  For $E(X_{e}q_{w}X_{e'})$ to be nonzero, we need $w = w'$.  Assuming this, we get $p_{v}\Phi(u_{e}^{w}\cdot q_{w} \cdot (u_{e'}^{w'})^{*}) p_{v}$.  As was discussed in the proof of Lemma \ref{lem:orthogonalsupport},
$$
\Phi(u_{e}^{w}\cdot q_{w} \cdot (u_{e'}^{w'})^{*}) = \Phi(u_{e}^{w}(u_{e'}^{w'})^{*}) = 0
$$
since $u_{e}^{w}$ and $u_{e'}^{w'}$ have orthogonal left supports.  It is also straightforward to check that the only elements $y$ in $F\cA_{\I}F$ where $X_{e}yX_{e'} \neq 0$ are of the form $y = a\cdot p_{v} + b\cdot q_{w}$.

We now assume that $e$ is a loop at $v$ and $e'$ is an arbitrary edge connected to $v$.  The discussion in the previous paragraph implies that $E(X_{e}pX_{e'}) = 0$ for any projection $p \in F\cA_{\I}F$; however, we also have to consider $E(X_{e}r_{v}^{*}X_{e'})$.  This is the following diagram:
$$
\begin{tikzpicture}[baseline = 0cm]
    \draw (-.8, -.2)--(5.6, -.2);
    \filldraw [thick, unshaded] (-.4, -.4)--(-.4, .4)--(.4, .4)--(.4, -.4)--(-.4, -.4);
    \filldraw [thick, unshaded] (.8, -.4)--(.8, .4)--(1.6, .4)--(1.6, -.4)--(.8, -.4);
    \filldraw [thick, unshaded] (2, -.4)--(2, .4)--(2.8, .4)--(2.8, -.4)--(2, -.4);
    \filldraw [thick, unshaded] (3.2, -.4)--(3.2, .4)--(4, .4)--(4, -.4)--(3.2, -.4);
    \filldraw [thick, unshaded] (4.4, -.4)--(4.4, .4)--(5.2, .4)--(5.2, -.4)--(4.4, -.4);
    \draw [thick, \cupcolor] (.8, .2) arc(270:90: .2cm) -- (4, .6) arc(90:0: .2cm) arc(180:270:.2cm);
    \node at (0, 0) {$p_{v}$};
    \node at (1.2, 0) {$u_{e}^{v}$};
    \node at (2.4, 0) {$r_{v}^{*}$};
    \node at (3.6, 0) {$p_{v}$};
    \node at (4.8, 0) {$u_{e'}^{v}$};
    \node at (-.6, 0) {\scriptsize{$\alpha_{v}$}};
    \node at (.6, 0) {\scriptsize{$\alpha_{v}$}};
    \node at (1.8, 0) {\scriptsize{$\beta_{v}$}};
    \node at (3, 0) {\scriptsize{$\alpha_{v}$}};
    \node at (4.2, 0) {\scriptsize{$\alpha_{v}$}};
    \node at (5.4, 0) {\scriptsize{$\beta_{v}'$}};
\end{tikzpicture} =
\begin{tikzpicture} [baseline = 0cm]
    \draw (-.8, -.2)--(5.6, -.2);
    \filldraw [thick, unshaded] (-.4, -.4)--(-.4, .4)--(.4, .4)--(.4, -.4)--(-.4, -.4);
    \filldraw [thick, unshaded] (.8, -.4)--(.8, .4)--(1.6, .4)--(1.6, -.4)--(.8, -.4);
    \filldraw [thick, unshaded] (2, -.4)--(2, .4)--(2.8, .4)--(2.8, -.4)--(2, -.4);
    \filldraw [thick, unshaded] (3.2, -.4)--(3.2, .4)--(4, .4)--(4, -.4)--(3.2, -.4);
    \filldraw [thick, unshaded] (4.4, -.4)--(4.4, .4)--(5.2, .4)--(5.2, -.4)--(4.4, -.4);
    \draw [thick, \cupcolor] (2, .2) arc(270:90: .2cm) -- (4, .6) arc(90:0: .2cm) arc(180:270:.2cm);
    \node at (0, 0) {$r_{v}$};
    \node at (1.2, 0) {$r_{v}^{*}$};
    \node at (2.4, 0) {$u_{e}^{v}$};
    \node at (3.6, 0) {$r_{v}^{*}$};
    \node at (4.8, 0) {$u_{e'}^{v}$};
    \node at (-.6, 0) {\scriptsize{$\alpha_{v}$}};
    \node at (.6, 0) {\scriptsize{$\beta_{v}$}};
    \node at (1.8, 0) {\scriptsize{$\alpha_{v}$}};
    \node at (3, 0) {\scriptsize{$\beta_{v}$}};
    \node at (4.2, 0) {\scriptsize{$\alpha_{v}$}};
    \node at (5.4, 0) {\scriptsize{$\beta_{v}'$}};
\end{tikzpicture}\, ,
$$
where $\beta_{v}' = \beta_{v}$ if $e'$ is also a loop at $v$.  By our choice of $u_{v}^{e}$, this diagram is a scalar multiple of $r_{v}\cdot (u_{e}^{v})^{*}u_{e'}^{v}$ which is 0.  Similarly, $E(X_{e}r_{v}^{*}X_{e'}) = 0$ and we are done.
\end{proof}

Note that we have
$$
F\cM_{\I}F = \underset{F\cA_{\I}F}{*} (\cM_{e})_{e \in E(\cF_{\sC}(\cL))}.
$$
The algebra $F\cA_{\I}F$ has the decomposition
$$
F\cA_{\I}F = \bigoplus_{v \in \cF_{\sC}(\cL)} \overset{p_{v}, q_{v}}{M_{2}(\C)}.
$$
Recall that $Q = \sum_{v} p_{v}$.  Thus $Q$ is an abelian projection with central support 1 in $F\cA_{\I}F$.  Therefore (see for example \cite{MR2765550} or \cite{MR2051399})
$$
Q\cM_{\I}Q = \underset{Q\cA_{\I}Q}{*} (Q\cM_{e}Q)_{e \in E(\cF_{\sC}(\cL))}.
$$
The algebra $Q\cA_{\I}Q$ is simply $\ell^{\I}(V(\cF_{\sC}(\cL)))$, the bounded functions on the vertices of $\cF_{\sC}(\cL)$.  If $e$ connects two distinct vertices $v$ and $w$ with $\Tr(p_{v}) \geq \Tr(p_{w})$ then
$$
Q\cM_{e}Q = L(\Z) \otimes M_{2}(\C) \bigoplus \overset{p_{v}'}{\C} \bigoplus \ell^{\I}(V(\cF_{\sC}(\cL)) \setminus \{v, w\})
$$
and if $e$ is a loop at $v$ then
$$
Q\cM_{e}Q = L(\Z) \bigoplus \ell^{\I}(V(\cF_{\sC}(\cL)) \setminus \{v\}).
$$

In the next section, we will use the free product expression for $Q\cM_{\I}Q$ to determine the isomorphism class of $M = p_{\emptyset}(Q\cM_{\I}Q)p_{\emptyset}$.

\subsection{von Neumann algebras associated to graphs} \label{sec:vNAGraph}

The following notation will be useful in this section:

\begin{nota} \label{nota:vNA}
Throughout this section, we will be concerned with finite von Neumann algebras $(\cN, tr)$ which can be written in the form
$$
 \cN = \overset{p_{0}}{\underset{\gamma_{0}}{\cN_{0}}} \oplus \bigoplus_{j \in J} \overset{p_{j}}{\underset{\gamma_{j}}{L(\F_{t_{j}})}} \oplus  \bigoplus_{k \in K} \overset{q_{k}}{\underset{\alpha_{k}}{M_{n_{k}}}}
 $$
 where $\cN_{0}$ is a diffuse hyperfinite von Neumann algebra, $L(\F_{t_{j}})$ is an interpolated free group factor with parameter $t_{j}$, $M_{n_{k}}$ is the algebra of $n_{k} \times n_{k}$ matrices over the scalars, and the sets $J$ and $K$ are at most finite and countably infinite respectively.  We use $p_{j}$ to denote the projection in $L(\F_{t_{j}})$ corresponding to the identity of $L(\F_{t_{j}})$ and $q_{k}$ to denote a minimal projection in $M_{n_{k}}$. The projections $p_{j}$ and $q_{k}$ have traces $\gamma_{j}$ and $\alpha_{k}$ respectively.  Let $p_{0}$ be the identity in $\cN_{0}$ with trace $\gamma_{0}$.   We write $\overset{p,q}{M_{2}}$ to mean $M_{2}$ with a choice of minimal orthogonal projections $p$ and $q$.
 \end{nota}

We begin by letting $\Gamma$ be a connected weighted graph with weighting $\gamma$.  Let $\ell^{\I}(\Gamma)$ be the bounded functions on the vertices of $\Gamma$ and let $p_{v}$ be the delta function at $v$.  We endow $\ell^{\I}(\Gamma)$ with a trace $\Tr$ satisfying $\Tr(p_{v}) = \gamma_{v}$.  As in \cite{1208.2933} we now describe how to use the edges to associate a free product von Neumann algebra $\cN(\Gamma)$ to $\Gamma$.

\begin{defn}
Let $e$ be an edge in $\Gamma$.  We define algebras $\cN_{e}$ as follows:  If $e$ connects two distinct edges $v$ and $w$ with $\gamma_{v} \geq \gamma_{w}$ then
$$
\cN_{e} = \underset{2\gamma_{w}}{M_{2}(\C) \otimes L(\Z)} \oplus \underset{\gamma_{v} - \gamma_{w}}{\overset{p^{e}_{v}}{\C}} \oplus \ell^{\infty}(\Gamma \setminus \{v, w\}).
$$
If $e$ is a loop at the vertex $v$ then
$$
\cN_{e} = \overset{p_{v}}{\underset{\gamma_{v}}{L(\Z)}} \oplus \ell^{\infty}(\Gamma \setminus \{v\}).
$$
If $e$ connects two different vertices then the trace on $\underset{2\gamma_{w}}{M_{2}(\C) \otimes L(\Z)} \oplus \underset{\gamma_{v} - \gamma_{w}}{\overset{p^{e}_{v}}{\C}}$ is given by $\tr_{M_{2}} \otimes \tr_{L(\Z)} + \tr_{\C}$. $\cN_{e}$ includes $\ell^{\I}$ $\Gamma$ by letting $p_{w} = e_{1, 1}\otimes 1$ and $p_{v} = e_{2, 2}\otimes 1 + p_{v}^{e}$ so that $p^{e}_{v} \leq p_{v}$.  If $e$ is a loop that $\cN_{e}$ then $\cN_{e}$ includes $\ell^{\I}(\Gamma)$ in the obvious way.

Let $E_{e}$ be the $\Tr-$preserving conditional expectation from $\cN_{e}$ to $\ell^{\I}(\Gamma)$.  We define $\cN(\Gamma) = \underset{\ell^{\I}(\Gamma)}{*} (\cN_{e}, E_{e})_{e \in E(\Gamma)}$.
\end{defn}

Note that if $\Gamma$ has no loops then this definition of $\cN(\Gamma)$ is the same as $\cM(\Gamma)$ in \cite{1208.2933}.  Also observe that $Q\cM_{\I}Q$ in section \ref{sec:free1} is $\cN(\cF_{\sC}(\cL))$.  We now define some notation which will be useful in determining the isomorphism class of $\cN(\Gamma)$.

\begin{defn} \label{defn:H2} We write $v \sim w$ if $v$ and $w$ are connected by at least 1 edge in $\Gamma$ and denote $n_{v, w}$ be the number of edges joining $v$ and $w$.  We set $\alpha^{\Gamma}_{v} = \sum_{w\sim v} n_{v, w}\gamma_{w}$, and define $B(\Gamma) = \{ v \in V(\Gamma) : \gamma_{v} > \alpha^{\Gamma}_{v}\}$. Note that if there is a loop, $e$, at $v$ then $v \not\in B(\Gamma)$. \end{defn}

Assume for the moment that $\Gamma \subset \Gamma'$ are finite, connected, weighted graphs with at least two edges (so that $\cN(\Gamma)$ and $\cN(\Gamma')$ are finite von Neumann algebras).  There is a natural inclusion $\cN(\Gamma) \rightarrow \cN(\Gamma')$ which will not be unital if $\Gamma'$ has a larger vertex set.  We will prove the following theorem, which is along the same lines as \cite{1208.2933}.

\begin{thm} \label{thm:standardembedding}
$\cN(\Gamma)$ has the form
$$
\cN(\Gamma) \cong \overset{p^{\Gamma}}{L(\F_{t_{\Gamma}})} \oplus \underset{{v \in B(\Gamma)}}{\bigoplus} \overset{r_{v}^{\Gamma}}{\underset{\gamma_{v} - \alpha^{\Gamma}_{v}}{\C}}
$$
where $r_{v}^{\Gamma} \leq p_{v}$ and $t_{\Gamma}$ is such that this algebra has the appropriate free dimension.  If $\Gamma$ is a proper subgraph of $\Gamma'$ then the unital inclusion $p^{\Gamma}\cN(\Gamma)p^{\Gamma} \rightarrow p^{\Gamma}\cN(\Gamma')p^{\Gamma}$ is a standard embedding of interpolated free group factors.
\end{thm}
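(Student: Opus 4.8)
The plan is to prove both assertions of Theorem~\ref{thm:standardembedding} by induction on the number of edges of $\Gamma$, closely following the scheme of \cite{1208.2933} (and \cite{MR2807103}) but carrying along the extra bookkeeping forced by loops, which do not occur there. Fix an edge $e$ of $\Gamma$ and let $\Gamma_{0}$ be $\Gamma$ with $e$ deleted but all vertices retained; then $\ell^{\I}(\Gamma_{0})=\ell^{\I}(\Gamma)$, and since $\cN(-)$ is by construction the free product of the $\cN_{e}$'s amalgamated over $\ell^{\I}$ of the vertex set, we get $\cN(\Gamma)=\cN(\Gamma_{0})\underset{\ell^{\I}(\Gamma)}{*}\cN_{e}$. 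When $\Gamma$ has two edges (the base case) there are only four possibilities up to isomorphism --- two parallel edges, a path of length two, an edge together with a loop, and two loops at one vertex --- and in each case $\cN(\Gamma)$ can be identified directly from Dykema's formulas for free products amalgamated over finite-dimensional algebras. For the inductive step I would take $e$ to be a non-bridge whenever one exists, so that $\Gamma_{0}$ is again connected; the remaining (bridge) case is handled by running the same induction for the more general structure statement in the form of Notation~\ref{nota:vNA}, applied to the possibly disconnected $\Gamma_{0}$.

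For the structure part, I would feed the identity $\cN(\Gamma)=\cN(\Gamma_{0})*_{\ell^{\I}(\Gamma)}\cN_{e}$ into the free-product computations of Dykema as packaged in \cite{1208.2933}: the algebra $\cN_{e}$ is a direct sum of a diffuse hyperfinite piece ($M_{2}(\C)\otimes L(\Z)$ when $e$ joins distinct vertices, $L(\Z)$ when $e$ is a loop) with scalar atoms and an $\ell^{\I}$ of the not-yet-met vertices, so it is of the type to which those rules apply. The output is again an algebra of the advertised shape (in particular the hyperfinite summand allowed by Notation~\ref{nota:vNA} disappears because $\Gamma$ is connected with at least two edges), and the bookkeeping runs as follows: the diffuse part of $\cN_{e}$, being free from $\cN(\Gamma_{0})$ over $\ell^{\I}(\Gamma)$, fuses with the interpolated free group factor summand $L(\F_{t_{\Gamma_{0}}})$ of $\cN(\Gamma_{0})$ into a single interpolated free group factor; a scalar corner at a vertex $v$ survives in $\cN(\Gamma)$ exactly when, after $e$ is adjoined, the weight $\gamma_{v}$ still strictly exceeds the total incoming weight $\alpha^{\Gamma}_{v}$, and is otherwise absorbed into the free group factor; and any loop at $v$ forces $\alpha^{\Gamma}_{v}\ge\gamma_{v}$, so $v\notin B(\Gamma)$, in agreement with Definition~\ref{defn:H2}. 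The parameter $t_{\Gamma}$ is then pinned down by summing the free-dimension contributions of the edges against the base contribution of $\ell^{\I}(\Gamma)$, i.e.\ it is the unique value for which $\operatorname{fdim}\cN(\Gamma)$ comes out correctly.

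For the standard-embedding part, I would first reduce to the case in which $\Gamma'$ is obtained from $\Gamma$ by adjoining a single edge $e'$ (and at most one new vertex), since standard embeddings of interpolated free group factors compose to standard embeddings. If $\Gamma'$ has the same vertex set as $\Gamma$, then $\cN(\Gamma')=\cN(\Gamma)*_{\ell^{\I}(\Gamma)}\cN_{e'}$ and $\cN(\Gamma)\hookrightarrow\cN(\Gamma')$ is the canonical inclusion of one free factor into an amalgamated free product; Dykema's results, used in this way in \cite{1208.2933} and compatibly with \cite{MR2765550,MR2051399}, show that the induced map on the free group factor summands, compressed by the relevant identity projections, is a standard embedding, and that standard embeddings survive such compressions. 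If $\Gamma'$ has one extra vertex, I would pass first to the corner by $p^{\Gamma}$: by the structure theorem already established for $\Gamma'$, $p^{\Gamma}$ lies in $\cN(\Gamma')$ and $p^{\Gamma}\cN(\Gamma')p^{\Gamma}$ is a corner of $L(\F_{t_{\Gamma'}})$, hence an interpolated free group factor, which returns us to the previous case.

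The step I expect to be the main obstacle is the free-dimension accounting through amalgamated --- rather than plain --- free products in the presence of loops: one must check that the $L(\Z)$ contributed by a loop at $v$ adds the right amount to $\operatorname{fdim}$, that it interacts correctly with the absorption of the scalar corner at $v$, and that Definition~\ref{defn:H2} (which counts a loop at $v$ in $\alpha^{\Gamma}_{v}$ with the appropriate multiplicity) is exactly the combinatorial input that makes the induction close and produces the stated value of $t_{\Gamma}$. Everything else should be routine bookkeeping along the lines already worked out in the loop-free setting of \cite{1208.2933}.
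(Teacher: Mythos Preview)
Your approach is essentially the same as the paper's: induct on the number of edges, write $\cN(\Gamma')=\cN(\Gamma)*_{\ell^{\infty}(\Gamma)}\cN_{e}$, and feed this into Dykema's computations (in the paper, via the compression trick of \cite{1110.5597}, which reduces to an ordinary free product at the vertex carrying the loop). The paper likewise defers all loop-free cases to \cite{1208.2933} and only treats explicitly the base case with a loop and the inductive step of adjoining a loop.

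The one organizational difference worth flagging: you set up the induction by \emph{deleting} an edge from $\Gamma$, which forces the bridge/non-bridge dichotomy and a vague appeal to a ``more general structure statement for possibly disconnected $\Gamma_{0}$.'' The paper instead \emph{builds up} $\Gamma'$ from a smaller connected $\Gamma$ by one of three moves---add a loop at an existing vertex, add an edge between two existing vertices, or add a new vertex together with an edge to an old one---so the inductive hypothesis always concerns a connected graph and the bridge issue never appears. This is purely a matter of framing, but it lets the paper avoid ever stating or proving anything about disconnected graphs; your version would need that auxiliary statement made precise before the bridge case is actually complete.
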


See \cite{MR1201693} and \cite{MR2765550} for a discussion of free dimension and rules for computing it.  Also we refer the reader to \cite{MR1201693} for the definition of a standard embedding of interpolated free group factors.  Whenever $A$ and $B$ are interpolated free group factors and $A$ is unitally included into $B$ then we write $A \overset{s.e.}{\hookrightarrow} B$ to indicate that the inclusion of $A$ into $B$ is a standard embedding.  In this section, we will extensively use the following properties of standard embeddings which can be found in \cite{MR1201693} and \cite{MR1363079}.

\begin{itemize}
\item[(1)]
If $A$ is an interpolated free group factor, the canonical inclusion $A \rightarrow A * \cN$ is a standard embedding whenever $\cN$ is of the form in Notation \ref{nota:vNA}.
\item[(2)]
A composite of standard embeddings is a standard embedding.
\item[(3)]
If $A_{n} = L(\F_{s_{n}})$ with $s_{n} < s_{n+1}$ for all $n$ and $\phi_{n}: A_{n} \overset{s.e.}{\hookrightarrow} A_{n+1}$, then the inductive limit of the $A_{n}$ with respect to the $\phi_{n}$ is $L(\F_{s})$ where $s = \displaystyle \lim_{n \rightarrow \infty}s_{n}$.
\item[(4)]
If $t > s$  then $\phi: L(\F_{s}) \overset{s.e.}{\hookrightarrow} L(\F_{t})$ if and only if for any nonzero projection $p \in L(\F_{s})$, $\phi|_{pL(\F_{s})p}: pL(\F_{s})p \overset{s.e.}{\hookrightarrow} \phi(p)L(\F_{t})\phi(p)$.
\end{itemize}

Theorem \ref{thm:standardembedding} was proved in \cite{1208.2933} in the case that $\Gamma$ contained no loops, so we only to need to modify the arguments there to incorporate what happens when $\Gamma$ contains loops.  We will prove Theorem \ref{thm:standardembedding} by inducting on the number of edges in the graph.  We divide this into two lemmas.

\begin{lem} \label{lem:basecase}
Suppose $\Gamma$ is a connected graph with 2 edges, one of which is a loop.  Then $\cN(\Gamma)$ is of the form in Theorem \ref{thm:standardembedding}.
\end{lem}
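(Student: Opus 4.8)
The plan is to enumerate the possible shapes of $\Gamma$ and compute $\cN(\Gamma)=\underset{\ell^{\I}(\Gamma)}{*}(\cN_{e},E_{e})_{e\in E(\Gamma)}$ directly in each one. Since $\Gamma$ is connected, has exactly two edges, and one of them is a loop, there are only two cases: either (a) $\Gamma$ has a single vertex $v$ carrying two loops $e_{1},e_{2}$; or (b) $\Gamma$ has two vertices $v,w$, a loop $e_{1}$ at $v$, and a single non-loop edge $e_{2}$ joining $v$ and $w$ (if the second edge were a loop, connectedness would force it to be based at $v$ as well, giving case (a)). In both cases the ambient algebra $\ell^{\I}(\Gamma)$ is abelian and finite dimensional and the blocks $\cN_{e}$ are hyperfinite, so I would read off $\cN(\Gamma)$ from Dykema's computations of amalgamated free products over finite-dimensional abelian subalgebras \cite{MR1201693,MR1363079} (see also \cite{MR2765550}), using additivity of free dimension $\mathrm{fdim}(\cN_{e_{1}}*_{\ell^{\I}(\Gamma)}\cN_{e_{2}})=\mathrm{fdim}(\cN_{e_{1}})+\mathrm{fdim}(\cN_{e_{2}})-\mathrm{fdim}(\ell^{\I}(\Gamma))$ to fix the free group parameter; the numerology is the same routine bookkeeping as in \cite{1208.2933} and I would only sketch it.

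Case (a) is immediate: $\ell^{\I}(\Gamma)=\C$ and $\cN_{e_{1}}=\cN_{e_{2}}=L(\Z)$, so $\cN(\Gamma)=L(\Z)*L(\Z)\cong L(\F_{2})$, an interpolated free group factor; and since $v$ carries a loop, $B(\Gamma)=\emptyset$, so this is exactly the form in Theorem \ref{thm:standardembedding}.

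For case (b) I would set $\ell^{\I}(\Gamma)=\C p_{v}\oplus\C p_{w}\cong\C^{2}$ with $\Tr(p_{v})=\gamma_{v}$, $\Tr(p_{w})=\gamma_{w}$, note that the loop contributes $\cN_{e_{1}}=\overset{p_{v}}{L(\Z)}\oplus\overset{p_{w}}{\C}$ and the bridge contributes $\cN_{e_{2}}=(M_{2}(\C)\otimes L(\Z))\oplus\overset{r}{\C}$, where (after relabelling $v,w$ if needed) $r$ is the distinguished rank-one summand of trace $|\gamma_{v}-\gamma_{w}|$ sitting beneath the heavier of $p_{v},p_{w}$. Then I would split into two sub-cases. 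If $\gamma_{v}\ge\gamma_{w}$, both the extra $L(\Z)$-generator of $\cN_{e_{1}}$ and the corner $r$ lie under $p_{v}$; running Dykema's formulas exactly as in the loop-free two-edge base case of \cite{1208.2933} gives that $\cN(\Gamma)$ is a single interpolated free group factor of the free dimension prescribed by the identity above, and indeed $B(\Gamma)=\emptyset$ here since $v$ has a loop and $\alpha^{\Gamma}_{w}=\gamma_{v}\ge\gamma_{w}$. If $\gamma_{w}>\gamma_{v}$, then $r\le p_{w}$ is a central summand of $\cN_{e_{2}}$, and since the only part of $\cN_{e_{1}}$ supported under $p_{w}$ is the scalars $\C p_{w}$, a reduced-word check shows that no reduced word in the free product connects $r$ to anything else, so $r$ remains a central projection of $\cN(\Gamma)$ and $\C r$ splits off as a direct summand; this is precisely the $B(\Gamma)=\{w\}$ block, with $r_{w}^{\Gamma}=r\le p_{w}$ of trace $\gamma_{w}-\gamma_{v}=\gamma_{w}-\alpha^{\Gamma}_{w}$, while $(1-r)\cN(\Gamma)(1-r)$ is handled as in the first sub-case and is a single interpolated free group factor of the required free dimension.

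The hard part will be case (b): one must verify that, although the loop already produces a diffuse corner at $p_{v}$ before $e_{2}$ is adjoined, the amalgamated free product leaves no residual matrix or hyperfinite summand beyond the one scalar block dictated by $B(\Gamma)$, and that the free group parameter comes out exactly as in Theorem \ref{thm:standardembedding}. Both are routine applications of Dykema's free-product formulas and mirror the loop-free base case of \cite{1208.2933}; the only genuinely new feature is that a loop contributes an $L(\Z)$-corner rather than an $M_{2}(\C)\otimes L(\Z)$-corner, which does not alter the structure of the argument.
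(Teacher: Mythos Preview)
Your case split and Case (a) are identical to the paper's. In Case (b) the paper takes a slightly different route: rather than invoking Dykema's amalgamated-free-product formulas wholesale, it compresses by $p_{v}$ and uses \cite{1110.5597} to turn the amalgamated free product into a plain free product, namely $p_{v}\cN(\Gamma)p_{v}=L(\Z)*p_{v}\cN_{e_{2}}p_{v}$. When $\gamma_{v}\ge\gamma_{w}$ the central support of $p_{v}$ in $\cN_{e_{2}}$ is $1$, so $p_{v}\cN_{e_{2}}p_{v}=L(\Z)\oplus\C$ and the free product is a factor; amplifying gives $\cN(\Gamma)$ itself is a factor. When $\gamma_{w}>\gamma_{v}$ the central support of $p_{v}$ in $\cN_{e_{2}}$ is exactly $1-r$ (your $r=p_{w}^{e_{2}}$), so $p_{v}\cN_{e_{2}}p_{v}=L(\Z)$ and $p_{v}\cN(\Gamma)p_{v}=L(\F_{2})$; amplifying to $1-r$ gives the interpolated free group summand, and the $\C r$ summand is forced. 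Your reduced-word argument that $r$ stays central (because $p_{w}\cN_{e_{1}}p_{w}=\C p_{w}$) is correct and is the hands-on counterpart of the paper's central-support observation. The compression approach is a bit more explicit and avoids having to quote the full Dykema machinery, but your route is perfectly valid and the outcomes agree.
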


\begin{proof}
There are two cases to consider.  The first when $\Gamma$ has one vertex with two loops and the other when $\Gamma$ has two vertices, $w$ and $w$ with a loop at $v$ and an edge connecting $v$ to $w$.

\item[\text{\underline{Case 1:}}]
Assume $\Gamma$ has one vertex with two loops.  It follows immediately from the definition of $\cN(\Gamma)$ that $\cN(\Gamma) = L(\F_{2})$ which is in agreement with Theorem \ref{thm:standardembedding}.\\

\item[\text{\underline{Case 2:}}]
Assume $\Gamma$ has two vertices $w$ and $w$ with a loop, $e$, at $v$ and an edge, $f$, connecting $v$ to $w$.  There are two subcases to consider:  when $\gamma_{v} \geq \gamma_{w}$ and $\gamma_{v} < \gamma_{w}$.

\item[\text{\underline{Case 2a:}}]
Assume $\gamma_{v} \geq \gamma_{w}$ and set $D  = \ell^{\I}(\Gamma)$.  $\cN(\Gamma)$ has the form
$$
\cN(\Gamma) = \left(\overset{p_{v}}{\underset{\gamma_{v}}{L(\Z)}} \oplus \overset{p_{w}}{\underset{\gamma_{w}}{\C}}\right) \underset{D}{*} \left(\underset{2\gamma_{w}}{M_{2} \otimes L(\Z)} \oplus \overset{p^{f}_{v}}{\underset{\gamma_{v} - \gamma_{w}}{\C}}\right).
$$
Note that the central support of $p_{v}$ is 1 in $\cN_{f}$.  By \cite{1110.5597},
$$
p_{v}\cN(\Gamma)p_{v} = L(\Z) * p_{v}\cN_{f}p_{v} = L(\Z) * \left(\underset{\gamma_{w}}{L(\Z)} \oplus \overset{p_{f}^{v}}{\underset{\gamma_{v} - \gamma_{w}}{\C}}\right)
$$
which is an interpolated free group factor $L(\F_{t})$ for some appropriate $t$.  By amplifying, it follows that $\cN(\Gamma)$ is an interpolated free group factor, in agreement with Theorem \ref{thm:standardembedding}.

\item[\text{\underline{Case 2b:}}]
Assume $\gamma_{v} < \gamma_{w}$.  We note that the central support of $p_{v}$ in $\cN_{f}$ is $1 - p_{w}^{f}$ and following the same algorithm as in Case 2a gives
$$
p_{v}\cN(\Gamma)p_{v} = L(\Z) * p_{v}\cN_{f}p_{v} = L(\Z) * L(\Z) = L(\F_{2}),
$$
so by amplifying, $\cN(\Gamma) = L(\F_{t}) \oplus \overset{p_{w}^{f}}{\underset{\gamma_{w}}{\C}}$ for appropriate $t$.  This agrees with Theorem \ref{thm:standardembedding}.
\end{proof}

\begin{lem} \label{lem:inductivestep}
Suppose $\Gamma \subset \Gamma'$ are finite weighted graphs and assume $\cN(\Gamma)$ is of the form in Theorem \ref{thm:standardembedding}.  In addition, suppose $\Gamma'$ is obtained from $\Gamma$ by one of the following three operations:

\begin{itemize}
\item[(1)]
$\Gamma$ and $\Gamma'$ have the same edge set and $\Gamma'$ is obtained from $\Gamma$ be adding a loop $e$ at a vertex $v$,
\item[(2)]
$\Gamma$ and $\Gamma'$ have the same edge set and $\Gamma'$ is obtained from $\Gamma$ be adding an edge $e$ between two distinct vertices $v$ and $w$, or
\item[(3)]
$\Gamma'$ is obtained from $\Gamma$ be adding a vertex $v$ and an edge $e$ connecting $w \in V(\Gamma)$ to $v$.
\end{itemize}

Then $\cN(\Gamma')$ is also of the form in Theorem \ref{thm:standardembedding} and $p^{\Gamma}\cN(\Gamma)p^{\Gamma} \overset{s.e.}{\hookrightarrow} p^{\Gamma}\cN(\Gamma')p^{\Gamma}$.
\end{lem}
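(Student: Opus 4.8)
The proof treats the three operations separately. Operations (2) and (3) --- adjoining an ordinary edge, respectively a pendant vertex together with an edge --- are handled exactly as in \cite{1208.2933}: the arguments there use only that $\cN(\Gamma)$ is already in the standard form of Theorem \ref{thm:standardembedding}, not how it was built, so they go through verbatim even when $\Gamma$ contains loops. Hence the genuinely new content is operation (1), adjoining a loop $e$ at a vertex $v$. First I would use associativity of the amalgamated free product over $D := \ell^{\infty}(\Gamma) = \ell^{\infty}(\Gamma')$ to write $\cN(\Gamma') \cong \cN(\Gamma) \underset{D}{*} \cN_{e}$ with $\cN_{e} = \overset{p_{v}}{L(\Z)} \oplus \ell^{\infty}(\Gamma \setminus \{v\})$, where $p_{v}$ is the identity of the $L(\Z)$ factor.

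Since the central support of $p_{v}$ in $\cN_{e}$ is $1$, the description of compressions of amalgamated free products in \cite{1110.5597} yields the ordinary free product $p_{v}\cN(\Gamma')p_{v} \cong \big(p_{v}\cN(\Gamma)p_{v}\big) * L(\Z)$. I would then split into two cases. If $v \notin B(\Gamma)$, then $p_{v} \le p^{\Gamma}$, so $p_{v}\cN(\Gamma)p_{v}$ is a compression of $L(\F_{t_{\Gamma}})$ and hence an interpolated free group factor, and its free product with $L(\Z)$ is again such a factor by Dykema's free-product computations. If $v \in B(\Gamma)$, then $p_{v}\cN(\Gamma)p_{v} \cong L(\F_{s}) \oplus \overset{r_{v}^{\Gamma}}{\C}$ for an appropriate $s$, and freely multiplying by the diffuse algebra $L(\Z)$ absorbs the atom, producing a factor $L(\F_{s'})$. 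In either case, amplifying back by $p_{v}$ --- which is full in the factor part of $\cN(\Gamma')$ --- and observing that the abelian summands $\overset{r_{w}^{\Gamma}}{\C}$ for $w \in B(\Gamma)\setminus\{v\}$ are orthogonal to $p_{v}$ and unchanged, shows $\cN(\Gamma') \cong \overset{p^{\Gamma'}}{L(\F_{t_{\Gamma'}})} \oplus \bigoplus_{w \in B(\Gamma)\setminus\{v\}} \overset{r_{w}^{\Gamma'}}{\C}$. A loop at $v$ forces $v \notin B(\Gamma')$ while leaving $\alpha_{w}^{\Gamma'} = \alpha_{w}^{\Gamma}$ for $w \neq v$, so $B(\Gamma') = B(\Gamma)\setminus\{v\}$ and this is exactly the form in Theorem \ref{thm:standardembedding}; the parameter $t_{\Gamma'}$ is then pinned down by a free-dimension count using the rules of \cite{MR1201693, MR2765550}.

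For the standard embedding claim I would again pass to corners, arranging $p^{\Gamma}$ compatibly (so that $p^{\Gamma} + \sum_{w \in B(\Gamma)} r_{w}^{\Gamma} = 1$ and $p^{\Gamma} \le p^{\Gamma'}$), so that $\cN(\Gamma) \hookrightarrow \cN(\Gamma')$ restricts to an inclusion of the interpolated free group factors $p^{\Gamma}\cN(\Gamma)p^{\Gamma} \hookrightarrow p^{\Gamma}\cN(\Gamma')p^{\Gamma}$. The point is that, in the free-product picture above, the canonical inclusion of $\cN(\Gamma)$ into the (amalgamated) free product becomes, after compressing, the canonical inclusion of a factor into its free product with an algebra of the type allowed in Notation \ref{nota:vNA}: property (1) following Theorem \ref{thm:standardembedding} makes this a standard embedding, and properties (2) and (4) let one transport it across the relevant compressions and amplifications and compose it with the embeddings coming from operations (2) and (3) as the induction proceeds --- exactly the argument of \cite{1208.2933}.

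The step I expect to be the main obstacle is the bookkeeping rather than any single deep fact: checking that the central-support hypotheses needed to invoke the compression theorem of \cite{1110.5597} genuinely hold in the loop case, tracking traces precisely enough that the free-dimension computation returns the stated $t_{\Gamma'}$, and --- most delicately --- choosing the projections $p^{\Gamma}, p^{\Gamma'}$ and the identifying isomorphisms so that the inclusion is genuinely a \emph{standard} embedding (not merely an inclusion of the correct factors), which is precisely where properties (2)--(4) of standard embeddings must be used with the same care as in \cite{1208.2933}.
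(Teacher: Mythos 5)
Your proposal is correct and follows essentially the same route as the paper's proof: deferring operations (2) and (3) to \cite{1208.2933}, then for a loop at $v$ writing $\cN(\Gamma') = \cN_e *_D \cN(\Gamma)$, compressing by $p_v$ and invoking \cite{1110.5597} to get $p_v\cN(\Gamma')p_v = L(\Z) * p_v\cN(\Gamma)p_v$, amplifying back, and reducing the standard-embedding claim to the canonical inclusion of a corner into a free product (via property (4) to pass between the corner $p_v^\Gamma$ and $p^\Gamma$). Your case split on $v\in B(\Gamma)$ versus $v\notin B(\Gamma)$ is subsumed in the paper by the remark that $r_v^\Gamma$ may be zero, but the content is identical.
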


\begin{proof}
The steps in proving the lemma assuming operations (2) or (3) are exactly the same as in \cite{1208.2933}, so we only need to assume that $\Gamma'$ is obtained from $\Gamma$ via operation (1).  Set $D = \ell^{\I}(\Gamma) = \ell^{\I}(\Gamma')$.  We have $\cN(\Gamma') = \cN_{e} \underset{D}{*} \cN(\Gamma)$ and by assumption, $$
p_{v}\cN(\Gamma)p_{v} = \overset{p_{v}^{\Gamma}}{L(\F_{t})} \oplus \overset{r_{v}^{\Gamma}}{\C}
$$
where $r_{v}^{\Gamma}$ can possibly be zero but $p_{v}^{\Gamma} = p_{v}\cdot p^{\Gamma} \neq 0$.  By \cite{1110.5597},
$$
p_{v}\cN(\Gamma')p_{v} = L(\Z) * p_{v}\cN(\Gamma)p_{v} = L(\Z) * \left(\overset{p_{v}^{\Gamma}}{L(\F_{t})} \oplus \overset{r_{v}^{\Gamma}}{\C}\right)
$$
which is an interpolated free group factor.  By amplification, it follows that $\cN(\Gamma')$ is of the form in Theorem \ref{thm:standardembedding}.  By \cite{MR1201693}, The inclusion $p^{\Gamma}_{v}\cN(\Gamma)p^{\Gamma}_{v} \rightarrow p^{\Gamma}_{v}\cN(\Gamma')p^{\Gamma}_{v}$ is equivalent to the inclusion
$$
L(\F_{t}) \rightarrow L(\F_{t}) * p^{\Gamma}_{v}\left(L(\Z) * \left(\overset{p_{v}^{\Gamma}}{\C} \oplus \overset{r_{v}^{\Gamma}}{\C}\right)\right)p^{\Gamma}_{v}
$$
which is a standard embedding.  As $p_{v}^{\Gamma} \leq p^{\Gamma}$ it follows that $p^{\Gamma}\cN(\Gamma)p^{\Gamma} \overset{s.e.}{\hookrightarrow} p^{\Gamma}\cN(\Gamma')p^{\Gamma}$.
\end{proof}

We notice that Theorem \ref{thm:standardembedding} follows from Lemmas \ref{lem:basecase} and \ref{lem:inductivestep}.  Indeed, if $\Gamma \subset \Gamma'$ are finite, connected, weighted graphs then $\Gamma'$ can be obtained from $\Gamma$ by applying operations (1), (2), and (3) above.  Also, the composite of standard embeddings is a standard embedding and standard embeddings are preserved by cut-downs.

Before determining the isomorphism $M \cong L(\F_{\I})$ we note the following lemma whose proof is a straightforward induction exercise using the algorithms in \cite{1110.5597}.

\begin{lem} \label{lem:star}
Suppose $\Gamma$ consists of a vertex $v$ connected to vertices $w_{1}$ through $w_{k}$ by a total of $n$ edges.  Assume further that $\gamma_{v} \leq \gamma_{w_{i}}$ for all $i$.  Then $p_{v}\cN(\Gamma)p_{v} = L(\F_{t})$ where $t \geq n$.
\end{lem}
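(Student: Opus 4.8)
The plan is to induct on the number $n$ of edges of $\Gamma$, building $\Gamma$ up one edge at a time and tracking the interpolated free group factor parameter of the cut-down $p_v\cN(\,\cdot\,)p_v$ at each stage. First observe that the hypothesis $\gamma_v\le\gamma_{w_i}$ forces $\alpha_v^\Gamma=\sum_i n_{v,w_i}\gamma_{w_i}\ge n\gamma_v\ge\gamma_v$, so $v\notin B(\Gamma)$ in the sense of Definition \ref{defn:H2}. Hence by Theorem \ref{thm:standardembedding} we may write $\cN(\Gamma)\cong L(\F_{t_\Gamma})^{p^\Gamma}\oplus\bigoplus_{u\in B(\Gamma)}\C^{r_u^\Gamma}$ with $r_u^\Gamma\le p_u$; since $v\notin B(\Gamma)$ and $p_u\perp p_v$ for every $u\in B(\Gamma)$, we get $p_v\le p^\Gamma$, and therefore $p_v\cN(\Gamma)p_v=p_vL(\F_{t_\Gamma})p_v$ is a nonzero compression of an interpolated free group factor, hence again an interpolated free group factor $L(\F_t)$ (with the convention $L(\F_1)=L(\Z)$, relevant only when $n=1$). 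So the whole content of the lemma is the inequality $t\ge n$. We shall also use repeatedly that for any edge $e$ of $\Gamma$ (necessarily joining $v$ to some $w_i$, with $v$ the vertex of smaller weight), the definition of $\cN_e$ gives $p_v\cN_e p_v=p_v(M_2(\C)\otimes L(\Z))p_v\cong L(\Z)$.

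For the base case $n=1$ we have $\cN(\Gamma)=\cN_e$, so the computation just noted gives $p_v\cN(\Gamma)p_v\cong L(\Z)=L(\F_1)$ and $t=1=n$. For the inductive step, fix an edge $e$ of $\Gamma$ and let $\Gamma'$ be the graph with the same vertex set and with $e$ deleted; thus $\Gamma'$ is a star with center $v$ and $n-1$ edges, possibly together with one isolated vertex. In either case $\cN(\Gamma')$ is $\cN$ of its connected component containing $v$ direct-summed with at most one copy of $\C$ orthogonal to $p_v$, so the inductive hypothesis applies and $p_v\cN(\Gamma')p_v\cong L(\F_{t'})$ with $t'\ge n-1$. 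Now $\cN(\Gamma)=\cN_e\,\underset{D}{*}\,\cN(\Gamma')$ with $D=\ell^\infty(\Gamma)$, and compressing by the minimal projection $p_v\in D$ and invoking the compression and amalgamated-free-product algorithms of \cite{1110.5597}, $p_v\cN(\Gamma)p_v$ is generated by a copy of $p_v\cN(\Gamma')p_v\cong L(\F_{t'})$, a copy of $p_v\cN_e p_v\cong L(\Z)$, and possibly a hyperfinite remainder coming from the fact that $p_v$ need not have full central support in $\cN_e$, with the first two free with amalgamation over $\C p_v$. Running the free-dimension bookkeeping of \cite{MR1201693,MR2765550} then identifies this as $L(\F_t)$ with $t\ge t'+1\ge n$, completing the induction.

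The only delicate point — and the reason the lemma is not an immediate consequence of Lemma \ref{lem:inductivestep} — is the quantitative increment $t\ge t'+1$: Lemma \ref{lem:inductivestep} only yields that the relevant inclusion of cut-downs is a standard embedding, without recording how much the parameter grows. So one must re-examine the \cite{1110.5597} computation for the operation of adjoining an edge carefully enough to see that freely adjoining the $M_2(\C)\otimes L(\Z)$-block of $\cN_e$ raises the free dimension of $p_v\cN(\,\cdot\,)p_v$ by at least $1$. The bookkeeping of the extra hyperfinite direct summands produced when $p_v$ fails to have full central support in $\cN_e$ is the only place requiring care, and it only ever increases the free dimension, so it does not threaten the inequality. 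Everything else — connectedness at each stage, the reduction via Theorem \ref{thm:standardembedding}, and the base case — is routine, which is why \cite{1208.2933} describes this as a straightforward induction.
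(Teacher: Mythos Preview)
Your proposal is correct and follows essentially the same approach as the paper: the paper's entire proof is the sentence ``a straightforward induction exercise using the algorithms in \cite{1110.5597},'' and you carry out precisely that induction, correctly isolating the only nontrivial point (that adjoining each edge increases the parameter of $p_v\cN(\,\cdot\,)p_v$ by at least $1$) and handling the possible disconnection of $\Gamma'$. If anything, you supply more detail than the paper does.
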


\begin{thm}
$M \cong L(\F_{\I})$.
\end{thm}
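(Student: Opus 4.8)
The plan is to realize $M$ as an inductive limit of interpolated free group factors along standard embeddings, in the style of \cite{MR2807103,1208.2933}, using Theorem \ref{thm:standardembedding}. Recall from Section \ref{sec:free1} that $M = p_\emptyset\,\cN(\cF_\sC(\cL))\,p_\emptyset$, where $\cF_\sC(\cL)$ is the (countably infinite, connected) fusion graph; by the standing assumption every vertex of $\cF_\sC(\cL)$ carries self-loops (because $2=1\oplus 1\in\cL$), and the base vertex $[\emptyset]=[1]$ (the class of the trivial object, represented by the empty diagram $p_\emptyset$) in fact carries infinitely many self-loops, since infinitely many distinct colours $Y\oplus\overline Y\in\cL$ have $1$ as a summand.

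First I would fix an exhaustion $\Gamma_1\subset\Gamma_2\subset\cdots$ of $\cF_\sC(\cL)$ by finite connected weighted subgraphs with the following properties: (i) $\Gamma_1$ consists of the vertex $[\emptyset]$ together with two self-loops at it; (ii) each $\Gamma_n$ contains at least one self-loop at every one of its vertices and at least $n$ self-loops at $[\emptyset]$; and (iii) $\Gamma_{n+1}$ is obtained from $\Gamma_n$ by finitely many applications of operations (1)--(3) of Lemma \ref{lem:inductivestep}. Such a sequence exists: new vertices can be attached one at a time by a single edge to previously chosen vertices (operation (3), using connectedness of $\cF_\sC(\cL)$), and the remaining loops and edges are then added by operations (1) and (2); the self-loops demanded in (ii) are available by the remarks above. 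By Lemma \ref{lem:basecase} (Case 1), $\cN(\Gamma_1)\cong L(\F_2)$, and by Lemma \ref{lem:inductivestep} every $\cN(\Gamma_n)$ is of the form in Theorem \ref{thm:standardembedding}; since each vertex of $\Gamma_n$ has a self-loop we have $B(\Gamma_n)=\emptyset$, so in fact $\cN(\Gamma_n)\cong L(\F_{t_n})$ is an interpolated free group factor with $1_{\cN(\Gamma_n)}=p^{\Gamma_n}$.

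Next I would pass to the corner at $p_\emptyset$. Since $\cN(\Gamma_n)$ is a factor and $p_\emptyset$ a nonzero projection in it, $M_n:=p_\emptyset\,\cN(\Gamma_n)\,p_\emptyset\cong L(\F_{s_n})$ by the compression formula for interpolated free group factors \cite{MR1363079,MR1201693}. By Theorem \ref{thm:standardembedding} the inclusion $\cN(\Gamma_n)\hookrightarrow\cN(\Gamma_{n+1})$ restricts to a standard embedding $p^{\Gamma_n}\cN(\Gamma_n)p^{\Gamma_n}\overset{s.e.}{\hookrightarrow}p^{\Gamma_n}\cN(\Gamma_{n+1})p^{\Gamma_n}$; compressing further by $p_\emptyset\le p^{\Gamma_n}$ and invoking property (4) of standard embeddings gives $M_n\overset{s.e.}{\hookrightarrow}M_{n+1}$. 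Moreover $\bigcup_n\cN(\Gamma_n)$ is weakly dense in $\cN(\cF_\sC(\cL))$ (the generators $\cN_e$ and all finite cut-downs of $\ell^{\infty}(\cF_\sC(\cL))$ lie in some $\cN(\Gamma_n)$), so $\bigcup_n M_n$ is weakly dense in $M$. Hence by property (3), $M=\varinjlim_n M_n\cong L(\F_s)$ with $s=\lim_n s_n$. Finally, each of the $\ge n$ self-loops at $[\emptyset]$ in $\Gamma_n$ contributes, via the corresponding $\cN_e=L(\Z)\oplus\ell^{\infty}(\Gamma_n\setminus\{[\emptyset]\})$ supported under $p_\emptyset$, a fixed positive amount to the free dimension of $M_n$ (a free-dimension count by the rules of \cite{MR1201693}, exactly as in \cite{MR2807103,1208.2933}; alternatively one may use Lemma \ref{lem:star}, noting that $\dim(1)=1$ is minimal among the weights $\gamma_v$), so $s_n\ge n\to\infty$. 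Therefore $s=\infty$ and $M\cong L(\F_\I)$.

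The main obstacle is organisational rather than conceptual: one must choose the exhausting sequence so that every $\cN(\Gamma_n)$ is already a \emph{factor} (this is precisely where the self-loops at all vertices are used, so that $B(\Gamma_n)=\emptyset$ and there is no residual atomic summand), and then track the projections $p^{\Gamma_n}$ and $p_\emptyset$ through the non-unital inclusions $\cN(\Gamma_n)\hookrightarrow\cN(\Gamma_{n+1})$ so that the standard-embedding property survives each successive compression. Once the inductive system $(M_n,\overset{s.e.}{\hookrightarrow})$ is set up, the conclusion follows from properties (1)--(4) of standard embeddings together with the fact that $\cF_\sC(\cL)$ has infinitely many edges, which forces the limiting free-group parameter to be infinite.
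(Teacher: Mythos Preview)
Your argument is correct and follows the same strategy as the paper: exhaust the fusion graph by finite connected subgraphs, apply Theorem \ref{thm:standardembedding} together with property (4) of standard embeddings to obtain a chain $p_\emptyset\cN(\Gamma_n)p_\emptyset\overset{s.e.}{\hookrightarrow}p_\emptyset\cN(\Gamma_{n+1})p_\emptyset$, and use that infinitely many edges of $\cF_\sC(\cL)$ are incident to the trivial vertex to force the free-group parameter to infinity. The only cosmetic differences are that you arrange $B(\Gamma_n)=\emptyset$ by including a self-loop at every vertex (the paper only needs $[\emptyset]\notin B(\Gamma_n)$, which holds once $[\emptyset]$ has any neighbor), and you drive the parameter to infinity via the self-loops at $[\emptyset]$ rather than via the star subgraph $\tilde\Gamma_n$ and Lemma \ref{lem:star}.
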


\begin{proof}
Let $\Gamma$ denote $\cF_{\sC}(\cL)$ so that $M \cong p_{\emptyset}\cN(\Gamma)p_{\emptyset}$.  We build up $\Gamma$ by an increasing union of finite connected subgraphs $\Gamma_{k}$, each of which contain the vertex * so that $M$ is the inductive limit of the algebras $p_{\emptyset}\cN(\Gamma_{k})p_{\emptyset}$.  All vertices in $\Gamma_{k}$ must have weight larger than $1$ since for $p$ an irreducible projection in $P_{\bullet}$, $p \otimes \overline{p}$ must have a subprojection equivalent to the trivial one.  It follows that for $k$ sufficiently large, $p_{\emptyset}\cN(\Gamma_{k})p_{\emptyset}$ is an interpolated free group factor and since $p_{\emptyset}\cN(\Gamma_{k})p_{\emptyset} \overset{s.e.}{\hookrightarrow} p_{\emptyset}\cN(\Gamma_{k+1})p_{\emptyset}$ we know that $M = p_{\emptyset}\cN(\Gamma)p_{\emptyset}$ must be an interpolated free group factor.  Let $\tilde{\Gamma_{n}}$ be the subgraph of $\Gamma_{n}$ whose vertices are * and $v_{1}$, ..., $v_{k}$ which are connected to $*$ in $\Gamma_{k}$ and whose edges are exactly the edges connecting $*$ to each $v_{i}$ to * in $\Gamma_{n}$.  Assuming there are $k_{n}$ such vertices, we see from Lemma \ref{lem:star} that $p_{\emptyset}\cN(\tilde{\Gamma_{n}})p_{\emptyset} = L(\F_{r_{n}})$ for $r_{n} \geq k_{n}$. Since either $p_{\emptyset}\cN(\tilde{\Gamma_{n}})p_{\emptyset} =  p_{\emptyset}\cN(\Gamma_{n})p_{\emptyset}$ or $p_{\emptyset}\cN(\tilde{\Gamma_{n}})p_{\emptyset} \overset{s.e.}{\hookrightarrow} p_{\emptyset}\cN(\Gamma_{n})p_{\emptyset}$ it follows that $p_{\emptyset}\cN(\Gamma_{n})p_{\emptyset} = L(\F_{t_{n}})$ for $t_{n} \geq r_{n}$.  There are infinitely many edges emanating from $*$.  Therefore $k_{n}$ and hence $t_{n}$ can be arbitrarily large so it follows that $M = L(\F_{t})$ where $t = \lim_{n} t_{n} = \I$.
\end{proof}

\begin{cor}
$M_{\alpha} \cong L(\F_{\I})$ for all $\alpha \in \Lambda$.
\end{cor}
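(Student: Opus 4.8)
The plan is to derive the corollary immediately from the isomorphism $M \cong L(\F_\I)$ just established, together with the amplification formula for interpolated free group factors. First I would record the structural facts already in hand: by Theorem~\ref{thm:Factor} each $M_\alpha$ is a $II_1$ factor, and by the first corollary following Lemma~\ref{lem:2inftyfactor} the factor $M_\alpha$ is the $\delta_\alpha$-amplification $M^{\delta_\alpha}$ of $M = M_0$. Concretely, $M_\alpha = p_\alpha \wedge \cM_\I \wedge p_\alpha$ is the cut-down of the $II_\infty$ factor $\cM_\I$ by the finite projection $p_\alpha$, and the partial isometry $v_\alpha$ (the polar part of $w_\alpha$ from the proof of Lemma~\ref{lem:2inftyfactor}), together with enough partial isometries in $\cM_\I$ to write $p_\alpha$ as an orthogonal sum of copies of $p_\emptyset$, identifies $M_\alpha$ with $M^t$ for $t = \Tr(p_\alpha)/\Tr(p_\emptyset) = \delta_\alpha$.

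Next I would invoke the classical computation of amplifications of interpolated free group factors due to R\u{a}dulescu and Dykema (see \cite{MR1201693}): for every $r \in (1,\I]$ and every real $t > 0$ one has $L(\F_r)^t \cong L(\F_{1 + (r-1)/t^2})$, and in particular $L(\F_\I)^t \cong L(\F_\I)$ for all $t > 0$. Applying this with $r = \I$ and $t = \delta_\alpha$, and using the preceding theorem $M \cong L(\F_\I)$, we obtain
$$
M_\alpha \cong M^{\delta_\alpha} \cong L(\F_\I)^{\delta_\alpha} \cong L(\F_\I),
$$
which is the desired conclusion.

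There is no substantive obstacle here: both ingredients — the identification of $M_\alpha$ with an amplification of $M$ and the stability of $L(\F_\I)$ under amplification — are already available, the former recorded in the excerpt and the latter a black-box fact about free group factors. The only point deserving a word of care is that $\delta_\alpha$, being a product of loop parameters $\delta_c > 1$, is an arbitrary positive real and need not be an integer or even rational; hence one must appeal to the full amplification formula for all real $t > 0$ rather than merely to the elementary case of integer amplifications.
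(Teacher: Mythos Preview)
Your proof is correct and follows exactly the same approach as the paper: the paper's proof is the single line ``$M_{\alpha}$ is an amplification of $M$,'' and you have simply unpacked this by recalling that $M_\alpha \cong M^{\delta_\alpha}$ and invoking the stability of $L(\F_\I)$ under amplification.
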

\begin{proof}
$M_{\alpha}$ is an amplification of $M$.
\end{proof}

\begin{rem}\label{rem:Finite}
With a bit more careful analysis, using the techniques in \cite{1208.2933}, one can show that if $\sC$ has infinitely many simple objects then the factor $M$ is $L(\F_{\I})$ no matter the choice of the generating set $\cS$ (using $\cL=\cS\oplus \overline{\cS}$).  When $\sC$ has finitely many simple objects, then we can find $t$ finite with $M \cong L(\F_{t})$.  To do this, we can find a single object $X$ which generates $\sC$, and we let $\cL=\{X\oplus \overline{X}\}$.  Applying the analysis in Section \ref{sec:free1} shows that $M$ is a cutdown of $\cN(\cF_\sC(X\oplus\overline{X}))$.  Keeping track of the free dimension yields the isomorphism $M \cong L(\F(1 + \dim(\sC)(2\dim(X) - 1)))$.
\end{rem}

\appendix
\section{Appendix}\label{sec:Appendix}

\subsection{The factors in \cite{MR1960417} have property $\Gamma$}
First, we sketch the proof that the factors in \cite{MR1960417} have property $\Gamma$, and thus are not interpolated free group factors. We use some notation from \cite{MR1960417}.

\begin{proof}[Sketch of proof]
Let $\sC$ be a countably generated rigid $C^*$-tensor category.
Let $\cS$ be the set of isomorphism classes of simple objects.
Let $\set{e_s}{s\in \cS}$ be a family of non-zero, pairwise orthogonal projections in the hyperfinite $II_1$ factor $R$.
Consider the von Neumann algebras
$$
A=\bigoplus_{s\in \cS}e_sRe_s\text{ and }B=\bigoplus_{s,t\in\cS}\mathcal \sC(s,t)\overline{\otimes} e_sRe_t.
$$
Note that we have a unital, connected inclusion $A\subset B$.
The factors considered in \cite{MR1960417} are of the form
$$
N=(Q\overline\otimes A)*_A B,
$$
where $Q$ is an arbitrary finite von Neumann algebra.
To show that $N$ admits a non-trivial central sequence, it suffices to find a sequence in $A$ which commutes asymptotically with $B$.
Let $x=(x_n)$ be a central sequence of $R$.
One can check that the sequence $(y_n)$ given by
$$
y_n=\bigoplus_{s\in\cS}e_sx_ne_s\in A
$$
has the desired property.
Hence $N$ has property $\Gamma$ and is not isomorphic to an interpolated free group factor.
\end{proof}

\subsection{Proofs using $\lambda$-lattices}
We now sketch a proof of Theorem \ref{thm:Main} using the techniques of \cite{MR2051399} in the case where the rigid $C^*$-tensor category $\sC$ is finitely generated. This sketch closely follows \cite[Theorem 4.1]{1112.4088}.
\begin{proof}[Sketch of proof]
Suppose $\cS=\{X_1,\dots,X_r\}$ generates $\sC$.
First, set
$$
X=X_1\oplus \overline{X_1}\oplus \cdots \oplus X_r \oplus \overline{X_r}
$$
which is a symmetrically self-dual object in $\sC$. Setting
\begin{align*}
P_{n,+} &= \End_{\sC}(X\otimes \overline{X}\otimes \cdots \otimes X^{\pm})\\
P_{n,-} &= \End_{\sC}(\overline{X}\otimes X\otimes \cdots \otimes X^{\pm})
\end{align*}
where $X^\pm= X,\overline{X}$ depending on the parity of $n$, $P_\bullet$ is a subfactor planar algebra. (This follows by results similar to those in Section \ref{sec:TCandPA}).
A subfactor planar algebra is naturally a $\lambda$-lattice \cite{MR1334479,math/9909027}, and thus \cite{MR2051399} gives us a subfactor $N\subset M$ where $N,M$ are both isomorphic to $L(\F_\I)$ and whose subfactor planar algebra is isomorphic to $P_\bullet$. The result now follows by a modified version of Theorem \ref{thm:PAandTC}.
\end{proof}

Difficulties arise when trying to adapt an approach along these lines for $\sC$ not finitely generated. One wants to define an inductive limit Popa system and use  \cite{MR2051399} to get an inclusion of factors isomorphic to $L(\F_\I)$ which remembers $\sC$.

One hope is to look at finitely generated subcategories of $\sC$ as follows. Set
$$
Y^{r} = X_{1} \oplus \overline{X_1} \oplus \cdots \oplus X_r \oplus \overline{X_r},
$$
and define $Z^{r}$ inductively by setting $Z^{1} = Y^{1}$ and $Z^{r} = Z^{r-1} \otimes Y^{r}$. Now for all $r$, we have subfactor planar algebras $P_\bullet^r$ given by
\begin{align*}
P^{r}_{n,+} &= \End_{\sC}(Z^{r}\otimes \overline{Z^{r}}\otimes \cdots \otimes Z^{r,\pm})\\
P^{r}_{n,-} &= \End_{\sC}(\overline{Z^{r}}\otimes Z^{r}\otimes \cdots \otimes Z^{r,\pm}).
\end{align*}
Again, using \cite{MR2051399}, we get an inclusion $N^{r} \subset M^{r}$ where both factors are isomorphic to $L(\F_{\I})$, and the associated category of $N^r-N^r$ bimodules is equivalent to the subcategory of $\sC$ generated by $X_{1}, \overline{X_1},\dots, X_{r},\overline{X_r}$.

One problem with this approach is that while $P^{r}_{n, \pm}$ includes unitally into $P^{r+1}_{n, \pm}$, the inductive limit planar algebra $P_\bullet^\I$ has infinite dimensional $n$-box spaces, i.e., $\dim(P_{n,\pm}^\I)=\I$ for all $n$. Hence we cannot directly use \cite{MR2051399} to get a subfactor. Another problem is that the inclusion  $P^{r}_{\bullet}\hookrightarrow P_\bullet^{r+1}$ does not induce an inclusion of $N^{r}$ into $N^{r+1}$ nor an inclusion $M^r$ into $M^{r+1}$. Hence we do not get an inductive limit subfactor $N^\I\subset M^\I$.

Of course, one can try other approaches along these lines, but so far, the authors have not succeeded in finding an inductive limit inclusion.

\bibliographystyle{amsalpha}
\bibliography{bibliography}
\end{document}